\numberwithin{equation}{section}
\renewcommand*\env@matrix[1][*\c@MaxMatrixCols c]{%
	\hskip -\arraycolsep
	\let\@ifnextchar\new@ifnextchar
	\array{#1}}
\numberwithin{equation}{section}
\theoremstyle{plain}
\newtheorem{theorem}{Theorem}[section]
\newtheorem{definition}{Definition}[section]
\newtheorem{corollary}{Corollary}[section]
\newtheorem{lemma}{Lemma}[section]
\newtheorem{assumption}{Assumption}[section]
\newtheorem{remark}{Remark}[section]
\newtheorem{proposition}{Proposition}[section]
\newcommand{\abs}[1]{\left\vert#1\right\vert}
\def\ind{\mathbbm{1}}
\newcommand \bbP{\mathbb{P}}
\newcommand \bbE{\mathbb{E}}
\begin{document}
\setstcolor{red}

\title{Bayesian Graph Selection Consistency Under Model Misspecification}
\author[Yabo Niu, Debdeep Pati, Bani K. Mallick]{Yabo Niu, Debdeep Pati, Bani K. Mallick}
\address{Department of Statistics, Texas A\&M University} 
\email{ybniu@stat.tamu.edu, debdeep@stat.tamu.edu, bmallick@stat.tamu.edu}
\subjclass[2010]{Primary: 62F15. Secondary: 60K35}
\keywords{Bayesian, consistency, decomposable, graph selection, hyper inverse Wishart, marginal likelihood}

\begin{abstract} Gaussian graphical models are a popular tool to learn the dependence structure in the form of a graph among variables of interest.    Bayesian methods have gained in popularity in the last two decades due to their ability to simultaneously learn the covariance and the graph and characterize uncertainty in the selection. For scalability of the Markov chain Monte Carlo algorithms, decomposability is commonly imposed on the graph space. A wide variety of graphical conjugate priors are proposed jointly on the covariance matrix and the graph with improved algorithms to search along the space of decomposable graphs, rendering the methods extremely popular in the context of multivariate dependence modeling.
{\it An open problem} in Bayesian decomposable structure learning is whether the posterior distribution is able to select a meaningful decomposable graph that it is  ``close'' in an appropriate sense to the true non-decomposable graph, when the dimension of the variables increases with the sample size. 
%
In this article, we explore specific conditions on the true precision matrix and the graph which results in an affirmative answer to this question using a commonly used hyper-inverse Wishart prior on the covariance matrix and a suitable complexity prior on the graph space, both in the well-specified and misspecified settings. In absence of structural sparsity assumptions, our strong selection consistency holds in a high dimensional setting where $p = O(n^{\alpha})$ for  $\alpha < 1/3$. 
We show when the true graph is non-decomposable, the posterior distribution on the graph concentrates on a set of graphs that are {\it minimal triangulations} of the true graph.
\end{abstract}

\maketitle

\section{Introduction}

Graphical models provide a framework for describing statistical dependencies in (possibly large) collections of random variables \citep{lauritzen1996graphical}. In this article, we revisit the well known problem of inference on the underlying graph from observed data from a Bayesian point of view.   Research on Bayesian inference for natural exponential families and associated conjugate priors (DY priors) is pioneered by  \cite{diaconis1979conjugate} and has profound impact on the development of Bayesian Gaussian graphical models.  Consider independent and identically distributed vectors $Y_1, Y_2, \ldots, Y_n$ drawn from  $p$-variate normal distribution with mean vector $0$ and a sparse inverse covariance matrix $\Omega$.   The sparsity pattern in $\Omega$ can be encoded in terms of a graph $G$ on the set of variables as follows. If the variables $i$ and $j$ do not share an edge in $G$, then $\Omega_{ij}=0$. Hence, an undirected (or concentration) graphical model corresponding to $G$ restricts the inverse covariance matrix $\Omega$ to a linear subspace of the cone of positive definite matrices. 

A probabilistic framework for learning the dependence structure and the graph $G$ requires specification of a prior distribution for $(\Omega, G)$.  Conditional on $G$, a hyper-inverse Wishart distribution \citep{dawid1993hyper} on $\Sigma = \Omega^{-1}$  and the corresponding induced class of distributions on $\Omega$ \citep{roverato2000cholesky} are attractive choices of DY priors. 
A rich family of conjugate priors that subsumes the DY class is developed by \cite{letac2007wishart}. 
Bayesian procedures corresponding to these Letac-Massam priors have been derived in a decision theoretic framework in the recent work of \cite{rajaratnam2008flexible}.  The key component of Bayesian structure learning is achieved through  specification of a prior distribution on the space of graphs. There is a need for a flexible but tractable family of such priors, capable of representing a variety of prior beliefs about the conditional independence structure. In the interests of tractability and scalability, there has been a strong focus on the case where the true graph may be assumed to be decomposable.   On the other hand, relatively few papers have considered non decomposable graphs in a Bayesian set-up; refer to HIW distributions for non-decomposable graphs \cite{roverato2002hyper,atay2005monte,dellaportas2003bayesian,moghaddam2009accelerating,wang2010simulation,khare2018bayesian}. 


In this paper, we focus on the HIW distribution for decomposable graphs as this construction enjoys many advantages, such as computational efficiency due to its conjugate formulation and exact calculation of marginal likelihoods \citep{scott2008feature}. 
The use of HIW prior within a Bayesian framework for Gaussian graphical models has been well studied for the past decade, see \cite{giudici1996learning,giudici1999decomposable,carvalho2007simulation,carvalho2009objective}.  Although deemed as a restrictive model choice in the space of graphs, as long as the model for the data allows arbitrarily small interactions, the resulting model assuming decomposability is quite flexible. 
Stochastic search algorithms are empirically demonstrated to have good practical performance in these models.  For detailed description and comparison of various Bayesian computation methods in this scenario, see \cite{jones2005experiments,donnet2012empirical}.  

There has been a growing literature on model selection consistency in Gaussian graphical models from a frequentist point of view \citep{rask,meinshausen2006high,yuan2007model,drton2007multiple}. Beyond the literature on Gaussian graphical models, there has been a incredible amount of frequentist work in the context of estimating high-dimensional covariance matrix estimation with rates of convergence of various regularized covariance estimators derived in \cite{bickel2008regularized,lam2009sparsistency,el2008operator,cai2011adaptive} among others. There is a relatively smaller literature on asymptotic properties of Bayesian procedures for covariance or precision matrices in graphical models; refer to \cite{banerjee2014posterior,banerjee2015bayesian}. However, the literature on graph selection consistency in a Bayesian paradigm is surprisingly sparse.  In the context of decomposable graphs, the only article we were aware of is \cite{fitch2014performance} who considered the behavior of Bayesian procedures that perform model selection for decomposable Gaussian graphical models.  However, the analysis is restricted to the fixed dimensional regime and involves the behavior of the marginal likelihood ratios between graphs differing by an edge.   For general graph selection consistency within a Bayesian framework, refer to the very recent article \cite{cao2016posterior} in the context of Gaussian directed acyclic graph (DAG) models. 
%
%
The question of validity of using decomposable graphical models using the HIW prior when the true graph is in fact non-decomposable is unanswered till date despite its popularity and development of associated posterior computation techniques over the past 20 years.  

In this article, focusing on the hyper-inverse g-Wishart (g-HIW) distribution on the covariance matrix and a complexity prior on the graph, we derive sufficient conditions for strong selection consistency when $p = O(n^{\alpha})$ with $\alpha < 1/3$. The key conditions relate to precise upper and lower bounds on the partial correlation and a suitably complexity prior on the space of graphs.  We emphasize here that we do {\it not} need conditions to be verified on all subgraphs -  all assumptions are easy to understand and relatively straightforward to verify.   Regarding our findings, we discover that g-HIW prior  places heavy penalty on missing true edges (false negatives), but comparatively smaller penalty on adding false edges (false positives). Henceforth in high-dimensional regime a carefully chosen complexity prior on the graph space is needed for penalizing false positives and achieving strong consistency. 
%
%

In the well-specified case, the hierarchical model used here is a subset of \cite{cao2016posterior} since hyper-inverse Wishart prior is a special case of DAG-Wishart prior proposed in \cite{ben2011high} under {\it perfect} DAGs. However, the assumptions in this paper are distinctly different from those stated in \cite{cao2016posterior}. In particular, our assumptions are on the magnitude of the elements of partial correlation matrix rather than on the eigen values of covariance matrix as in \cite{cao2016posterior}.  Also, the main focus of this article is to study the behavior of graph selection consistency under model misspecification, which cannot be addressed within a DAG framework. To the best of our knowledge, this is the first paper to show the strong selection consistency under HIW prior for high-dimensional graphs under model misspecification. 
In particular, we show that the posterior concentrates on decomposable graphs which are in some sense closest to the true non-decomposable graph. Interestingly, the pairwise Bayes factors between such graphs are stochastically bounded.  Our result under model-misspecification is inspired by \cite{fitch2014performance}, but extends to the case when $p$ is growing with $n$ and provides a rigorous proof the convergence of the posterior distribution to the class of decomposable graphs which are closest to the true one. We also present a detailed simulation study both for the well-specified and misspecified case, which provides empirical justification for some of our technical results.  

{\it En-route}, we develop precise bounds for Bayes factor in favor of an alternative graph with respect to the true graph.  The main proof technique is a combination of a) {\em localization:} which involves breaking down the Bayes factor between any two graphs into local moves, i.e. addition and deletion of one edge  using decomposable graph chain rule and b) {\em correlation association:} which converts the Bayes factor between two graphs differing by an edge into a suitable function of sample partial correlations. By developing sharp concentration and tail bounds for sample partial correlation, we obtain bounds for ratios of local marginal likelihoods which are then combined to yield strong selection consistency results.

The remaining part of the paper is organized as follows.  In \S \ref{sec:prelim}, we introduce the necessary background and notations. \S \ref{sec:model} introduces the model with the HIW prior. \S \ref{sec:theory} describes the main results on pairwise {\it posterior ratio consistency} and  consistent graph selection when the true graph is decomposable.  \S \ref{sec:theory2} states the main results on consistent graph selection 
 under model misspecification  and results on equivalence of minimal triangulations.
In each of Sections \ref{sec:theory} and \ref{sec:theory2}, the results are presented progressively as follows: First we provide a non-asymptotic sharp upper bound for pairwise Bayes factor. Next, we state the main theorem for posterior ratio consistency when $p$ diverges with $n$ with $p$ of the order $n^{\alpha}$ for $\alpha < 1/2$. Finally we state the main theorem on strong graph selection consistency which further requires $\alpha < 1/3$. Numerical experiments are presented in \S \ref{sec:sim} followed by a discussion in \S \ref{sec:disc}.

\section{Preliminaries}\label{sec:prelim}
In this section, we define a collection of notations required to describe the model and the prior. \S \ref{ssec:cor} introduces sample and population correlations and partial correlations, \S \ref{ssec:graphs} sets up the notations for undirected graphs and briefly introduces the definitions and properties  associated with decomposable graphs. \S \ref{ssec:matrix} contained matrix abbreviations and notations used throughout the paper. 

\subsection{Correlation and partial correlation}\label{ssec:cor}
Let $\bm{X}_p=(X_1,X_2,\ldots,X_p)^T$ denote a random vector which follows a $p$-dimensional Gaussian distribution and $\mathrm{x}^{(1)}, \mathrm{x}^{(2)}, \ldots, \mathrm{x}^{(n)}$ denote $n$ independent and identically distributed (i.i.d) samples  observations from $\bm{X}_p$.  Clearly, the $n \times p$ matrix formed by augmenting the $n$-dimensional column vectors $x_i$,  denoted $(\mathrm{x}_1, \mathrm{x}_2, \ldots,\mathrm{x}_p)$ is the same as  $( \mathrm{x}^{(1)}, \mathrm{x}^{(2)}, \ldots, \mathrm{x}^{(n)})^T$ and $\bar{\mathrm{x}}_i = n^{-1}\ind_n^T \mathrm{x}_i$, $i=1,2,\ldots,p$. Here  $\ind_n$ is an $n$-dimensional vector with all ones. Let $I_n$ denote an $n \times n$ identity matrix. 

\begin{definition} \label{defpcorr} {\normalfont{(Population correlation coefficient).}} 
	The population correlation coefficient between $X_i$ and $X_j$,  $1\leq i, j \leq p$, is defined as 
	\small
	\begin{equation*}
		\rho_{ij} = \frac{\sigma_{ij}}{\sqrt{\sigma_{ii}}\sqrt{\sigma_{jj}}},
	\end{equation*}
	\normalsize
	where \small$\sigma_{ii}=\bbE(X_i-\bbE X_i)^2$ \normalsize and \small$\sigma_{ij}=\bbE\{(X_i-\bbE X_i)(X_j-\bbE X_j)\}$\normalsize.
\end{definition}

\begin{definition} \label{defscorr} {\normalfont{(Sample/Pearson correlation coefficient).}} 
	The sample correlation coefficient between $X_i$ and $X_j$,  $1\leq i, j \leq p$, is defined as 
	\small
	\begin{equation*}
		\hat{\rho}_{ij} = \frac{\hat{\sigma}_{ij}}{\sqrt{\hat{\sigma}_{ii}}\sqrt{\hat{\sigma}_{jj}}},
	\end{equation*}
	\normalsize
	where \small$\hat{\sigma}_{ii}=(\mathrm{x}_i-\bar{\mathrm{x}}_i\ind_n)^T(\mathrm{x}_i-\bar{\mathrm{x}}_i\ind_n)/n$ \normalsize and \small$\hat{\sigma}_{ij}=(\mathrm{x}_i-\bar{\mathrm{x}}_i\ind_n)^T(\mathrm{x}_j-\bar{\mathrm{x}}_j\ind_n)/n$\normalsize.
\end{definition}

\begin{definition} \label{defppcorr} {\normalfont{(Population partial correlation coefficient).}} 
	Let $S = \{ i_1, i_2, \ldots, i_{\abs{S}} \}$, where $1\leq i_1, i_2, \ldots, i_{\abs{S}}\leq p$ and $\abs{S}$ is the cardinality of set $S$. Define $X_S = (\mathrm{X}_{i_1}, \mathrm{X}_{i_2}, \ldots, \mathrm{X}_{i_{\abs{S}}})^T$. The population partial correlation coefficient between $X_i$ and $X_j$, where $i,j\not\in S$ and $1\leq i,j \leq p$, holding $X_S$ fixed is defined as 
	\small
	\begin{equation*}
		\rho_{ij\mid S} = \frac{\sigma_{ij\mid S}}{\sqrt{\sigma_{ii\mid S}}\sqrt{\sigma_{jj\mid S}}},
	\end{equation*}
	\normalsize
	where \small$\sigma_{ii\mid S}=\sigma_{ii}-\sigma_{Si}^T\sigma^{-1}_{SS}\sigma_{Si}$\normalsize, \small$\sigma_{ij\mid S}=\sigma_{ij}-\sigma_{Si}^T\sigma^{-1}_{SS}\sigma_{Sj}$\normalsize. And \small$\sigma_{Si}=\bbE\{(X_S-\bbE X_S)(X_i-\bbE X_i)\}$\normalsize, \small$\sigma_{SS}=\bbE\{(X_S-\bbE X_S)^T(X_S-\bbE X_S)\}$\normalsize.
\end{definition}

\begin{definition} \label{defspcorr} {\normalfont{(Sample partial correlation coefficient).}} 
	Define $\mathrm{x}_S = (\mathrm{x}_{i_1}, \mathrm{x}_{i_2}, \ldots, \mathrm{x}_{i_{\abs{S}}})$. The sample partial correlation coefficient between $X_i$ and $X_j$, where $i,j\not\in S$ and $1\leq i,j \leq p$, holding $X_S$ fixed is defined as 
	\small
	\begin{equation*}
		\hat{\rho}_{ij\mid S} = \frac{\hat{\sigma}_{ij\mid S}}{\sqrt{\hat{\sigma}_{ii\mid S}}\sqrt{\hat{\sigma}_{jj\mid S}}},
	\end{equation*}
	\normalsize
	where \small$\hat{\sigma}_{ii\mid S}=\hat{\sigma}_{ii}-\hat{\sigma}_{Si}^T\hat{\sigma}_{SS}^{-1}\hat{\sigma}_{Si}$\normalsize, \small$\hat{\sigma}_{ij\mid S}=\hat{\sigma}_{ij}-\hat{\sigma}_{Si}^T\hat{\sigma}_{SS}^{-1}\hat{\sigma}_{Sj}$\normalsize. And \small$\hat{\sigma}_{Si}=(\mathrm{x}_S-\bar{\mathrm{x}}_S)^T(\mathrm{x}_i-\bar{\mathrm{x}}_i)/n$\normalsize, \small$\hat{\sigma}_{SS}=\big\{(\mathrm{x}_S-\bar{\mathrm{x}}_S)^T(\mathrm{x}_S-\bar{\mathrm{x}}_S)/n\big\}^{-1}$\normalsize, \small$\bar{\mathrm{x}}_S =(\bar{\mathrm{x}}_{i_1}\ind_n, \ldots, \bar{\mathrm{x}}_{i_{\abs{S}}}\ind_n)$.
\end{definition}

\subsection{Undirected decomposable graphs} \label{ssec:graphs}
Denote an undirected graph by $G = (V, E)$ with a vertex set $V = \{1, 2, \ldots, p\}$ and an edge set $E=\{(r,s): e_{rs}=1, 1\leq r<s\leq p\}$ with $e_{rs} =1$ if the edge $(r, s)$ is present in $G$ and $0$ otherwise. 

For purpose of a self-contained exposition, we first review some basic terminologies of graph theory. A {\it path} of length $k$ in $G$ from vertex $u$ to $v$ is a sequence of $k-1$ distinct vertices of the form $u=v_0,v_1,\ldots,v_{k-1},v_k=v$ such that $(v_{i-1},v_i)\in E$ for all $i=1,2,\ldots,k$. The path is a {\it $k$-cycle} if the end points are the same, $u=v$. If there is a path from $u$ to $v$, then we say $u$ and $v$ are {\it connected}. A subset $S\subseteq V$ is said to be an {\it $uv$-separator} if all paths from $u$ to $v$ intersect $S$. The subset $S$ is said to {\it separate} $A$ from $B$ if it is an $uv$-separator for every $u\in A$, $v\in B$. A {\it chord} of a cycle is a pair of vertices that are not consecutive on the cycle, but are adjacent in $G$. A graph is {\it complete} if all vertices are joined by an edge. A {\it clique} is a complete subgraph that is maximal, maximally complete subgraph. See \cite{lauritzen1996graphical} for more graph related terminologies.

We shall focus on decomposable graphs in this paper. A graph is decomposable \cite{lauritzen1996graphical} if and only if its every cycle of length greater than or equal to four possesses a chord. A decomposable graph $G$ can be represented by a perfect ordering of its cliques and separators. Refer to \cite{lauritzen1996graphical} for formal definitions of a clique and a separator, and other equivalent representations. An ordering of cliques $C_i \in \mathcal{C}$ and separators $S_i \in \mathcal{S}$, where $\mathcal{C}=\{C_i\}_{i=1}^k$ and $\mathcal{S}=\{S_i\}_{i=2}^k$, $(C_1, S_2, C_2, S_3, \ldots C_k)$, is said to be perfect if for every $i = 2, 3,\ldots, k$ the running intersection property [\cite{lauritzen1996graphical}, page 15] is fulfilled, meaning that there exists a $j < i$ such that $S_i = C_i \cap H_{i-1}\subset C_j$ where $H_{i-1} = \cup_{j=1}^{i-1} C_j$.  A junction tree for the decomposable graph $G$ is a tree representation of the cliques. (For a non-decomposable graph, the junction tree consists of its prime components that are not necessarily cliques, i.e. complete). A tree with a set of vertices equal to the set of cliques of $G$ is said to be a junction tree if, for any two cliques $C_i$ and $C_j$ and any clique $C$ on the unique path between $C_i$ and $C_j$,  we have $C_i \cap C_j \subset C$. A set of vertices shared by two adjacent nodes of the junction tree is complete and defines the separator of the two subgraphs induced by the nodes. Denote by $\mathcal{D}_k$ the space of all decomposable graphs on $k$ notes.  Figures \ref{example1} and \ref{example2} briefly  illustrate a decomposable and a non-decomposable graph, both defined on $6$ nodes. 

\begin{figure}[h]
	\centering
	\begin{tikzpicture}[thick]
	\node[draw, circle] (n1) {$1$};
	\node[draw, circle, right=0.8 of n1] (n2) {$2$};
	\node[draw, circle, above right=0.6 of n2] (n3) {$3$};
	\node[draw, circle, below=1.1 of n3] (n4) {$4$};
	\node[draw, circle, right=1.1 of n4] (n5) {$5$};
	\node[draw, circle, above=1.1 of n5] (n6) {$6$};
	\edge[-] {n1} {n2};
	\edge[-] {n2} {n3};
	\edge[-] {n2} {n4};
	\edge[-] {n3} {n4};
	\edge[-] {n3} {n5};
	\edge[-] {n3} {n6};
	\edge[-] {n4} {n5};
	\edge[-] {n4} {n6};
	\edge[-] {n5} {n6};
	\node[left=0.15 of n1] {$G_6$};
	\node[draw, ellipse, right=0.8 of n6, label={above:$C_1$}] (C1) {$1$, $2$};
	\node[draw, regular polygon sides=4, right=0.4 of C1, label={above:$S_2$}] (S2) {$2$};
	\node[draw, ellipse, right=0.4 of S2, label={above:$C_2$}] (C2) {$2$, $3$, $4$};
	\node[draw, regular polygon sides=4, below=0.4 of C2, label={right:$S_3$}] (S3) {$3$, $4$};
	\node[draw, ellipse, below=0.4 of S3, label={right:$C_3$}] (C3) {$3$, $4$, $5$, $6$};
	\edge[-] {C1} {S2};
	\edge[-] {S2} {C2};
	\edge[-] {C2} {S3};
	\edge[-] {S3} {C3};
	\end{tikzpicture}
	\caption{$G_6$ is a $6$-node decomposable graph and its junction tree decomposition {\normalfont(}right{\normalfont)} has $3$ {\bf cliques} and $2$ separators, i.e. $C_1=\{1,2\}$, $S_2=\{2\}$, $C_2=\{2,3,4\}$, $S_3=\{3,4\}$, $C_3=\{3,4,5,6\}$.} \label{example1}
\end{figure}
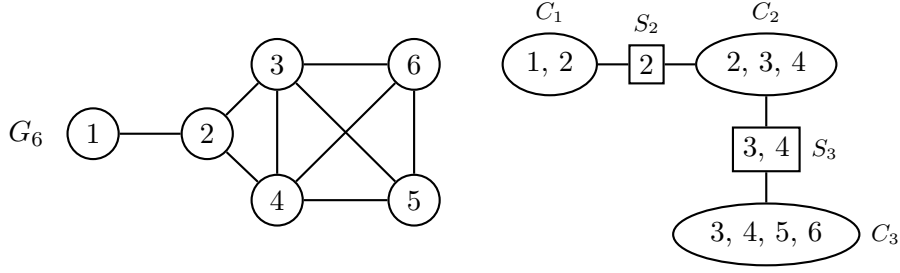

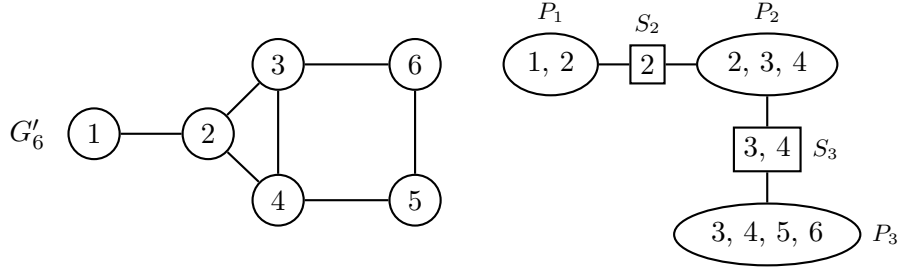
\begin{figure}[h]
	\centering
	\begin{tikzpicture}[thick]
	\node[draw, circle] (n1) {$1$};
	\node[draw, circle, right=0.8 of n1] (n2) {$2$};
	\node[draw, circle, above right=0.6 of n2] (n3) {$3$};
	\node[draw, circle, below=1.1 of n3] (n4) {$4$};
	\node[draw, circle, right=1.1 of n4] (n5) {$5$};
	\node[draw, circle, above=1.1 of n5] (n6) {$6$};
	\edge[-] {n1} {n2};
	\edge[-] {n2} {n3};
	\edge[-] {n2} {n4};
	\edge[-] {n3} {n4};
	\edge[-] {n3} {n6};
	\edge[-] {n4} {n5};
	\edge[-] {n5} {n6};
	\node[left=0.15 of n1] {$G'_6$};
	\node[draw, ellipse, right=0.8 of n6, label={above:$P_1$}] (C1) {$1$, $2$};
	\node[draw, regular polygon sides=4, right=0.4 of C1, label={above:$S_2$}] (S2) {$2$};
	\node[draw, ellipse, right=0.4 of S2, label={above:$P_2$}] (C2) {$2$, $3$, $4$};
	\node[draw, regular polygon sides=4, below=0.4 of C2, label={right:$S_3$}] (S3) {$3$, $4$};
	\node[draw, ellipse, below=0.4 of S3, label={right:$P_3$}] (C3) {$3$, $4$, $5$, $6$};
	\edge[-] {C1} {S2};
	\edge[-] {S2} {C2};
	\edge[-] {C2} {S3};
	\edge[-] {S3} {C3};
	\end{tikzpicture}
	\caption{$G'_6$ is a $6$-node non-decomposable graph because its cycle of four, $3-4-5-6$, does not have a cord. Its junction tree decomposition {\normalfont(}right{\normalfont)} has $3$ {\bf prime} components and $2$ separators, i.e. $P_1=\{1,2\}$, $S_2=\{2\}$, $P_2=\{2,3,4\}$, $S_3=\{3,4\}$, $P_3=\{3,4,5,6\}$. Out of all prime components only $P_1$ and $P_2$ are cliques.} \label{example2}
\end{figure}

\subsection{Matrix notations}\label{ssec:matrix}
For an $n \times p$ matrix $Y$, $\mathrm{Y}_C$ is defined as the submatrix of $\mathrm{Y}$ consisting of columns with indices in the clique $C$. Let $(\mathrm{y}_1, \mathrm{y}_2, \ldots, \mathrm{y}_p)=(Y_1,Y_2,\ldots,Y_n)^T$, where $\mathrm{y}_i$ is the $i$th column of $\mathrm{Y}_{n\times p}$. If $C = \{ i_1, i_2, \ldots, i_{\abs{C}} \}$, where $1\leq i_1 < i_2 < \ldots < i_{\abs{C}} \leq p$, then $\mathrm{Y}_C = (\mathrm{y}_{i_1}, \mathrm{y}_{i_2}, \ldots, \mathrm{y}_{i_{\abs{C}}})$. For any square matrix $A = {(a_{ij})}_{p \times p}$, define $A_C = {(a_{ij})}_{\abs{C}\times\abs{C}}$ where $i, j \in C$, and the order of entries carries into the new submatrix $A_C$. Therefore, $\mathrm{Y}_C^T\mathrm{Y}_C = (\mathrm{Y}^T\mathrm{Y})_C$. 

$\mbox{MN}_{m \times n}(M,\Sigma_r,\Sigma_c)$ is an $m\times n$ matrix normal distribution with mean matrix $M$, $\Sigma_r$ and $\Sigma_c$ as covariance matrices between rows and columns, respectively. 

\subsection{Miscellaneous}\label{ssec:misc}
Let $\bbP$ be the probability corresponding to the true data generating distribution. Denote $\mathcal{G}_k$ and $\mathcal{D}_k$ as the $k$-dimensional graph space and $k$-dimensional decomposable graph space. Let $\mathcal{M}_t$ be the minimal triangulation space of $G_t$ when $G_t$ is non-decomposable. $a \asymp b$ denotes $C_1a \leq b \leq C_2 a$ for constants $C_1, C_2$. $a \precsim b$ denotes $ a \leq C_3 b$ for a constant $C_3$. For set relations, $A\subset B$ means $A$ is a subset of $B$; $A\subsetneq B$ means $A\subset B$ and $A\neq B$; $A\not\subset B$ means $A$ is not a subset of $B$. $\abs{\,\cdot\,}$ determined by context can be absolute value, cardinality of sets or determinant of matrices. $\pi(\cdot)$ and $\pi(\cdot\mid\mathrm{Y})$ are the prior distribution and posterior distribution of graphs, respectively.  Refer also to Table \ref{Table:default} for a detailed list of notations used in the theorem statements and the proofs.

\section{Bayesian hierarchical model for graph selection}\label{sec:model}
Suppose we observe independent and identically distributed $p$-dimensional Gaussian random variables $Y_i, i=1, \ldots, n$.  To describe the common distribution of $Y_i$, define a  $p \times p$ covariance matrix $\Sigma_G$ that depends on an undirected decomposable graph as defined in \S \ref{ssec:graphs}. Assume  $Y_i \mid \Sigma_G, G \sim \mbox{N}_p(0, \Sigma_G)$. In matrix notations, 
\begin{equation}\label{eq:dgm}
	\mathrm{Y}_{n\times p} \mid  \Sigma_G, G \sim \mbox{MN}_{n \times p}(\bm{0}_{n\times p},  I_n, \Sigma_G),
\end{equation}
where $\mathrm{Y}_{n\times p} = (Y_1,Y_2,\ldots, Y_n)^T$ and $\bm{0}_{n\times p}$ is an $n\times p$ matrix with all zeros. The prior used here for covariance matrix $\Sigma_G$ given a decomposable graph $G$ is the hyper-inverse Wishart prior, described below.

\subsection{The Hyper-inverse Wishart distribution}
Denote by $\mbox{HIW}_G(b, D)$ \citep{dawid1993hyper, carvalho2009objective} a distribution on the cone of $p \times p$ positive definite matrices with degrees of freedom $b>2$ \citep{jones2005experiments} and a fixed $p \times p$ positive definite matrix $D$ such that the joint density factorizes on the junction tree of the given decomposable graph $G$ as 
\begin{equation}\label{eq:HIW}
	p(\Sigma_G \mid b, D) = \frac{\prod_{C \in \mathcal{C}} p(\Sigma_C \mid b, D_C)}
	{\prod_{S \in \mathcal{S}} p(\Sigma_S \mid b, D_S)},
\end{equation}
where for each $C \in \mathcal{C}$, $\Sigma_C \sim \mbox{IW}_{\abs{C}}(b, D_C)$ with density 
\begin{equation*}
	p(\Sigma_C \mid b, D_C) \propto \abs{\Sigma_C}^{-(b +2 \abs{C})/2} \mbox{etr}\Big\{-\frac{1}{2}\Sigma_C^{-1} D_C\Big\},
\end{equation*}
where $\abs{C}$ is the cardinality of the clique $C$ and $\mbox{etr}(\cdot)=\exp\big\{\mbox{tr}(\cdot)\big\}$. $\mbox{IW}_p(b,D)$ is the inverse Wishart distribution with degrees of freedom $b$ and a fixed $p\times p$ positive definite matrix $D$ with normalizing constant 
\small
\begin{equation*}
	\abs{\frac{1}{2}D}^{(b+p-1)/{2}}\Gamma^{-1}_p\Big(\frac{b+p-1}{2}\Big),
\end{equation*}
\normalsize
where $\Gamma_p(\cdot)$ is a multivariate gamma function. Refer to  \cite{carvalho2009objective} for more details about this parametrization of the inverse Wishart distribution.

\subsection{Bayesian inference on graphs}
Since the joint density factorizes over cliques and separators,
\small
\begin{equation}
	f(\mathrm{Y}\mid\Sigma_G) = {(2\pi)}^{-\frac{np}{2}} \frac
	{\prod_{C\in\mathcal{C}} {\abs{\Sigma_C}}^{-\frac{n}{2}} \mbox{etr}\Big( -\frac{1}{2}\Sigma_C^{-1}\mathrm{Y}_C^T\mathrm{Y}_C \Big)}
	{\prod_{S\in\mathcal{S}} {\abs{\Sigma_S}}^{-\frac{n}{2}} \mbox{etr}\Big( -\frac{1}{2}\Sigma_S^{-1}\mathrm{Y}_S^T\mathrm{Y}_S \Big)} \label{eq:likelihood}
\end{equation}
\normalsize
in the same way as in \eqref{eq:HIW}, and
\small
\begin{align*}
	f(\Sigma_G\mid G) &= \frac{\prod_{C\in\mathcal{C}} p(\Sigma_C\mid b, D_C)}{\prod_{S\in\mathcal{S}} p(\Sigma_S\mid b, D_S)} \\
	&= \frac{\prod_{C\in\mathcal{C}} \abs{\frac{1}{2}D_C}^{\frac{b+\abs{C}-1}{2}}\Gamma^{-1}_{\abs{C}}\big(\frac{b+\abs{C}-1}{2}\big)\abs{\Sigma_C}^{-\frac{b+2\abs{C}}{2}} \mbox{etr}\big( -\frac{1}{2} \Sigma_C^{-1}D_C \big)}
	{\prod_{S\in\mathcal{S}} \abs{\frac{1}{2}D_S}^{\frac{b+\abs{S}-1}{2}}\Gamma^{-1}_{\abs{S}}\big(\frac{b+\abs{S}-1}{2}\big)\abs{\Sigma_S}^{-\frac{b+2\abs{S}}{2}} \mbox{etr}\big( -\frac{1}{2} \Sigma_S^{-1}D_S \big )},
\end{align*}
\normalsize
it is straightforward to obtain the marginal likelihood of the decomposable graph $G$,
\small 
\begin{equation*}
	f(\mathrm{Y}\mid G) = 
	{(2\pi)}^{-\frac{np}{2}} \frac{h(G,b,D)}{h(G,b+n,D+\mathrm{Y}^T\mathrm{Y})} 
	= {(2\pi)}^{-\frac{np}{2}} \frac{\prod_{C\in\mathcal{C}} w(C) }{\prod_{S\in\mathcal{S}} w(S)},
\end{equation*}
\normalsize
where 
\small
\begin{equation*}
	h(G,b,D) = \frac{\prod_{C\in\mathcal{C}}\abs{\frac{1}{2}D_C}^{\frac{b+\abs{C}-1}{2}}\Gamma^{-1}_{\abs{C}}\big(\frac{b+\abs{C}-1}{2}\big)} {\prod_{S\in\mathcal{S}}\abs{\frac{1}{2}D_S}^{\frac{b+\abs{S}-1}{2}}\Gamma^{-1}_{\abs{S}}\big(\frac{b+\abs{S}-1}{2}\big)}, \,
	w(C) = \frac{\abs{D_C}^{\frac{b+\abs{C}-1}{2}} {\abs{D_C+\mathrm{Y}_C^T\mathrm{Y}_C}}^{-\frac{b+n+\abs{C}-1}{2}}} {2^{-\frac{n\abs{C}}{2}}\Gamma_{\abs{C}}\big(\frac{b+\abs{C}-1}{2}\big)\Gamma_{\abs{C}}^{-1}\big(\frac{b+n+\abs{C}-1}{2}\big)}.
\end{equation*}
\normalsize
Throughout the remainder of the paper, we shall be working with the hyper-inverse Wishart $g$-prior \cite{carvalho2009objective}, denoted as
\begin{equation}\label{hiwg}
	\Sigma_G \mid  G \sim \mathrm{HIW}_G(b, g\mathrm{Y}^T\mathrm{Y}),
\end{equation}
where $g$ is some suitably small fraction in $(0, 1)$ and $b>0$ is a fixed constant. Following the recommendation in \cite{carvalho2009objective}, we choose $g = 1/n$ through the remainder of the paper. Intuitively,  this choice of $g$ avoids  overwhelming the likelihood asymptotically as well as arbitrarily diffusing the prior.
In that case,
\small
\begin{equation*}
	w(C) = \frac{(n+1)^{-\frac{\abs{C}(b+n+\abs{C}-1)}{2}}\abs{\mathrm{Y}^T_C\mathrm{Y}_C}^{-\frac{n}{2}}}
	{(2n)^{-\frac{n\abs{C}}{2}}\Gamma_{\abs{C}}\big(\frac{b+\abs{C}-1}{2}\big)\Gamma_{\abs{C}}^{-1}\big(\frac{b+n+\abs{C}-1}{2}\big)}.
\end{equation*}
\normalsize
 The choice of focusing on the hyper-inverse Wishart $g$-prior in this paper is driven by the following two reasons. First, we can simplify the edge/signal strength assumption in terms of the smallest nonzero entries  in the partial correlation matrix, which serves as a natural interpretation of the edge strength compared to assumptions on the eigenvalues of the correlation matrix. Second, we conjecture that the results stated in \S \ref{sec:theory} and \ref{sec:theory2} continue to hold for any choice of HIW prior. The proof techniques under HIW g-prior serve as representations to the principle ideas in the article and can be easily adapted to other variations of HIW prior.

To complete a fully Bayesian specification, we place a prior distribution $\pi(\cdot)$ on the decomposable graph $G$.  Our theoretical results in \S \ref{sec:theory} and \ref{sec:theory2} are independent of the prior choice on $G$ if we consider a fixed $p$ asymptotics. However, for $p$ increasing with $n$ we need a suitable penalty on the number of edges of the random graph to penalize the false positives. Here is a popular example \citep{jones2005experiments,dobra2004sparse,carvalho2009objective,scott2008feature,cao2016posterior} we consider in the paper. Considering an undirected decomposable graph $G$, we assume the edges are independently drawn from a Bernoulli distribution with a common probability $q$:    
\begin{equation}\label{eq:priorG}
	\pi(G \mid q) \propto \bigg[\prod_{r<s} q^{e_{rs}} (1-q)^{1-e_{rs}}\bigg]\cdot\ind_{\mathcal{D}}(G),
\end{equation}
where $\mathcal{D}$ is the set of all decomposable graphs with $\abs{V} = p$ vertices and $q$ is the prior edge inclusion probability. We control the parameter $q$ to induce sparsity on the number of edges.  \cite{jones2005experiments} recommends using $2/(\abs{V}-1)$ as the hyper-parameter for the Bernoulli distribution. For an undirected graph, it has peak around $\abs{V}$ edges and the mode is smaller for decomposable graphs. We outline specific choices in  \S \ref{sec:theory} and \ref{sec:theory2} below.

\section{Theoretical results in the well-specified case}\label{sec:theory}
In this section, we present our main consistency results. The proofs of the results are deferred to the Appendix.  
Before introducing the assumptions, we need to adapt previous notations to the high-dimensional graph selection problem. Let $\mathrm{Y}=(Y_1, Y_2,\ldots,Y_n)^T$
and $\Omega_0=\Sigma_0^{-1}$  the corresponding precision matrix. Without loss of generality, we assume all column means of $\mathrm{Y}$ are zero. Let $G_t=(V, E_t)$ denote the true decomposable graph induced by $\Omega_0$, $\rho_{ij\mid V\backslash\{i,j\}}$ denote the true partial correlation between node $i$ and $j$ given the rest of the nodes $V\backslash\{i,j\}$. Assume $\rho_L$ and $\rho_U$ are the smallest and largest in absolute value of the {\it non-zero} population partial correlations, i.e.
\small
\begin{equation*}
	\rho_L=\min_{\substack{1\leq i<j\leq p\\(i,j)\in E_t}} \abs{\rho_{ij\mid V\backslash\{i,j\}}}, \quad \rho_U=\max_{\substack{1\leq i<j\leq p\\(i,j)\in E_t}} \abs{\rho_{ij\mid V\backslash\{i,j\}}},
\end{equation*}
\normalsize

Let $G_a=(V, E_a)$ be any alternative decomposable graph other than the true graph $G_t$. Denote by $E^{1}_a=E_t\cap E^n_a$ the set of true edges in $G_a$. Notice, when $E_t\subsetneq E_a$, we have $E^{1}_a=E_t$. Denoting by $|\cdot|$ the cardinality of a set,  $\abs{E_t}$ is the number of edges in $G_t$, $\abs{E^{1}_a}$ is the number of true edges in $G_a$. Define  $G_c=(V,E_c)$, where $E_c=\{(i,j):e_{ij}=1, 1\leq i<j\leq p\}$, to be the complete graph such that $\abs{E_c}=p(p-1)/2$. By definition, $G_c$ is a decomposable graph. We use $G_a\neq G_t$ to denote $E_a\neq E_t$; $G^n_a\not\subset G_t$ to denote $E_a\not\subset E_t$; $G_a\subsetneq G_t$ to denote $E_a\subsetneq E_t$. In the following, we state the main assumptions for graph selection consistency.

\begin{assumption}\label{assump-dim}{\normalfont{(Graph size)}}
	\begin{equation*}
		p  \precsim n^\alpha, \text{ where } 0<\alpha<1.
	\end{equation*}
\end{assumption}

\begin{assumption}\label{assump-lower}{\normalfont{(Edge sensitivity and identifiability)}}
	\begin{equation*}
		 \rho_L\asymp n^{-\lambda}, \text{ where } 0\leq\lambda<\frac{1}{2}.
	\end{equation*}
\end{assumption}

\begin{assumption}\label{assump-edge}{\normalfont{(Number of maximum edges in $G^n_t$)}}
	\begin{equation*}
		|E_t| \precsim n^\sigma, \text{ where } 0\leq\sigma\leq 2\alpha.
	\end{equation*}
\end{assumption}

\begin{assumption}\label{assump-prior}{\normalfont{(Prior edge inclusion probability)}}
	\begin{equation*}
		q \asymp e^{-C_qn^\gamma}, \text{ where } 0<\gamma<1,\,0<C_q<\infty.
	\end{equation*}
\end{assumption}

\begin{assumption}\label{assump-upper}{\normalfont{(Imperfect linear relationship)}}
	\begin{equation*}
		1-\rho_U \asymp n^{-k}, \text{ where } k\geq 0 \text{ and } \rho_U \neq 1.
	\end{equation*}
\end{assumption}
The main results will have additional restrictions on the parameters $(\alpha, \lambda)$,  but it is important to note that we require $\rho_L$ to not decrease to $0$ too quickly in order to ensure that the graph is identifiable. On the other hand, $\rho_U$ can be allowed to be sufficiently close to $1$. 
%


\subsection{Pairwise Bayes factor consistency for fixed $p$}\label{ssec:pwfp}
In this section, we assume $p, \rho_U$ and $\rho_L$ are all fixed constants. 
As a first step towards model selection, we investigate the behavior of the pairwise Bayes factor 
\begin{eqnarray}\label{eq:BF}
	\mbox{BF}(G_a; G_t) = \frac{f(\mathrm{Y} \mid G_a)}{f(\mathrm{Y}\mid G_t)},
\end{eqnarray}
where $G_t$ is the decomposable true graph and $G_a$ is any other decomposable graph.  
In this section, we shall investigate sufficient conditions on the likelihood \eqref{eq:likelihood} and the prior on $(\Sigma_G, G)$ given by \eqref{hiwg} and \eqref{eq:priorG}  such that the Bayes factor \eqref{eq:BF} converges to $0$ as $n \to \infty$ for any graph 
$G_a \neq G_t$. 
\begin{theorem} \label{bfupper0} {\normalfont (Upper bound for pairwise Bayes factor).}
	Assume the graph dimension $p$ is a fixed constant and $\rho_U\neq 1$. Given any decomposable graph $G_a\neq G_t$, there exists a set $\Delta_a$, such that on the set $\Delta_a$, if $n>\max\{p+b,4p\}$, we have
	\begin{enumerate}
	\setlength\itemsep{0em}
		\item when $G_t\not\subset G_a$,
			\small
			\begin{equation}
				{\normalfont\mbox{BF}}(G_a;G_t) < \exp \Big\{-\frac{n\rho^2_L}{2} + \delta(n) \Big\}, \label{eq:true_edge}
			\end{equation}
			\normalsize
		\item when $G_t\subsetneq G_a$,
			\small
			\begin{equation}
				{\normalfont\mbox{BF}}(G_a;G_t) < \big(e^{p^2}\big)\cdot n^{-\frac{1}{2}(|E_a|-|E_t|)(1- 2/\tau^*)}, \label{eq:false_edge}
			\end{equation}
			\normalsize
	\end{enumerate}
	and
	\small
	\begin{equation*}
		\bbP(\Delta_a) \geq 1 - \frac{42p^2}{(1-\rho_U)^2} (n-p)^{-\frac{1}{4\tau^*} }\Big\{\frac{1}{\tau^*}\log(n-p)\Big\}^{-\frac{1}{2}}, 
	\end{equation*}
	\normalsize
	where $\tau^*>2$ and $\delta(n) = p^2\log n + \sqrt{n\log n} + 3p^2\log p$ satisfying $\delta(n)/n\rightarrow 0$, as $n\rightarrow\infty$.
\end{theorem}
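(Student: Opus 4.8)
The plan is to start from the closed-form marginal likelihood $f(\mathrm{Y}\mid G) = (2\pi)^{-np/2}\prod_{C\in\mathcal{C}} w(C)/\prod_{S\in\mathcal{S}} w(S)$ derived above and reduce the global Bayes factor to a product of single-edge contributions (\emph{localization}). The graph-theoretic input I would invoke is that any two decomposable graphs are joined by a path $G_t = G^{(0)}, G^{(1)},\ldots,G^{(m)} = G_a$ in which each $G^{(\ell)}$ is decomposable and consecutive graphs differ by exactly one edge; when $G_t\subsetneq G_a$ this path can be chosen to add the $|E_a|-|E_t|$ surplus edges one at a time. Telescoping the marginal likelihood along the path writes $\mathrm{BF}(G_a;G_t)$ as a product of local factors, each comparing two decomposable graphs that differ by a single edge $(i,j)$ with common separator $S$.

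The second step is \emph{correlation association}. For a single edge move only the four terms $w(S\cup\{i,j\})$, $w(S)$, $w(S\cup\{i\})$, $w(S\cup\{j\})$ survive, the remaining cliques and separators cancelling. I would insert the $g$-prior form of $w(\cdot)$ into this local ratio: the determinants collapse, via the Schur-complement identity $1-\hat\rho_{ij\mid S}^2 = \tfrac{|\mathrm{Y}_{S\cup\{i,j\}}^T\mathrm{Y}_{S\cup\{i,j\}}|\,|\mathrm{Y}_S^T\mathrm{Y}_S|}{|\mathrm{Y}_{S\cup\{i\}}^T\mathrm{Y}_{S\cup\{i\}}|\,|\mathrm{Y}_{S\cup\{j\}}^T\mathrm{Y}_{S\cup\{j\}}|}$, into the single factor $(1-\hat\rho_{ij\mid S}^2)^{-n/2}$. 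A direct evaluation of the remaining $(n+1)$-powers, $(2n)$-powers and multivariate gamma ratios — the $(2n)$-powers cancel, the $(n+1)$-power contributes $(n+1)^{-1}$, and the gamma ratio contributes $\asymp n^{1/2}$ through $\Gamma(x+1/2)/\Gamma(x)\sim x^{1/2}$ — shows each local factor equals, up to an $n$-free constant and the prior ratio, $n^{-1/2}(1-\hat\rho_{ij\mid S}^2)^{\mp n/2}$ (the sign being $-$ for an edge addition and $+$ for a deletion).

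The third step defines $\Delta_a$ as the event on which every sample partial correlation entering the path is close to its population value, using sharp concentration and tail bounds for $\hat\rho_{ij\mid S}$; the $(1-\rho_U)^{-2}$ and $(n-p)^{-1/(4\tau^*)}$ factors arise from one such tail bound and a union bound over the $O(p^2)$ relevant triples. On $\Delta_a$ I split into the two cases. When $G_t\subsetneq G_a$, every moved edge is false with population partial correlation $0$, so $\hat\rho_{ij\mid S}^2$ is at most of order $(\log n)/n$ and $(1-\hat\rho_{ij\mid S}^2)^{-n/2}\le n^{1/\tau^*}$; multiplying by the $n^{-1/2}$ penalty gives $n^{-\frac12(1-2/\tau^*)}$ per edge, and the accumulated constants and prior ratios are absorbed into $e^{p^2}$, yielding \eqref{eq:false_edge}. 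When $G_t\not\subset G_a$, at least one true edge is deleted; for that edge $(1-\hat\rho_{ij\mid S}^2)^{+n/2}\le \exp\{-\tfrac{n}{2}\rho_L^2 + O(\sqrt{n\log n})\}$ by concentration of $\hat\rho^2$ around a value $\ge\rho_L^2$, while the at most $p^2$ remaining moves each contribute a polynomial factor; collecting these into $\delta(n)=p^2\log n+\sqrt{n\log n}+3p^2\log p$ gives \eqref{eq:true_edge}.

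The main obstacle I anticipate is concentrated in the third step and is twofold. First, I must guarantee that the telescoping path stays inside the decomposable graph space and, more delicately, that the separator $S$ in the decisive deletion of a true edge conditions on enough variables that $|\rho_{ij\mid S}|$ is genuinely bounded below by a constant multiple of $\rho_L$ rather than being attenuated; this needs a lemma relating partial correlations under nested conditioning sets in a decomposable model. Second, the probability statement requires uniform control of $\hat\rho_{ij\mid S}$ over all triples encountered, so the single-edge tail bound must be sharp enough that the union over $O(p^2)$ terms still tends to one; calibrating the threshold — hence $\tau^*$ — so that the false-edge inflation $n^{1/\tau^*}$ and the tail decay $(n-p)^{-1/(4\tau^*)}$ are simultaneously of the right order is the crux of the argument.
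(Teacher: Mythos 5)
Your localization and correlation-association skeleton matches the paper, and your treatment of case (2) ($G_t\subsetneq G_a$) is essentially the paper's: by the decomposable chain rule all moves are additions of false edges, and each population partial correlation $\rho_{\widetilde{x}_i\widetilde{y}_i\mid\widetilde{S}_i}$ vanishes — though note this last fact is not automatic and the paper proves it (Lemma \ref{Gt2Gc}) by combining the junction-tree property with an inheritance lemma (Lemma \ref{inherit}): the separator separates $\widetilde{x}_i$ from $\widetilde{y}_i$ in the intermediate graph, which \emph{contains} $G_t$, hence also in $G_t$, and then the global Markov property applies. (A small slip: for a deletion the local factor carries prefactor $\asymp n^{+1/2}$, not $n^{-1/2}$, since $(1+1/g)=n+1$; this is harmless as it is absorbed into $\delta(n)$.)

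The genuine gap is in case (1). You telescope along a direct single-edge path from $G_t$ to $G_a$ and then hope for ``a lemma relating partial correlations under nested conditioning sets'' guaranteeing that the separator $S$ at the decisive true-edge deletion satisfies $|\rho_{ij\mid S}|\gtrsim\rho_L$. No such lemma is available under the paper's assumptions: Assumption \ref{assump-lower} lower-bounds only the full-conditioning partial correlations $\rho_{ij\mid V\backslash\{i,j\}}$, and for $(i,j)\in E_t$ the quantity $\rho_{ij\mid S}$ for a smaller $S$ can be arbitrarily small or exactly zero (partial correlations are not monotone in the conditioning set; even the marginal correlation of an adjacent pair can vanish). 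Worse, on a mixed path whose intermediate graphs do not contain $G_t$, the addition steps are no longer innocuous: the argument of Lemma \ref{Gt2Gc} fails because the junction-tree separator need not separate the endpoints in $G_t$, so the corresponding population partial correlation can be bounded away from zero and the factor $(1-\hat{\rho}_{ij\mid S}^2)^{-n/2}$ is then exponentially \emph{large} on your concentration event, which your per-edge $n^{-\frac12(1-2/\tau^*)}$ accounting cannot absorb. The paper's resolution is different and is the missing idea: route through the complete graph, $G_t\rightarrow G_c\rightarrow G_a$, writing $\mbox{BF}(G_a;G_t)=\mbox{BF}_{c\rightarrow a}\cdot\mbox{BF}_{t\rightarrow c}$. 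In $\mbox{BF}_{t\rightarrow c}$ every step is an addition from a graph containing $G_t$, so all population partial correlations are zero; in $\mbox{BF}_{c\rightarrow a}$ the order is chosen (Lemma \ref{Gc2Ga}) so that a missing true edge $(\overline{x}_*,\overline{y}_*)$ is deleted \emph{first}, from $G_c$ itself, forcing the separator to be exactly $V\backslash\{\overline{x}_*,\overline{y}_*\}$ where Assumption \ref{assump-lower} applies directly; all remaining deletion factors are crudely bounded by their $\{2p(n+1)\}^{1/2}$ prefactors since $(1-\hat{\rho}^2)^{n/2}\le 1$. Without this detour (or an equivalent device), your case (1) bound does not go through.
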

\noindent The next corollary is the direct result from Theorem \ref{bfupper0}.
\begin{corollary} {\normalfont(Finite graph pairwise Bayes factor consistency).}
	Let $G_a$ be any decomposable graph and $G_a\neq G_t$. The graph dimension $p$ is a fixed constant. If $\rho_U\neq 1$, then ${\normalfont\mbox{BF}}(G_a;G_t)\stackrel{\bbP}{\rightarrow} 0$, as $n\rightarrow \infty$.
\end{corollary}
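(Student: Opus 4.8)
The plan is to obtain this corollary directly from Theorem~\ref{bfupper0} through the standard ``good event'' decomposition, exploiting that when $p$, $\rho_L$, and $\rho_U$ are all held fixed, both the upper bound on the Bayes factor valid on $\Delta_a$ and the complement probability $\bbP(\Delta_a^c)$ vanish as $n\to\infty$. Fix an arbitrary $\epsilon>0$; since $G_a$ is a fixed alternative with $G_a\neq G_t$, it falls into exactly one of the two regimes of the theorem, and I write (suppressing that $\Delta_a=\Delta_a^{(n)}$ depends on $n$)
\[
\bbP\big(\mbox{BF}(G_a;G_t) > \epsilon\big) \le \bbP\big(\{\mbox{BF}(G_a;G_t) > \epsilon\} \cap \Delta_a\big) + \bbP(\Delta_a^c).
\]

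For the second term I would use the lower bound on $\bbP(\Delta_a)$ supplied by the theorem: with $p$ fixed and $\rho_U\neq 1$ the prefactor $42p^2/(1-\rho_U)^2$ is a finite constant, and because $\tau^*>2$ both $(n-p)^{-1/(4\tau^*)}$ and $\{\tau^{*-1}\log(n-p)\}^{-1/2}$ tend to $0$, so $\bbP(\Delta_a^c)\to 0$. For the first term I would invoke the two explicit bounds. In the regime $G_t\not\subset G_a$, since $\rho_L$ is a fixed positive constant and $\delta(n)=p^2\log n+\sqrt{n\log n}+3p^2\log p=O(\sqrt{n\log n})$ for fixed $p$, the exponent satisfies
\[
-\frac{n\rho_L^2}{2} + \delta(n) \rightarrow -\infty,
\]
so the bound \eqref{eq:true_edge} vanishes. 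In the regime $G_t\subsetneq G_a$ we have $|E_a|-|E_t|\ge 1$ and $1-2/\tau^*>0$, so the bound \eqref{eq:false_edge}, namely $e^{p^2}\,n^{-\frac12(|E_a|-|E_t|)(1-2/\tau^*)}$, carries a strictly negative power of $n$ against a constant prefactor and therefore also vanishes.

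Writing $B_n$ for whichever of the two bounds applies, $B_n\to 0$, so $B_n<\epsilon$ for all large $n$; hence on $\Delta_a$ the event $\{\mbox{BF}(G_a;G_t)>\epsilon\}$ is eventually empty, making the first term identically $0$ for large $n$. Combining the two pieces gives $\bbP(\mbox{BF}(G_a;G_t)>\epsilon)\to 0$ for every $\epsilon>0$, which is exactly $\mbox{BF}(G_a;G_t)\stackrel{\bbP}{\rightarrow}0$. There is no genuine obstacle here beyond bookkeeping; the one step I would check carefully is that nothing hidden in $B_n$ or in the probability bound depends on $n$ in a non-vanishing way once $p,\rho_L,\rho_U$ are frozen, but since each of these quantities is a constant in the fixed-$p$ setting, the argument is routine.
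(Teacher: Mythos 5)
Your proof is correct and is essentially the argument the paper intends: it states the corollary as a direct consequence of Theorem \ref{bfupper0}, and your good-event decomposition simply spells this out, correctly noting that with $p$, $\rho_L$, $\rho_U$ fixed the exponent $-n\rho_L^2/2+\delta(n)\to-\infty$ in the case $G_t\not\subset G_a$, the polynomial bound $e^{p^2}n^{-\frac{1}{2}(|E_a|-|E_t|)(1-2/\tau^*)}\to 0$ in the case $G_t\subsetneq G_a$ (using $|E_a|-|E_t|\geq 1$ and $\tau^*>2$), and $\bbP(\Delta_a^c)\to 0$ from the theorem's explicit lower bound on $\bbP(\Delta_a)$. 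The only cosmetic remark is that $\tau^*>2$ is not needed for $\bbP(\Delta_a^c)\to 0$ (any $\tau^*>0$ suffices there); it is needed only where you actually use it, namely to make $1-2/\tau^*$ positive.
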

When $p$ is fixed, the likelihood  is strong enough to consistently recover the graph. One key aspect of the proof is that Bayes factor in favor of adding a true edge versus the lack of it is exponentially small, while the Bayes factor in favor for adding a false edge decreases to zero only at a polynomial rate.   

We emphasize here that exponential rate for deletion (of true edges) is only true when the corresponding population partial correlation or correlation is non-zero. From the {\it global Markov property}, we know if two nodes are adjacent then any partial correlation between them is non-zero but their correlation can be zero. The polynomial rate for addition (of false edges) is only true when the corresponding population partial correlation or correlation is zero. When two nodes are not adjacent, then only the set that separates them will results in a zero partial correlation. We choose the path of $G_t\rightarrow G_c\rightarrow G_a$ which ensures us the exponential decay when missing true edges and polynomial decay when adding false edges. 

\subsection{Posterior ratio consistency for growing $p$}\label{ssec:prc}
Next we examine the convergence of posterior ratio,
\begin{eqnarray}\label{eq:PR}
	\mbox{PR}(G_a; G_t) = \frac{f(\mathrm{Y} \mid G_a)\pi(G_a)}{f(\mathrm{Y}\mid G_t)\pi(G_t)},
\end{eqnarray}
when the dimension of graphs grows with sample size.

\begin{theorem} \label{thpr} {\normalfont (High-dimensional graph posterior ratio consistency).}
	Let $G_a$ be any decomposable graph and $G_a\neq G_t$ and Assumptions \ref{assump-dim}-\ref{assump-upper} are satisfied with 
\begin{align*}
0 < \alpha < \frac{1}{2}, \quad 0\leq\lambda < \min\Big\{\alpha, \frac{1}{2}-\alpha \Big\}.
\end{align*}
By choosing $\gamma$ in the interval $(\max\{0,1-4\alpha\},  1-\sigma-2\lambda)$ we have 
	 ${\normalfont\mbox{PR}}(G_a;G_t)\stackrel{\bbP}{\rightarrow} 0$, as $n\rightarrow\infty$.
\end{theorem}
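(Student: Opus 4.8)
The plan is to factor the posterior ratio as $\mbox{PR}(G_a;G_t) = \mbox{BF}(G_a;G_t)\cdot \pi(G_a)/\pi(G_t)$ and control each factor separately, using the non-asymptotic Bayes-factor estimates of Theorem~\ref{bfupper0} (carried out with the explicit $p$-dependence of their proof, now with $p \precsim n^\alpha$) together with the explicit form of the complexity prior \eqref{eq:priorG}. Since both $G_a$ and $G_t$ are decomposable, the prior ratio is $\pi(G_a)/\pi(G_t) = (q/(1-q))^{|E_a|-|E_t|}$, and with $q \asymp e^{-C_q n^\gamma}$ from Assumption~\ref{assump-prior} this behaves like $\exp\{-C_q n^\gamma(|E_a|-|E_t|)\}$: it rewards graphs with fewer edges and penalizes each added false edge by a factor $e^{-C_q n^\gamma}$. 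Because any $G_a \neq G_t$ satisfies exactly one of $G_t \not\subset G_a$ (at least one true edge is missing) or $G_t \subsetneq G_a$ (only false edges are added), these two regimes are exhaustive, and I would treat them in turn on the high-probability event $\Delta_a$.

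In the first regime ($G_t \not\subset G_a$) I would invoke the exponential bound \eqref{eq:true_edge}, $\mbox{BF}(G_a;G_t) < \exp\{-n\rho_L^2/2 + \delta(n)\}$. The net edge change $|E_a|-|E_t|$ may be negative, so the prior ratio can be as large as $\exp\{C_q n^\gamma |E_t|\}$ with $|E_t|\precsim n^\sigma$ by Assumption~\ref{assump-edge}; hence the worst-case prior reward has exponent $\precsim n^{\gamma+\sigma}$. Writing $n\rho_L^2 \asymp n^{1-2\lambda}$ via Assumption~\ref{assump-lower}, it suffices that $1-2\lambda$ strictly exceed each competing exponent: $2\alpha$ (from $\delta(n)\asymp n^{2\alpha}\log n$), $1/2$ (from $\sqrt{n\log n}$), and $\gamma+\sigma$ (from the prior reward). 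The constraint $\lambda < \min\{\alpha, 1/2-\alpha\}$ forces $\lambda < 1/4$, giving $1-2\lambda > \max\{2\alpha, 1/2\}$, while the upper endpoint $\gamma < 1-\sigma-2\lambda$ gives $1-2\lambda > \gamma+\sigma$; together these make the exponential evidence dominate and drive this contribution to $0$.

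In the second regime ($G_t \subsetneq G_a$) the Bayes factor decays only polynomially, \eqref{eq:false_edge}, and additionally carries the $e^{p^2}\asymp e^{n^{2\alpha}}$ overhead coming from the determinant and multivariate-gamma ratios. Here the exponential penalty $\exp\{-C_q n^\gamma(|E_a|-|E_t|)\}$ supplied by the complexity prior is indispensable: it must overwhelm both the slow polynomial rate and the overhead uniformly in the number of added edges $|E_a|-|E_t|\geq 1$. Requiring the prior penalty to beat this overhead, and requiring the concentration set $\Delta_a$ on which \eqref{eq:true_edge}--\eqref{eq:false_edge} hold to have probability tending to one as $p$ grows (which forces the $\sim p^2\asymp n^{2\alpha}$ union-bound and tail exponents to be dominated), is what pins down the lower endpoint $\gamma > \max\{0, 1-4\alpha\}$.

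The main obstacle is precisely this second regime: unlike the fixed-$p$ Corollary, where any prior works because the likelihood alone is consistent, for growing $p$ one must show the complexity prior is simultaneously (i) strong enough to kill false positives despite the $e^{p^2}$ overhead and the merely polynomial Bayes-factor decay, forcing $\gamma$ above $\max\{0,1-4\alpha\}$, and (ii) weak enough not to overwhelm the exponential evidence against missing true edges, forcing $\gamma$ below $1-\sigma-2\lambda$. Checking that these two windows overlap under $0<\alpha<1/2$ and $\lambda<\min\{\alpha,1/2-\alpha\}$ --- and, underlying everything, producing sharp enough concentration and tail bounds for the sample partial correlations so that $\bbP(\Delta_a)\to 1$ uniformly as $p\to\infty$ (the correlation-association step) --- is the technical heart of the argument. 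Once both factors vanish on $\Delta_a$ and $\bbP(\Delta_a)\to1$, a union bound gives $\bbP(\mbox{PR}(G_a;G_t)>\epsilon)\to0$ for every $\epsilon>0$, i.e. $\mbox{PR}(G_a;G_t)\stackrel{\bbP}{\to}0$.
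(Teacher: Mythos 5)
Your high-level architecture (two exhaustive regimes along the $G_t\rightarrow G_c\rightarrow G_a$ chain, prior ratio $(q/(1-q))^{|E_a|-|E_t|}$, leading-exponent comparison) matches the paper, but there is a genuine gap at the step you lean on hardest: you cannot import Theorem \ref{bfupper0}'s bounds ``with explicit $p$-dependence,'' because the concentration event behind them degenerates when $p\asymp n^\alpha$. Theorem \ref{bfupper0} works at level $\epsilon_{1,n}=\sqrt{\log(n-p)/(\tau^*(n-p))}$, giving $\bbP(\Delta_a^c)\precsim p^2(1-\rho_U)^{-2}(n-p)^{-1/(4\tau^*)}$; since $\tau^*>2$ forces $1/(4\tau^*)<1/8$, this bound is vacuous once $2\alpha>1/8$ (and Assumption \ref{assump-upper} with $k>0$ makes the $(1-\rho_U)^{-2}\asymp n^{2k}$ factor worse). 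You acknowledge that $\bbP(\Delta_a)\to 1$ is ``the technical heart'' but supply no mechanism. The paper's actual fix is to rerun the entire Bayes-factor computation at a \emph{polynomial} concentration level $\epsilon_{2,n}=(n-p)^{-\beta^*}$ (Lemma \ref{lemmainf1}), whose tail $\exp\{-\tfrac14(n-p)^{1-2\beta^*}\}$ swamps any polynomial union bound, with $\beta^*$ chosen in the window $\big(\max\{\alpha+\lambda,(1-\gamma)/2\},\,\min\{1/2,2\alpha\}\big)$. The theorem's $\gamma$-interval is precisely the nonemptiness condition of this window: $\beta^*>(1-\gamma)/2$ makes the per-edge addition slack $n\epsilon_{2,n}^2\asymp n^{1-2\beta^*}$ lose to the per-edge prior penalty $n^\gamma$, $\beta^*>\alpha+\lambda$ makes the deletion-case slack $np^2\epsilon_{2,n}^2\asymp n^{1+2\alpha-2\beta^*}$ lose to $n\rho_L^2\asymp n^{1-2\lambda}$, and the cap $\beta^*<2\alpha$ (together with $(1-\gamma)/2<2\alpha$) is what yields $\gamma>\max\{0,1-4\alpha\}$. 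Your proposal cannot recover this interval because the quantity $\beta^*$ that generates it never appears.

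A second, related error: you treat the $e^{p^2}$ in \eqref{eq:false_edge} as a genuine overhead the prior must ``overwhelm.'' Taken literally, with a single added false edge, $e^{p^2}e^{-C_qn^\gamma}\asymp\exp\{n^{2\alpha}-C_qn^\gamma\}$ diverges unless $\gamma>2\alpha$ --- a condition not implied by the hypotheses (e.g.\ $\alpha=0.3$, $\sigma=\lambda=0$, $\gamma=0.5<2\alpha$ is admissible). In fact the $e^{p^2}$ in the fixed-$p$ statement is a crude lumping of a per-edge constant $2^{(|E_a|-|E_t|)/2}$; the correct argument (as in the paper) keeps the decomposition per added edge, pairing each factor $\precsim \sqrt{2/n}\,\exp\{n^{1-2\beta^*}\}$ against one factor of $2q\asymp e^{-C_qn^\gamma}$, so the false-positive regime closes for any $\gamma$ in the stated interval without ever confronting an $e^{n^{2\alpha}}$ term. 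Consequently your attribution of the lower endpoint $\gamma>\max\{0,1-4\alpha\}$ to the prior-versus-overhead fight and to the event probability is incorrect on both counts: the event probability tends to one for every $\beta^*\in(0,1/2)$, and the endpoint instead arises from the arithmetic $(1-\gamma)/2<2\alpha$ in the $\beta^*$ window. Your regime-1 leading-term checks ($1-2\lambda>\max\{2\alpha,1/2,\gamma+\sigma\}$) are sound as far as they go, but without the modified concentration level and the per-edge treatment of additions the proof does not go through.
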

When the graph size grows with $n$, the partial correlation is no longer a constant. The HIW prior does not naturally favor parsimonious graphs, so a penalty on the number of edges in the graph in needed by restricting $\gamma$ in the above interval.  Note also that we do not need any further restriction on $\sigma$ in Assumption \ref{assump-edge}  meaning that the true graph is allowed to be the complete graph for the posterior ratio consistency to hold. 

\subsection{Strong graph selection consistency}\label{ssec:sgsc}
In this section, we examine the behavior of 
\begin{eqnarray*}
\pi(G\mid \mathrm{Y}) = \frac{f(\mathrm{Y}\mid G)\pi(G)}{\sum_{G' \in \mathcal{D}}f(\mathrm{Y}\mid G')\pi(G')}
\end{eqnarray*}
as $n, p \to \infty$. 
\begin{theorem} \label{thstrong} {\normalfont (Strong graph selection consistency).}
	Let $G_a$ be any decomposable graph and $G_a\neq G_t$ and Assumptions \ref{assump-dim}-\ref{assump-upper} are satisfied with 
\begin{align*}
& 0 < \alpha < \frac{1}{3}, \quad 0\leq\lambda < \min\Big\{\alpha, \frac{1-3\alpha}{2} \Big\}. 
\end{align*}
By choosing $\gamma$ in the interval $(\max\big\{\alpha,1-4\alpha\big\},  1-\sigma-2\lambda)$,  we have
	\begin{equation*}
		\pi(G_t\mid \mathrm{Y})\stackrel{\bbP}{\rightarrow} 1, \text{ as } n\rightarrow \infty.
	\end{equation*}
\end{theorem}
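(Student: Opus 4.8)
The plan is to reduce the statement to the vanishing of a single sum of posterior ratios, and then control that sum by exploiting the dichotomy already isolated in Theorem~\ref{bfupper0}. Expanding the posterior mass,
\begin{equation*}
\frac{1}{\pi(G_t\mid \mathrm{Y})} = \sum_{G'\in\mathcal{D}}\frac{f(\mathrm{Y}\mid G')\pi(G')}{f(\mathrm{Y}\mid G_t)\pi(G_t)} = 1 + \sum_{G_a\neq G_t}\mathrm{PR}(G_a;G_t),
\end{equation*}
so it suffices to show $\sum_{G_a\neq G_t}\mathrm{PR}(G_a;G_t)\stackrel{\bbP}{\rightarrow}0$. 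This is the genuine strengthening over Theorem~\ref{thpr}: there each individual ratio vanishes, whereas here the \emph{entire sum}, over the super-exponentially many decomposable graphs, must vanish. First I would fix one high-probability event on which the per-graph bounds of Theorem~\ref{bfupper0} hold \emph{simultaneously} for every $G_a$. Since each $\Delta_a$ is built from deviation bounds on the sample partial correlations $\hat{\rho}_{ij\mid S}$, a single such event is obtained by intersecting over all pairs $(i,j)$ and separators $S$; Assumptions~\ref{assump-dim} and~\ref{assump-upper} are what keep its complementary probability $o(1)$ after the union bound.

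On this event I would split $\{G_a\neq G_t\}$ into the underfitting family $\{G_t\not\subset G_a\}$ and the overfitting family $\{G_t\subsetneq G_a\}$, which by the paper's subset conventions is an exact and exhaustive dichotomy (graphs that both drop true edges and add false ones fall into the underfitting family), and bound $\mathrm{PR}=\mathrm{BF}\cdot\pi(G_a)/\pi(G_t)$ on each piece. For underfitting I would use the bound~\eqref{eq:true_edge}, which is uniform over all such $G_a$, to factor out $\exp\{-n\rho_L^2/2+\delta(n)\}$ and then bound the remaining prior mass crudely by $\sum_{G_t\not\subset G_a}\pi(G_a)/\pi(G_t)\le 1/\pi(G_t)$. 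Under~\eqref{eq:priorG} with $q\asymp e^{-C_qn^\gamma}$ this yields $1/\pi(G_t)\precsim \exp\{C_q n^{\sigma+\gamma}\}$, so the whole underfitting sum is at most $\exp\{-c\,n^{1-2\lambda}+\delta(n)+C_q n^{\sigma+\gamma}\}$; since $n\rho_L^2\asymp n^{1-2\lambda}$, this vanishes exactly when $\gamma<1-\sigma-2\lambda$ (the stated upper endpoint) and $\lambda$ is small enough that $n^{1-2\lambda}$ dominates $\delta(n)$. This is the step where the prior's bias \emph{toward} sparse, hence underfit, graphs must be overwhelmed by the exponential signal term.

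For overfitting I would sum the polynomial bound~\eqref{eq:false_edge}: for $G_a$ with $m=|E_a|-|E_t|\ge 1$ extra edges, $\mathrm{BF}$ decays like $n^{-\frac{m}{2}(1-2/\tau^*)}$, the prior contributes $(q/(1-q))^m\approx e^{-C_q m n^\gamma}$, and the number of decomposable supergraphs with exactly $m$ extra edges is at most $\binom{\binom{p}{2}}{m}\le p^{2m}$. Grouping by $m$ gives a geometric series in the quantity $p^{2}\,n^{-\frac12(1-2/\tau^*)}e^{-C_q n^\gamma}$, forcing $\gamma$ above a lower threshold of the form $\max\{\alpha,1-4\alpha\}$ so that the complexity penalty beats both the count $p^{2m}$ and the Bayes-factor slack. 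Crucially, the admissible window for $\gamma$ is $\big(\max\{\alpha,1-4\alpha\},\,1-\sigma-2\lambda\big)$, and requiring it to be nonempty at the extremal $\sigma=2\alpha$ reproduces the theorem's hypotheses exactly: the two branches of the lower endpoint give $\alpha<1-2\alpha-2\lambda$ and $1-4\alpha<1-2\alpha-2\lambda$, i.e. $\lambda<(1-3\alpha)/2$ and $\lambda<\alpha$, hence $\lambda<\min\{\alpha,(1-3\alpha)/2\}$ and, since $(1-3\alpha)/2$ must be positive, $\alpha<1/3$.

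The hard part, and the true novelty beyond Theorem~\ref{thpr}, is the \emph{uniform} accounting: neither the single-edge exponential bound nor the single-comparison polynomial bound is individually summable, and the argument closes only because the complexity prior supplies a penalty $e^{-C_q n^\gamma}$ \emph{per edge} that must simultaneously (i) dominate the geometric proliferation of overfitting supergraphs for $\gamma>\max\{\alpha,1-4\alpha\}$ and (ii) remain mild enough that $1/\pi(G_t)\precsim e^{C_qn^{\sigma+\gamma}}$ does not swamp the exponential signal term, i.e. $\gamma<1-\sigma-2\lambda$. Balancing these two opposing demands on $\gamma$ inside a nonempty interval is the crux. I would also flag one technical subtlety: invoking~\eqref{eq:false_edge} as a black box reinserts its $e^{p^2}=e^{n^{2\alpha}}$ factor once per summand, which would only permit $\gamma>2\alpha$; to recover the sharper endpoint $\max\{\alpha,1-4\alpha\}$ one must reopen the derivation of~\eqref{eq:false_edge} and distribute the $e^{p^2}$ across the per-edge factors before summing. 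Verifying that this refinement goes through, and that the intersected event retains probability tending to one under the stated growth rates, is the main remaining obstacle.
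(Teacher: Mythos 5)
Your architecture coincides with the paper's (reduce to $\sum_{G_a\neq G_t}\mathrm{PR}(G_a;G_t)\to 0$, one simultaneous deviation event, the underfit/overfit dichotomy, exponential signal versus per-edge prior penalty, a geometric series in $m=|E_a|-|E_t|$), but the uniformity step fails as you have planned it, and the failure is not the one you flag. The bounds of Theorem \ref{bfupper0} are derived at the radius $\epsilon_{1,n}\asymp\sqrt{\log(n-p)/(n-p)}$, for which each deviation event $R_{ij\mid S}$ has complement probability decaying only \emph{polynomially}, like $(n-p)^{-1/(4\tau^*)}$. For strong consistency the simultaneous event must intersect $R_{ij\mid S}$ over all $(i,j)\not\in E_t$ \emph{and all} $S\in\Pi_{ij}$, which is of order $p^2 2^{p-2}\asymp e^{n^{\alpha}}$ events (the paper's Lemma \ref{lemmainf2} carries exactly this $p^3e^p$ factor); a union bound with polynomial tails cannot survive this count. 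So reusing Theorem \ref{bfupper0} "as a black box on an intersected event" is not viable even after your proposed repair of redistributing the $e^{p^2}$ factor in \eqref{eq:false_edge}: one must change the deviation \emph{scale} to $\epsilon_n=(n-p)^{-\beta}$, whose tails $\exp\{-(n-p)^{1-2\beta}/4\}$ beat $e^{n^\alpha}$ precisely when $1-2\beta>\alpha$, and re-derive both local bounds at that scale, as the paper does with $\beta^{\#}\in\big(\max\{\alpha+\lambda,\tfrac{1-\gamma}{2}\},\ \min\{\tfrac{1-\alpha}{2},2\alpha\}\big)$. This squeeze is where $\alpha<1/3$ genuinely enters (one needs $\alpha+\lambda<\tfrac{1-\alpha}{2}$), not the summability of $\delta(n)$ terms.

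Consequently your mechanism for the lower endpoint $\max\{\alpha,1-4\alpha\}$ is misattributed. The combinatorial count $p^{2m}$ of overfitting supergraphs is annihilated by $e^{-C_q m n^{\gamma}}$ for \emph{every} $\gamma>0$: the paper's overfit sum is bounded by $(1+e^{-D_2n^{\gamma}})^{|E_c|-|E_t|}-1\le \exp\{p^2e^{-D_2n^{\gamma}}\}-1\to 0$ with no constraint on $\gamma$ beyond positivity. The constraint $\gamma>\alpha$ arises instead from the squeeze on $\beta^{\#}$: the per-edge prior penalty $n^{\gamma}$ must dominate the Bayes-factor slack $\exp\{n/(n-p)^{2\beta^{\#}}\}$, forcing $\beta^{\#}>\tfrac{1-\gamma}{2}$, while the separator union bound forces $\beta^{\#}<\tfrac{1-\alpha}{2}$, whence $\gamma>\alpha$; the branch $\gamma>1-4\alpha$ comes from the companion cap $\beta^{\#}<2\alpha$ inherited from Theorem \ref{thpr}. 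Your endpoint arithmetic at $\sigma=2\alpha$ is a valid consistency check but does not derive the window. The remainder of your plan is sound and close to the paper's proof: the crude underfit bound $\sum_{G_t\not\subset G_a}\pi(G_a)/\pi(G_t)\le 1/\pi(G_t)\precsim e^{C_qn^{\sigma+\gamma}}$ works (the paper instead counts $2^{p^2}$ graphs against the uniform bound $e^{-D_1n\rho_L^2}$, having already absorbed the prior ratio via $1-2\lambda>\sigma+\gamma$), and both routes need exactly $\gamma<1-\sigma-2\lambda$. Once the deviation scale is corrected as above, your argument closes and coincides with the paper's.
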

Strong selection consistency demands all posterior ratio to be converging simultaneously at a sufficiently fast rate so that the sum is convergent. Since the number of alternative graphs is of the order $2^{p^2}$, to make the sum convergent, we require further assumptions on the model complexity and an accompanying stronger penalty $\pi$.  We achieve this by shrinking the dimension of graph space $(\alpha < 1/3)$ and inducing a slightly stronger sparsity (by selecting larger $\gamma$) on the prior over the graph space.   

In the proofs of Theorem \ref{bfupper0}-\ref{thstrong}, by using the decomposable graph chain rule, we traverse to any decomposable graph from the true graph and thus break down the Bayes factor into local moves, i.e. addition and deletion of a single edge. The local moves then can be associated with sample partial correlations and sample correlations, which are the natural criterion of edge selection by definition. This enables us to transform the problem into a more understandable manner.

In practice, one might be interested in a consistent point estimate rather than the entire posterior distribution. In Bayesian inference for discrete configurations, a posterior mode  provides a natural surrogate for the MLE. In the following, we investigate the consistency of the posterior mode obtained from our hierarchical Bayesian  model  as a simple bi-product of Theorems  \ref{thpr} and  \ref{thstrong}. Define $\hat{G}$ to be the posterior mode in the decomposable graph space, i.e.
\begin{equation*}
	\hat{G} = \mbox{argmax}_{G\in \mathcal{D}} \pi(G\mid \mathrm{Y}).
\end{equation*}
Then the following in true. 
\begin{corollary} \label{decomp-mode} {\normalfont(Consistency of posterior mode when $G_t$ is decomposable).}
	Under the assumptions of Theorem \ref{thstrong}, the probability which the posterior mode $\hat{G}$ is equal to the true graph $G_t$ goes to one, i.e.
	\begin{equation*}
		\bbP\big(\hat{G}=G_t)\rightarrow 1, \quad \text{ as } n\rightarrow \infty.
	\end{equation*}

\end{corollary}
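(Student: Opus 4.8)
The plan is to deduce the corollary directly from the strong selection consistency already established in Theorem \ref{thstrong}, exploiting the elementary fact that the posterior probabilities over the decomposable graph space $\mathcal{D}$ sum to one. The first step is to observe that on the event $\{\pi(G_t\mid\mathrm{Y}) > 1/2\}$ the true graph is automatically the unique posterior mode. Indeed, since $\sum_{G\in\mathcal{D}}\pi(G\mid\mathrm{Y}) = 1$, whenever $\pi(G_t\mid\mathrm{Y}) > 1/2$ we have, for every $G\neq G_t$, the bound $\pi(G\mid\mathrm{Y}) \leq 1-\pi(G_t\mid\mathrm{Y}) < 1/2 < \pi(G_t\mid\mathrm{Y})$. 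Hence $G_t$ strictly dominates every competitor and $\hat{G} = \mbox{argmax}_{G\in\mathcal{D}}\pi(G\mid\mathrm{Y}) = G_t$. This gives the set inclusion $\{\pi(G_t\mid\mathrm{Y}) > 1/2\} \subseteq \{\hat{G}=G_t\}$, and therefore $\bbP(\hat{G}=G_t) \geq \bbP(\pi(G_t\mid\mathrm{Y}) > 1/2)$.

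The second step is to invoke Theorem \ref{thstrong}, which, under exactly the assumptions imposed in the corollary, guarantees $\pi(G_t\mid\mathrm{Y})\stackrel{\bbP}{\rightarrow} 1$. By the definition of convergence in probability, this implies $\bbP(\pi(G_t\mid\mathrm{Y}) > 1/2)\rightarrow 1$ as $n\rightarrow\infty$. Chaining this with the inequality from the previous step yields $\bbP(\hat{G}=G_t)\rightarrow 1$, which is precisely the claimed consistency of the posterior mode.

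There is essentially no analytic obstacle here, as all of the substantive probabilistic work (the Bayes factor bounds, the concentration of sample partial correlations, and the summability of the posterior ratios over $\mathcal{D}$) has already been absorbed into Theorem \ref{thstrong}. The only point meriting a line of care is the claim that posterior mass exceeding $1/2$ forces a \emph{unique} argmax, so that no tie-breaking convention for $\mbox{argmax}$ is needed; this is handled cleanly by the normalization constraint $\sum_{G\in\mathcal{D}}\pi(G\mid\mathrm{Y})=1$, which prevents any other graph from matching or exceeding the dominant mass. I therefore expect the proof to be a short two-line deduction rather than a calculation.
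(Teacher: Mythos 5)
Your proof is correct, and it takes a genuinely different (more modular) route than the paper's. The paper argues not from the \emph{statement} of Theorem \ref{thstrong} but from its \emph{proof}: it returns to the high-probability event $\Delta''_{\epsilon_3}(n)$ on which the posterior ratios $\mbox{PR}(G_a;G_t)$ were shown to converge to zero uniformly over all alternatives $G_a$, concludes that $\bbP\big\{\max_{G_a\neq G_t}\pi(G_a\mid\mathrm{Y})<\pi(G_t\mid\mathrm{Y})\big\}\rightarrow 1$, and reads off $\bbP(\hat{G}=G_t)\rightarrow 1$. You instead treat Theorem \ref{thstrong} as a black box: since $\pi(G_t\mid\mathrm{Y})\stackrel{\bbP}{\rightarrow}1$, the event $\{\pi(G_t\mid\mathrm{Y})>1/2\}$ has probability tending to one, and on that event normalization forces $\pi(G\mid\mathrm{Y})\leq 1-\pi(G_t\mid\mathrm{Y})<1/2<\pi(G_t\mid\mathrm{Y})$ for every $G\neq G_t$, so the argmax is unique and equals $G_t$. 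Your version buys generality and robustness: it shows that strong selection consistency implies posterior-mode consistency for \emph{any} posterior on a discrete model space, with no reference to the Bayes-factor machinery or the particular concentration event, and it handles the uniqueness/tie-breaking issue explicitly, which the paper's one-line proof glosses over. The paper's version, by reusing the explicit event $\Delta''_{\epsilon_3}(n)$ with its quantified probability bounds, could in principle be pushed to give a rate for $\bbP(\hat{G}\neq G_t)$, but for the corollary as stated nothing is lost by your shorter argument.
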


\section{Theoretical results under model misspecification}\label{sec:theory2}
In this section, we investigate the effect of model misspecification when the underlying true graph $G_t$ is non-decomposable. We begin with some definitions on  triangulation and minimal triangulations of a graph. A triangulation of graph $G=(V,E)$ is a decomposable graph $G^\Delta=(V,E\cup F)$. The edges in $F$ are called {\it fill-in} edges. A triangulation $G^{\Delta}=(V,E\cup F)$ of $G=(V,E)$ is minimal if $(V,E\cup F')$ is non-decomposable for every $F'\subsetneq F$ \cite{heggernes2006minimal}. A triangulation is minimal if and only if the removal of any single fill-in edge from it results in a non-decomposable graph \cite{rose1976algorithmic,heggernes2006minimal}. This property captures the important aspect of minimal triangulations. For a summary of minimal triangulations of graphs, see \cite{heggernes2006minimal} for more details. Next, we state two theorems graph selection consistency under a true non-decomposable graph. 
\begin{theorem} \label{finitetri} {\normalfont(Convergence and equivalence of minimal triangulations for finite graphs).}
	Assume the true graph $G_t$ is non-decomposable. When the graph dimension $p$ is a fixed constant ($\rho_U, \rho_U$ are fixed constants), we have the following: 
	\begin{enumerate}
		\setlength\itemsep{0.6em}
		\item Let $G_m$ be any minimal triangulation of $G_t$ and $G_a$ be any decomposable graph that is not a minimal triangulation of $G_t$. If $\rho_U\neq 1$, then ${\normalfont\mbox{BF}}(G_a;G_m)\stackrel{\bbP}{\rightarrow} 0$, as $n\rightarrow\infty$.
		\item Let $G_{m_1}$ and $G_{m_2}$ be any two different minimal triangulations of $G_t$ {\normalfont(}with the same number of fill-in edges{\normalfont)}. Then the Bayes factor between them are stochastically bounded, i.e. for any $0<\epsilon<1$, there exist two positive finite constants $A_1(\epsilon)<1$ and $A_2(\epsilon)>1$, such that
		\small
		\begin{equation*}
			\bbP\big\{A_1<{\normalfont\mbox{BF}}(G_{m_1};G_{m_2})<A_2\big\}>1-\epsilon, \quad\text{ for } n>p+\max\Big\{3,b,6\log\big(10p^2/\epsilon\big)\Big\}.
		\end{equation*}
		\normalsize
		\item If $\rho_U\neq 1$, we have $\sum_{G_m\in\mathcal{M}_t} \pi(G_m\mid \mathrm{Y})\stackrel{\bbP}{\rightarrow} 1$, as $n\rightarrow\infty$, where $\mathcal{M}_t$ is the minimal triangulation space of $G_t$.
	\end{enumerate}
\end{theorem}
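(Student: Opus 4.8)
The plan is to reduce all three statements to the single-edge estimates that drive Theorem \ref{bfupper0}, pivoting through the complete graph $G_c$ since the true $G_t$ is no longer available as a decomposable base. For any decomposable $G_a$ and any $G_m\in\mathcal M_t$ I would write $\mbox{BF}(G_a;G_m)=\mbox{BF}(G_a;G_c)/\mbox{BF}(G_m;G_c)$ and expand each factor by the decomposable chain rule, peeling the target's non-edges off $G_c$ one at a time along a carefully chosen elimination order; the correlation-association identity then turns each single-edge deletion into an explicit function of a sample partial correlation $\hat\rho_{ij\mid S}$, with $S$ the separator created by that deletion. The whole argument is governed by one dichotomy: a deletion whose population value $\rho_{ij\mid S}$ is nonzero carries the exponential factor \eqref{eq:true_edge}, while a deletion with $\rho_{ij\mid S}=0$ carries only the polynomial factor \eqref{eq:false_edge}.

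The sign of each move is pinned down by two structural observations. First, if $G_c$ is reduced to a triangulation $G\supseteq G_t$ by deleting only non-edges of $G_t$, then every intermediate graph still contains $G_t$, so the separator $S$ that the chain rule attaches to a deletion $(i,j)$ separates $i$ and $j$ in that intermediate graph, hence in $G_t$ (fewer edges block more easily); thus $\rho_{ij\mid S}=0$ and every fill-in deletion is polynomial. Second, a genuine edge $(i,j)\in E_t$ admits \emph{no} separating set in $G_t$, so if $G_a$ omits such an edge I would delete it first, while $G_c$ is still complete and $S=V\setminus\{i,j\}$, forcing $|\rho_{ij\mid S}|\ge\rho_L>0$ and the exponential factor. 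These are exactly the two rates isolated in the well-specified analysis, now read off a minimal triangulation instead of the true graph.

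With the dichotomy in place the three parts become combinatorics of fill-in counts. For Part (1), if $G_a\not\supseteq G_t$ the missing-edge deletion contributes $\exp(-n\rho_L^2/2+\delta(n))$, which dominates the remaining polynomial factors and sends $\mbox{BF}(G_a;G_m)\to0$; if instead $G_a\supseteq G_t$ is a non-minimal triangulation, then $G_a$ carries strictly more fill-in edges than the minimal $G_m$, and the surplus of polynomial factors from \eqref{eq:false_edge} again forces $\mbox{BF}(G_a;G_m)\to0$. For Part (2), reducing $G_c$ to either $G_{m_1}$ or $G_{m_2}$ deletes only non-edges of $G_t$, so no exponential factor ever appears; since the two minimal triangulations delete equally many edges (equal fill-in count), the $n$-powers cancel exactly and $\mbox{BF}(G_{m_1};G_{m_2})$ collapses to a bounded product of functions of sample partial correlations, each concentrating at its zero population value and hence trapped between constants $A_1(\epsilon)$ and $A_2(\epsilon)$ once $n$ exceeds the stated threshold. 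Part (3) is then immediate for fixed $p$: the space $\mathcal D$ is finite and the prior ratios $\pi(G_a)/\pi(G_m)$ are bounded constants, so Part (1) sends $\pi(G_a\mid\mathrm Y)/\pi(G_m\mid\mathrm Y)\to0$ for every $G_a\notin\mathcal M_t$, and summing the finitely many such terms gives $\sum_{G_m\in\mathcal M_t}\pi(G_m\mid\mathrm Y)\to1$.

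The step I expect to fight hardest with is the two-sided bookkeeping behind Part (2). Turning ``equal number of fill-in edges'' into an \emph{exact} cancellation of polynomial orders requires a two-sided version of the single-edge zero-signal estimate (\eqref{eq:false_edge} alone is one-sided) together with a uniform control, over the $O(p^2)$ deleted edges, of the sample partial correlations about zero; the explicit threshold $n>p+\max\{3,b,6\log(10p^2/\epsilon)\}$ is precisely the price of this uniformity. A subtler point, needed to make the count comparison in Part (1) legitimate, is the minimal-triangulation input that a non-minimal triangulation of $G_t$ always contains strictly more fill-in edges than a minimal one \cite{heggernes2006minimal}; checking that the chosen elimination order realizes each fill-in as a separating-set deletion is where the graph theory of \S\ref{sec:theory2} does its real work.
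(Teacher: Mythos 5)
Your proposal is correct and follows essentially the same route as the paper's own proof: pivoting through the complete graph $G_c$ with the decomposable chain rule, splitting Part (1) into the missing-true-edge case (exponential rate, the paper's Lemma \ref{Gc2Ga}) and the surplus-fill-in case (polynomial rate, Lemma \ref{Gm2Ga}), cancelling the equal fill-in counts for Part (2) via two-sided single-edge bounds and a two-sided zero-signal concentration estimate (Lemmas \ref{lemmaadd} and \ref{lemmafinite1}), and invoking finiteness of the graph space for Part (3). Even the two points you flag as delicate --- the two-sided Beta-tail estimate behind the threshold $n>p+\max\{3,b,6\log(10p^2/\epsilon)\}$ and the edge-count comparison $|E_a|>|E_m|$ --- are exactly where the paper invests its technical effort.
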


\begin{theorem} \label{hightri} {\normalfont(Convergence and equivalence of minimal triangulations for high-dimensional graphs).}
	Assume the true graph $G_t$ is not decomposable. When the graph dimension $p$ grows with $n$, we have the following results.
	\begin{enumerate}
		\setlength\itemsep{0.6em}
		\item Let $G_m$ be any minimal triangulation of $G_t$ and $G_a$ be any decomposable graph that is not a minimal triangulation of $G_t$. Assume
		\small
		\begin{align*}
			 0 < \alpha < \frac{1}{2}, \quad 0\leq\lambda < \min\Big\{\alpha, \frac{1}{2}-\alpha \Big\}, \quad 0 < \sigma <  \min\Big\{2(\alpha-\lambda), 2(\frac{1}{2}-\alpha-\lambda)\Big\}. 
		\end{align*}
		\normalsize
Choose $\gamma$ in the interval $(\max\big\{2\alpha,1-2\alpha\big\} , 1-\sigma-2\lambda)$. Then under Assumptions \ref{assump-dim}-\ref{assump-upper}, we have ${\normalfont\mbox{PR}}(G_a;G_m)\stackrel{\bbP}{\rightarrow} 0$, as $n\rightarrow\infty$.
		\item Let $G_{m_1}$ and $G_{m_2}$ be any two different minimal triangulations of $G_t$. If the number of fill-in edges is finite, then the Bayes factor between them are stochastically bounded.
		\item If \small
		\begin{align*}
			 0 < \alpha < \frac{1}{3}, \quad 0\leq\lambda < \min\Big\{\alpha, \frac{1-3\alpha}{2} \Big\}, 0\leq \sigma < \min\Big\{2(\alpha-\lambda), 2\Big(\frac{1-3\alpha}{2}-\lambda\Big)\Big\}. 
		\end{align*}
		\normalsize
And we choose $\gamma$ in the interval $(\max\big\{3\alpha,1-2\alpha\big\}, 1-\sigma-2\lambda)$, then under Assumptions \ref{assump-dim}-\ref{assump-upper}, we have $\sum_{G_m\in\mathcal{M}_t} \pi(G_m\mid \mathrm{Y})\stackrel{\bbP}{\rightarrow} 1$, as $n\rightarrow\infty$, where $\mathcal{M}_t$ is the minimal triangulation space of $G_t$.
	\end{enumerate}
\end{theorem}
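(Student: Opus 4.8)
The plan is to establish all three parts by transporting the finite-dimensional arguments of Theorem \ref{finitetri} into the growing-$p$ regime, using the same two-pronged machinery: \emph{localization} (the decomposable-graph chain rule, which factors any pairwise Bayes factor into single-edge moves) and \emph{correlation association} (which rewrites each single-edge factor through a sample partial correlation $\hat\rho_{ij\mid S}$). The anchor graph is now a minimal triangulation $G_m = (V, E_t \cup F_m)$ rather than the non-decomposable, hence unusable, $G_t$. The decisive structural fact is that every edge outside $E_t$, including the fill-ins, corresponds to a \emph{vanishing} population partial correlation given the remaining nodes, whereas every edge in $E_t$ has $\abs{\rho_{ij\mid V\backslash\{i,j\}}} \geq \rho_L$; this is what lets the exponential-versus-polynomial dichotomy of Theorem \ref{bfupper0} carry over. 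As in the well-specified proofs I would route every comparison through the complete graph $G_c$, writing $\mathrm{BF}(G_a; G_m) = \mathrm{BF}(G_a; G_c)/\mathrm{BF}(G_m; G_c)$, so that descending from $G_c$ only ever deletes edges whose relevant separator is essentially $V\backslash\{i,j\}$.

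For Part (1) I would split $G_a$ into two cases. If $G_a$ omits at least one true edge, the move from $G_m$ to $G_a$ must delete an edge of $E_t$; correlation association then produces a factor $\exp\{-n\rho_L^2/2 + \delta(n)\}$ as in \eqref{eq:true_edge}, which under Assumption \ref{assump-lower} ($\lambda < 1/2$) dominates all polynomial and prior factors and forces $\mathrm{PR}(G_a; G_m) \to 0$. If instead $G_a \supseteq E_t$ but is not minimal, then $G_a$ strictly contains some minimal triangulation $G_{m'}$ (repeatedly removing a deletable fill-in preserves decomposability until minimality is reached), so I would factor $\mathrm{PR}(G_a; G_m) = \mathrm{PR}(G_a; G_{m'})\,\mathrm{PR}(G_{m'}; G_m)$. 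The first factor is a pure false-edge addition with $\abs{E_a}-\abs{E_{m'}} > 0$, bounded by $\bigl(e^{p^2}\bigr) n^{-\frac12(\abs{E_a}-\abs{E_{m'}})(1-2/\tau^*)} q^{\abs{E_a}-\abs{E_{m'}}}$ as in \eqref{eq:false_edge}, which vanishes once $\gamma$ exceeds $\max\{2\alpha, 1-2\alpha\}$; the second is stochastically bounded by Part (2). Collecting the admissible exponent ranges yields the stated constraints on $(\alpha,\lambda,\sigma,\gamma)$.

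For Part (2), two minimal triangulations with a finite number of fill-in edges differ only in false edges, so the Bayes factor is a product over their symmetric difference of single-edge factors, each governed by a sample partial correlation whose population value is $0$. Here I would need a genuinely \emph{two-sided} bound rather than the one-sided decay used elsewhere: using the sharp concentration and tail bounds for $\hat\rho_{ij\mid S}$ together with a union bound over the finitely many differing edges, each factor is pinned between constants $A_1(\epsilon)$ and $A_2(\epsilon)$ with probability at least $1-\epsilon$, which gives the stochastic boundedness and is exactly why the fill-in count must be finite. Part (3) then mirrors Theorem \ref{thstrong}: I would write $1 - \sum_{G_m \in \mathcal{M}_t}\pi(G_m\mid \mathrm{Y}) \leq \sum_{G \notin \mathcal{M}_t}\mathrm{PR}(G; G_m)$, stratify the non-triangulations by number of missing true edges and excess false edges, bound each stratum by a $2^{\binom{p}{2}}$-type cardinality, and verify that the per-graph penalties from Part (1) beat the count once $\alpha < 1/3$, the tighter range for $\lambda$ and $\sigma$ holds, and $\gamma > \max\{3\alpha, 1-2\alpha\}$.

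The step I expect to be the main obstacle is Part (2): establishing that distinct minimal triangulations are Bayes-factor-\emph{equivalent}. All other estimates are one-sided decay bounds, but here the Bayes factor neither vanishes nor explodes, so it must be controlled from both above and below uniformly. The delicacy is that although every fill-in edge has zero partial correlation given all other vertices, the junction-tree separators attached to these edges differ between the two triangulations, so the determinant and degrees-of-freedom contributions do not cancel term-by-term; showing that their net fluctuation is $O_{\bbP}(1)$ requires pairing the fill-in edges across the two triangulations and invoking sharp two-sided concentration of the associated sample partial correlations, which is precisely where finiteness of the fill-in set is used.
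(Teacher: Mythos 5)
Your overall architecture (localization via the chain rule, correlation association, the three-way case split in Part (1), and the Theorem~\ref{thstrong}-style summation in Part (3)) matches the paper, but there are two genuine gaps, and they are connected. First, in Part (1) you handle the case $E_t\subset E_a$ with $G_a$ not minimal by factoring $\mathrm{PR}(G_a;G_m)=\mathrm{PR}(G_a;G_{m'})\,\mathrm{PR}(G_{m'};G_m)$ and invoking Part (2) to bound $\mathrm{PR}(G_{m'};G_m)$. This fails: Part (2) is only valid under the hypothesis that the number of fill-in edges is \emph{finite}, whereas Part (1) makes no such assumption ($\sigma$ may grow, so the fill-in set can diverge with $n$, and the paper explicitly warns that minimal triangulations then ``drift apart'' and their Bayes factors need not be bounded). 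Moreover $\mathrm{PR}$, unlike $\mathrm{BF}$, carries the prior ratio $\{q/(1-q)\}^{|E_{m'}|-|E_m|}$, and since distinct minimal triangulations need not have equally many edges, this factor can explode at rate $e^{C_q n^{\gamma}}$. The paper avoids both problems by routing this case through $G_c$ directly: all moves $G_m\to G_c\to G_a$ are additions or reciprocals of additions (Lemma~\ref{Gm2Ga}), producing a bound $\{2p(n+1)\}^{(|E_c|-|E_a|)/2}\exp\{(|E_c|-|E_m|)n(n-p)^{-2\beta^*}+(|E_a|-|E_m|)\log(2q)\}$; the polynomial-in-$p$ cost, of order $\exp(Cp^2\log n)\asymp\exp(Cn^{2\alpha}\log n)$, is exactly what forces the strengthened prior conditions $\gamma>2\alpha$ in Part (1) and $\gamma>3\alpha$ in Part (3) and the choice $\beta^*>(1-\gamma+2\alpha)/2$. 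Your route, even if repaired, would not explain where these stated $\gamma$-thresholds come from.

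Second, in Part (2) your plan to write $\mathrm{BF}(G_{m_1};G_{m_2})$ as ``a product over their symmetric difference of single-edge factors'' is not justified. The chain rule (Lemma~\ref{chainrule}) connects only \emph{nested} decomposable graphs, $G_{m_1}$ and $G_{m_2}$ are not nested, and their union $G_{m_1}\cup G_{m_2}$ need not be decomposable, so there is in general no sequence of decomposable single-edge moves between them confined to the symmetric difference. You correctly identify this as the main obstacle but do not resolve it. The paper's missing idea is a construction: complete the subgraphs induced by all chordless cycles of $G_t$ to obtain a single decomposable graph $G_{m_c}$ with $G_{m_1},G_{m_2}\subsetneq G_{m_c}$ and $\delta_c=|E_{m_c}|-|E_{m_i}|$ finite (this is where finiteness of the fill-in set enters), then route $\mathrm{BF}(G_{m_1};G_{m_2})=\mathrm{BF}(G_{m_2};G_{m_c})\cdot\mathrm{BF}(G_{m_c};G_{m_1})^{-1}$ and pin each factor two-sidedly between $(e^{M_1}/2n)^{\delta_c/2}$ and $(2e^{2M_2}/n)^{\delta_c/2}$ using the exact Beta-tail rates for zero partial correlations (Corollary~\ref{lemmafinite2}), yielding constants $A_1,A_2$ free of $p$. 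Note also that one cannot reuse the finite-$p$ route through $G_c$ from Theorem~\ref{finitetri}: there the two-sided constant is $4e^{2M_2p^2}$, which is unbounded when $p$ grows, which is precisely why the local completion $G_{m_c}$, rather than $G_c$, is essential here.
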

Based on the theorems presented above, the equivalence among minimal triangulations is true when the number of fill-in edges is finite.  Adding infinitely many fill-in edges prompts the minimal triangulations to drift further away from the true graph. In that case, there are too many possibilities among the minimal triangulations such that they can be vastly different for each other.  
It is worth mentioning that any decomposable subgraph of the true graph is not a good posterior estimate of the true graph. This is simply due to the fact that such a graph is associated with at least one edge deletion step following by reciprocal of addition steps from a minimal triangulation. Since deletion of any true edge results in an exponential decay of the Bayes factor in favor of the deletion and the reciprocal of additions will be in favor of additions (the minimal triangulations) or neutral depending on whether the corresponding population partial correlation is zero. Thus, pairwise speaking, the posterior mode is among minimal triangulation class. 

Analogous to Corollary \ref{decomp-mode}, when the true graph $G_t$ is not decomposable, we state the behavior of posterior mode in the following corollary under model misspecification.

\begin{corollary} \label{nondecomp-mode} {\normalfont(Consistency of posterior mode when $G_t$ is non-decomposable).}
	Under the assumptions of Theorem \ref{hightri}, the posterior mode $\hat{G}$ is in the minimal triangulation space $\mathcal{M}_t$ of the true graph $G_t$ with probability converging to one, i.e.
	\small
	\begin{equation*}
		\bbP\big(\hat{G}\in\mathcal{M}_t)\rightarrow 1, \quad \text{ as } n\rightarrow \infty.
	\end{equation*}
	\normalsize
\end{corollary}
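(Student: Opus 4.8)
The plan is to deduce the statement directly from part (3) of Theorem~\ref{hightri}, which already places almost all of the posterior mass on the minimal triangulation space, and then to upgrade ``concentration of mass'' to ``concentration of the mode.'' Writing $T_n := \sum_{G\notin\mathcal{M}_t}\pi(G\mid\mathrm{Y})$ for the posterior mass outside $\mathcal{M}_t$, part (3) gives $1-T_n = \sum_{G_m\in\mathcal{M}_t}\pi(G_m\mid\mathrm{Y})\stackrel{\bbP}{\rightarrow}1$, so $T_n\stackrel{\bbP}{\rightarrow}0$. This reduction is the analogue of the argument for Corollary~\ref{decomp-mode}, but with one genuine wrinkle: in the misspecified case there is no single dominant graph, only a dominant set, so a small total mass outside $\mathcal{M}_t$ does not by itself prevent one graph outside $\mathcal{M}_t$ from beating every individual graph inside it.

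First I would record the elementary mode inequality. Since $\hat{G}=\mbox{argmax}_{G\in\mathcal{D}}\pi(G\mid\mathrm{Y})$ dominates the average over the subset $\mathcal{M}_t$,
\begin{equation*}
\pi(\hat{G}\mid\mathrm{Y}) \;\geq\; \max_{G_m\in\mathcal{M}_t}\pi(G_m\mid\mathrm{Y}) \;\geq\; \frac{1}{\abs{\mathcal{M}_t}}\sum_{G_m\in\mathcal{M}_t}\pi(G_m\mid\mathrm{Y}) \;=\; \frac{1-T_n}{\abs{\mathcal{M}_t}}.
\end{equation*}
On the event $\{\hat{G}\notin\mathcal{M}_t\}$ the single term $\pi(\hat{G}\mid\mathrm{Y})$ is one of the summands defining $T_n$, whence $\pi(\hat{G}\mid\mathrm{Y})\leq T_n$. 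Combining the two bounds gives $(1-T_n)/\abs{\mathcal{M}_t}\leq T_n$ on that event, i.e. $T_n\geq 1/(1+\abs{\mathcal{M}_t})$, so that
\begin{equation*}
\bbP\big(\hat{G}\notin\mathcal{M}_t\big) \;\leq\; \bbP\Big(T_n\geq \tfrac{1}{1+\abs{\mathcal{M}_t}}\Big),
\end{equation*}
and the proof is reduced to showing the right-hand side vanishes.

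The main obstacle is therefore purely combinatorial: I must control $\abs{\mathcal{M}_t}$ against the rate at which $T_n\rightarrow 0$. In the regime where the number of fill-in edges is finite---exactly the setting in which the minimal triangulations remain mutually equivalent by part (2), as noted in the discussion following the theorems---the non-chordal defect of $G_t$ is localized, so the number of distinct minimal triangulations $\abs{\mathcal{M}_t}$ is bounded by a constant $M$ independent of $n$. Then $1/(1+\abs{\mathcal{M}_t})\geq 1/(1+M)$ is bounded away from zero, and $T_n\stackrel{\bbP}{\rightarrow}0$ yields $\bbP(T_n\geq 1/(1+M))\rightarrow 0$, completing the argument. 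Were $\abs{\mathcal{M}_t}$ allowed to diverge, the same inequality would still close provided one sharpens part (3) to the quantitative statement $\abs{\mathcal{M}_t}\,T_n\stackrel{\bbP}{\rightarrow}0$; tracing the summability bounds that prove $T_n\stackrel{\bbP}{\rightarrow}0$---namely the summed posterior ratios $\sum_{G\notin\mathcal{M}_t}\mbox{PR}(G;G_{m_0})$ relative to a fixed reference triangulation $G_{m_0}$---would supply this refined rate, and this is where the real work would lie.
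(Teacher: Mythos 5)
Your reduction inequality is sound, but the proof as written has a genuine gap exactly where you place the ``real work.'' The hypothesis that the number of fill-in edges is finite appears \emph{only} in part (2) of Theorem \ref{hightri}; part (3), which is what the corollary rests on, makes no such restriction, so you are not entitled to assume $\abs{\mathcal{M}_t}$ is bounded by a constant $M$ independent of $n$. Worse, even when $G_t$ admits \emph{some} minimal triangulation with few fill edges, the space $\mathcal{M}_t$ contains \emph{all} minimal triangulations, and these can have fill sets of very different sizes, so bounding $\abs{\mathcal{M}_t}$ by a constant does not follow from localized non-chordality either. Your fallback --- sharpening part (3) to $\abs{\mathcal{M}_t}\,T_n \stackrel{\bbP}{\rightarrow} 0$ --- is precisely the missing step, and you leave it as a pointer rather than a proof. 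It is in fact attainable: the crude bound $\abs{\mathcal{M}_t} \leq 2^{p^2} = \exp(p^2\log 2)$ combined with the uniform exponential bounds from the proof of part (3) (each summed posterior ratio relative to a fixed $G_{m_1}$ is at most of order $\exp\{p^2 - D n^{\gamma}\}$ or $\exp\{-D_1 n\rho_L^2 + p^2\}$, and $\gamma > 3\alpha > 2\alpha$ makes $n^{\gamma}$ dominate $p^2 \asymp n^{2\alpha}$) closes the argument, but without carrying out this computation the proof is incomplete in the very regime the corollary covers.

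The paper's own proof sidesteps the cardinality bookkeeping entirely, and it is worth seeing why. On the single high-probability event $\Delta''_{\epsilon_3}(n)$ constructed in the proof of Theorem \ref{hightri}, the upper bounds on $\mbox{PR}(G_a;G_m)$ hold \emph{uniformly over all pairs} $(G_a, G_m)$ with $G_a\not\in\mathcal{M}_t$ and $G_m\in\mathcal{M}_t$, which immediately yields
\begin{equation*}
\bbP\Big\{\max_{G_a\not\in\mathcal{M}_t}\pi(G_a\mid\mathrm{Y}) < \min_{G_m\in\mathcal{M}_t}\pi(G_m\mid\mathrm{Y})\Big\}\rightarrow 1,
\end{equation*}
and hence $\bbP(\hat{G}\in\mathcal{M}_t)\rightarrow 1$ with no reference to $\abs{\mathcal{M}_t}$ at all. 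The factor $\abs{\mathcal{M}_t}$ enters your argument only through the lossy averaging step $\max_{G_m}\pi(G_m\mid\mathrm{Y}) \geq \abs{\mathcal{M}_t}^{-1}\sum_{G_m}\pi(G_m\mid\mathrm{Y})$; the paper instead compares each alternative graph against \emph{every} minimal triangulation pairwise, which is strictly stronger information than the aggregated mass statement of part (3) and is available for free from the same event used to prove it. If you want to salvage your mass-to-mode route, replace the appeal to bounded $\abs{\mathcal{M}_t}$ with the explicit rate computation sketched above.
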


\section{Simulations}\label{sec:sim}
We conduct two sets of simulations for the demonstrate the convergence of Bayes factors in the well-specified case (Theorem \ref{bfupper0}) and  in the misspecified case (Theorem \ref{finitetri})  for fixed $p.$  
\subsection{Simulation 1: Demonstration of pairwise Bayes factor convergence rate}\label{ssec:well}
In this section, we conduct a simulation study in $\mathcal{D}_3$ to demonstrate the convergence rate of pairwise Bayes factors. Let $\mathcal{G}_k$ be the $k$-dimensional graph space. Since there is no non-decomposable graph with 3 nodes, $\mathcal{D}_3$ is the same as $\mathcal{G}_3$. All 8 graphs in $\mathcal{D}_3$ are enumerated in Figure \ref{3nodegraph}.

\begin{figure}[H]
	\centering
	\begin{tikzpicture}[thick]
	\node[draw, shape=circle, fill=black, inner sep=2pt, label={left: $1$}] (n11) {};
	\node[draw, shape=circle, fill=black, inner sep=2pt, label={left: $2$}, below left=0.8 of n11] (n12) {};
	\node[draw, shape=circle, fill=black, inner sep=2pt, label={right: $3$}, below right=0.8 of n11] (n13) {};
	\node[below=0.9 of n11] {$G_0$};

	\node[draw, shape=circle, fill=black, inner sep=2pt, label={left: $1$}, right=3.2 of n11] (n21) {};
	\node[draw, shape=circle, fill=black, inner sep=2pt, label={left: $2$}, below left=0.8 of n21] (n22) {};
	\node[draw, shape=circle, fill=black, inner sep=2pt, label={right: $3$}, below right=0.8 of n21] (n23) {};
	\edge[-] {n21} {n22};
	\node[below=0.9 of n21] {$G_{12}$};

	\node[draw, shape=circle, fill=black, inner sep=2pt, label={left: $1$}, right=3.2 of n21] (n31) {};
	\node[draw, shape=circle, fill=black, inner sep=2pt, label={left: $2$}, below left=0.8 of n31] (n32) {};
	\node[draw, shape=circle, fill=black, inner sep=2pt, label={right: $3$}, below right=0.8 of n31] (n33) {};
	\edge[-] {n31} {n33};
	\node[below=0.9 of n31] {$G_{13}$};

	\node[draw, shape=circle, fill=black, inner sep=2pt, label={left: $1$}, right=3.2 of n31] (n41) {};
	\node[draw, shape=circle, fill=black, inner sep=2pt, label={left: $2$}, below left=0.8 of n41] (n42) {};
	\node[draw, shape=circle, fill=black, inner sep=2pt, label={right: $3$}, below right=0.8 of n41] (n43) {};
	\edge[-] {n42} {n43};
	\node[below=0.9 of n41] {$G_{23}$};

	\node[draw, shape=circle, fill=black, inner sep=2pt, label={left: $1$}, below=2.2 of n11] (n51) {};
	\node[draw, shape=circle, fill=black, inner sep=2pt, label={left: $2$}, below left=0.8 of n51] (n52) {};
	\node[draw, shape=circle, fill=black, inner sep=2pt, label={right: $3$}, below right=0.8 of n51] (n53) {};
	\edge[-] {n51} {n52};
	\edge[-] {n51} {n53};
	\node[below=0.9 of n51] {$G_{-23}$};

	\node[draw, shape=circle, fill=black, inner sep=2pt, label={left: $1$}, right=3.2 of n51] (n61) {};
	\node[draw, shape=circle, fill=black, inner sep=2pt, label={left: $2$}, below left=0.8 of n61] (n62) {};
	\node[draw, shape=circle, fill=black, inner sep=2pt, label={right: $3$}, below right=0.8 of n61] (n63) {};
	\edge[-] {n61} {n62};
	\edge[-] {n62} {n63};
	\node[below=0.9 of n61] {$G_t$};

	\node[draw, shape=circle, fill=black, inner sep=2pt, label={left: $1$}, right=3.2 of n61] (n71) {};
	\node[draw, shape=circle, fill=black, inner sep=2pt, label={left: $2$}, below left=0.8 of n71] (n72) {};
	\node[draw, shape=circle, fill=black, inner sep=2pt, label={right: $3$}, below right=0.8 of n71] (n73) {};
	\edge[-] {n71} {n73};
	\edge[-] {n72} {n73};
	\node[below=0.9 of n71] {$G_{-12}$};

	\node[draw, shape=circle, fill=black, inner sep=2pt, label={left: $1$}, right=3.2 of n71] (n81) {};
	\node[draw, shape=circle, fill=black, inner sep=2pt, label={left: $2$}, below left=0.8 of n81] (n82) {};
	\node[draw, shape=circle, fill=black, inner sep=2pt, label={right: $3$}, below right=0.8 of n81] (n83) {};
	\edge[-] {n81} {n82};
	\edge[-] {n81} {n83};
	\edge[-] {n82} {n83};
	\node[below=0.9 of n81] {$G_c$};
	\end{tikzpicture}
	\caption{Enumerating all 3-node decomposable graphs in $\mathcal{D}_3$ with $G_t$ as the true graph, $G_0$ as the null graph and $G_c$ as the complete graph.} \label{3nodegraph}
\end{figure}
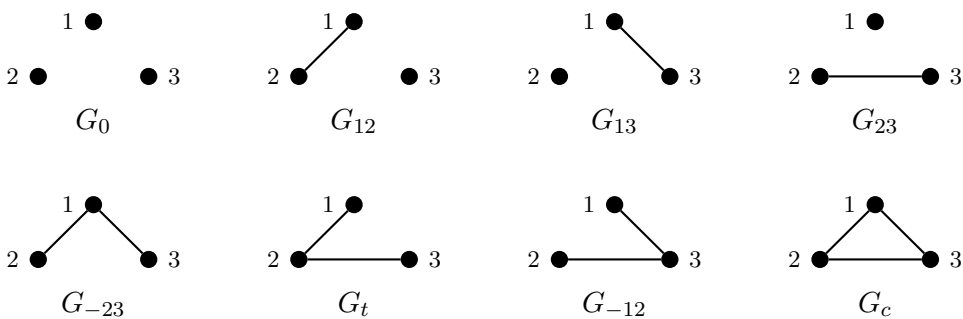

The underlying covariance matrix $\Sigma_3$ and its precision matrix $\Omega_3$ are shown below along with the correlation matrix $R_3$ and the partial correlation matrix $\overline{R}_3$. Samples are drawn independent and identically from $N_3(\bm{0},\Sigma_3)$. The range of the sample size simulated is from 100 to 10,000 with an increment of 100. The Bayes factor for each sample size is averaged over 1000 simulation replicates. The degree of freedom $b$ in the HIW g-prior is chosen to be 3. The first six pairwise Bayes factors in logarithmic scale  is shown  in Figure \ref{sim3node} (a) and the logarithm of $\mbox{BF}(G_c;G_t)$ is shown separately in Figure \ref{sim3node} (b) due to its slower convergence rate. To better understand the simulation results, asymptotic leading terms of pairwise Bayes factors in logarithmic scale and the empirically estimated slopes for $n$ or $\log n$  are listed in the second and third columns of Table \ref{leadingterms}. To calculate the leading terms in the logarithm of Bayes factors, the sample partial correlations or sample correlations are replaced with their population counterparts that do not depend on $n$. The leading terms are obtained by following the route we have used in the proof, i.e. $G_t\rightarrow G_c\rightarrow G_a$. The slopes of logarithms of the first six Bayes factors in Figure \ref{sim3node} (a) are calculated in Table \ref{leadingterms} based on linear regression fit on $n$. The last slope in Table \ref{leadingterms} is calculated based on linear regression on  $\log n$; refer to Figure \ref{sim3node} (b). Table \ref{leadingterms} shows that the theoretical asymptotic leading terms match well with the empirical values.  
\small
\begin{equation*}
	\Sigma_3 =
	\begin{bmatrix}
		 \phantom{-}0.7119 &           -0.4237 &  \phantom{-}0.1695 \\
		           -0.4237 & \phantom{-}0.8475 &            -0.3390 \\
		 \phantom{-}0.1695 &           -0.3390 &  \phantom{-}0.6356
	\end{bmatrix}, \quad
	\Omega_3 = 
	\begin{bmatrix}
		\phantom{0.}2 & \phantom{0.}1 & \phantom{0.}0 \\
		\phantom{0.}1 & \phantom{0.}2 &           0.8 \\
		\phantom{0.}0 &           0.8 & \phantom{0.}2
	\end{bmatrix}.
\end{equation*}
\normalsize
\small
\begin{equation*}
	R_3 =
	\begin{bmatrix}
		 \phantom{-}1.0000 &           -0.5456 &  \phantom{-}0.2520 \\
		           -0.5456 & \phantom{-}1.0000 &            -0.4619 \\
		 \phantom{-}0.2520 &           -0.4619 &  \phantom{-}1.0000
	\end{bmatrix}, \quad
	\overline{R}_3 = 
	\begin{bmatrix}
		\phantom{0.}1 &           0.5 & \phantom{0.}0 \\
		          0.5 & \phantom{0.}1 &           0.4 \\
		\phantom{0.}0 &           0.4 & \phantom{0.}1
	\end{bmatrix}.
\end{equation*}
\normalsize
\bgroup
\def\arraystretch{1.2}
\begin{table}[H]
	\caption{Asymptotic leading terms and simulation slopes of Bayes factors in logarithmic scale} \label{leadingterms}
	\centering
	\scalebox{0.9}{
	\begin{tabular}{cl|l|l}
		& \textbf{Bayes factors}   & \textbf{asymptotic leading term} & \textbf{simulation slope} \\
		\hline
		& $\mbox{BF}(G_0;G_t)$     & $\big\{\log\big(1-\rho^2_{12}\big)+\log\big(1-\rho^2_{23\phantom{_{\mid 1}}}\big)\big\}\cdot n/2 = \bm{-0.2967\cdot n}$ 
		                                                                                                           & $\bm{-0.2963}$ \\ [0.5ex]
		& $\mbox{BF}(G_{13};G_t)$  & $\big\{\log\big(1-\rho^2_{12}\big)+\log\big(1-\rho^2_{23\mid 1}\big)\big\}\cdot n/2=\bm{-0.2639\cdot n}$ 
		                                                                                                           & $\bm{-0.2637}$ \\ [0.5ex]
		& $\mbox{BF}(G_{23};G_t)$  & $\log\big(1-\rho^2_{12\phantom{_{\mid 3}}}\big)\cdot n/2=\bm{-0.1767\cdot n}$ & $\bm{-0.1765}$ \\ [0.5ex]
		& $\mbox{BF}(G_{-12};G_t)$ & $\log\big(1-\rho^2_{12\mid 3}\big)\cdot n/2=\bm{-0.1438\cdot n}$              & $\bm{-0.1439}$ \\ [0.5ex]
		& $\mbox{BF}(G_{12};G_t)$  & $\log\big(1-\rho^2_{23\phantom{_{\mid 1}}}\big)\cdot n/2=\bm{-0.1120\cdot n}$ & $\bm{-0.1198}$ \\ [0.5ex]
		& $\mbox{BF}(G_{-23};G_t)$ & $\log\big(1-\rho^2_{23\mid 1}\big)\cdot n/2=\bm{-0.0872\cdot n}$              & $\bm{-0.0873}$ \\ [0.5ex]
		& $\mbox{BF}(G_c;G_t)$     & $\bm{-0.5\cdot \log n}$                                                       & $\bm{-0.5106}$
	\end{tabular}
	}
\end{table}
\egroup
From the simulation results, we can see missing at least one true edge of $G_t$ in $G_a$ will result in the Bayes factor converging to zero exponentially. This is perfectly illustrated by all six Bayes factors in Figure \ref{sim3node} (a). On the other hand, adding false edges in $G_a$ results in a Bayes factor going to zero at a polynomial rate which is much slower than missing a true edge, see Figure \ref{sim3node} (b). These discoveries are consistent with Table \ref{leadingterms} and our proofs. 

Next we compare the different types of rates in the convergence of the first six Bayes factors. The convergence rate associated with missing two edges of $G_t$ is faster than missing only one edge, i.e. $\mbox{BF}(G_0;G_t)$ vs. $\mbox{BF}(G_{23};G_t)$ and $\mbox{BF}(G_0;G_t)$ vs. $\mbox{BF}(G_{12};G_t)$. The convergence rate is faster when the missing edge of $G_t$ corresponds to a larger partial correlation (or correlation) in absolute value, i.e. $\mbox{BF}(G_{-12};G_t)$ vs. $\mbox{BF}(G_{-23};G_t)$ and $\mbox{BF}(G_{23};G_t)$ vs. $\mbox{BF}(G_{12};G_t)$. One interesting fact is although $G_0$ and $G_{13}$ are both missing two edges of $G_t$, with $G_{13}$ having an additional false edge of $G_t$  compared to $G_0$, the convergence rate of the Bayes factor for $G_{13}$ is slower than that for $G_0$. The reason is clear from Table \ref{leadingterms}. As the absolute value of correlation between node 2 and 3 ($|\rho_{23}|=0.4619$) is larger than the absolute value of partial correlation between them given node 1 ($|\rho_{23\mid 1}|=0.4$), the leading term of $\mbox{BF}(G_0;G_t)$ is smaller than that of $\mbox{BF}(G_{13};G_t)$. The effect due to false edges (polynomial rate) is overwhelmed by the leading term (exponential rate). It is evident that HIW prior places higher penalties on false negative edges compared to false positive edges. Hence in the  high-dimensional case, a prior on graph space is needed for penalizing false positive edges. Similar conclusions can be made comparing $\mbox{BF}(G_{23};G_t)$ and $\mbox{BF}(G_{-12};G_t)$, also from comparing $\mbox{BF}(G_{12};G_t)$ and $\mbox{BF}(G_{-23};G_t)$.

\begin{figure}[H]
	\centering
	\includegraphics[width=1\textwidth]{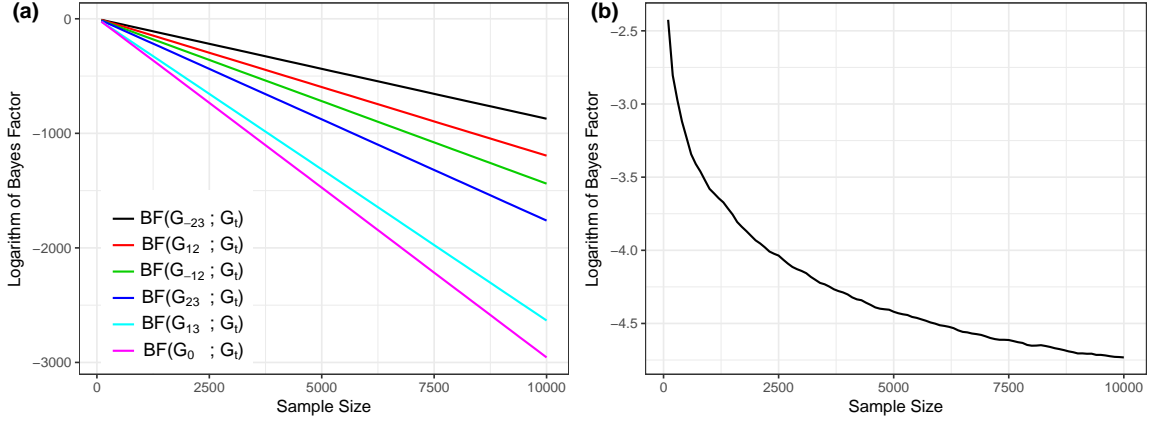}
	\caption{Simulation results of pairwise Bayes factors of $\mathcal{D}_3$ in logarithmic scale. (a) Six Bayes factors where $G_t\not\subset G_a$ (at least missing one edge in $G_t$). (b) When $G_t\subsetneq G_a=G_c$ (only addition).} \label{sim3node}
\end{figure}

\subsection{Simulation 2: Examination of model misspecification}\label{ssec:mis}
In this section, we illustrate the stochastic equivalence between minimal triangulations when the true graph is non-decomposable. The smallest non-decomposable graph is a cycle of length 4 without a chord. So we focus our simulation in $\mathcal{D}_4$. Since the number of decomposable graph increases exponentially with the dimension of graphs, we only select 5 alternative graphs in $\mathcal{D}_4$ other than the minimal triangulations, see Figure \ref{4nodegraph}. The true covariance matrix $\Sigma_4$ and its precision matrix $\Omega_4$ are listed below along with the correlation matrix $R_4$ and the partial correlation matrix $\overline{R}_4$. All simulation settings are the same as in the simulation of $\mathcal{D}_3$. 
\small
\begin{equation*}
	\Sigma_4 =
	\begin{bmatrix}
		 \phantom{-}1.8364 &           -1.0909 &  \phantom{-}0.8909 &           -1.3636 \\
		           -1.0909 & \phantom{-}1.0606 &            -0.7273 & \phantom{-}0.9091 \\
		 \phantom{-}0.8909 &           -0.7273 &  \phantom{-}0.9273 &           -0.9091 \\
		           -1.3636 & \phantom{-}0.9091 &            -0.9091 & \phantom{-}1.6364
	\end{bmatrix}, \quad
	\Omega_4 =
	\begin{bmatrix}
		 \phantom{0.}2 &           1.2 &  \phantom{0.}0 & \phantom{0.}1 \\
		           1.2 & \phantom{0.}3 &            1.2 & \phantom{0.}0 \\
		 \phantom{0.}0 &           1.2 &  \phantom{0.}3 & \phantom{0.}1 \\
		 \phantom{0.}1 & \phantom{0.}0 &  \phantom{0.}1 & \phantom{0.}2
	\end{bmatrix}.
\end{equation*}
\normalsize
\small
\begin{equation*}
	R_4 =
	\begin{bmatrix}
		 \phantom{-}1.0000 &           -0.7817 &  \phantom{-}0.6827 &           -0.7866 \\
		           -0.7817 & \phantom{-}1.0000 &            -0.7334 & \phantom{-}0.6901 \\
		 \phantom{-}0.6827 &           -0.7334 &  \phantom{-}1.0000 &           -0.7380 \\
		           -0.7866 & \phantom{-}0.6901 &            -0.7380 & \phantom{-}1.0000
	\end{bmatrix}, \quad
	\overline{R}_4 =
	\begin{bmatrix}
		\phantom{0.0}1 &           0.49 & \phantom{0.0}0 &           0.50 \\
		          0.49 & \phantom{0.0}1 &           0.40 & \phantom{0.0}0 \\
		 \phantom{0.}0 &           0.40 & \phantom{0.0}1 &           0.41 \\
		          0.50 & \phantom{0.0}0 &           0.41 & \phantom{0.0}1
	\end{bmatrix}.
\end{equation*}
\normalsize

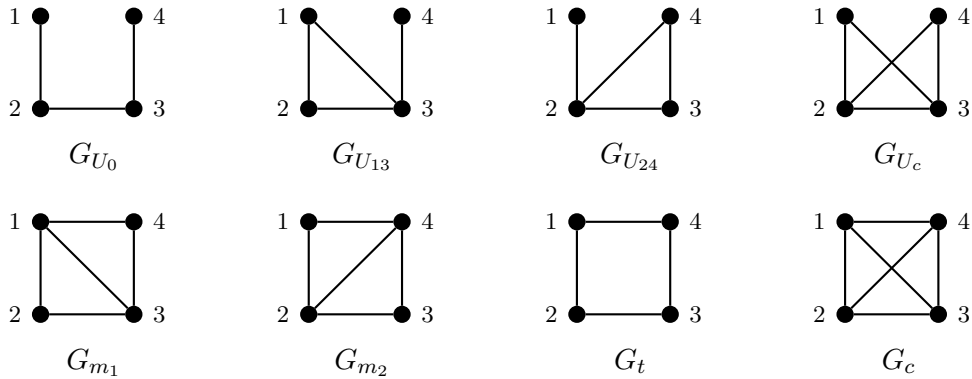
\begin{figure}[H]
	\centering
	\begin{tikzpicture}[thick]
	\node[draw, shape=circle, fill=black, inner sep=2pt, label={left: $1$}] (n11) {};
	\node[draw, shape=circle, fill=black, inner sep=2pt, label={left: $2$}, below=1 of n11] (n12) {};
	\node[draw, shape=circle, fill=black, inner sep=2pt, label={right: $3$}, right=1 of n12] (n13) {};
	\node[draw, shape=circle, fill=black, inner sep=2pt, label={right: $4$}, right=1 of n11] (n14) {};
	\edge[-] {n11} {n12};
	\edge[-] {n12} {n13};
	\edge[-] {n13} {n14};
	\node[right=0.45 of n11] (n10) {};
	\node[below=1.4 of n10] {$G_{U_0}$};

	\node[draw, shape=circle, fill=black, inner sep=2pt, label={left: $1$}, right=3.3 of n11] (n21) {};
	\node[draw, shape=circle, fill=black, inner sep=2pt, label={left: $2$}, below=1 of n21] (n22) {};
	\node[draw, shape=circle, fill=black, inner sep=2pt, label={right: $3$}, right=1 of n22] (n23) {};
	\node[draw, shape=circle, fill=black, inner sep=2pt, label={right: $4$}, right=1 of n21] (n24) {};
	\edge[-] {n21} {n22};
	\edge[-] {n22} {n23};
	\edge[-] {n23} {n24};
	\edge[-] {n21} {n23};
	\node[right=0.45 of n21] (n20) {};
	\node[below=1.4 of n20] {$G_{U_{13}}$};

	\node[draw, shape=circle, fill=black, inner sep=2pt, label={left: $1$}, right=3.3 of n21] (n31) {};
	\node[draw, shape=circle, fill=black, inner sep=2pt, label={left: $2$}, below=1 of n31] (n32) {};
	\node[draw, shape=circle, fill=black, inner sep=2pt, label={right: $3$}, right=1 of n32] (n33) {};
	\node[draw, shape=circle, fill=black, inner sep=2pt, label={right: $4$}, right=1 of n31] (n34) {};
	\edge[-] {n31} {n32};
	\edge[-] {n32} {n33};
	\edge[-] {n33} {n34};
	\edge[-] {n32} {n34};
	\node[right=0.45 of n31] (n30) {};
	\node[below=1.4 of n30] {$G_{U_{24}}$};

	\node[draw, shape=circle, fill=black, inner sep=2pt, label={left: $1$}, right=3.3 of n31] (n41) {};
	\node[draw, shape=circle, fill=black, inner sep=2pt, label={left: $2$}, below=1 of n41] (n42) {};
	\node[draw, shape=circle, fill=black, inner sep=2pt, label={right: $3$}, right=1 of n42] (n43) {};
	\node[draw, shape=circle, fill=black, inner sep=2pt, label={right: $4$}, right=1 of n41] (n44) {};
	\edge[-] {n41} {n42};
	\edge[-] {n42} {n43};
	\edge[-] {n43} {n44};
	\edge[-] {n41} {n43};
	\edge[-] {n42} {n44};
	\node[right=0.45 of n41] (n40) {};
	\node[below=1.4 of n40] {$G_{U_c}$};

	\node[draw, shape=circle, fill=black, inner sep=2pt, label={left: $1$}, below=2.5 of n11] (n51) {};
	\node[draw, shape=circle, fill=black, inner sep=2pt, label={left: $2$}, below=1 of n51] (n52) {};
	\node[draw, shape=circle, fill=black, inner sep=2pt, label={right: $3$}, right=1 of n52] (n53) {};
	\node[draw, shape=circle, fill=black, inner sep=2pt, label={right: $4$}, right=1 of n51] (n54) {};
	\edge[-] {n51} {n52};
	\edge[-] {n52} {n53};
	\edge[-] {n53} {n54};
	\edge[-] {n51} {n53};
	\edge[-] {n51} {n54};
	\node[right=0.45 of n51] (n50) {};
	\node[below=1.4 of n50] {$G_{m_1}$};

	\node[draw, shape=circle, fill=black, inner sep=2pt, label={left: $1$}, right=3.3 of n51] (n61) {};
	\node[draw, shape=circle, fill=black, inner sep=2pt, label={left: $2$}, below=1 of n61] (n62) {};
	\node[draw, shape=circle, fill=black, inner sep=2pt, label={right: $3$}, right=1 of n62] (n63) {};
	\node[draw, shape=circle, fill=black, inner sep=2pt, label={right: $4$}, right=1 of n61] (n64) {};
	\edge[-] {n61} {n62};
	\edge[-] {n62} {n63};
	\edge[-] {n63} {n64};
	\edge[-] {n62} {n64};
	\edge[-] {n61} {n64};
	\node[right=0.45 of n61] (n60) {};
	\node[below=1.4 of n60] {$G_{m_2}$};

	\node[draw, shape=circle, fill=black, inner sep=2pt, label={left: $1$}, right=3.3 of n61] (n71) {};
	\node[draw, shape=circle, fill=black, inner sep=2pt, label={left: $2$}, below=1 of n71] (n72) {};
	\node[draw, shape=circle, fill=black, inner sep=2pt, label={right: $3$}, right=1 of n72] (n73) {};
	\node[draw, shape=circle, fill=black, inner sep=2pt, label={right: $4$}, right=1 of n71] (n74) {};
	\edge[-] {n71} {n72};
	\edge[-] {n72} {n73};
	\edge[-] {n73} {n74};
	\edge[-] {n71} {n74};
	\node[right=0.45 of n71] (n70) {};
	\node[below=1.4 of n70] {$G_t$};

	\node[draw, shape=circle, fill=black, inner sep=2pt, label={left: $1$}, right=3.3 of n71] (n81) {};
	\node[draw, shape=circle, fill=black, inner sep=2pt, label={left: $2$}, below=1 of n81] (n82) {};
	\node[draw, shape=circle, fill=black, inner sep=2pt, label={right: $3$}, right=1 of n82] (n83) {};
	\node[draw, shape=circle, fill=black, inner sep=2pt, label={right: $4$}, right=1 of n81] (n84) {};
	\edge[-] {n81} {n82};
	\edge[-] {n82} {n83};
	\edge[-] {n83} {n84};
	\edge[-] {n81} {n84};
	\edge[-] {n81} {n83};
	\edge[-] {n82} {n84};
	\node[right=0.45 of n81] (n80) {};
	\node[below=1.4 of n80] {$G_c$};
	\end{tikzpicture}
	\caption{Some selected graphs in $\mathcal{G}_4$, including $G_t$ as the true graph which is non-decomposable. $G_{m_1}$ and $G_{m_2}$ are two minimal triangulations of $G_t$.} \label{4nodegraph}
\end{figure}

Since the true graph $G_t$ is non-decomposable, the two minimal triangulations of $G_t$ act like the pseudo-true graphs. So we plot the first four pairwise Bayes factors where $G_{m_i}\not\subset G_a$, $i=1,2$ for $G_{m_1}$ and $G_{m_2}$ in logarithmic scale together in Figure \ref{sim4node} (a) and (b), respectively. The logarithm of Bayes factor between two minimal triangulations is in Figure \ref{sim4node} (c). Finally, we plot the Bayes factors of one triangulation (i.e. $G_c$, not minimal) of $G_t$ against both minimal triangulations in Figure \ref{sim4node} (d).

From Figure \ref{sim4node} (a) and (b), we can see the behavior of two minimal triangulations is the same as what we observed in the case where Bayes factors against the true decomposable graph, i.e. missing true edges causes exponential decay of pairwise Bayes factors. And in the case of false positive edges, i.e. Figure \ref{sim4node} (c), the rate is what we expected if $G_{m_1}$ and $G_{m_2}$ are the true graph, polynomial rate. Based on the simulation result in Figure \ref{sim4node} (c), we can see the Bayes factor between two minimal triangulations neither converges to zero nor diverges to infinity. And they are stochastically bounded. In this case, it is closely to 1 which means these two minimal triangulations of $G_t$ are almost the same in this case (in terms of posterior probability). It is also demonstrated by Figure \ref{sim4node} (a), (b) and (d) where the curves between $G_{m_1}$ and $G_{m_2}$ are almost identical.

\begin{figure}[H]
	\centering
	\includegraphics[width=1\textwidth]{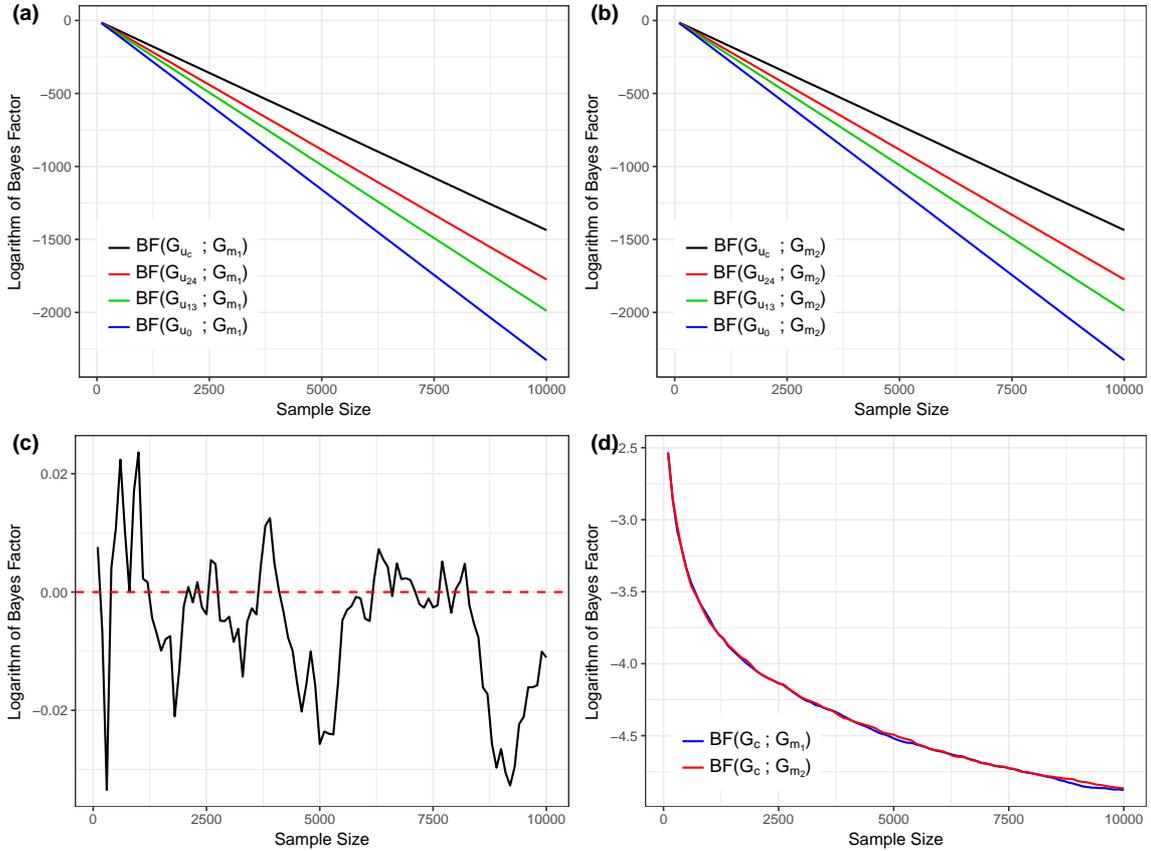}
	\caption{Simulation results of pairwise Bayes factors of $\mathcal{D}_4$ in logarithmic scale. (a) When $G_{m_1}\not\subset G_a$ (missing true edges). (b) When $G_{m_2}\not\subset G_a$ (missing true edges). (c) The Bayes factor between two minimal triangulations of $G_t$, i.e. $\mbox{BF}(G_{m_2};G_{m_1})$. (d) When $G_{m_i}\subsetneq G_a=G_c$, $i=1,2$ (only addition).} \label{sim4node}
\end{figure}

\section{Discussion}\label{sec:disc}
In this paper, we provide a complete theoretical foundation for high-dimensional decomposable graph selection under model misspecification. When the graph dimension is finite, Fitch, Jones and Massam \cite{fitch2014performance} present  pairwise Bayes factor consistency results and stochastic equivalence among minimal triangulations. We  provide more general results of both pairwise consistency and strong selection consistency in high-dimensional scenario. To the best of our knowledge, these are the first complete results on this topic so far. 

In our results, the graph dimension can not be equal to or exceed $n^{1/2}$ and $n^{1/3}$ for pairwise consistency and strong selection consistency, respectively. The limitation of the growth rate of the graph dimension is caused by the convergence rate of sample partial correlations and sample correlations. With the current techniques, without further investigating the relationship among sample partial correlations, these results cannot be improved.   Observe that in i.i.d. case without any sparsity assumptions, it is well-known that the MLE is consistent under ``$p/n$ small'', the Fisher expansion for the MLE is valid under ``$p^2/n$ small'' while the Wilks and asymptotic normality results apply under ``$p^3/n$ small'' \cite{johnstone2010high,spokoiny2013bernstein}. We conjecture that it may not be  possible to relax the growth rate of  $p$ for achieving strong selection consistency using the current formulation of the HIW prior.  This is simply because HIW does not penalize false edges significantly enough so that in high dimension a prior on graph space is needed to achieve both pairwise and strong selection consistency. Also any other sparsity restriction on the elements of the precision matrix is not supported by the HIW prior due to its inability to enforce sufficient shrinkage conditional on the graph. This limits extending the technical results to ultra-high-dimensional case by enforcing additional sparsity assumptions on the elements of the precision matrix. This apparent ``flaw'' lies in the construction of the HIW prior itself and can not be improved by adding any reasonable penalty on the graph space.  

For technical simplicity,  our results are based on HIW $g$-prior only. We conjecture that the consistency results continue to hold for general HIW prior.  Moreover, extensions to non-decomposable graphical models can be done by using $G$-Wishart prior, but major bottlenecks are expected stemming from the lack of a closed form for the normalizing constant for the general HIW prior. Recent work \cite{uhler2018exact} on the development of approximation results for the normalizing constant  may prove to be useful in this regard. 

\begin{table}[H]
	\caption{Summary of notations} \label{notations}
	\begin{center}
		\begin{tabular}{cl|l}
			& \textbf{Symbol} & \textbf{definition} \\
			\hline
			& $\bbP$  & probability corresponding to the true data generating distribution \\ [0.5ex]
			& $\mathcal{G}_k$, $\mathcal{D}_k$ & $k$-dimensional graph space, $k$-dimensional decomposable graph space \\ [0.5ex]
			& $\mathcal{M}_t$ & the minimal triangulation space of $G_t$ when $G_t$ is non-decomposable \\ [0.5ex]
			& $a\asymp b$ & $C_1a \leq b \leq C_2 a$ for constants $C_1, C_2$ \\ [0.5ex]
			& $a\precsim b$ & $a \leq C_3 b$ for a constant $C_3$ \\ [0.5ex]
			& $A\subset B$, $A\not\subset B$ & $A$ is a subset of $B$, $A$ is not a subset of $B$ \\ [0.5ex]
			& $A\subsetneq B$ & $A\subset B$ and $A\neq B$ \\ [0.5ex]			
			& $\abs{\cdot}$ & absolute value, cardinality of sets or determinant of matrices by context \\ [0.5ex]
			& $\pi(\cdot)$, $\pi(\cdot\mid\mathrm{Y})$ & prior distribution and posterior distribution of graphs \\ [0.5ex]
			& $\mathrm{Y}$, $Y_i^T$, $\mathrm{y}_i$ & $n\times p$ data matrix, row of $\mathrm{Y}$, column of $\mathrm{Y}$ \\ [0.5ex] 
			& $\rho_{ij}$, $\rho_{ij|S}$ & correlation and partial correlation between $X_i$ and $X_j$ given $X_S$ \\ [0.5ex]
			& $\hat{\rho}_{ij}$, $\hat{\rho}_{ij|S}$ & sample correlation and partial correlation between $X_i$ and $X_j$ given $X_S$ \\ [0.5ex]
			& $\rho_L$, $\rho_U$ & the lower and upper bound for all $\rho_{ij|V\backslash\{i,j\}}$, where $(i,j)\in E_t$ \\ [0.5ex]
			& $C_i$, $\mathcal{C}$, $S_i$, $\mathcal{S}$ & clique, set of cliques, separator, set of separators \\ [0.5ex]
			& $G_t$, $G_a$, $G_c$ & the true graph, any decomposable graph, the complete graph \\ [0.5ex]
			& $G_m$, $G_0$ & the minimal triangulation when $G_t$ is non-decomposable, empty graph \\ [0.5ex]
			& $\hat{G}$ & posterior mode in the decomposable graph space \\ [0.5ex]
			& $E_t$, $E_a$, $E_c$, $E_a^1$ & edge set of $G_t$, $G_a$, $G_c$ and $E_a^1=E_a\cap E_t$ \\ [0.5ex]
			& $p$, $V$ & graph dimension, vertex set, where $V=\{1,2,\ldots,p\}$ \\ [0.5ex]
 			& $x$, $\overline{x}$, $\widetilde{x}$ & nodes in the graph \\ [0.5ex]
 			& $i$, $j$ & determined by context, nodes in the graph or indices of nodes \\ [0.5ex]
 			& $S$, $\overline{S}$, $\widetilde{S}$ & separators in the graph \\ [0.5ex]
 			& $d_S$, $q$ & cardinality of separator $S$, prior edge inclusion probability \\ [0.5ex]
 			& $\Delta'_\epsilon$, $\Delta'_\epsilon(n)$, $\Delta''_\epsilon(n)$ & probability regions of sample partial correlations \\ [0.5ex]
 			& $\Pi_{xy}$ & the set of all sets that separates node $x$ and $y$, where $(x,y)\not\in E_t$ \\ [0.5ex]
 			& $G_{\pm(x,y)\in E_t}$ & a graph with/without true edge $(x,y)$ \\  [0.5ex]
 			& $G_{\pm(x,y)\not\in E_t}$ & a graph with/without false edge $(x,y)$ \\ [0.5ex]
 			& $\overline{G}_i^{\,c\rightarrow a}$, $\widetilde{G}_i^{\,t\rightarrow c}$ & the $i$th graph in the sequence from $G_c$ to $G_a$ and $G_t$ to $G_c$ \\ [0.5ex]
		\end{tabular}
	\end{center}
	\label{Table:default}
\end{table}

\appendix
The Appendix begins with a set of auxiliary results related to the concentration and tail behavior of partial correlations, following by bounds for Bayes factor for local moves required to prove Theorem \ref{bfupper0}.  Then we provide a proof of Theorem  \ref{bfupper0} followed by  the proofs of Theorem \ref{thpr}, Theorem \ref{thstrong}, Corollary \ref{decomp-mode}, the minimal triangulation Theorems \ref{finitetri} and \ref{hightri} and Corollary \ref{nondecomp-mode}.  
\section{Some results on sample correlation and sample partial correlation coefficients}
\begin{theorem} \label{thcorr0} {\normalfont{(When the population correlation is zero \cite{anderson1984introduction}).}} 
	Assume we have $n$ i.i.d. samples from a multivariate Gaussian distribution. If the population correlation between $X_i$ and $X_j$ is zero, i.e. $\rho_{ij}=0$, the density of the corresponding sample correlation coefficient $\hat{\rho}_{ij}$ as defined in Definition \ref{defscorr} is
	\small
	\begin{equation*}
	f_n(r\mid \rho_{ij}=0) = \frac{\Gamma\big\{\frac{1}{2}(n-1)\big\}}{\Gamma\big\{\frac{1}{2}(n-2)\big\}\sqrt{\pi}} (1-r^2)^{\frac{1}{2}(n-4)}.
	\end{equation*}
	\normalsize
\end{theorem}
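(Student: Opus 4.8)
The plan is to exploit the geometry of the Gaussian sample and reduce the problem to the distribution of the cosine of the angle between a fixed direction and a uniformly random direction on a sphere. Since the hypothesis $\rho_{ij}=0$ only involves the pair $(X_i,X_j)$, and under normality a zero correlation makes these two coordinates independent, I would first restrict attention to the two $n$-vectors $\mathrm{x}_i$ and $\mathrm{x}_j$. Writing $u_i = \mathrm{x}_i - \bar{\mathrm{x}}_i\ind_n$ and $u_j = \mathrm{x}_j - \bar{\mathrm{x}}_j\ind_n$ for the centered samples, both lie in the $(n-1)$-dimensional hyperplane $H = \ind_n^{\perp}$, and by Definition \ref{defscorr} the factors of $n$ cancel so that
\begin{equation*}
	\hat{\rho}_{ij} = \frac{u_i^T u_j}{\norm{u_i}\,\norm{u_j}},
\end{equation*}
which is exactly the cosine of the angle between $u_i$ and $u_j$ inside $H$. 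In particular $\hat\rho_{ij}$ depends on $u_j$ only through its direction.

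Next I would condition on $\mathrm{x}_i$ (equivalently on $u_i$, which is nonzero almost surely). Because $\rho_{ij}=0$, the vector $\mathrm{x}_j$ is independent of $\mathrm{x}_i$ with i.i.d.\ $N(\bbE X_j,\sigma_{jj})$ entries, so $u_j$ is a centered Gaussian vector whose covariance is $\sigma_{jj}(I_n - n^{-1}\ind_n\ind_n^T)$, i.e.\ $\sigma_{jj}$ times the orthogonal projector onto $H$. Its law is therefore invariant under every rotation of $H$, so the normalized direction $u_j/\norm{u_j}$ is uniformly distributed on the unit sphere $S^{n-2}\subset H$, and this direction is independent of both $\norm{u_j}$ and $u_i$. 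By rotational invariance of the uniform measure on the sphere, the conditional law of $r=\hat\rho_{ij}$ given $u_i$ equals the law of the inner product of an arbitrary fixed unit vector with a uniform point on $S^{n-2}$, i.e.\ the law of a single coordinate of a uniform point on the unit sphere in $\bbR^{n-1}$. Since this law does not depend on $u_i$, it is also the unconditional law.

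It then remains to compute the marginal density of one coordinate $W_1$ of a vector $W$ uniformly distributed on $S^{m-1}\subset\bbR^m$ with $m=n-1$. Slicing the sphere at height $W_1=w$ produces an $(m-2)$-sphere of radius $\sqrt{1-w^2}$, and a standard Jacobian computation gives that the density is proportional to $(1-w^2)^{(m-3)/2}$ on $(-1,1)$. The normalizing constant follows from the Beta integral
\begin{equation*}
	\int_{-1}^{1}(1-w^2)^{(m-3)/2}\,dw = B\Big(\tfrac12,\tfrac{m-1}{2}\Big) = \frac{\sqrt{\pi}\,\Gamma\big(\tfrac{m-1}{2}\big)}{\Gamma\big(\tfrac{m}{2}\big)},
\end{equation*}
so the density equals $\{\Gamma(m/2)/(\sqrt{\pi}\,\Gamma((m-1)/2))\}(1-w^2)^{(m-3)/2}$. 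Substituting $m=n-1$ turns the exponent $(m-3)/2$ into $(n-4)/2$ and the constant into $\Gamma((n-1)/2)/(\sqrt{\pi}\,\Gamma((n-2)/2))$, which is exactly the stated expression.

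The main obstacle is the conditioning and spherical-symmetry step: I must justify carefully that after mean-centering the direction $u_j/\norm{u_j}$ is \emph{exactly} uniform on the sphere inside $H$ and is independent of $u_i$. This rests on the observation that $\mathrm{Cov}(u_j)$ is a scalar multiple of the orthogonal projector onto $H$, so the law of $u_j$ is invariant under all rotations of $H$; once this is established, the remaining coordinate-density computation is a routine Jacobian and Beta-function calculation.
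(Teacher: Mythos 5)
Your proposal is correct. Note that the paper itself does not prove this statement at all: it is quoted as a classical result with a citation to Anderson's textbook, so there is no in-paper argument to compare against. Your geometric derivation is essentially the standard one behind that citation: since $(X_i,X_j)$ are jointly Gaussian, $\rho_{ij}=0$ does give independence of $\mathrm{x}_i$ and $\mathrm{x}_j$; the centered vector $u_j$ has covariance $\sigma_{jj}(I_n-n^{-1}\ind_n\ind_n^T)$, a scalar multiple of the projector onto $H=\ind_n^{\perp}$, so its law is spherically symmetric within $H$ and the direction $u_j/\norm{u_j}$ is exactly uniform on $S^{n-2}$, independent of $\norm{u_j}$ and of $u_i$ --- this is the one step you flagged as delicate, and your justification of it is sound. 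The reduction of $\hat\rho_{ij}$ to a single coordinate of a uniform point on the sphere in $\bbR^{n-1}$, and the Beta-integral normalization $\int_{-1}^{1}(1-w^2)^{(m-3)/2}dw=B\big(\tfrac12,\tfrac{m-1}{2}\big)$ with $m=n-1$, reproduce the stated constant $\Gamma\{\tfrac12(n-1)\}/\{\Gamma\{\tfrac12(n-2)\}\sqrt{\pi}\}$ and exponent $\tfrac12(n-4)$ exactly. Two trivial points you could make explicit: the result requires $n>2$ (for $n=2$ the sample correlation is degenerate at $\pm1$ and no density exists, consistent with the nonintegrable exponent), and the nondegeneracy $\sigma_{ii},\sigma_{jj}>0$ guaranteeing $u_i,u_j\neq 0$ almost surely. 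What your write-up buys over the paper's citation is a short, self-contained proof; what it loses is nothing, since Anderson's own derivation proceeds by the same conditioning-plus-rotational-invariance device.
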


\begin{theorem} \label{thcorrnot0} {\normalfont{(When the population correlation is nonzero \cite{hotelling1953new}).}} 
	The sample correlation coefficient in a sample of $n$ from a bivariate normal distribution with population correlation coefficient $\rho$ is distributed with density
	\small	
	\begin{equation*}
	f_n(r\mid\rho) = \frac{n-2}{\sqrt{2\pi}} \frac{\Gamma(n-1)}{\Gamma(n-\frac{1}{2})} (1-\rho^2)^{\frac{1}{2}(n-1)} (1-r^2)^{\frac{1}{2}(n-4)}(1-\rho r)^{-n+\frac{3}{2}} F\Big( \frac{1}{2}, \frac{1}{2}; n-\frac{1}{2}; \frac{1+\rho r}{2} \Big),
	\end{equation*}
	\normalsize
	where $n>2$, $-1\leq r \leq 1$ and $F(\cdot, \cdot; \cdot; \cdot)$ is the hypergeometric function. When $\rho=0$, the density becomes the same as in Theorem \ref{thcorr0}.
\end{theorem}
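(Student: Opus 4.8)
The plan is to derive the density by working with the sample sums-of-squares-and-cross-products matrix and reducing everything to a Wishart computation. First I would note that the sample correlation coefficient $\hat\rho_{ij}$ is invariant under separate increasing affine transformations of the two coordinates, so without loss of generality I may assume the bivariate normal has zero means and unit variances, with covariance matrix $\Sigma = \bigl(\begin{smallmatrix} 1 & \rho \\ \rho & 1\end{smallmatrix}\bigr)$. Writing $A = (a_{rs})$ for the $2\times 2$ matrix of centered sums of cross products, $a_{rs} = \sum_{\ell}(x^{(\ell)}_r - \bar{x}_r)(x^{(\ell)}_s - \bar{x}_s)$, the classical Wishart theorem gives that $A \sim W_2(n-1,\Sigma)$, with density proportional to $\abs{A}^{(n-4)/2}\exp\{-\tfrac12\operatorname{tr}(\Sigma^{-1}A)\}$; the exponent $(n-4)/2$ already foreshadows the factor $(1-r^2)^{(n-4)/2}$ in the statement.

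Then I would change variables from $(a_{11},a_{12},a_{22})$ to $(a_{11},a_{22},r)$ via $a_{12} = r\sqrt{a_{11}a_{22}}$, so that $\abs{A} = a_{11}a_{22}(1-r^2)$ and the Jacobian contributes a factor $\sqrt{a_{11}a_{22}}$. Using $\operatorname{tr}(\Sigma^{-1}A) = (a_{11} - 2\rho a_{12} + a_{22})/(1-\rho^2)$, the joint density of $(a_{11},a_{22},r)$ factors into $(1-r^2)^{(n-4)/2}$ times $(a_{11}a_{22})^{(n-3)/2}$ times $\exp\{-(a_{11}+a_{22})/(2(1-\rho^2))\}\,\exp\{\rho r\sqrt{a_{11}a_{22}}/(1-\rho^2)\}$. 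To integrate out $a_{11}$ and $a_{22}$, the key device is to expand the cross term $\exp\{\rho r\sqrt{a_{11}a_{22}}/(1-\rho^2)\}$ as a power series in $\sqrt{a_{11}a_{22}}$; each monomial then separates, and the resulting one-dimensional integrals are standard Gamma integrals. After collecting the normalizing constant of the Wishart density (which supplies the $(1-\rho^2)^{(n-1)/2}$ and a ratio of Gamma functions, to be simplified via the Legendre duplication formula), this yields $f_n(r\mid\rho)$ as $(1-\rho^2)^{(n-1)/2}(1-r^2)^{(n-4)/2}$ times a power series proportional to $\sum_{k\ge 0}\frac{(2\rho r)^k}{k!}\Gamma\bigl(\tfrac{n-1+k}{2}\bigr)^2$.

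The final, and hardest, step is to recognize this power series as the claimed closed form $(1-\rho r)^{-n+3/2}\,F(\tfrac12,\tfrac12;n-\tfrac12;\tfrac{1+\rho r}{2})$. I would split the sum into even and odd indices and apply the duplication formula to each squared Gamma factor, rewriting the series as a Gauss hypergeometric series in the variable $\rho r$; a Pfaff/Euler-type quadratic transformation of ${}_2F_1$ then converts the argument from $\rho r$ to $(1+\rho r)/2$ and produces the prefactor $(1-\rho r)^{-n+3/2}$. I expect the bookkeeping in this hypergeometric transformation, together with verifying the exact constants so that the density integrates to one, to be the main obstacle; the reduction to the Wishart integral and the term-by-term integration are routine by comparison. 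Finally, setting $\rho = 0$ collapses the series to its $k=0$ term and recovers Theorem \ref{thcorr0}, which serves as a consistency check.
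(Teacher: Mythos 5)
The first thing to note is that the paper contains no proof of this statement: Theorem \ref{thcorrnot0} is imported directly from Hotelling (1953), as the citation in the theorem header indicates, and the appendix only \emph{uses} it (e.g.\ the proof of Theorem \ref{thrate} takes Hotelling's expansion with the $M_k$ integrals off the shelf). So there is no in-paper argument to compare against, and your proposal must be judged on its own; on that basis it is sound, and it is essentially the classical two-stage derivation that the citation points to. Stage one is correct as written: $A \sim W_2(n-1,\Sigma)$ gives the exponent $(n-4)/2$, the substitution $a_{12}=r\sqrt{a_{11}a_{22}}$ has Jacobian $\sqrt{a_{11}a_{22}}$, and term-by-term Gamma integration (justified by Tonelli after expanding, since the absolute series converges) yields Fisher's/Anderson's series form, proportional to $(1-\rho^2)^{(n-1)/2}(1-r^2)^{(n-4)/2}\sum_{k\geq 0}\frac{(2\rho r)^k}{k!}\Gamma^2\big(\frac{n-1+k}{2}\big)$, exactly as you predict. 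Stage two is compressed in one spot: the even/odd split does not give ``a Gauss hypergeometric series in $\rho r$'' directly, but rather the pair $\Gamma^2\big(\frac{n-1}{2}\big)F\big(\frac{n-1}{2},\frac{n-1}{2};\frac12;\rho^2r^2\big)+2\rho r\,\Gamma^2\big(\frac{n}{2}\big)F\big(\frac{n}{2},\frac{n}{2};\frac32;\rho^2r^2\big)$, and the device you need is the quadratic-transformation identity (DLMF 15.8.7--15.8.8, Erd\'elyi \S 2.11) that recombines precisely such an even/odd pair into the single function $F\big(2a,2b;a+b+\frac12;\frac{1+\rho r}{2}\big)$ with $a=b=\frac{n-1}{2}$, i.e.\ $F\big(n-1,n-1;n-\frac12;\frac{1+\rho r}{2}\big)$; one checks that the coefficient ratio $2\Gamma^2(\frac n2)/\Gamma^2(\frac{n-1}{2})$ in your series matches the identity exactly. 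Euler's transformation $F(a,b;c;w)=(1-w)^{c-a-b}F(c-a,c-b;c;w)$, with $c-a-b=\frac32-n$ and $1-w=\frac{1-\rho r}{2}$, then produces both the prefactor $(1-\rho r)^{-n+\frac32}$ and the parameters $\big(\frac12,\frac12;n-\frac12\big)$, and the duplication formula collapses the residual half-integer Gamma products into $\frac{(n-2)\Gamma(n-1)}{\sqrt{2\pi}\,\Gamma(n-\frac12)}$ --- so your anticipated ``bookkeeping'' is real but entirely routine once the right identity is named. Your $\rho=0$ sanity check is also done at the honest place, namely the series (only $k=0$ survives, recovering Theorem \ref{thcorr0}); checking it on the closed form instead would require a nontrivial evaluation of $F\big(\frac12,\frac12;n-\frac12;\frac12\big)$.
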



\begin{proposition} \label{mills} {\normalfont(Mill's ratio).}
	Let $\phi(\cdot)$ and $\Phi(\cdot)$ be the pdf and cdf of the standard normal distribution, respectively and $\widetilde{\Phi}(x) = 1 - \Phi(x)$. Then, we have $\phi(x)\big(\frac{1}{x}-\frac{1}{x^3}\big) \leq \widetilde{\Phi}(x)\leq \frac{\phi(x)}{x}$, for all $x>0$.
\end{proposition}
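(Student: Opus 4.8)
The plan is to treat the two inequalities separately, both resting on the elementary identity $\phi'(t) = -t\,\phi(t)$, which converts between $\phi$ and the integral $\widetilde{\Phi}$.

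For the upper bound, first I would write $\widetilde{\Phi}(x) = \int_x^\infty \phi(t)\,dt$ and observe that for $t \geq x > 0$ one has $\phi(t) \leq (t/x)\,\phi(t)$. Integrating this pointwise inequality and using $t\,\phi(t) = -\phi'(t)$ gives
\begin{equation*}
\widetilde{\Phi}(x) \leq \frac{1}{x}\int_x^\infty t\,\phi(t)\,dt = \frac{1}{x}\big[-\phi(t)\big]_x^\infty = \frac{\phi(x)}{x},
\end{equation*}
which is precisely the right-hand inequality.

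For the lower bound I would switch to a monotonicity argument. Define $h(x) = \widetilde{\Phi}(x) - \phi(x)\big(\tfrac{1}{x} - \tfrac{1}{x^3}\big)$ and aim to show $h(x) \geq 0$ on $(0,\infty)$. Differentiating, with $\widetilde{\Phi}'(x) = -\phi(x)$ and $\phi'(x) = -x\,\phi(x)$, the $-\phi(x)$ contributions cancel and the remaining terms collapse to $h'(x) = -3x^{-4}\phi(x) < 0$. Since $h$ is strictly decreasing while $h(x) \to 0$ as $x \to \infty$, it follows that $h(x) > 0$ for every finite $x > 0$, giving the left-hand inequality. The same device in fact reproduces the upper bound as well: the function $k(x) = \phi(x)/x - \widetilde{\Phi}(x)$ satisfies $k'(x) = -x^{-2}\phi(x) < 0$ together with $k(x) \to 0$ at infinity.

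There is no serious obstacle, as the proposition is classical; the only point demanding care is the differentiation of $\phi(x)\big(x^{-1} - x^{-3}\big)$, where one must verify the cancellation of the leading $-\phi(x)$ term so that $h'$ reduces to the clean form $-3x^{-4}\phi(x)$. Indeed, it is precisely the choice of the correction term $-1/x^3$ that engineers this cancellation and hence the sharpness of the lower bound.
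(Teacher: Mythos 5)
Your proposal is correct, and there is nothing to compare it against: the paper states this classical Mill's-ratio bound as a proposition without any proof, so your argument fills a gap the authors left to the literature. Both halves check out. The upper bound via $\phi(t)\leq (t/x)\phi(t)$ for $t\geq x>0$ and the identity $t\phi(t)=-\phi'(t)$ is the standard one-line integration. For the lower bound, the derivative computation is right: with $h(x)=\widetilde{\Phi}(x)-\phi(x)\big(x^{-1}-x^{-3}\big)$ one gets
\begin{equation*}
h'(x)=-\phi(x)+x\phi(x)\Big(\frac{1}{x}-\frac{1}{x^3}\Big)+\phi(x)\Big(\frac{1}{x^2}-\frac{3}{x^4}\Big)=-\frac{3\phi(x)}{x^4}<0,
\end{equation*}
and since $h(x)\to 0$ as $x\to\infty$, monotonicity forces $h>0$ on $(0,\infty)$, which is in fact strictly stronger than the stated non-strict inequality. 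Your closing remark is also apt: the $-1/x^3$ correction is exactly what cancels the leading $-\phi(x)$ term, and the same device (equivalently, iterating the integration-by-parts $\widetilde{\Phi}(x)=\phi(x)/x-\int_x^\infty t^{-2}\phi(t)\,dt$) generates the full asymptotic expansion of which these two bounds are the first truncations.
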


\begin{proposition} \label{gammabound} {\normalfont(Watson's inequality \cite{watson1959note,mortici2010new}).}
	\small
	\begin{equation*}
		\sqrt{x+\frac{1}{4}} < \frac{\Gamma(x+1)}{\Gamma(x+\frac{1}{2})} \leq \sqrt{x+\frac{1}{\pi}} < \sqrt{x+\frac{1}{2}}, \quad \text{for all } x\geq 0.
	\end{equation*}
	\normalsize
\end{proposition}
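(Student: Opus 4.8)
The plan is to first dispose of the trivial rightmost inequality, $\sqrt{x+1/\pi}<\sqrt{x+1/2}$, which holds for all $x\ge 0$ since $1/\pi<1/2$, and then focus on the two substantive bounds. Write $g(x)=\Gamma(x+1)/\Gamma(x+\tfrac12)$, so the claim becomes $x+\tfrac14 < g(x)^2 \le x+\tfrac1\pi$. Two boundary computations pin down the constants and explain why one inequality is strict and the other attained: at $x=0$ one has $g(0)=\Gamma(1)/\Gamma(\tfrac12)=1/\sqrt\pi$, hence $g(0)^2=1/\pi$, which is why the upper bound holds with equality at $x=0$; and Stirling's expansion gives $g(x)^2 = x+\tfrac14+o(1)$ as $x\to\infty$, which is why $\tfrac14$ is the correct lower constant, only approached in the limit and hence strict. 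Accordingly I set $F_c(x)=g(x)/\sqrt{x+c}$ and reduce everything to showing $F_{1/4}(x)>1$ and $F_{1/\pi}(x)\le 1$.

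For the lower bound I would use the recurrence $g(x+1)=\tfrac{x+1}{x+1/2}\,g(x)$ to compare $F_c$ at unit shifts:
\[
\Big(\tfrac{F_c(x+1)}{F_c(x)}\Big)^2=\frac{(x+1)^2(x+c)}{(x+\tfrac12)^2(x+1+c)},
\]
whose numerator minus denominator equals the linear expression $(c-\tfrac14)x+\tfrac{3c-1}{4}$. For $c=\tfrac14$ this is identically $-\tfrac1{16}<0$, so $F_{1/4}(x+1)<F_{1/4}(x)$ for every $x\ge 0$; since $F_{1/4}(x)\to 1$ along $x,x+1,x+2,\dots$, the strictly decreasing chain forces $F_{1/4}(x)>1$, i.e. $g(x)^2>x+\tfrac14$ for all $x\ge 0$. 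This half is clean.

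The upper bound is the real obstacle, and here the same shift argument breaks down: for $c=1/\pi$ the linear expression $(c-\tfrac14)x+\tfrac{3c-1}{4}$ changes sign (negative for small $x$, positive for large $x$), so $F_{1/\pi}$ is not monotone along unit shifts and the telescoping conclusion fails near $x=0$. Instead I would argue continuously. Set $\phi(x)=\log\Gamma(x+1)-\log\Gamma(x+\tfrac12)-\tfrac12\log(x+\tfrac1\pi)$, noting $\phi(0)=0$ and $\phi(x)\to 0$ as $x\to\infty$; the goal is $\phi(x)\le 0$ on $[0,\infty)$. Differentiating, $\phi'(x)=\psi(x+1)-\psi(x+\tfrac12)-\tfrac1{2(x+1/\pi)}$, and one checks $\phi'(0)=2\log 2-\tfrac\pi2<0$, while the digamma/reciprocal expansions give $\phi'(x)=\big(\tfrac1{2\pi}-\tfrac18\big)x^{-2}+O(x^{-3})>0$ for large $x$. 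Thus $\phi$ starts decreasing and ends increasing; if $\phi'$ has exactly one sign change, then $\phi$ is a single valley between its two zero endpoints and is therefore non-positive throughout, with equality only at $x=0$, which is precisely $g(x)^2\le x+\tfrac1\pi$.

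The crux is the rigorous proof that $\phi'$ changes sign exactly once. I would establish this from the series representation $\psi(x+1)-\psi(x+\tfrac12)=\sum_{k\ge0}\tfrac{1/2}{(x+k+\tfrac12)(x+k+1)}$ together with a term-by-term/complete-monotonicity comparison against $\tfrac1{2(x+1/\pi)}$; equivalently, one shows that $x\mapsto (x+\tfrac1\pi)\big(\psi(x+1)-\psi(x+\tfrac12)\big)$ crosses $\tfrac12$ only once. This monotonicity-of-the-remainder step is exactly the technical content underlying the Watson--Mortici estimates, and it is the single place where the argument is delicate rather than routine; the remaining manipulations are elementary.
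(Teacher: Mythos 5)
The paper offers no proof of this proposition: it is imported as a known result, with the proof residing in \cite{watson1959note} and \cite{mortici2010new}. So your attempt must be judged on its own merits, and it splits cleanly in two. The lower-bound half is complete and correct: with $g(x)=\Gamma(x+1)/\Gamma(x+\frac12)$ and $F_c(x)=g(x)/\sqrt{x+c}$, your ratio computation is right --- the numerator-minus-denominator of $(x+1)^2(x+c)$ versus $(x+\frac12)^2(x+1+c)$ is indeed $(c-\frac14)x+\frac{3c-1}{4}$, which for $c=\frac14$ is identically $-\frac{1}{16}$ --- and the strictly decreasing chain $F_{1/4}(x)>F_{1/4}(x+1)>\cdots\to 1$ (the limit requiring only the classical $g(x)\sim\sqrt{x}$) forces $g(x)^2>x+\frac14$. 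Your boundary checks for the upper bound are also accurate: $g(0)^2=1/\pi$, $\phi'(0)=2\log 2-\pi/2<0$, the expansion $\phi'(x)=(\frac{1}{2\pi}-\frac18)x^{-2}+O(x^{-3})$, and the series $\psi(x+1)-\psi(x+\frac12)=\sum_{k\geq 0}\frac{1/2}{(x+k+\frac12)(x+k+1)}$ are all correct.

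The genuine gap is that the upper bound $g(x)^2\leq x+\frac{1}{\pi}$ is never actually established. Your entire argument funnels through the claim that $\phi'$ changes sign exactly once, and at precisely that point you switch from proof to intention (``I would establish this from the series representation together with a term-by-term/complete-monotonicity comparison''). No such comparison is carried out, and it cannot be waved through: $\phi'$ is negative at $0$ and positive near infinity, so it is neither one-signed nor completely monotone, and a term-by-term comparison of the series against $\frac{1}{2(x+1/\pi)}$ yields sign information pointwise in $k$ but not, without further quantitative work, a single crossing of the sum. You correctly identify this as the technical heart of the Watson--Mortici estimates, but identifying the hard step is not the same as closing it. Worth noting: your own unit-shift computation nearly finishes the job, since for $c=1/\pi$ the expression $(c-\frac14)x+\frac{3c-1}{4}$ is nonnegative for $x\geq x^{*}=(1-3/\pi)/(4/\pi-1)\approx 0.165$, so the increasing chain $F_{1/\pi}(x)\leq F_{1/\pi}(x+1)\leq\cdots\to 1$ already proves the upper bound on $[x^{*},\infty)$; the unproven content is thus confined to the compact window $[0,x^{*})$, where an explicit tail estimate on the digamma series would suffice. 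As written, the proposal is a correct proof of the strict lower bound and the trivial rightmost inequality, plus a credible but incomplete program for the sharp upper bound.
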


\begin{theorem} \label{thrate} {\normalfont{(Tail behavior of sample correlation coefficient).}}
	Let $\hat{\rho}_{ij}$ be the sample correlation coefficient between $X_i$ and $X_j$ with $n$ samples from a $p$-dimensional normal distribution and the corresponding population correlation coefficient is $\rho_{ij}$, where $0\leq\abs{\rho_{ij}}<1$. Then
	\small
	\begin{equation*}
			P\big(\abs{\hat{\rho}_{ij}-\rho_{ij}} > \epsilon \big) < \frac{21}{(1-\abs{\rho_{ij}})^2}\frac{\exp(-n\epsilon^2/4)}{\epsilon\sqrt{n}}, \qquad \text{ for any } 0<\epsilon<1-\abs{\rho_{ij}}, n>2. 
	\end{equation*}
	\normalsize
\end{theorem}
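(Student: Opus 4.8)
The plan is to integrate the exact density of the sample correlation from Theorem \ref{thcorrnot0} over the two tail regions and, in each, reduce the integral to a Gaussian tail controlled by Mills' ratio. Replacing $X_j$ by $-X_j$ flips the signs of both $\rho := \rho_{ij}$ and $\hat\rho := \hat\rho_{ij}$, so I may assume $\rho \ge 0$ and bound $\bbP(\hat\rho > \rho + \epsilon)$ and $\bbP(\hat\rho < \rho - \epsilon)$ separately, adding them at the end.

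The crux is to expose a Gaussian kernel hidden inside the density. I would first discard the inessential factors: the ratio $\Gamma(n-1)/\Gamma(n-\tfrac12)$ is controlled by Watson's inequality (Proposition \ref{gammabound}), and since $\tfrac{1+\rho r}{2}<1$ and the hypergeometric series has nonnegative coefficients, the factor $F(\tfrac12,\tfrac12;n-\tfrac12;\tfrac{1+\rho r}{2})$ is dominated by its value at $1$, namely $\Gamma(n-\tfrac12)\Gamma(n-\tfrac32)/\Gamma(n-1)^2$ (Gauss's evaluation), which Watson's inequality again reduces to an explicit power of $n$. The essential $r$-dependent factor is $(1-\rho^2)^{(n-1)/2}(1-r^2)^{(n-4)/2}(1-\rho r)^{-(n-3/2)}$, and here I would use the identity
\begin{equation*}
\frac{(1-r^2)(1-\rho^2)}{(1-\rho r)^2} = 1 - \Big(\frac{r-\rho}{1-\rho r}\Big)^2
\end{equation*}
to rewrite it as $(1-\rho^2)^{3/2}(1-\rho r)^{-5/2}\big[\,1 - \big(\tfrac{r-\rho}{1-\rho r}\big)^2\,\big]^{(n-4)/2}$. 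The M\"obius substitution $u = (r-\rho)/(1-\rho r)$ then does all the work: its Jacobian is $(1-\rho r)^2/(1-\rho^2)$, and a short computation ($1-\rho r = (1-\rho^2)/(1+\rho u)$) shows that \emph{all} the $\rho$-dependent prefactors collapse to the single bounded factor $(1+\rho u)^{1/2} \le \sqrt2$, while the bracket becomes the clean kernel $(1-u^2)^{(n-4)/2}$. The tail $\{r>\rho+\epsilon\}$ maps to $\{u>u_0\}$ and $\{r<\rho-\epsilon\}$ to $\{u<u_1\}$, with both $u_0$ and $|u_1|$ bounded below by an absolute constant multiple of $\epsilon$ (this uses $\epsilon < 1-|\rho|$).

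With the density now of the form $(\text{const})\cdot(1+\rho u)^{1/2}(1-u^2)^{(n-4)/2}F$, I would bound $1-u^2 \le e^{-u^2}$, rescale by $\sqrt{n-4}$, and invoke Mills' ratio (Proposition \ref{mills}) to estimate $\int_{u_0}^{1} e^{-(n-4)u^2/2}\,du \le \widetilde\Phi\big(u_0\sqrt{n-4}\big)\sqrt{2\pi/(n-4)}$ by a multiple of $e^{-(n-4)u_0^2/2}/\big(u_0(n-4)\big)$. Since $u_0,|u_1| \ge c\epsilon$, this produces exactly the shape $e^{-n\epsilon^2/4}/(\epsilon\sqrt n)$ (for $n \ge 8$, where $(n-4)/2 \ge n/4$; the finitely many smaller $n$ are absorbed into the constant). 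Because the substitution leaves only absolute constants in the prefactor, this argument in fact yields a bound with no factor of $1-|\rho|$ at all; the stated $1/(1-|\rho|)^2$ is therefore slack, and the numerical constant $21$ (and the doubling to $42$ over both tails in the applications) comes from collecting the explicit constants from Watson's inequality, Gauss's evaluation and Mills' ratio.

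The main obstacle is the reorganization in the $\rho \ne 0$ case: the three coupled factors $(1-\rho^2)$, $(1-r^2)$, $(1-\rho r)$ must be recombined so that the Gaussian kernel is centered at $r = \rho$ rather than at $0$, which is precisely what the identity and the M\"obius substitution accomplish; the remaining care is to verify that the transformed thresholds $u_0, |u_1|$ stay comparable to $\epsilon$ uniformly in $\rho$ under the hypothesis $\epsilon < 1-|\rho|$, so that the exponent remains of order $n\epsilon^2$. The case $\rho = 0$ is the degenerate instance in which $u = r$ and the density reduces to that of Theorem \ref{thcorr0}, handled by the same Watson/Mills estimate.
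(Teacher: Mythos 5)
Your proposal is correct, but it takes a genuinely different route from the paper. The paper starts from Hotelling's series representation $P_n(r_0,\rho)=\frac{(n-2)\Gamma(n-1)}{\sqrt{2\pi}\,\Gamma(n-\frac12)}(M_0+R)$, controls the residual $R$ by a multiple of $M_0$ via Hotelling's inequality $2(2n-1)\frac{1-\abs{\rho}}{3-\abs{\rho}}\le M_0/R$ (this is one source of the $(1-\abs{\rho})^{-1}$ factors), and then bounds $\sqrt n\,M_0$ by a case analysis on the sign of $\rho$, converting the integrand pointwise into $\exp\{-n(x-\rho)^2/(2(1-\rho^2))\}$ and finishing with Mills' ratio; a second $(1-\abs{\rho})^{-1}$ enters there, giving the stated $21/(1-\abs{\rho})^2$. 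You instead keep the exact density, dominate the hypergeometric factor by its Gauss value $F(\frac12,\frac12;n-\frac12;1)=\Gamma(n-\frac12)\Gamma(n-\frac32)/\Gamma(n-1)^2$ (legitimate: nonnegative coefficients and $c-a-b=n-\frac32>0$), and let the M\"obius substitution $u=(r-\rho)/(1-\rho r)$ do the centering: your identity and Jacobian computations are correct, and the collapse of all $\rho$-dependence to $(1+\rho u)^{1/2}\le\sqrt2$ checks out, so your bound is indeed uniform in $\rho$ and hence strictly sharper in shape than the paper's. What your route buys is this $\rho$-uniformity and the avoidance of both the residual-control step and the sign-of-$\rho$ case split; what it costs is exactly the care you flagged at the thresholds. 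Two quantitative remarks there: for $\rho\ge0$ the upper threshold satisfies $u_0=\epsilon/(1-\rho(\rho+\epsilon))\ge\epsilon$, but the lower one is $\abs{u_1}=\epsilon/(1-\rho^2+\rho\epsilon)$, and since $1-\rho^2+\rho\epsilon\le1+\rho-2\rho^2\le\frac98$ under $\epsilon<1-\rho$, you get $c=\frac89$, not $c=1$; consequently your parenthetical ``$n\ge8$'' is slightly optimistic for the lower tail, since matching the exponent $n\epsilon^2/4$ needs $(n-4)c^2/2\ge n/4$, i.e. $c>1/\sqrt2$ and $n\ge11$ for $c=\frac89$ (any $c\le1/\sqrt2$, e.g. the crude $\abs{u_1}\ge\epsilon/2$, would fail for \emph{all} $n$, so the sharper threshold bound is essential, not cosmetic). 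The gap for $5\le n\le10$ is harmless because the deficit factor $\exp\{(n/4-(n-4)c^2/2)\epsilon^2\}$ is bounded by an absolute constant when $c^2>1/2$ and $\epsilon\le1$, and $n=3,4$ (where $(1-u^2)^{(n-4)/2}\le e^{-(n-4)u^2/2}$ is unavailable or vacuous) are covered trivially since the right-hand side then exceeds $1$; collecting the Watson, Gauss and Mills constants over both tails one lands well below $21$, confirming your assessment that the paper's constant and its $(1-\abs{\rho})^{-2}$ factor are slack artifacts of its particular decomposition.
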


\begin{proof} 
	First, let $r=\hat{\rho}_{ij}$ and $\rho=\rho_{ij}$, then by Theorem \ref{thcorrnot0}, $f_n(x\mid\rho)$ is the pdf of $r$. Define
	\small
	\begin{equation*}
		P_n(r_0,\rho) = P(r>r_0)= \int_{r_0}^1 f_n(x\mid\rho)dx, \quad -1\leq r_0\leq 1.
	\end{equation*}
	\normalsize
	\noindent By \cite{hotelling1953new}, we have
	\small
	\begin{align*}
		P_n(r_0, \rho) & = \frac{(n-2)\Gamma(n-1)}{\sqrt{2\pi}\Gamma(n-\frac{1}{2})}\Bigg[ M_0 + \frac{2M_0-M_1}{4(2n-1)} + \frac{9(4M_0-4M_1+M_2)}{32(2n-1)(2n+1)} + \ldots \Bigg] \\
		& = \frac{(n-2)\Gamma(n-1)}{\sqrt{2\pi}\Gamma(n-\frac{1}{2})} (M_0 + R),
	\end{align*}
	\normalsize
	where
	\small
	\begin{align*}
		M_k & = \int_{r_0}^1 (1-\rho^2)^{\frac{1}{2}(n-1)}(1-x^2)^{\frac{1}{2}(n-4)}(1-\rho x)^{-n+k+\frac{3}{2}} dx, \qquad k=0,1,2,\ldots , \\
		R & = \frac{2M_0-M_1}{4(2n-1)} + \frac{9(4M_0-4M_1+M_2)}{32(2n-1)(2n+1)} + \ldots,
	\end{align*}
	\normalsize
	and we know that the first term $M_0$ and the rest of the terms have the following inequality \cite{hotelling1953new},
	\small
	\begin{equation*}
		2(2n-1)\frac{1-\abs{\rho}}{3-\abs{\rho}} \leq \frac{M_0}{R} \leq 4(2n-1)\frac{1-\abs{\rho}}{3-\abs{\rho}}.
	\end{equation*}
	\normalsize
	Let $\delta_{\rho} = \frac{1-\abs{\rho}}{3-\abs{\rho}}$. Since $0\leq \abs{\rho}< 1$, then $0<\delta_{\rho}\leq \frac{1}{3}$. We can bound the residual term $R$ by a fraction of $M_0$,
	\small
	\begin{equation*}
		R \leq \frac{M_0}{2\delta_{\rho}(2n-1)} < \frac{M_0}{6\delta_{\rho}},
	\end{equation*}
	\normalsize
	Therefore,
	\small
	\begin{equation*}
		P_n(r_0, \rho) < \frac{(n-2)\Gamma(n-1)}{\sqrt{2\pi}\Gamma(n-\frac{1}{2})} \bigg(1+\frac{1}{6\delta_{\rho}}\bigg)M_0.
	\end{equation*}
	\normalsize
	\noindent Next, we further simplify the bound of $P_n(r_0, \rho)$. By Proposition \ref{gammabound}, we have
	\small
	\begin{equation*}
		\frac{\Gamma(n-1)}{\Gamma(n-\frac{1}{2})} < \frac{1}{\sqrt{n-\frac{5}{4}}} < \frac{1}{\sqrt{n-2}}.
	\end{equation*}
	\normalsize
	Thus,
	\small
	\begin{equation*}
		P_n(r_0, \rho) < \sqrt{\frac{n-2}{2\pi}}\bigg(1+\frac{1}{6\delta_{\rho}}\bigg)M_0 < \frac{1}{\sqrt{\pi}}\bigg(1+\frac{1}{6\delta_{\rho}}\bigg)\sqrt{n}M_0.
	\end{equation*}
	\normalsize
	\noindent Let $r_0 = \rho + \epsilon > \rho$, where $0<\epsilon \leq 1-\rho$. Next, we calculate the upper bound of $\sqrt{n}M_0$ for $0\leq\rho<1$ and $-1<\rho<0$ separately. But first, when $-1<\rho<1$ and $\rho<\rho+\epsilon\leq x\leq 1$, then $1-\rho x>0$. Observe that,
	\small
	\begin{equation*}
		\sqrt{n}M_0 = \sqrt{n} \int_{\rho+\epsilon}^{1}  \Big(\frac{1-x^2}{1-\rho x}\Big)^{\frac{1}{2}(n-4)} \Big(\frac{1-\rho^2}{1-\rho x}\Big)^{\frac{1}{2}(n-1)} \frac{1}{1-\rho x} dx. 
	\end{equation*}
	\normalsize
	\noindent $(\mathrm{I})$ When $0 \leq \rho <1$.	Since $\rho < \rho + \epsilon \leq x \leq 1$ and $\rho \geq 0$, we have $(1- \rho^2)^{-1} < (1-\rho x)^{-1} \leq (1-\rho)^{-1}$. Then
	\small
	\begin{equation*}
		\sqrt{n}M_0 \leq \frac{\sqrt{n}}{1-\rho}\int_{\rho+\epsilon}^{1} \Big( 1 - \frac{x^2 - \rho x}{1-\rho x}   \Big)^{\frac{1}{2}(n-4)} \Big(1 + \frac{\rho x - \rho^2}{1-\rho x} \Big)^{\frac{1}{2}(n-1)} dx. 
	\end{equation*}
	\normalsize
	Since $0<\frac{x^2 - \rho x}{1-\rho x}\leq 1$ and $0<\frac{\rho x - \rho^2}{1-\rho x}\leq \rho$, we have
	\small
	\begin{align*}
		\sqrt{n}M_0 & \leq \frac{\sqrt{n}}{1-\rho}\int_{\rho+\epsilon}^{1} \exp\Big( -\frac{n}{2}\frac{x^2 - \rho x}{1-\rho x} + 2\frac{x^2 - \rho x}{1-\rho x} \Big ) \exp\Big( \frac{n}{2}\frac{\rho x - \rho^2}{1-\rho x}-\frac{1}{2}\frac{\rho x - \rho^2}{1-\rho x} \Big) dx \\
		& \leq \frac{e^2 \sqrt{n}}{1-\rho}\int_{\rho+\epsilon}^{1} \exp\Big( -\frac{n}{2}\frac{x^2 - \rho x}{1-\rho x} \Big) \exp\Big( \frac{n}{2}\frac{\rho x - \rho^2}{1-\rho x} \Big) dx \\
		& \leq \frac{e^2 \sqrt{n}}{1-\rho}\int_{\rho+\epsilon}^{1} \exp\bigg\{-\frac{n(x-\rho)^2}{2(1-\rho^2)}\bigg\} dx.
	\end{align*}
	\normalsize
	Thus, by Proposition \ref{mills},
	\small
	\begin{align*}
		\sqrt{n}M_0 & \leq e^2\sqrt{2\pi}\sqrt{\frac{1+\rho}{1-\rho}} \widetilde{\Phi}\bigg(\frac{\epsilon\sqrt{n}}{\sqrt{1-\rho^2}}\bigg) \\
		& \leq e^2\sqrt{2\pi} (1+ \rho) \frac{\phi\Big( \frac{\epsilon\sqrt{n}}{\sqrt{1-\rho^2}}\Big)}{\epsilon\sqrt{n}} 
		\leq \frac{\exp(2+\rho/2)}{1-\rho} \cdot \frac{\exp(-n\epsilon^2/4)}{\epsilon\sqrt{n}}.
	\end{align*}
	\normalsize

 	\noindent $(\mathrm{II})$ When $-1<\rho<0$. Since $\rho < \rho + \epsilon \leq x \leq 1$ and $\rho < 0$, we have $(1-\rho)^{-1}\leq(1-\rho x)^{-1}<(1-\rho^2)^{-1}$. Then
 	\small
	\begin{equation*}
		\sqrt{n}M_0 \leq \frac{\sqrt{n}}{1-\rho^2}\int_{\rho+\epsilon}^{1} \Big( 1 - \frac{x^2 - \rho x}{1-\rho x}   \Big)^{\frac{1}{2}(n-4)} \Big(1 + \frac{\rho x - \rho^2}{1-\rho x} \Big)^{\frac{1}{2}(n-1)} dx := \overline{M}.
	\end{equation*}
	\normalsize
	\noindent $(\mathrm{II.1})$ When $\rho+\epsilon<0$,
	\small
	\begin{align*}
		\overline{M} 
		& = \frac{\sqrt{n}}{1-\rho^2} \Bigg\{\int_{\rho+\epsilon}^{0} + \int_{0}^{1} \Big( 1 + \frac{\rho x - x^2}{1-\rho x}   \Big)^{\frac{1}{2}(n-4)} \Big(1 - \frac{\rho^2 - \rho x}{1-\rho x} \Big)^{\frac{1}{2}(n-1)} dx \Bigg\}\\
		& := A + B.
	\end{align*}
	\normalsize
	Since $0\leq\frac{\rho x - x^2}{1-\rho x}\leq \Big( \frac{1-\sqrt{1-\rho^2}}{\rho} \Big)^2$ and $0<\frac{\rho^2 - \rho x}{1-\rho x}\leq\rho^2$ when $\rho< x\leq 0$,
	\small
	\begin{align*}
		A & \leq \frac{\sqrt{n}}{1-\rho^2} \int_{\rho+\epsilon}^{0} \exp\Big( \frac{n}{2}\frac{\rho x - x^2}{1-\rho x} - 2\frac{\rho x - x^2}{1-\rho x} \Big) \exp\Big( -\frac{n}{2}\frac{\rho^2 - \rho x}{1-\rho x} + \frac{1}{2} \frac{\rho^2-\rho x}{1-\rho x} \Big) dx \\
		& \leq \frac{e^{\frac{\rho^2}{2}}\sqrt{n}}{1-\rho^2} \int_{\rho+\epsilon}^{0} \exp\Big( \frac{n}{2}\frac{\rho x - x^2}{1-\rho x} \Big ) \exp\Big(-\frac{n}{2}\frac{\rho^2 - \rho x}{1-\rho x} \Big) dx \\
		& \leq \frac{e^{2-\frac{\rho}{2}}\sqrt{n}}{1-\rho^2} \int_{\rho+\epsilon}^{0} \exp \bigg\{ -\frac{n(x-\rho)^2}{2(1-\rho)} \bigg\} dx, \text{ since $0<\frac{\rho^2}{2}<-\frac{\rho}{2}$}.
	\end{align*}
	\normalsize
	Since $0\leq\frac{x^2 - \rho x}{1-\rho x}\leq 1$ and $0<\rho^2\leq\frac{\rho^2 - \rho x}{1-\rho x}\leq-\rho$ when $\rho< 0 \leq x\leq 1$,
	\small
	\begin{align*}
		B & \leq \frac{\sqrt{n}}{1-\rho^2} \int_{0}^{1} \exp\Big( -\frac{n}{2}\frac{x^2 - \rho x}{1-\rho x} + 2\frac{x^2 - \rho x}{1-\rho x} \Big ) \exp\Big( -\frac{n}{2}\frac{\rho^2 - \rho x}{1-\rho x} + \frac{1}{2} \frac{\rho^2-\rho x}{1-\rho x} \Big) dx \\
		& \leq \frac{e^{2-\frac{\rho}{2}}\sqrt{n}}{1-\rho^2} \int_{0}^{1} \exp\Big( -\frac{n}{2}\frac{x^2 - \rho x}{1-\rho x} \Big) \exp\Big( -\frac{n}{2}\frac{\rho^2 - \rho x}{1-\rho x} \Big) dx \\
		& \leq \frac{e^{2-\frac{\rho}{2}}\sqrt{n}}{1-\rho^2} \int_{0}^{1} \exp\bigg\{-\frac{n(x-\rho)^2}{2(1-\rho)} \bigg\} dx,
	\end{align*}
	\normalsize
	Hence, when $-1<\rho<0$ and $\rho+\epsilon<0$, by Proposition \ref{mills} we have
	\small
	\begin{align*}
		\sqrt{n}M_0 & \leq e^{2-\frac{\rho}{2}}\sqrt{2\pi}\frac{\sqrt{1-\rho}}{1-\rho^2} \widetilde{\Phi}\bigg(\frac{\epsilon\sqrt{n}}{\sqrt{1-\rho}}\bigg) \\
		& \leq \frac{e^{2-\frac{\rho}{2}}\sqrt{2\pi}}{1+\rho} \frac{\phi\big( \frac{\sqrt{n}\epsilon}{\sqrt{1-\rho}} \big)}{\sqrt{n}\epsilon} 
		\leq \frac{\exp(2-\rho/2)}{1+\rho} \cdot \frac{\exp(-n\epsilon^2/4)}{\epsilon\sqrt{n}}.
	\end{align*}
	\normalsize
	\noindent $(\mathrm{II.2})$ When $\rho+\epsilon\geq 0$, similar to $B$, we still have
	\small
	\begin{equation*}
		\sqrt{n}M_0 \leq \frac{e^{2-\frac{\rho}{2}}\sqrt{n}}{1-\rho^2} \int_{\rho+\epsilon}^{1} \exp \bigg\{ -\frac{n(x-\rho)^2}{2(1-\rho)} \bigg\} dx
		\leq \frac{\exp(2-\rho/2)}{1+\rho} \cdot \frac{\exp(-n\epsilon^2/4)}{\epsilon\sqrt{n}}.
	\end{equation*}
	\normalsize
	So when $-1<\rho<1$ and $\rho <\rho + \epsilon < 1$,
	\small
	\begin{align*}
		P(r > \rho + \epsilon) & < \frac{1}{\sqrt{\pi}}\bigg(1+\frac{1}{6\delta_{\rho}}\bigg)\frac{\exp(2+\abs{\rho}/2)}{1-\abs{\rho}} \cdot \frac{\exp(-n\epsilon^2/4)}{\epsilon\sqrt{n}} \\
		& < \frac{7}{1-\abs{\rho}} \bigg(1+\frac{1}{6\delta_{\rho}}\bigg) \frac{\exp(-n\epsilon^2/4)}{\epsilon\sqrt{n}} \\
		& < \frac{10.5}{(1-\abs{\rho})^2} \frac{\exp(-n\epsilon^2/4)}{\epsilon\sqrt{n}}, \text{ for any } 0<\epsilon<1-\rho. 
	\end{align*}
	\normalsize
	\noindent For $P_n(r_0, \rho)$, we only need to consider when $r_0 > \rho$, i.e. $r_0=\rho+\epsilon$. For the case which $r_0<\rho$, i.e. $-1<r_0 = \rho-\epsilon<\rho$, we have the following equality,
	\small
	\begin{align*}
		P(r<\rho-\epsilon) & = 1 - P(r>\rho-\epsilon) \\
		& = 1 - \int_{\rho-\epsilon}^{1} f_n(-x\mid-\rho) dx \\
		& = 1 - \int_{-1}^{\epsilon-\rho} f_n(x\mid-\rho) dx \\
		& = P(r > -\rho + \epsilon) \\
		& < \frac{10.5}{(1-\abs{\rho})^2} \frac{\exp(-n\epsilon^2/4)}{\epsilon\sqrt{n}}, \quad \text{ for any } 0<\epsilon<1+\rho.
	\end{align*}
	\normalsize
	Therefore,
	\small
	\begin{equation*}
		P(\abs{r - \rho}> \epsilon) < \frac{21}{(1-\abs{\rho})^2} \frac{\exp(-n\epsilon^2/4)}{\epsilon\sqrt{n}}, \quad \text{ for any } 0<\epsilon<1-\abs{\rho}.
	\end{equation*}
	\normalsize
\end{proof}

\begin{theorem} \label{distpcorr} {\normalfont{(The CDF of sample partial correlation coefficient \cite{anderson1984introduction}).}} 
	If the cdf of sample correlation coefficient $\hat{\rho}_{ij}$ based on $n$ samples from a normal distribution with population correlation coefficient $\rho_{ij}$ is denoted by $F(r\mid n,\rho_{ij})$, then the cdf of the sample partial correlation coefficient $\hat{\rho}_{ij\mid s+1,\ldots, p}$, where $i,j<s+1$, based on $n$ samples from a $p$-dimensional normal distribution with population partial correlation coefficient $\rho_{ij\mid s+1,\ldots,p}$ is $F(r\mid n-p+s, \rho_{ij\mid s+1,\ldots,p}).$ 
\end{theorem}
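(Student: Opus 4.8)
\section*{Proof proposal}

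The plan is to condition on the $n$ observations of the $k:=p-s$ variables $X_{s+1},\ldots,X_p$ being partialled out, show that \emph{conditionally} the sample partial correlation is distributed exactly as an ordinary sample correlation coefficient based on $n-k$ observations with population correlation $\rho_{ij\mid s+1,\ldots,p}$, and then note that this conditional law does not depend on the conditioning values and hence equals the marginal law. Collect the conditioning observations in an $n\times k$ matrix $Z$ and write $\mathrm{x}_i,\mathrm{x}_j$ for the observation vectors of $X_i,X_j$ (here $i,j\le s$). Conditional on $Z$, the Gaussian conditioning identity gives the linear model $\mathrm{x}_i=\mu_i\ind_n+Z\beta_i+\epsilon_i$ and $\mathrm{x}_j=\mu_j\ind_n+Z\beta_j+\epsilon_j$, where the $n\times 2$ residual matrix $E=[\epsilon_i,\epsilon_j]$ has i.i.d.\ rows distributed as $\mathrm{N}_2(0,\Psi)$; the $2\times 2$ matrix $\Psi$ is the conditional covariance of $(X_i,X_j)$ given $X_{s+1},\ldots,X_p$, whose correlation is by definition $\rho_{ij\mid s+1,\ldots,p}$. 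The key point is that this conditional residual law is completely free of $Z$.

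Next I would realize the sample partial correlation as a correlation in a space of reduced dimension. Let $P$ be the orthogonal projection onto the complement of $\mathrm{span}\{\ind_n,\text{columns of }Z\}$, generically of rank $n-k-1$, and let $H$ be an $n\times(n-k-1)$ matrix with orthonormal columns and $HH^T=P$. From Definition \ref{defspcorr}, $\hat{\rho}_{ij\mid s+1,\ldots,p}$ is the cosine of the angle between the residual vectors $P\mathrm{x}_i$ and $P\mathrm{x}_j$, i.e.\ $\hat{\rho}_{ij\mid s+1,\ldots,p}=(H^T\mathrm{x}_i)^T(H^T\mathrm{x}_j)\big/\big(\|H^T\mathrm{x}_i\|\,\|H^T\mathrm{x}_j\|\big)$. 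Because $H^T\ind_n=0$ and $H^TZ=0$, the intercept and the regression on $Z$ drop out, so $H^T\mathrm{x}_i=H^T\epsilon_i$ and $H^T\mathrm{x}_j=H^T\epsilon_j$. Since $E\sim\mathrm{MN}_{n\times2}(0,I_n,\Psi)$ and $H^TH=I_{n-k-1}$, rotation invariance of the matrix normal gives $H^TE\sim\mathrm{MN}_{(n-k-1)\times2}(0,I_{n-k-1},\Psi)$. Thus, conditionally on $Z$, $\hat{\rho}_{ij\mid s+1,\ldots,p}$ is the cosine of the angle between two vectors of length $n-k-1$ whose $n-k-1$ coordinate pairs are i.i.d.\ $\mathrm{N}_2(0,\Psi)$, i.e.\ with correlation $\rho_{ij\mid s+1,\ldots,p}$.

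Finally I would match degrees of freedom with the ordinary correlation. The centered sample correlation based on $N$ observations (Definition \ref{defscorr}) is itself the cosine of the angle between two Gaussian vectors, now living in the $(N-1)$-dimensional space orthogonal to $\ind_N$, and by the same rotation invariance its law depends only on that residual dimension and on $\rho$ (this is the content of Theorems \ref{thcorr0} and \ref{thcorrnot0}). Equating the residual dimensions, $n-k-1=N-1$, forces $N=n-k=n-p+s$, so the conditional cdf of $\hat{\rho}_{ij\mid s+1,\ldots,p}$ is exactly $F(\cdot\mid n-p+s,\rho_{ij\mid s+1,\ldots,p})$. As this conditional law is free of $Z$, it is also the marginal law, which is the claim.

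The main obstacle is the bookkeeping in the second step: one must verify that $P$ annihilates the intercept and the regression on $Z$ at the same time while the matrix-normal structure of the residuals is preserved under $H^T$, so that the effective sample size drops by exactly $k$, the single additional degree of freedom lost to centering being absorbed into the ``$N-1$'' of the ordinary correlation. A cleaner-to-verify alternative is an induction on $k$ that peels off one conditioning variable at a time and invokes the one-variable reduction, lowering the effective sample size from $m$ to $m-1$ at each step.
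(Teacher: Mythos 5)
Your proof is correct; note that the paper itself states Theorem \ref{distpcorr} without proof, citing \cite{anderson1984introduction}, and the argument given there is essentially the one you reconstruct: condition on the partialled-out block, observe that the residuals form an $\mathrm{MN}(0,I,\Psi)$ array whose law is free of the conditioning values, project onto the orthogonal complement of $\mathrm{span}\{\ind_n, Z\}$ (of dimension $n-k-1$ almost surely, since Gaussian $Z$ has full column rank a.s.\ for $n>k+1$), and match this with the $(N-1)$-dimensional geometric representation of the ordinary sample correlation to get $N=n-p+s$. Your degree-of-freedom bookkeeping, including absorbing the one dimension lost to centering into the ``$N-1$'' of the ordinary correlation, is exactly right, so there is nothing to add.
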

\noindent The next corollary is an immediate result from Theorem \ref{thrate} and \ref{distpcorr}.

\begin{corollary} \label{thratep} {\normalfont{(Tail behavior of sample partial correlation coefficient).}}
	Let $\hat{\rho}_{ij\mid S}$ be the sample partial correlation coefficient between $X_i$ and $X_j$, where $i,j\not\in S$, holding $X_S$ fixed based on $n$ samples from a $p$-dimensional normal distribution and the corresponding population partial correlation coefficient is $\rho_{ij\mid S}$, where $0\leq\abs{\rho_{ij\mid S}}<1$ and $\abs{S}=d_S<p$. Then
	\small
	\begin{equation*}
		P\big(\abs{\hat{\rho}_{ij\mid S} - \rho_{ij\mid S}} > \epsilon\big) < \frac{21}{(1-\abs{\rho_{ij\mid S}})^2}\frac{\exp\big\{-(n-d_S)\epsilon^2/4\big\}}{\epsilon\sqrt{n-d_S}}, \quad 0<\epsilon<1-\abs{\rho_{ij\mid S}}.
	\end{equation*}
	\normalsize
\end{corollary}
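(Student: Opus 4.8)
The plan is to reduce the statement directly to Theorem \ref{thrate} by means of the distributional identity recorded in Theorem \ref{distpcorr}. First I would observe that Theorem \ref{distpcorr} identifies the cumulative distribution function of the sample partial correlation coefficient $\hat{\rho}_{ij\mid S}$, conditioned on the $d_S=\abs{S}$ variables indexed by $S$, with that of an ordinary sample correlation coefficient computed from an \emph{effective} sample size $n-d_S$ and population correlation $\rho_{ij\mid S}$. Concretely, conditioning on $k$ variables lowers the sample size from $n$ to $n-k$ in the relevant cdf, so with $k=d_S$ the law of $\hat{\rho}_{ij\mid S}$ coincides exactly with the law of a sample correlation $\hat{\rho}_{ij}$ based on $n-d_S$ bivariate normal observations whose true correlation equals the partial correlation $\rho_{ij\mid S}$.

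Given this identification, the two-sided tail probability $P\big(\abs{\hat{\rho}_{ij\mid S}-\rho_{ij\mid S}}>\epsilon\big)$ equals the corresponding tail probability for a sample correlation coefficient with sample size $n-d_S$ and true correlation $\rho_{ij\mid S}$. I would then invoke Theorem \ref{thrate} with $n$ replaced by $n-d_S$ and $\rho_{ij}$ replaced by $\rho_{ij\mid S}$. This substitution reproduces the claimed bound verbatim, with the factor $(1-\abs{\rho_{ij\mid S}})^{-2}$ and the Gaussian-type decay $\exp\{-(n-d_S)\epsilon^2/4\}/(\epsilon\sqrt{n-d_S})$, valid throughout the range $0<\epsilon<1-\abs{\rho_{ij\mid S}}$.

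The only technical caveat to make precise is the hypothesis $n>2$ appearing in Theorem \ref{thrate}; after the reduction it becomes the requirement $n-d_S>2$, which is harmless in the regime of interest since $d_S<p$ and $n$ will exceed $p$ by the margins imposed in later sections. There is no genuine obstacle here: the entire content of the corollary is carried by the effective-sample-size reduction of Theorem \ref{distpcorr} together with the already-established tail bound, so the result follows immediately once the $n\mapsto n-d_S$ substitution is justified.
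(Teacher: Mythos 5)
Your proposal is correct and coincides with the paper's own argument: the paper presents this corollary as an immediate consequence of Theorems \ref{thrate} and \ref{distpcorr}, i.e., the distributional identity reducing $\hat{\rho}_{ij\mid S}$ to a sample correlation with effective sample size $n-d_S$ and population value $\rho_{ij\mid S}$, followed by the substitution $n\mapsto n-d_S$, $\rho_{ij}\mapsto\rho_{ij\mid S}$ in the tail bound. Your caveat that the hypothesis $n>2$ of Theorem \ref{thrate} becomes $n-d_S>2$ is the right (and only) point requiring care, and it is indeed harmless in the regimes used later.
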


Before introducing the next three lemmas, we first define some notations which are used by them and will be carried on using in the following proofs. Let $R_{ij\mid S}=\big\{\abs{\hat{\rho}_{ij\mid S}-\rho_{ij\mid S}}\leq\epsilon\big\}$. If $(i,j)\not\in E_t$, denote the set of all subsets (of $V$) which separate node $i$ and $j$ as $\Pi_{ij}=\big\{ S\subseteq V\backslash\{i,j\}: \rho_{ij\mid S}=0, (i,j)\not\in E_t\big\}$, $1\leq i<j\leq p$. Define
\small
\begin{align*}
	\Delta'_{\epsilon} & = \Big\{\cap_{(i,j)\in E_t} R_{ij\mid V\backslash\{i,j\}}\Big\} \bigcap 
	\Big\{\cap_{\substack{(i,j)\not\in E_t,\\ \forall S\in\Pi_{ij}}} R_{ij\mid S} \Big\}, \text{ when } p<\infty, \\
	\Delta'_{\epsilon}(n) & = \Big\{\cap_{(i,j)\in E_t} R_{ij\mid V\backslash\{i,j\}}\Big\} \bigcap 
	\Big\{\cap_{\substack{(i,j)\not\in E_t,\\ \forall S\in\Pi_{ij}}} R_{ij\mid S} \Big\}, \text{ when } p \text{ grows with } n, \\
	\Delta''_{\epsilon}(n) & = \Big\{\cap_{(i,j)\in E_t} R_{ij\mid V\backslash\{i,j\}}\Big\} \bigcap 
	\Big\{\cap_{(i,j)\not\in E_t} \big(\cap_{S\in\Pi_{ij}} R_{ij\mid S} \big)\Big\}, \text{ when } p \text{ grows with } n,
\end{align*}
\normalsize
where $\cap_{(i,j)\not\in E_t,\,\forall S\in\Pi_{ij}}$ means intersection of $R_{ij\mid S}$ over all pairs of $(i,j)\not\in E_t$ and for each pair any set of $S\in\Pi_{ij}$ can be used. The $n$ in the bracket means the number of intersections depends on $n$. (When $p$ grows with $n$, the number of edges in the true graph depends on $n$ also.)

\begin{lemma} \label{lemmafinite} {\normalfont(Sample partial correlation simultaneous bounds for pairwise Bayes factor in finite graphs).}
	When the graph dimension $p$ is finite, assume $\rho_U\neq 1$. Let $\epsilon_1(n) = \sqrt{\frac{\log (n-p)}{\tau(n-p)}}$. If $\tau>0$, then $\bbP\big(\Delta'_{\epsilon_1}\big) \rightarrow 1$ as $n \rightarrow \infty$.
\end{lemma}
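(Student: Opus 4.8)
The plan is to pass to the complementary event and apply a union bound. Because $p$ is finite, $\Delta'_{\epsilon_1}$ is an intersection of only finitely many events: one event $R_{ij\mid V\backslash\{i,j\}}$ for each of the $|E_t|\le\binom{p}{2}$ true edges, and one event $R_{ij\mid S}$ (for a single chosen separator $S\in\Pi_{ij}$) for each of the $\le\binom{p}{2}$ non-edges. Writing $N_p$ for the total number of such events, a constant independent of $n$, De Morgan and subadditivity give
\[
\bbP\big((\Delta'_{\epsilon_1})^c\big)\;\le\;\sum P\big(|\hat{\rho}_{ij\mid S}-\rho_{ij\mid S}|>\epsilon_1\big),
\]
so it suffices to control each summand uniformly.

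First I would apply Corollary \ref{thratep} to each term. For a true edge, $S=V\backslash\{i,j\}$ has $d_S=p-2$ and $|\rho_{ij\mid V\backslash\{i,j\}}|\le\rho_U<1$; for a non-edge with its chosen separator, $\rho_{ij\mid S}=0$ and $d_S=|S|\le p-2$. In either case $1-|\rho_{ij\mid S}|\ge 1-\rho_U>0$, so $(1-|\rho_{ij\mid S}|)^{-2}\le(1-\rho_U)^{-2}$ uniformly; and since $\epsilon_1(n)\to 0$, the admissibility requirement $\epsilon_1<1-|\rho_{ij\mid S}|$ of Corollary \ref{thratep} holds for all large $n$. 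This is exactly where the hypothesis $\rho_U\neq 1$ enters, and the fact that $\rho_{ij\mid S}=0$ for the chosen non-edge separators is guaranteed by the definition of $\Pi_{ij}$.

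Next I would substitute $\epsilon_1(n)=\sqrt{\log(n-p)/\{\tau(n-p)\}}$. The rate is tuned so that, using $d_S\le p-2$ and hence $n-d_S\ge n-p$, one gets $(n-d_S)\epsilon_1^2\ge(n-p)\epsilon_1^2=\tau^{-1}\log(n-p)$, whence $\exp\{-(n-d_S)\epsilon_1^2/4\}\le(n-p)^{-1/(4\tau)}$ and $\epsilon_1\sqrt{n-d_S}\ge\sqrt{\tau^{-1}\log(n-p)}$. Each summand is therefore at most $\frac{21}{(1-\rho_U)^2}(n-p)^{-1/(4\tau)}\sqrt{\tau/\log(n-p)}$, and multiplying by $N_p$ yields
\[
\bbP\big((\Delta'_{\epsilon_1})^c\big)\;\le\;N_p\cdot\frac{21}{(1-\rho_U)^2}\,(n-p)^{-\frac{1}{4\tau}}\sqrt{\frac{\tau}{\log(n-p)}}\;\longrightarrow\;0
\]
as $n\to\infty$ for every $\tau>0$, which gives $\bbP(\Delta'_{\epsilon_1})\to 1$.

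I do not expect a genuine obstacle here: with $p$ fixed the number of events is constant, so the union bound loses nothing, and the argument is essentially a one-line tail estimate once Corollary \ref{thratep} is in hand. The only points requiring care are the two uniformity checks above (that every chosen separator yields $\rho_{ij\mid S}=0$, and that $1-|\rho_{ij\mid S}|\ge 1-\rho_U$ holds with an $n$-free lower bound). The real difficulty is deferred to the analogues in which $p$ grows with $n$, namely the events $\Delta'_\epsilon(n)$ and $\Delta''_\epsilon(n)$: there the number of terms in the union diverges, the constant $N_p$ must be replaced by a count controlled through Assumption \ref{assump-edge}, and the choice of $\tau$ must be coordinated with the growth of $p$ so that the polynomial decay $(n-p)^{-1/(4\tau)}$ still dominates the number of events.
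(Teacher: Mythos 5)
Your proposal is correct and follows essentially the same route as the paper's proof: a union bound over the finitely many complement events in $\Delta'_{\epsilon_1}$, then Corollary \ref{thratep} applied with the uniform $n$-free constant $(1-\rho_U)^{-2}$ and the substitution $\epsilon_1(n)=\sqrt{\log(n-p)/\{\tau(n-p)\}}$, giving the same $(n-p)^{-\frac{1}{4\tau}}\big\{\frac{1}{\tau}\log(n-p)\big\}^{-\frac{1}{2}}$ decay. Your explicit checks that $\epsilon_1<1-\abs{\rho_{ij\mid S}}$ holds for large $n$ and that non-edge separators in $\Pi_{ij}$ yield $\rho_{ij\mid S}=0$ are left implicit in the paper but change nothing.
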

\begin{proof}
	For finite $p$, $\rho_U\neq 1$ is a positive constant which does not depend on $n$. By Corollary \ref{thratep}, we have
	\small
	\begin{align*}
		\bbP\big(\Delta'_{\epsilon_1}\big)
		& \geq 1 - \bbP\Big\{\cup_{(i,j)\in E_t} R^C_{ij\mid V\backslash\{i,j\}}\Big\} - \bbP\Big\{\cup_{\substack{(i,j)\not\in E_t,\\ \forall S\in\Pi_{ij}}} R^C_{ij\mid S}\Big\} \\
		& \geq 1 - 21\bigg\{\frac{\abs{E_t}}{(1-\rho_U)^2}+p^2-\abs{E_t}\bigg\} (n-p)^{-\frac{1}{4\tau} }\Big\{\frac{1}{\tau}\log(n-p)\Big\}^{-\frac{1}{2}} \\
		& \rightarrow 1, \text{ as } n\rightarrow\infty.
	\end{align*}
	\normalsize
\end{proof}

\begin{lemma} \label{lemmainf1} {\normalfont(Sample partial correlation simultaneous bounds for posterior ratio in high-dimensional graphs).}
	Under Assumption \ref{assump-dim}, i.e. the graph dimension $p = O(n^\alpha)$ grows with sample size $n$, where $0<\alpha<1$. Let $\epsilon_2(n) = (n-p)^{-\beta}$. If $0<\beta<\frac{1}{2}$, under Assumption \ref{assump-upper}, then $\bbP\big\{\Delta'_{\epsilon_2}(n)\big\} \rightarrow 1$ as $n\rightarrow\infty$.
\end{lemma}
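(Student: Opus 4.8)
The plan is to bound $\bbP\{\Delta'_{\epsilon_2}(n)\}$ from below by controlling the complementary event through a Bonferroni (union) bound, exactly as in the proof of Lemma \ref{lemmafinite}, and then to show the resulting sum of tail probabilities vanishes despite the number of terms growing with $n$. Recall that $\Delta'_{\epsilon_2}(n)$ is the intersection of the events $R_{ij\mid V\backslash\{i,j\}}$ over all true edges $(i,j)\in E_t$ together with one event $R_{ij\mid S}$, $S\in\Pi_{ij}$, for each non-edge $(i,j)\notin E_t$. Hence
\[
	1-\bbP\{\Delta'_{\epsilon_2}(n)\}\leq \sum_{(i,j)\in E_t}\bbP\big(R^C_{ij\mid V\backslash\{i,j\}}\big)+\sum_{(i,j)\notin E_t}\bbP\big(R^C_{ij\mid S}\big),
\]
and each summand is estimated by Corollary \ref{thratep}. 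Note the essential distinction from the event $\Delta''_{\epsilon_2}(n)$: here exactly one separator is used per non-edge, so the number of non-edge terms is at most the number of non-edges, which is polynomial in $p$.

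The key simplification is uniform in the conditioning set. Since every separator used has cardinality $d_S<p$, we have $n-d_S>n-p$, so each tail bound of Corollary \ref{thratep} is dominated by $\frac{\exp\{-(n-p)\epsilon_2^2/4\}}{\epsilon_2\sqrt{n-p}}$, with the factor $(1-\abs{\rho_{ij\mid S}})^{-2}$ equal to $1$ for non-edges (where $\rho_{ij\mid S}=0$) and at most $(1-\rho_U)^{-2}$ for true edges. As the numbers of edges and non-edges are each at most $\binom{p}{2}\leq p^2$, collecting the terms yields
\[
	1-\bbP\{\Delta'_{\epsilon_2}(n)\}\leq 21\Big\{\frac{\abs{E_t}}{(1-\rho_U)^2}+p^2\Big\}\frac{\exp\{-(n-p)\epsilon_2^2/4\}}{\epsilon_2\sqrt{n-p}}.
\]
Substituting $\epsilon_2=(n-p)^{-\beta}$ gives $(n-p)\epsilon_2^2=(n-p)^{1-2\beta}$ and $\epsilon_2\sqrt{n-p}=(n-p)^{1/2-\beta}$, so the per-term factor becomes a stretched exponential $\exp\{-(n-p)^{1-2\beta}/4\}/(n-p)^{1/2-\beta}$; this is where $\beta<1/2$ is essential, since it makes the exponent $1-2\beta$ strictly positive. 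Under Assumption \ref{assump-dim} the prefactor satisfies $p^2\asymp n^{2\alpha}$, and under Assumption \ref{assump-upper} the inflation factor obeys $(1-\rho_U)^{-2}\asymp n^{2k}$, so the bracketed prefactor is polynomial in $n$. A stretched exponential decays faster than any polynomial grows, so the bound tends to $0$ and $\bbP\{\Delta'_{\epsilon_2}(n)\}\to 1$.

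The main obstacle is not the counting but verifying that Corollary \ref{thratep} is actually applicable to the true-edge terms, for which its hypothesis demands $\epsilon_2<1-\abs{\rho_{ij\mid V\backslash\{i,j\}}}$; since the worst case is $\abs{\rho_{ij\mid V\backslash\{i,j\}}}=\rho_U$, it suffices to secure $\epsilon_2<1-\rho_U$. With $\epsilon_2\asymp n^{-\beta}$ and $1-\rho_U\asymp n^{-k}$, this holds for all large $n$ as long as $\rho_U$ does not approach $1$ faster than the deviation window $\epsilon_2$ closes, which is precisely the role of Assumption \ref{assump-upper}: it keeps this admissibility window open and guarantees that the inflation factor $(1-\rho_U)^{-2}$ remains polynomial and is therefore swamped by the stretched-exponential decay. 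Once this admissibility check is in place, the remaining manipulations are routine and mirror those of Lemma \ref{lemmafinite}.
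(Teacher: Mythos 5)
Your proposal is correct and follows essentially the same route as the paper's proof: a union bound over the true-edge and non-edge events, the uniform tail bound of Corollary \ref{thratep} with $n-d_S>n-p$, and substitution of $\epsilon_2=(n-p)^{-\beta}$ so that the stretched-exponential factor $\exp\{-(n-p)^{1-2\beta}/4\}$ (requiring $\beta<1/2$) swamps the polynomial prefactor $|E_t|/(1-\rho_U)^2+p^2$. Your explicit admissibility check $\epsilon_2<1-\rho_U$ is a point the paper passes over silently (and, strictly, it needs $k<\beta$ rather than Assumption \ref{assump-upper} alone), but this matches the paper's implicit usage and does not constitute a departure from its argument.
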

\begin{proof}
	By Corollary \ref{thratep}, we have
	\small
	\begin{align*}
		\bbP\big\{\Delta'_{\epsilon_2}(n)\big\}
		& \geq 1 - \bbP\Big\{\cup_{(i,j)\in E_t} R^C_{ij\mid V\backslash\{i,j\}}\Big\} - \bbP\Big\{\cup_{\substack{(i,j)\not\in E_t,\\ \forall S\in\Pi_{ij}}} R^C_{ij\mid S}\Big\} \\
		& \geq 1 - 21\bigg\{\frac{\abs{E_t}}{(1-\rho_U)^2}+p^2-\abs{E_t}\bigg\} (n-p)^{\beta-\frac{1}{2}}\exp\Big\{-\frac{1}{4}(n-p)^{1-2\beta}\Big\} \\
		& \rightarrow 1, \text{ as } n\rightarrow\infty.
	\end{align*}
	\normalsize
\end{proof}

\begin{proposition} \label{binombound} {\normalfont(Lower and upper bound of binomial coefficient).}
	\small
	\begin{equation*}
	\Big(\frac{n}{k}\Big)^k \leq \binom{n}{k} \leq \Big(\frac{en}{k} \Big)^k,
	\end{equation*}
	\normalsize
	where $k\leq n$ and $k$, $n$ are positive integers.
\end{proposition}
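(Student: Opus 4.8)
The plan is to write the binomial coefficient as a product of $k$ ratios and bound it factorwise, handling the two inequalities separately. I would start from the elementary identity
$$\binom{n}{k} = \frac{n(n-1)\cdots(n-k+1)}{k!} = \prod_{i=0}^{k-1}\frac{n-i}{k-i},$$
which is valid for all integers $1 \le k \le n$. Both bounds then reduce to comparing this product against $(n/k)^k$.

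For the lower bound, I would argue factor by factor. Since $n \ge k$, for each $i$ with $0 \le i \le k-1$ one has $k(n-i) \ge n(k-i)$, equivalently $\frac{n-i}{k-i} \ge \frac{n}{k}$, with equality exactly at $i=0$. Multiplying these $k$ inequalities gives $\binom{n}{k} \ge (n/k)^k$ at once.

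For the upper bound, I would instead discard the descending factors in the numerator in favour of $n$, giving $\binom{n}{k} \le n^k/k!$, and then bound the factorial from below by $k! \ge (k/e)^k$. The latter is the only step needing a short justification: it follows from the series $e^k = \sum_{j \ge 0} k^j / j! \ge k^k/k!$, whence $k! \ge k^k/e^k = (k/e)^k$. Substituting yields $\binom{n}{k} \le n^k (e/k)^k = (en/k)^k$, completing the argument. An equivalent route for the upper bound is to observe $\binom{n}{k}(k/n)^k \le \sum_{j=0}^n \binom{n}{j}(k/n)^j = (1 + k/n)^n \le e^k$, using the binomial theorem and $1 + x \le e^x$. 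Either way the proof is entirely elementary; since this is a standard estimate, I do not anticipate any genuine obstacle, the only mild subtlety being the one-line derivation of $k! \ge (k/e)^k$.
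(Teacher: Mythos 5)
Your proof is correct and complete: the factorwise inequality $\frac{n-i}{k-i}\geq\frac{n}{k}$ (equivalent to $(n-k)i\geq 0$) gives the lower bound, and $\binom{n}{k}\leq n^k/k!$ combined with $k!\geq (k/e)^k$ (from $e^k\geq k^k/k!$) gives the upper bound, with your binomial-theorem alternative equally valid. The paper states Proposition \ref{binombound} without any proof, treating it as a standard estimate, so there is nothing to compare against; your argument is the canonical one and fills that gap correctly.
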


\begin{lemma} \label{lemmainf2} {\normalfont(Sample partial correlation simultaneous bounds for strong selection consistency in high-dimensional graphs).}
	Under Assumption \ref{assump-dim}, i.e. the graph dimension $p = O(n^\alpha)$ grows with sample size $n$, where $0<\alpha<1$. Let $\epsilon_3(n) = (n-p)^{-\beta}$, where $0<\beta<\frac{1}{2}$. If $\alpha+2\beta<1 $, under Assumption \ref{assump-upper}, then $\bbP\big\{\Delta''_{\epsilon_3}(n)\big\} \rightarrow 1$ as $n \rightarrow \infty$.
\end{lemma}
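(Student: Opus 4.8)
The plan is to follow the union-bound strategy of Lemma~\ref{lemmainf1}, but the decisive new feature is that the second block defining $\Delta''_{\epsilon_3}(n)$ forces \emph{every} separating set $S\in\Pi_{ij}$ to satisfy its concentration event simultaneously, rather than a single representative set per non-edge. Since $\Pi_{ij}$ consists of subsets of $V\backslash\{i,j\}$, its cardinality can be as large as $2^{p-2}$, so the number of events to be controlled grows like $p^2\cdot 2^{p-2}$, which is exponential in $p$. The whole argument hinges on showing that the exponentially small tail probability supplied by Corollary~\ref{thratep} overcomes this exponentially large count, and this is exactly what the sharpened hypothesis $\alpha+2\beta<1$ secures.

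First I would apply the union bound to $\bbP\{(\Delta''_{\epsilon_3}(n))^C\}$, separating the contribution of true edges from that of the non-edges summed over all their separating sets. For a true edge I invoke Corollary~\ref{thratep} with $\abs{\rho_{ij\mid V\backslash\{i,j\}}}\leq\rho_U$, so the prefactor is at most $21/(1-\rho_U)^2$, and there are at most $p^2$ such terms. For a non-edge, by the definition of $\Pi_{ij}$ we have $\rho_{ij\mid S}=0$, so the prefactor is just $21$; moreover $d_S=\abs{S}<p$, so substituting $\epsilon_3=(n-p)^{-\beta}$ and using $n-d_S\geq n-p$ bounds each tail uniformly by $21\,(n-p)^{\beta-1/2}\exp\{-(n-p)^{1-2\beta}/4\}$. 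Collecting the two blocks and using $\abs{E_t}\leq p^2$ yields
\begin{equation*}
\bbP\{(\Delta''_{\epsilon_3}(n))^C\}\leq 21\Big(\frac{p^2}{(1-\rho_U)^2}+p^2\,2^{p-2}\Big)(n-p)^{\beta-\frac12}\exp\Big\{-\tfrac14(n-p)^{1-2\beta}\Big\}.
\end{equation*}

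The decisive step is the asymptotic comparison of the dominant prefactor $p^2\,2^{p-2}$ against the exponential decay. Taking logarithms, and using $p\precsim n^\alpha$ from Assumption~\ref{assump-dim} together with $n-p\sim n$ (valid since $\alpha<1$), the growth contributes $(p-2)\log 2=O(n^\alpha)$ while the decay contributes $-\tfrac14(n-p)^{1-2\beta}\sim-\tfrac14 n^{1-2\beta}$; Assumption~\ref{assump-upper} keeps the true-edge prefactor $1/(1-\rho_U)^2$ merely polynomial in $n$, hence subdominant. Because $\alpha+2\beta<1$ is equivalent to $1-2\beta>\alpha$, the decay exponent dominates, the logarithm of the bound tends to $-\infty$, and therefore $\bbP\{(\Delta''_{\epsilon_3}(n))^C\}\to 0$, giving the claim.

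I expect the main obstacle to be precisely the bookkeeping of the exponentially many separating sets: unlike Lemma~\ref{lemmainf1}, where one representative $S$ per non-edge sufficed and the weaker condition $\beta<1/2$ was enough, here the full $2^{p-2}$ factor must be absorbed, and it is this factor that forces the stronger restriction $\alpha+2\beta<1$. Some care is also needed to check that the single uniform bound $n-d_S\geq n-p$ controls separating sets of all sizes at once, so that the estimate above is legitimate across the entire intersection.
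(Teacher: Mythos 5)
Your proof is correct and follows essentially the same route as the paper's: a union bound over the true edges and over all pairs $(i,j)\not\in E_t$ together with \emph{every} $S\in\Pi_{ij}$, uniform control of each tail via Corollary \ref{thratep} with $n-d_S\geq n-p$ and prefactor $21/(1-\rho_U)^2$ (polynomial in $n$ by Assumption \ref{assump-upper}), and the decisive comparison that the $e^{O(p)}=e^{O(n^\alpha)}$ count of separating sets is absorbed by the decay $\exp\{-\tfrac14(n-p)^{1-2\beta}\}$ precisely because $\alpha+2\beta<1$. The only cosmetic difference is that you count the separating sets exactly as $2^{p-2}$ per pair, whereas the paper bounds each binomial coefficient by $(2e)^{p/2}$ via Proposition \ref{binombound} and arrives at the prefactor $p^3e^p$; both are of the same exponential order and give the identical conclusion.
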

\begin{proof}
	By Corollary \ref{thratep}, we have
	\small
	\begin{align*}
		\bbP\big\{\Delta''_{\epsilon_3}(n)\big\}
		& \geq 1 - \bbP\Big\{\cup_{(i,j)\in E_t} R^C_{ij\mid V\backslash\{i,j\}}\Big\} - \bbP\Big\{\cup_{(i,j)\not\in E_t} \big(\cup_{S\in\Pi_{ij}} R^C_{ij\mid S}\big)\Big\} \\
		& \geq 1 - \sum_{(i,j)\in E_t}\bbP\big(R^C_{ij\mid V\backslash\{i,j\}}\big) - \sum_{(i,j)\not\in E_t}\sum_{\abs{S}=0}^{p-2}\binom{p-2}{\abs{S}}\bbP\big(R^C_{ij\mid S}\big) \\
		& \geq 1 - \abs{E_t}\bbP\big(R^C_{ij\mid V\backslash\{i,j\}}\big) - \sum_{(i,j)\not\in E_t}\sum_{\abs{S}=0}^{p-2}(2e)^{p/2}\bbP\big(R^C_{ij\mid S}\big) \\
		& \geq 1 - 21\bigg\{\frac{\abs{E_t}}{(1-\rho_U)^2}+p^3e^p\bigg\} (n-p)^{\beta-\frac{1}{2}}\exp\Big\{-\frac{1}{4}(n-p)^{1-2\beta}\Big\} \\
		& \rightarrow 1, \text{ as } n\rightarrow\infty.
	\end{align*}
	\normalsize	
\end{proof}

\begin{proposition} \label{betabound} {\normalfont(Sharp bounds for Beta CDF \cite{segura2016sharp}).}
	Assume $Z\sim Beta(a,b)$, then
	\small
	\begin{align*}
		P(Z\leq z) & < \frac{z^a(1-z)^b}{B(a,b)\{a-(a+b)z\}}, \quad z<\frac{a}{a+b}, \\
		P(Z > z)   & < \frac{z^a(1-z)^b}{B(a,b)\{(a+b)z-a\}}, \quad z>\frac{a}{a+b},
	\end{align*}
	\normalsize
	where $B(a,b)=\frac{\Gamma(a)\Gamma(b)}{\Gamma(a+b)}$.
\end{proposition}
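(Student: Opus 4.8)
The plan is to reduce both tail bounds to a single elementary observation about the antiderivative of the Beta integrand. Writing the density as $f(t)=t^{a-1}(1-t)^{b-1}/B(a,b)$ and setting $g(t)=t^{a}(1-t)^{b}$, I would first record the key identity
\begin{equation*}
	g'(t)=t^{a-1}(1-t)^{b-1}\bigl[a-(a+b)t\bigr],
\end{equation*}
obtained by direct differentiation. This lets me rewrite the unnormalized Beta integrand as $t^{a-1}(1-t)^{b-1}=g'(t)/\{a-(a+b)t\}$ on any interval avoiding the point $t=a/(a+b)$, converting the incomplete Beta integral into an integral of $g'$ weighted by a single rational factor.

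For the lower tail I would write $P(Z\le z)=\frac{1}{B(a,b)}\int_0^z g'(t)/\{a-(a+b)t\}\,dt$. On $(0,z)$ with $z<a/(a+b)$ the factor $a-(a+b)t$ is positive and decreasing in $t$, so $1/\{a-(a+b)t\}$ is increasing and bounded above by its endpoint value $1/\{a-(a+b)z\}$; moreover $g'(t)\ge 0$ there since all three factors in the identity are nonnegative. Pulling the endpoint bound through the nonnegative integrand and telescoping $\int_0^z g'(t)\,dt=g(z)-g(0)=z^a(1-z)^b$ yields exactly the claimed bound. Strictness follows because the inequality $1/\{a-(a+b)t\}<1/\{a-(a+b)z\}$ is strict for $t<z$ while $g'>0$ on a set of positive measure.

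For the upper tail the argument is symmetric. I would write $a-(a+b)t=-\{(a+b)t-a\}$ so that $P(Z>z)=\frac{1}{B(a,b)}\int_z^1 \{-g'(t)\}/\{(a+b)t-a\}\,dt$, where now on $(z,1)$ with $z>a/(a+b)$ the factor $(a+b)t-a$ is positive and increasing, hence $1/\{(a+b)t-a\}$ is decreasing and bounded above by $1/\{(a+b)z-a\}$, and $-g'(t)\ge 0$. The same endpoint substitution together with $\int_z^1\{-g'(t)\}\,dt=g(z)-g(1)=z^a(1-z)^b$ gives the second inequality.

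The computations are routine; the only point requiring genuine care is the bookkeeping of signs and monotonicity directions across the two regimes $z<a/(a+b)$ and $z>a/(a+b)$. Specifically, I must verify that in each case the nonnegative function in play ($g'$ below the mode, $-g'$ above it) is paired with a rational factor that is monotone in the direction making its endpoint value the maximum, so that a single endpoint substitution produces an upper bound of the correct orientation. I expect this sign-tracking to be the main (minor) obstacle, and the telescoping evaluations $g(0)=g(1)=0$ to make the rest immediate.
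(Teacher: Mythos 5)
Your proof is correct. Note that the paper itself does not prove Proposition \ref{betabound} at all: it is quoted as a known result with a citation to Segura (2016), so there is no internal argument to compare against. Your derivation is a clean, self-contained alternative: the identity $g'(t)=t^{a-1}(1-t)^{b-1}\{a-(a+b)t\}$ for $g(t)=t^a(1-t)^b$, the sign and monotonicity checks on $a-(a+b)t$ (respectively $(a+b)t-a$) on the relevant interval, the endpoint substitution of the monotone rational weight, and the telescoping $g(0)=g(1)=0$ (valid since $a,b>0$, which also makes $g'$ integrable despite the possible endpoint singularities when $a<1$ or $b<1$, so the fundamental theorem of calculus applies) all go through exactly as you describe, and your strictness argument is sound. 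The cited source obtains these bounds (and their sharpness as $z$ approaches the endpoints) from a more general framework based on first-order differential inequalities for ratios of the form $F/f$, which produces whole families of bounds for several classical distributions; your one-step monotone-weight argument is more elementary and delivers precisely the two inequalities the paper needs, which is all that is required here.
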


\begin{theorem} \label{exactrate} {\normalfont(Exact convergence rate of sample correlation coefficient when population correlation coefficient is zero).}
	Let $\hat{\rho}_{ij}$ be the sample correlation coefficient between $X_i$ and $X_j$ with $n$ samples from a $p$-dimensional normal distribution. Assume its corresponding population correlation coefficient $\rho_{ij}$ is zero. For any $0<\epsilon<1/2$, there exist two finite constant $0<M_1(\epsilon)<1/4$ and $M_2(\epsilon)>3$, such that
	\small
	\begin{equation*}
		\bbP\Big(\hat{\rho}^2_{ij}<\frac{M_1}{n}\Big) < \epsilon, \quad \bbP\Big(\hat{\rho}^2_{ij}>\frac{M_2}{n}\Big) < \epsilon, \quad \text{ for any } n>3.
	\end{equation*}
	\normalsize
\end{theorem}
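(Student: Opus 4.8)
The plan is to reduce the statement to a pair of tail estimates for a Beta random variable and then invoke the sharp Beta CDF bounds of Proposition \ref{betabound}. The first step is to pin down the exact law of $\hat{\rho}_{ij}^2$ under $\rho_{ij}=0$. Starting from the density in Theorem \ref{thcorr0} and performing the change of variables $z=r^2$, folding the two symmetric branches $r=\pm\sqrt{z}$, the doubling and the Jacobian $z^{-1/2}/2$ combine to give the density $\tfrac{\Gamma((n-1)/2)}{\sqrt{\pi}\,\Gamma((n-2)/2)}\,z^{-1/2}(1-z)^{(n-4)/2}$ on $(0,1)$. This is exactly the $\mathrm{Beta}(1/2,(n-2)/2)$ density, since $B(1/2,(n-2)/2)^{-1}=\Gamma((n-1)/2)/\{\sqrt{\pi}\,\Gamma((n-2)/2)\}$. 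Thus $\hat{\rho}_{ij}^2\sim\mathrm{Beta}(1/2,(n-2)/2)$, whose mean $1/(n-1)\asymp 1/n$ already confirms the claimed scaling. Henceforth I write $a=1/2$, $b=(n-2)/2$, so that $a/(a+b)=1/(n-1)$.

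For the lower tail I would apply the first inequality of Proposition \ref{betabound} at $z=M_1/n$. The hypothesis $z<a/(a+b)$ holds for every $n\ge 2$ because $M_1<1/4<n/(n-1)$. I would then bound the factor $(1-z)^b\le 1$, bound the bracket $a-(a+b)z=1/2-(n-1)M_1/(2n)$ below by $3/8$ uniformly in $n$ (using $M_1<1/4$), and bound $B(1/2,(n-2)/2)^{-1}$ above by $\sqrt{n/(2\pi)}$ via Watson's inequality (Proposition \ref{gammabound}). The crucial cancellation is that this $\sqrt{n}$ annihilates the $1/\sqrt{n}$ produced by $z^{1/2}=(M_1/n)^{1/2}$, leaving a bound of the form $C\sqrt{M_1}$ independent of $n$. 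Choosing $M_1$ small enough (which for $\epsilon<1/2$ is automatically compatible with $M_1<1/4$) then forces this below $\epsilon$.

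For the upper tail I would apply the second inequality at $z=M_2/n$. When $n\le M_2$ one has $M_2/n\ge 1$ and the probability is identically zero, so I only need to treat $n>M_2$; there $M_2>3$ guarantees $z>a/(a+b)=1/(n-1)$ and also $z<1$. The same three ingredients reappear: $(1-z)^b\le e^{-M_2(n-2)/(2n)}\le e^{-M_2/4}$ for $n\ge 4$, the bracket $(a+b)z-a$ is bounded below by $(M_2-2)/2$ (using $n>M_2$), and Watson's inequality again gives $B^{-1}\le\sqrt{n/(2\pi)}$, cancelling $z^{1/2}$. This yields a bound of the form $C\sqrt{M_2}\,e^{-M_2/4}/(M_2-2)$, which tends to $0$ as $M_2\to\infty$, so choosing $M_2$ large (and $>3$) makes it smaller than $\epsilon$.

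The only genuine subtlety, and the step I expect to demand the most care, is that the estimates must be uniform over all $n>3$ rather than merely asymptotic in $n$. This uniformity is precisely what the exact Beta representation buys us: every $n$-dependent factor is controlled by an explicit inequality, namely the bracket by a constant, $(1-z)^b$ by $1$ or by $e^{-M_2/4}$, and the Beta normalizer by Watson's two-sided bound, and the lone $\sqrt{n}$ coming from the normalizer is exactly cancelled by the $1/\sqrt{n}$ inside $z^{1/2}$. What remains is a constant depending only on $M_1$ (respectively $M_2$) that can be driven below $\epsilon$ independently of $n$.
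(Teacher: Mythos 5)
Your proposal is correct and follows essentially the same route as the paper's own proof: identify $\hat{\rho}^2_{ij}\sim\mathrm{Beta}\big(\tfrac12,\tfrac{n-2}{2}\big)$, apply the sharp Beta CDF bounds of Proposition \ref{betabound} at $z=M_1/n$ and $z=M_2/n$, and control the normalizer via Watson's inequality (Proposition \ref{gammabound}) so that the $\sqrt{n}$ cancels $z^{1/2}$, leaving $n$-free bounds of order $\sqrt{M_1}$ and $\sqrt{M_2}\,e^{-cM_2}/(M_2-2)$. The only differences are cosmetic: the paper exhibits explicit constants $M_1=\big(\tfrac{\epsilon}{\epsilon+1}\big)^2$ and $M_2=6\log(5/\epsilon)$ (using $(n-2)/n\geq 1/3$ for $n>3$ where you use $e^{-M_2/4}$ for $n\geq 4$), while you argue existence, and you explicitly dispose of the trivial regime $n\leq M_2$, which the paper passes over silently.
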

\begin{proof}
	By Theorem \ref{thcorr0}, we know $\hat{\rho}^2_{ij}\sim \mbox{Beta}\Big(\frac{1}{2},\frac{n-2}{2}\Big)$. For any given $\epsilon$, where $0<\epsilon<\frac{1}{2}$, let $M_1=\big(\frac{\epsilon}{\epsilon+1}\big)^2<\frac{1}{4}$ and $M_2=6\log\big(\frac{5}{\epsilon}\big)>3$. Thus, $\frac{M_1}{n}<\frac{1/2}{1/2+(n-2)/2}$ and $\frac{M_2}{n}>\frac{1/2}{1/2+(n-2)/2}$.
	By Proposition \ref{betabound},
	\small
	\begin{align*}
		\bbP\Big(\hat{\rho}^2_{ij}<\frac{M_1}{n}\Big) & < \frac{\big(\frac{M_1}{n}\big)^{\frac{1}{2}}\big(1-\frac{M_1}{n}\big)^{\frac{n-2}{2}}}{B\big(\frac{1}{2},\frac{n-2}{2}\big)\big(\frac{1}{2}-\frac{n-1}{2}\frac{M_1}{n}\big)} \\
		& < \frac{\Gamma\big(\frac{n-1}{2}\big)}{\Gamma\big(\frac{n-2}{2}\big)\sqrt{\pi}}\sqrt{\frac{M_1}{n}} \exp\Big(-\frac{M_1}{2}\frac{n-2}{n}\Big) \Big(\frac{1}{2}-\frac{M_1}{2}\frac{n-1}{n}\Big)^{-1} \\
		& < \sqrt{\frac{n-2}{2n}} \sqrt{\frac{M_1}{\pi}} \Big(\frac{1}{2}-\frac{M_1}{2}\Big)^{-1} \\
		& < \frac{\sqrt{M_1}}{1-\sqrt{M_1}} = \epsilon,
	\end{align*}
	\normalsize
	\small
	\begin{align*}
		\bbP\Big(\hat{\rho}^2_{ij}>\frac{M_2}{n}\Big) &< \frac{\big(\frac{M_2}{n}\big)^{\frac{1}{2}}\big(1-\frac{M_2}{n}\big)^{\frac{n-2}{2}}}{B\big(\frac{1}{2},\frac{n-2}{2}\big)\big(\frac{n-1}{2}\frac{M_2}{n}-\frac{1}{2}\big)} \\
		& < \frac{\Gamma\big(\frac{n-1}{2}\big)}{\Gamma\big(\frac{n-2}{2}\big)\sqrt{\pi}}\sqrt{\frac{M_2}{n}} \exp\Big(-\frac{M_2}{2}\frac{n-2}{n}\Big) \Big(\frac{M_2}{2}\frac{n-1}{n}-\frac{1}{2}\Big)^{-1} \\
		& < \sqrt{\frac{M_2}{2\pi}} \exp\Big(-\frac{M_2}{6}\Big)\Big(\frac{M_2}{2}\frac{1}{2}-\frac{1}{2}\Big)^{-1} \\
		& < 5\exp\Big(-\frac{M_2}{6}\Big) = \epsilon.
	\end{align*}
	\normalsize
\end{proof}
\noindent The next corollary is an immediate result from Theorem \ref{distpcorr} and \ref{exactrate}.

\begin{corollary} \label{exactratep} {\normalfont(Exact convergence rate of sample partial correlation coefficient when population partial correlation coefficient is zero).}
	Let $\hat{\rho}_{ij\mid S}$ be the sample partial correlation coefficient between $X_i$ and $X_j$, where $i,j\not\in S$, holding $X_S$ fixed based on n samples from a $p$-dimensional normal distribution. Assume its corresponding population partial correlation coefficient $\rho_{ij\mid S}$ is zero. For any $0<\epsilon<1/2$, there exist two finite constant $0<M_1(\epsilon)<1/4$ and $M_2(\epsilon)>3$, such that
	\small
	\begin{equation*}
		\bbP\Big(\hat{\rho}^2_{ij\mid S}<\frac{M_1}{n-d_S}\Big) < \epsilon, \quad \bbP\Big(\hat{\rho}^2_{ij\mid S}>\frac{M_2}{n-d_S}\Big) < \epsilon, \quad \text{ for any } n>d_S+3, \,d_S=|S|.
	\end{equation*}
	\normalsize
\end{corollary}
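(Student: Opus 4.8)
The plan is to reduce the statement directly to Theorem \ref{exactrate} via the distributional identity in Theorem \ref{distpcorr}. The key observation is that when the population partial correlation $\rho_{ij\mid S}$ vanishes, the squared sample partial correlation $\hat{\rho}^2_{ij\mid S}$ has exactly the same distribution as the squared ordinary sample correlation $\hat{\rho}^2_{ij}$ of a bivariate sample with zero population correlation, except that the effective sample size is reduced from $n$ to $n-d_S$. Since Theorem \ref{exactrate} already establishes the two-sided exact rate for the zero-correlation case via sharp Beta-cdf bounds, all that remains is to track how conditioning on $S$ alters the sample-size parameter.

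First I would invoke Theorem \ref{distpcorr}. After relabeling coordinates so that the conditioning set $S$ occupies the final $d_S = |S|$ indices, that theorem states that the cdf of $\hat{\rho}_{ij\mid S}$ based on $n$ samples from a $p$-dimensional normal with population partial correlation $\rho_{ij\mid S}$ equals $F(r\mid n-d_S,\rho_{ij\mid S})$, where $F(\cdot\mid m,\rho)$ denotes the cdf of an ordinary sample correlation coefficient built from $m$ samples of a bivariate normal with population correlation $\rho$; indeed, conditioning on the $d_S=p-s$ variables reduces the parameter $n$ to $n-p+s=n-d_S$. Setting $\rho_{ij\mid S}=0$ then shows that $\hat{\rho}_{ij\mid S}$ is distributed exactly as a zero-correlation sample correlation coefficient on $n-d_S$ samples.

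Next I would apply Theorem \ref{exactrate} with the effective sample size $m=n-d_S$ in place of $n$. For any fixed $0<\epsilon<1/2$, that theorem supplies constants $0<M_1(\epsilon)<1/4$ and $M_2(\epsilon)>3$, independent of the sample size, satisfying $\bbP(\hat{\rho}^2<M_1/m)<\epsilon$ and $\bbP(\hat{\rho}^2>M_2/m)<\epsilon$ for every $m>3$. Substituting $m=n-d_S$ and transporting these probabilities through the distributional identity of the previous step yields precisely the two claimed bounds, while the admissibility requirement $m>3$ becomes $n>d_S+3$. The argument carries no genuine obstacle: the only point requiring care is the bookkeeping of the effective sample size, namely verifying that conditioning on $d_S$ variables shifts the sample-size argument of the correlation cdf by exactly $d_S$, as guaranteed by Theorem \ref{distpcorr}. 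Once that reduction is in hand, the constants $M_1(\epsilon)$ and $M_2(\epsilon)$ may be taken verbatim from Theorem \ref{exactrate}, since they do not depend on the sample size.
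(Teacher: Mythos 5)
Your proposal is correct and is exactly the paper's intended argument: the paper presents this corollary as an immediate consequence of Theorem \ref{distpcorr} (which replaces the sample size $n$ by $n-d_S$ in the correlation cdf) combined with Theorem \ref{exactrate} applied at the effective sample size $n-d_S$, with the constants $M_1(\epsilon)$ and $M_2(\epsilon)$ taken unchanged since they are sample-size free. Your bookkeeping of the condition $n-d_S>3$ into $n>d_S+3$ also matches the stated hypothesis, so nothing is missing.
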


\begin{lemma} \label{lemmafinite1} {\normalfont(Sample partial correlation simultaneous sharp bounds when population partial correlations are zero).}
	When the graph dimension $p$ is finite, for any $0<\epsilon<1/2$, there exist two finite constant $0<M_1(\epsilon)<1/4$ and $M_2(\epsilon)>3$, define 
	\small
	\begin{equation*}
		R^0_{ij\mid S}=\bigg\{\frac{M_1}{n}<\hat{\rho}^2_{ij\mid S}<\frac{M_2}{n-p}\bigg\}, \quad \Delta^0_\epsilon=\cap_{\substack{(i,j)\not\in E_t,\\ \forall S\in\Pi_{ij}}} R^0_{ij\mid S},
	\end{equation*}
	\normalsize
	such that $\bbP\big(\Delta^0_\epsilon\big)>1-\epsilon$, when $n>p+3$.
\end{lemma}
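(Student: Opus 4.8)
The plan is to reduce this simultaneous statement to the single-event bound of Corollary \ref{exactratep} and then apply a union bound, which is legitimate precisely because finiteness of $p$ forces the index set of the intersection defining $\Delta^0_\epsilon$ to be finite. First I would observe that every event $R^0_{ij\mid S}$ appearing in $\Delta^0_\epsilon$ is indexed by a non-edge $(i,j)\not\in E_t$ together with a set $S\in\Pi_{ij}$, and that by the very definition of $\Pi_{ij}$ we have $\rho_{ij\mid S}=0$. Hence the hypothesis of Corollary \ref{exactratep} (zero population partial correlation) is met for each such event. Since $p$ is finite, the number of non-edges is at most $\binom{p}{2}$ and each $\Pi_{ij}$ contains at most $2^{p-2}$ subsets, so the total number of events $N$ in the intersection satisfies $N\le\binom{p}{2}2^{p-2}<\infty$.

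Next I would align the thresholds. Corollary \ref{exactratep} controls $\hat\rho^2_{ij\mid S}$ relative to $M_1/(n-d_S)$ and $M_2/(n-d_S)$, whereas the lemma uses the single pair of normalizers $M_1/n$ and $M_2/(n-p)$. Because $d_S=\abs{S}\le p-2$, we have $M_1/n\le M_1/(n-d_S)$ and $M_2/(n-p)\ge M_2/(n-d_S)$, so the two tail events comprising $(R^0_{ij\mid S})^C$ are contained in the corresponding corollary events:
\begin{align*}
\{\hat\rho^2_{ij\mid S}\le M_1/n\}&\subseteq\{\hat\rho^2_{ij\mid S}\le M_1/(n-d_S)\}, \\
\{\hat\rho^2_{ij\mid S}\ge M_2/(n-p)\}&\subseteq\{\hat\rho^2_{ij\mid S}\ge M_2/(n-d_S)\}.
\end{align*}
Since $\hat\rho^2_{ij\mid S}$ follows a continuous Beta law by Theorems \ref{thcorr0} and \ref{distpcorr}, the boundary points carry zero mass and I may pass freely between strict and non-strict inequalities, so each containment transfers the strict corollary bound to the lemma's tail event. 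The standing condition $n>p+3$ guarantees $n>d_S+3$ for every admissible $S$, so the corollary applies throughout.

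Then I would feed $\epsilon'=\epsilon/(2N)$ into Corollary \ref{exactratep} to obtain constants $M_1=M_1(\epsilon')\in(0,1/4)$ and $M_2=M_2(\epsilon')>3$; from the explicit forms in Theorem \ref{exactrate} these depend on $\epsilon$ (and on $p$ through $N$) but not on $n$, which is exactly what the statement requires. The containments above then give $\bbP\{(R^0_{ij\mid S})^C\}<2\epsilon'=\epsilon/N$ for each event, and a union bound over the $N$ events yields $\bbP\{(\Delta^0_\epsilon)^C\}<N\cdot(\epsilon/N)=\epsilon$, i.e. $\bbP(\Delta^0_\epsilon)>1-\epsilon$.

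The only genuinely delicate point is the threshold alignment: the lemma deliberately replaces the $d_S$-dependent normalizers of the corollary by the single pair $(n,\,n-p)$ so that one choice of $(M_1,M_2)$ works simultaneously across all separating sets, and I expect the main care to lie in verifying the two monotonicity inequalities in $d_S$ together with the continuity argument that lets the corollary's strict bounds absorb the non-strict complement events. Everything else is a finite union bound made possible by holding $p$ fixed.
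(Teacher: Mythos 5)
Your proof is correct and takes essentially the same route as the paper: both reduce to the single-event bounds of Theorem \ref{exactrate}/Corollary \ref{exactratep} with a rescaled $\epsilon$, use the monotonicity $M_1/n\le M_1/(n-d_S)$ and $M_2/(n-p)\ge M_2/(n-d_S)$ (valid since $d_S\le p-2$ and $n>p+3>d_S+3$) to replace the $d_S$-dependent normalizers by the uniform pair $(n,\,n-p)$, and finish with a finite union bound. The only divergence is your reading of the intersection: you take it over \emph{every} $S\in\Pi_{ij}$, giving $N\le\binom{p}{2}2^{p-2}$ events and correspondingly smaller $M_1$ and larger $M_2$, whereas the paper's stated convention (see the definition of $\Delta'_\epsilon$, where ``for each pair any set of $S\in\Pi_{ij}$ can be used'') means one chosen separator per non-edge, so it unions over at most $p^2$ events with the explicit constants $M_1=\big(\frac{\epsilon/p^2}{\epsilon/p^2+2}\big)^2$ and $M_2=6\log(10p^2/\epsilon)$ --- your reading proves a strictly stronger simultaneous statement that still implies the lemma, since $p$ is fixed.
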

\begin{proof}
	For any $0<\epsilon<1/2$, let
	\small
	\begin{equation*}
		M_1 = \Big(\frac{\epsilon/p^2}{\epsilon/p^2+2}\Big)^2, \quad M_2 = 6\log\Big(\frac{10p^2}{\epsilon}\Big).
	\end{equation*}
	\normalsize
	By Theorem \ref{exactrate} and Corollary \ref{exactratep},
	\small
	\begin{equation*}
		\bbP\Big(\hat{\rho}_{ij\mid S}<\frac{M_1}{n}\Big) < \frac{\epsilon}{2p^2}, \quad \bbP\Big(\hat{\rho}_{ij\mid S}>\frac{M_2}{n-p}\Big) < \frac{\epsilon}{2p^2},
	\end{equation*}
	\normalsize
	for all $\hat{\rho}_{ij\mid S}$ such that $(i,j)\not\in E_t$ and $S\in\Pi_{ij}$. Therefore,
	\small
	\begin{align*}
		\bbP\big(\Delta^0_\epsilon\big) & \geq 1 - \sum_{\substack{(i,j)\not\in E_t,\\ \forall S\in\Pi_{ij}}}\bbP\Big(\hat{\rho}^2_{ij\mid S}<\frac{M_1}{n}\Big)
		- \sum_{\substack{(i,j)\not\in E_t,\\ \forall S\in\Pi_{ij}}}\bbP\Big(\hat{\rho}^2_{ij\mid S}>\frac{M_2}{n-p}\Big) \\
		& > 1 - p^2\cdot \frac{\epsilon}{2p^2} - p^2\cdot \frac{\epsilon}{2p^2} = 1 - \epsilon.
	\end{align*}
	\normalsize
\end{proof}

\begin{corollary} \label{lemmafinite2}
	When the graph dimension $p$ grows with $n$, for any $0<\epsilon<1/2$ and any positive integer $\delta$, there exist two finite constant $0<M_1(\epsilon)<1/4$ and $M_2(\epsilon)>3$, define 
	\small
	\begin{equation*}
		R^0_{ij\mid S}=\bigg\{\frac{M_1}{n}<\hat{\rho}^2_{ij\mid S}<\frac{M_2}{n-p}\bigg\}, \quad \Delta^{0+}_\epsilon=\cap_{(i,j,S)\in\overline{E}_t} R^0_{ij\mid S},
 	\end{equation*}
	\normalsize
	where
	\small
 	\begin{equation*}
		\overline{E}_t=\big\{(i,j,S):(i,j)\not\in E_t,S\in\Pi_{ij},|\overline{E}_t|=\delta<\infty\big\}, 
 	\end{equation*}
 	\normalsize
	we have $\bbP\big(\Delta^{0+}_\epsilon\big)>1-\epsilon$, when $n>p+3$.
\end{corollary}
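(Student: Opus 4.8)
The plan is to mirror the argument of Lemma \ref{lemmafinite1}, exploiting the fact that although $p$ now grows with $n$, the index set $\overline{E}_t$ has \emph{fixed} finite cardinality $\delta$. This is precisely what allows the thresholds $M_1,M_2$ to be chosen as functions of $\epsilon$ and $\delta$ alone, independently of the diverging dimension $p$; intersecting over a growing family (as in Lemmas \ref{lemmainf1}–\ref{lemmainf2}) would not permit such a clean choice.

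First I would fix the per-term error level $\epsilon' = \epsilon/(2\delta)$ and invoke Corollary \ref{exactratep} with this $\epsilon'$, obtaining constants $M_1 = \big(\tfrac{\epsilon'}{\epsilon'+1}\big)^2 \in (0,\tfrac14)$ and $M_2 = 6\log(5/\epsilon') > 3$ such that, for every triple $(i,j,S)\in\overline{E}_t$ (note $\rho_{ij\mid S}=0$ since $S\in\Pi_{ij}$),
\begin{equation*}
\bbP\Big(\hat{\rho}^2_{ij\mid S} < \tfrac{M_1}{n-d_S}\Big) < \epsilon', \qquad \bbP\Big(\hat{\rho}^2_{ij\mid S} > \tfrac{M_2}{n-d_S}\Big) < \epsilon'.
\end{equation*}
Because $\delta$ is finite and fixed, both $M_1$ and $M_2$ stay bounded away from the degenerate values $0$ and $\infty$ uniformly in $n$.

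Next I would reconcile these with the thresholds appearing in $R^0_{ij\mid S}$, which use $M_1/n$ and $M_2/(n-p)$. Since $0 \le d_S \le p-2 < p$, we have $M_1/n \le M_1/(n-d_S)$ and $M_2/(n-p) \ge M_2/(n-d_S)$; hence the lower-tail bad event $\{\hat{\rho}^2_{ij\mid S}\le M_1/n\}$ is contained in $\{\hat{\rho}^2_{ij\mid S}\le M_1/(n-d_S)\}$, and symmetrically for the upper tail. Consequently each complement obeys $\bbP\big((R^0_{ij\mid S})^C\big) < 2\epsilon' = \epsilon/\delta$. A union bound over the $\delta$ triples then gives $\bbP\big((\Delta^{0+}_\epsilon)^C\big) < \delta\cdot(\epsilon/\delta) = \epsilon$, i.e.\ $\bbP(\Delta^{0+}_\epsilon) > 1-\epsilon$. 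The hypothesis $n > p+3$ ensures $n > d_S+3$ for every $S$ (as $d_S \le p$), so Corollary \ref{exactratep} is applicable throughout.

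There is no genuine obstacle here: the entire content is the observation that finiteness of $\delta$—rather than of $p$—is what keeps the sharp Beta-tail constants controlled, after which the result follows from a routine union bound. No concentration machinery beyond Corollary \ref{exactratep} is required.
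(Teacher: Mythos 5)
Your proof is correct and is essentially the paper's own argument: the paper chooses $M_1=\big(\frac{\epsilon/\delta}{\epsilon/\delta+2}\big)^2$ and $M_2=6\log\big(\frac{10\delta}{\epsilon}\big)$, which coincide exactly with your choices at per-term level $\epsilon'=\epsilon/(2\delta)$, and then repeats the union-bound argument of Lemma \ref{lemmafinite1} over the $\delta$ triples in $\overline{E}_t$. Your explicit treatment of the threshold mismatch via $M_1/n\le M_1/(n-d_S)$ and $M_2/(n-p)\ge M_2/(n-d_S)$ is just the monotonicity step already implicit in that lemma's proof.
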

\begin{proof}
	Let
	\small
	\begin{equation*}
		M_1 = \Big(\frac{\epsilon/\delta}{\epsilon/\delta+2}\Big)^2, \quad M_2 = 6\log\Big(\frac{10\delta}{\epsilon}\Big).
	\end{equation*}
	\normalsize
	The rest of the proof proceeds the same as Lemma \ref{lemmafinite1}.
\end{proof}

\section{Enumerating Bayes Factors in the Deletion Case}
\begin{theorem} \label{validdelete} {\normalfont(Condition of proper deletion while maintaining decomposability \cite{frydenberg1989decomposition,lauritzen1996graphical,giudici1999decomposable,thomas2009enumerating}).} 
	Removing an edge $(x,y)$ from a decomposable graph $G$ will result in a decomposable graph if and only if node $x$ and $y$ are contained in exactly one clique.
\end{theorem}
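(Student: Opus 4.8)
The plan is to prove the two implications separately, in each case passing to the contrapositive so that the problem becomes one of either exhibiting or excluding a chordless cycle in $G' := (V, E\setminus\{(x,y)\})$. The single structural observation driving everything is this: because $G'$ differs from $G$ in exactly one edge, any cycle that is chordless in $G'$ but was not chordless in $G$ must have had $(x,y)$ as a chord in $G$, and in fact $(x,y)$ must be its \emph{unique} chord in $G$ (every other chord of the cycle lies in $G'$ as well, and would contradict chordlessness there). So all the work reduces to understanding cycles whose only chord in $G$ is the deleted edge. Throughout I treat ``clique'' as maximal clique, and note that since $(x,y)\in E$ it lies in at least one, so ``exactly one'' fails precisely when it lies in at least two.

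For the direction that $x,y$ lying in exactly one clique forces $G'$ to be decomposable, I would argue by contradiction: suppose $G'$ contains a chordless cycle $\sigma$ of length $\ge 4$. By the observation above, $x$ and $y$ are non-consecutive vertices of $\sigma$ and $(x,y)$ is its only chord in $G$. The chord $(x,y)$ splits $\sigma$ into two arcs, each of which together with $(x,y)$ forms a subcycle of $G$. Here is the crux: I would show each such subcycle must be a triangle. Indeed, if one arc had an internal vertex producing a subcycle of length $\ge 4$, then since $G$ is decomposable that subcycle would need a chord, and one checks that every candidate chord of the subcycle (joining vertices non-consecutive within the subcycle) is in fact a chord of $\sigma$ distinct from $(x,y)$ — hence present in $G'$, contradicting chordlessness of $\sigma$. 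Thus each arc contributes a single internal vertex, so $\sigma$ is a $4$-cycle $x,u,y,w$ with $u,w$ each adjacent to both $x$ and $y$ and $u\not\sim w$. Now $\{x,u,y\}$ and $\{x,w,y\}$ are complete, hence lie in maximal cliques $C_1,C_2$; if $C_1=C_2$ then $u\sim w$, a contradiction, so $C_1\ne C_2$, both contain $(x,y)$, and $x,y$ lie in at least two cliques — the desired contradiction.

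For the converse, I would show that if $x,y$ lie in two distinct maximal cliques $C_1\ne C_2$ then $G'$ is not decomposable. Both cliques contain the edge $(x,y)$, and by maximality neither contains the other, so $C_1\setminus C_2$ and $C_2\setminus C_1$ are both nonempty. I claim there exist $u\in C_1\setminus C_2$ and $w\in C_2\setminus C_1$ with $u\not\sim w$: otherwise every vertex of $C_1\setminus C_2$ would be adjacent to every vertex of $C_2\setminus C_1$, which, combined with the completeness of $C_1$, of $C_2$, and of $C_1\cap C_2$, would make $C_1\cup C_2$ complete, contradicting the maximality of $C_1$. With such $u,w$, the four distinct vertices $x,u,y,w$ form a $4$-cycle in $G$ whose only chord is $(x,y)$ (since $u\not\sim w$), so deleting $(x,y)$ leaves a chordless $4$-cycle in $G'$, and $G'$ is non-decomposable. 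I expect the main obstacle to be the reduction in the forward direction — the careful bookkeeping showing that any chord of a sub-cycle is already a chord of $\sigma$ and therefore survives into $G'$ — whereas the converse is a short direct construction.
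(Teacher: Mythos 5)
Your proof is correct, and it is worth noting that the paper itself never proves this statement: Theorem \ref{validdelete} is quoted with citations, and the cited sources (e.g.\ \cite{frydenberg1989decomposition,giudici1999decomposable}) establish it by junction-tree manipulations --- if $(x,y)$ lies in a unique clique $C$, one splits $C$ into $C\setminus\{x\}$ and $C\setminus\{y\}$ joined through the separator $C\setminus\{x,y\}$ and checks that the modified tree is again a junction tree of the reduced graph, and conversely one exhibits an obstruction when $(x,y)$ sits in two cliques. You instead argue purely from the chordless-cycle characterization of decomposability, and every step checks out: the reduction that any cycle chordless in $G'$ but of length $\ge 4$ must have $(x,y)$ as its \emph{unique} chord in $G$ is sound (every other chord survives the deletion); your bookkeeping that each chord of a subcycle formed by an arc plus $(x,y)$ is a chord of $\sigma$ distinct from $(x,y)$ holds in all cases, including chords incident to $x$ or $y$ (the relevant endpoint's other $\sigma$-neighbour lies on the opposite arc, so non-consecutivity transfers), forcing both subcycles to be triangles and $\sigma$ to be the $4$-cycle $x,u,y,w$; and in the converse, the completeness argument showing $C_1\cup C_2$ would be complete if every $u\in C_1\setminus C_2$ were adjacent to every $w\in C_2\setminus C_1$ correctly contradicts maximality of $C_1$, after which $x\text{--}u\text{--}y\text{--}w\text{--}x$ is a chordless $4$-cycle of $G'$ since the pair $(u,w)\ne(x,y)$ has the same adjacency status in $G$ and $G'$. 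The trade-off between the two routes: your argument is elementary and self-contained, using nothing beyond the definition of chordality, whereas the junction-tree proof in the cited literature additionally delivers the explicit post-deletion clique and separator structure --- exactly the bookkeeping (the clique $\{x,y,S\}$ splitting into $\{x,S\}$ and $\{y,S\}$ across the separator $S$) that the paper's Lemma \ref{lemmadel} enumeration relies on downstream, so your proof certifies the theorem but would not by itself replace that structural information.
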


For the rest of this paper, we use lower-case letter $x$, $y$ alone or with subscripts to represent nodes in the graph. \ul{We use the term ``deletion'' {\it only} in the case of deleting true edges.} And true edges are the edges in the true graph $G_t$. Let $G_{+(x,y)\in E_t}$ and $G_{-(x,y)\in E_t}$ be any decomposable graph with and without the true edge $(x,y)$, respectively. The remaining edges (excepting the true edge $(x,y)$) stays the same. (Notice $G_{+(x,y)\in E_t}$ does not need to be the true graph, except just containing the true edge $(x,y)$.) Thus $G_{-(x,y)\in E_t}$ can be seen as the result of deleting the true edge $(x,y)$ from $G_{+(x,y)\in E_t}$. From Theorem \ref{validdelete}, we know node $x$ and $y$ are contained in exactly one clique of $G_{+(x,y)\in E_t}$. The following Lemma \ref{lemmadel} provides upper and lower bound for Bayes factor in favor of deleting a true edge. 

\begin{lemma} \label{lemmadel} {\normalfont(Bayes factor of deleting one single true edge).}
	Denote $C$ to be the only clique in $G_{+(x,y)\in E_t}$ that contains node $x$ and $y$. Let $S=C\backslash\{x,y\}$. Then,
	\small
	\begin{align*}
		\bigg(1+\frac{1}{g}\bigg)\sqrt{\frac{b+d_S-\frac{1}{2}}{b+n+d_S}}\big(1-\hat{\rho}_{xy\mid S}^2\big)^{\frac{n}{2}} &< {\normalfont\mbox{BF}}\big(G_{-(x,y)\in E_t};G_{+(x,y)\in E_t}\big) \\ 
		&<\bigg(1+\frac{1}{g}\bigg)\sqrt{\frac{b+d_S}{b+n+d_S-\frac{1}{2}}}\big(1-\hat{\rho}_{xy\mid S}^2\big)^{\frac{n}{2}},
	\end{align*}
	\normalsize
where $d_S=|S|<p$. When $S=\emptyset$, $d_S=0$ and the sample partial correlation coefficient $\hat{\rho}_{xy\mid S}$ becomes the sample correlation coefficient $\hat{\rho}_{xy}$.
\end{lemma}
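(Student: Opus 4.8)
The plan is to localize the Bayes factor to the single clique affected by the deletion, evaluate the structural pieces of $w(\cdot)$ under the $g$-prior with $g=1/n$, and bound the one surviving Gamma factor by Watson's inequality.

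First I would invoke Theorem~\ref{validdelete}. Since $x$ and $y$ lie in exactly one clique $C=S\cup\{x,y\}$ of $G_{+(x,y)\in E_t}$, removing the edge $(x,y)$ replaces $C$ by the two cliques $S\cup\{x\}$ and $S\cup\{y\}$, introduces $S$ as a new separator, and leaves every other clique and separator untouched. Because $f(\mathrm{Y}\mid G)=(2\pi)^{-np/2}\prod_{C\in\mathcal{C}}w(C)\big/\prod_{S\in\mathcal{S}}w(S)$, all unchanged factors cancel in the ratio, giving
\begin{equation*}
\mbox{BF}\big(G_{-(x,y)\in E_t};G_{+(x,y)\in E_t}\big)=\frac{w(S\cup\{x\})\,w(S\cup\{y\})}{w(S\cup\{x,y\})\,w(S)}.
\end{equation*}

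Next I would substitute the explicit form of $w(C)$ for the $g$-prior and split it into three groups. Writing $d=d_S$, the signed cardinalities obey $|S\cup\{x\}|+|S\cup\{y\}|-|S\cup\{x,y\}|-|S|=0$ while the signed squares satisfy $2(d{+}1)^2-(d{+}2)^2-d^2=-2$; hence the powers of $(2n)$ cancel identically and the powers of $(n+1)$ collapse to exactly $n+1=1+1/g$. For the determinant block, using that the column means vanish so $(\mathrm{Y}^T\mathrm{Y})_{ij}=n\hat\sigma_{ij}$, the Schur-complement (conditional-variance) factorization gives $\big|\mathrm{Y}_{S\cup\{x,y\}}^T\mathrm{Y}_{S\cup\{x,y\}}\big|=\big|\mathrm{Y}_S^T\mathrm{Y}_S\big|\,n^2\,\hat\sigma_{xx\mid S}\hat\sigma_{yy\mid S}\big(1-\hat\rho_{xy\mid S}^2\big)$ together with the analogous one-variable identities $\big|\mathrm{Y}_{S\cup\{x\}}^T\mathrm{Y}_{S\cup\{x\}}\big|=\big|\mathrm{Y}_S^T\mathrm{Y}_S\big|\,n\,\hat\sigma_{xx\mid S}$, from which
\begin{equation*}
\frac{\big|\mathrm{Y}_{S\cup\{x\}}^T\mathrm{Y}_{S\cup\{x\}}\big|\,\big|\mathrm{Y}_{S\cup\{y\}}^T\mathrm{Y}_{S\cup\{y\}}\big|}{\big|\mathrm{Y}_{S\cup\{x,y\}}^T\mathrm{Y}_{S\cup\{x,y\}}\big|\,\big|\mathrm{Y}_S^T\mathrm{Y}_S\big|}=\frac{1}{1-\hat\rho_{xy\mid S}^2},
\end{equation*}
all extra powers of $n$ cancelling precisely because the cardinalities balance; raised to $-n/2$ this contributes the factor $(1-\hat\rho_{xy\mid S}^2)^{n/2}$.

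Finally the surviving multivariate-Gamma piece, $\Gamma_{|C|}\big((b+n+|C|-1)/2\big)\big/\Gamma_{|C|}\big((b+|C|-1)/2\big)$ from each $w(C)$, telescopes. Using $\Gamma_m(a)=\pi^{m(m-1)/4}\prod_{i=0}^{m-1}\Gamma(a-i/2)$, each such factor equals $\prod_{j=0}^{|C|-1}h(j)$ with $h(j)=\Gamma\big((b+n+j)/2\big)\big/\Gamma\big((b+j)/2\big)$, so the four-term combination reduces to the single ratio $h(d)/h(d+1)$, i.e.
\begin{equation*}
\frac{\Gamma\big(\tfrac{b+d+1}{2}\big)}{\Gamma\big(\tfrac{b+d}{2}\big)}\cdot\frac{\Gamma\big(\tfrac{b+n+d}{2}\big)}{\Gamma\big(\tfrac{b+n+d+1}{2}\big)}.
\end{equation*}
Applying Watson's inequality (Proposition~\ref{gammabound}) to each of these two consecutive-argument Gamma ratios bounds the product between $\sqrt{(b+d-\tfrac12)/(b+n+d)}$ and $\sqrt{(b+d)/(b+n+d-\tfrac12)}$, and combining with the $(n+1)$ and $(1-\hat\rho_{xy\mid S}^2)^{n/2}$ factors yields the stated two-sided bound. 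The main obstacle is the bookkeeping: tracking every power of $n$ and $2n$ through the determinant and normalizing-constant factors so that they cancel exactly, and verifying that the multivariate-Gamma product genuinely telescopes to the single univariate ratio to which Watson's inequality applies.
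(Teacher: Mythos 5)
Your computational core is exactly the paper's calculation and is carried out correctly: the reduction of the Bayes factor to the four-term ratio $w(\{x,S\})\,w(\{y,S\})\big/\big(w(S)\,w(\{x,y,S\})\big)$, the cancellation of the $(2n)$-powers via the signed cardinality identity and the collapse of the $(n+1)$-powers to $1+1/g$ via $2(d+1)^2-(d+2)^2-d^2=-2$, the Schur-complement identity producing $\big(1-\hat{\rho}_{xy\mid S}^2\big)^{n/2}$, the telescoping of the multivariate Gamma factors to the single ratio $\Gamma\big(\tfrac{b+d+1}{2}\big)\Gamma\big(\tfrac{b+n+d}{2}\big)\big/\big(\Gamma\big(\tfrac{b+d}{2}\big)\Gamma\big(\tfrac{b+n+d+1}{2}\big)\big)$, and the two applications of Watson's inequality all match the paper and yield the stated two-sided bound.

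The genuine gap is in your very first structural step. The claim that deleting $(x,y)$ ``replaces $C$ by the two cliques $S\cup\{x\}$ and $S\cup\{y\}$, introduces $S$ as a new separator, and leaves every other clique and separator untouched'' is false in general: it holds only when neither $S\cup\{x\}$ nor $S\cup\{y\}$ is a separator of $G_{+(x,y)\in E_t}$. If, say, there is another clique $\{x\}\cup S\cup P$ with $P\neq\emptyset$ (so $\{x\}\cup S$ is a separator of $G_{+(x,y)\in E_t}$), then after the deletion $S\cup\{x\}$ is complete but not maximal, hence is \emph{not} a clique of $G_{-(x,y)\in E_t}$; what changes instead is that the separator $\{x\}\cup S$ disappears from the junction tree, so $w(\{x,S\})$ enters the ratio as a removed separator of $G_{+(x,y)\in E_t}$ rather than as a new clique of $G_{-(x,y)\in E_t}$. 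The same issue arises on the $y$ side, on both sides simultaneously, and in the $S=\emptyset$ case, where $\{x\}$ and $\{y\}$ are typically absorbed into neighboring cliques. This is precisely why the paper's proof enumerates four configurations (its CASE 1, 2.1, 2.2 and 2.3, with junction-tree figures) and verifies that in \emph{each} configuration the net change in $\prod_{C\in\mathcal{C}}w(C)\big/\prod_{S\in\mathcal{S}}w(S)$ is the same four-term ratio. Fortunately the conclusion of your first displayed equation is correct in all cases, so your subsequent computation stands unchanged; but as written your proof asserts a junction-tree transformation that fails outside one configuration, and it needs either the case enumeration or a short argument that in every admissible configuration the clique/separator bookkeeping produces the identical ratio.
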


\begin{proof}
To proof this lemma, we enumerate all scenarios and calculate the Bayes factor above for every case. Similar enumeration also appears in \cite{green2013sampling}. \\

\noindent\textbf{CASE 1:} Node $x$ and $y$ are contained in one clique $C$ of $G_{+(x,y)\in E_t}$ which only has node $x$ and $y$. In other words, removing edge $(x,y)$ will result in adding an empty separator to the junction tree and also disconnecting clique $C_1$ and $C_2$, where $C_1$ is the clique before $C$ and $C_2$ is the clique after $C$. They remain unchanged after deleting edge $(x,y)$. This is the special scenario of CASE 2 where $S=\emptyset$. Figure \ref{delete1} illustrates the result of deleting edge $(x,y)$ from $G_{+(x,y)\in E_t}$. Only the parts which are relative to the deletion are shown, the rest of the junction tree is omitted and will remain unchanged after the deletion. We use ellipses to denote cliques and squares to denote separators in the junction tree. 
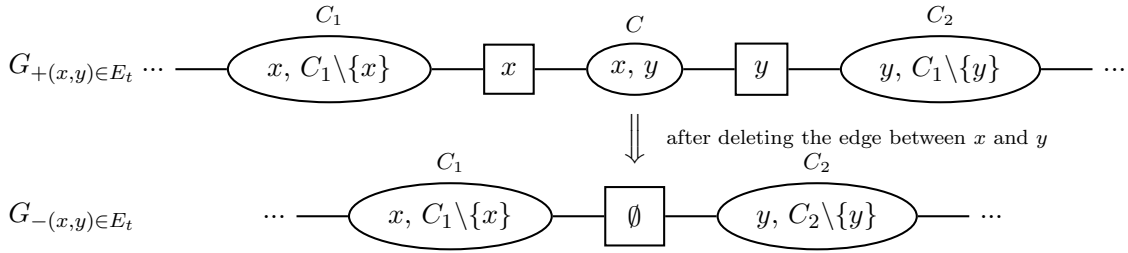
\begin{figure}[H]
	\centering
	\resizebox{\textwidth}{!}{
	\begin{tikzpicture}[thick]
		\node[draw, ellipse, label={above: $C$}] (Cxy) {$x$, $y$};
		\node[draw, regular polygon, regular polygon sides=4, left=0.7 of Cxy] (Sx) {$x$};
		\node[draw, regular polygon, regular polygon sides=4, right=0.7 of Cxy] (Sy) {$y$};
		\edge[-] {Sx} {Cxy}; 
		\edge[-] {Sy} {Cxy}; 
		\node[draw, ellipse, left=0.7 of Sx, , label={above:$C_1$}] (nC1) {$x$, $C_1\backslash \{x\}$};
		\node[draw, ellipse, right=0.7 of Sy, , label={above:$C_2$}] (nC2) {$y$, $C_1\backslash \{y\}$};
		\edge[-] {Sx} {nC1};
		\edge[-] {Sy} {nC2};
		\node[below=0.1 of Cxy, label={right: \scriptsize after deleting the edge between $x$ and $y$}] (DA) {$\Big\Downarrow$};
		\node[draw, regular polygon, regular polygon sides=4, below=0.16 of DA] (S) {$\emptyset$};
		\node[draw, ellipse, left=0.7 of S, label={above:$C_1$}] (C1) {$x$, $C_1\backslash \{x\}$};
		\node[draw, ellipse, right=0.7 of S, label={above:$C_2$}] (C2) {$y$, $C_2\backslash \{y\}$};
		\edge[-] {C1} {S};
		\edge[-] {C2} {S};
		\node[left=1.1 of nC1] (G) {$G_{+(x,y)\in E_t}$};
		\node[below=0.1 of G] (Gb) {$\phantom{\Big\Downarrow}$};
		\node[below=0.26 of Gb] {$G_{-(x,y)\in E_t}$};
		\node[left=0.7 of C1] (etc1) {...};
		\node[left=0.7 of nC1] (etc2) {...};
		\edge[-] {etc1} {C1};
		\edge[-] {etc2} {nC1};
		\node[right=0.7 of C2] (etc3) {...};
		\node[right=0.7 of nC2] (etc4) {...};
		\edge[-] {etc3} {C2};
		\edge[-] {etc4} {nC2};
	\end{tikzpicture}
	}
	\caption{Node $x$ and $y$ are in only one clique of $G_{+(x,y)\in E_t}$ that only contains themselves.} \label{delete1}
\end{figure}
\small
\begin{align*}
	& \mbox{BF}\big(G_{-(x,y)\in E_t};G_{+(x,y)\in E_t}\big) \\
	& = \frac{f(\mathrm{Y}\mid G_{-(x,y)\in E_t})}{f(\mathrm{Y}\mid G_{+(x,y)\in E_t})}
	  = \frac{1}{\frac{w(\{x,y\})}{w(\{x\})\cdot w(\{y\})}}
	  = \frac{w(\{x\})\cdot w(\{y\})}{w(\{x,y\})} \\
	& = \bigg(1+\frac{1}{g}\bigg) \frac{\Gamma_2(\frac{b+1}{2})\Gamma^2(\frac{b+n}{2})}{\Gamma^2(\frac{b}{2})\Gamma_2(\frac{b+n+1}{2})}
	\Bigg(\frac{\abs{\mathrm{Y}_{xy}^T\mathrm{Y}_{xy}}}{\abs{\mathrm{Y}_x^T\mathrm{Y}_x}\cdot\abs{\mathrm{Y}_y^T\mathrm{Y}_y}}\Bigg)^{\frac{n}{2}} \\
	& = \bigg(1+\frac{1}{g}\bigg) \frac{\Gamma(\frac{b+1}{2})\Gamma(\frac{b+n}{2})}{\Gamma(\frac{b}{2})\Gamma(\frac{b+n+1}{2})}
	\Bigg(\frac{\mathrm{Y}_x^T\mathrm{Y}_x \cdot \mathrm{X}_y^T\mathrm{Y}_y - (\mathrm{Y}_x^T\mathrm{Y}_y)^2}{\mathrm{Y}_x^T\mathrm{Y}_x \cdot \mathrm{X}_y^T\mathrm{Y}_y}\Bigg)^{\frac{n}{2}} \\
	& = \bigg(1+\frac{1}{g}\bigg) \frac{\Gamma(\frac{b+1}{2})\Gamma(\frac{b+n}{2})}{\Gamma(\frac{b}{2})\Gamma(\frac{b+n+1}{2})} \big(1-\hat{\rho}_{xy}^2\big)^{\frac{n}{2}}.
\end{align*}
\normalsize
By Proposition \ref{gammabound},
\small
\begin{equation*}
	\sqrt{\frac{b-1}{2}+\frac{1}{4}} < \frac{\Gamma\big(\frac{b+1}{2}\big)}{\Gamma\big(\frac{b}{2}\big)} < \sqrt{\frac{b}{2}}, \qquad
	\frac{1}{\sqrt{\frac{b+n}{2}}} < \frac{\Gamma\big(\frac{b+n}{2}\big)}{\Gamma\big(\frac{b+n+1}{2}\big)} < \frac{1}{\sqrt{\frac{b+n-1}{2}+\frac{1}{4}}}.
\end{equation*}
\normalsize
Thus,
\small
\begin{equation*}
	 \bigg(1+\frac{1}{g}\bigg) \sqrt{\frac{b-\frac{1}{2}}{b+n}} \big(1-\hat{\rho}_{xy}^2\big)^{\frac{n}{2}} < \mbox{BF}\big(G_{-(x,y)\in E_t};G_{+(x,y)\in E_t}\big) < \bigg(1+\frac{1}{g}\bigg) \sqrt{\frac{b}{b+n-\frac{1}{2}}} \big(1-\hat{\rho}_{xy}^2\big)^{\frac{n}{2}}.
\end{equation*}
\normalsize

\noindent\textbf{CASE 2:} Node $x$ and $y$ are contained in only one clique $C$ of $G_{+(x,y)\in E_t}$ which consists of node $x$, $y$ and a non-empty set $S$.
\begin{figure}[H]
	\centering
	\begin{tikzpicture}[thick]
		\node[draw, ellipse, label={above: $C$}] (CxyS) {$x$, $y$, $S$};
		\node[left=1.2 of CxyS] {$G_{+(x,y)\in E_t}$};	
		\node[left=0.7 of CxyS] (etc1) {...};
		\node[right=0.7 of CxyS] (etc2) {...};
		\edge[-] {etc1} {CxyS};
		\edge[-] {etc2} {CxyS};
	\end{tikzpicture}
	\caption{When $S$ is a non-empty set in $G_{+(x,y)\in E_t}$.} \label{delete2}
\end{figure}
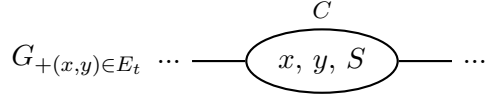

\noindent\textbf{CASE 2.1:} Both $\{x,S\}$ and $\{y,S\}$ are not separators in $G_{+(x,y)\in E_t}$. The cliques containing $\{x,S\}$ and $\{y,S\}$ are exactly $\{x,S\}$ and $\{y,S\}$ after the deletion in $G_{-(x,y)\in E_t}$, respectively \cite{green2013sampling}. Figure \ref{delete2.1} illustrates this scenario. 
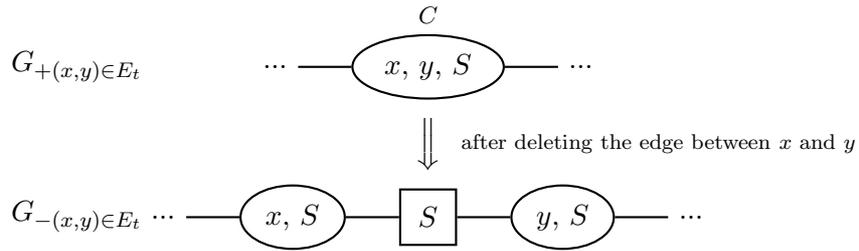
\begin{figure}[H]
	\centering
	\begin{tikzpicture}[thick]
		\node[draw, ellipse, label={above: $C$}] (CxyS) {$x$, $y$, $S$};
		\node[below=0.1 of CxyS, label={right: \scriptsize after deleting the edge between $x$ and $y$}] (DA) {$\Big\Downarrow$};
		\node[draw, regular polygon, regular polygon sides=4, below=0.1 of DA] (S) {$S$};
		\node[draw, ellipse, left=0.7 of S] (C1) {$x$, $S$};
		\node[draw, ellipse, right=0.7 of S] (C2) {$y$, $S$};
		\edge[-] {C1} {S};
		\edge[-] {C2} {S};
		\node[left=2.6 of CxyS] (G) {$G_{+(x,y)\in E_t}$};
		\node[below=0.21 of G] (Gb) {$\phantom{\Big\Downarrow}$}; 
		\node[below=0.1 of Gb] {$G_{-(x,y)\in E_t}$};
		\node[left=0.7 of CxyS] (etc1) {...};
		\node[right=0.7 of CxyS] (etc2) {...};
		\edge[-] {etc1} {CxyS};
		\edge[-] {etc2} {CxyS};
		\node[left=0.7 of C1] (etc3) {...};
		\node[right=0.7 of C2] (etc4) {...};
		\edge[-] {etc3} {C1};
		\edge[-] {etc4} {C2};	
	\end{tikzpicture}
	\caption{Both $\{x,S\}$ and $\{y,S\}$ are not in other cliques of $G_{+(x,y)\in E_t}$.} \label{delete2.1}
\end{figure}

\noindent Let 
\small
\begin{align*}
	\hat{\Sigma}_{SS} &= \mathrm{Y}_{S}^T\mathrm{Y}_{S}, \\
	\hat{H}_S &= \mathrm{Y}_S(\mathrm{Y}^T_S\mathrm{Y}_S)^{-1}\mathrm{Y}_S^T, \\
	\hat{\Sigma}_{xx\mid S} &= \mathrm{Y}^T_{x}\mathrm{Y}_{x}-\mathrm{Y}^T_{x}\hat{H}_S\mathrm{Y}_{x}, \\
	\hat{\Sigma}_{yy\mid S} &= \mathrm{Y}^T_{y}\mathrm{Y}_{y}-\mathrm{Y}^T_{y}\hat{H}_S\mathrm{Y}_{y}, \\
	\hat{\Sigma}_{xy\mid S} &= \mathrm{Y}^T_{x}\mathrm{Y}_{y}-\mathrm{Y}^T_{x}\hat{H}_S\mathrm{Y}_{y}.
\end{align*}
\normalsize
Then we have
\small
\begin{align*}
	\abs{\mathrm{Y}^T_{xyS}\mathrm{Y}_{xyS}} & = 
	\begin{vmatrix}
		\mathrm{Y}^T_{x}\mathrm{Y}_{x} & \mathrm{Y}^T_{x}\mathrm{Y}_{y} & \mathrm{Y}^T_{x}\mathrm{Y}_{S} \\
		\mathrm{Y}^T_{y}\mathrm{Y}_{x} & \mathrm{Y}^T_{y}\mathrm{Y}_{y} & \mathrm{Y}^T_{y}\mathrm{Y}_{S} \\
		\mathrm{Y}^T_{S}\mathrm{Y}_{x} & \mathrm{Y}^T_{S}\mathrm{Y}_{y} & \mathrm{Y}^T_{S}\mathrm{Y}_{S}
	\end{vmatrix}
	= \abs{\mathrm{Y}^T_{S}\mathrm{Y}_{S}}\cdot
	\begin{vmatrix}
		\mathrm{Y}^T_{x}\mathrm{Y}_{x}-\mathrm{Y}^T_{x}\hat{H}_S\mathrm{Y}_{x} & \mathrm{Y}^T_{x}\mathrm{Y}_{y}-\mathrm{Y}^T_{x}\hat{H}_S\mathrm{Y}_{y} \\
		\mathrm{Y}^T_{y}\mathrm{Y}_{x}-\mathrm{Y}^T_{y}\hat{H}_S\mathrm{Y}_{x} & \mathrm{Y}^T_{y}\mathrm{Y}_{y}-\mathrm{Y}^T_{y}\hat{H}_S\mathrm{Y}_{y} 
	\end{vmatrix} \\
	& = \abs{\hat{\Sigma}_{SS}}\cdot \big(\hat{\Sigma}_{xx\mid S}\hat{\Sigma}_{yy\mid S} - \hat{\Sigma}_{xy\mid S}^2\big), \\
	\abs{\mathrm{Y}^T_{xS}\mathrm{Y}_{xS}} & = 
	\begin{vmatrix}
		\mathrm{Y}^T_{x}\mathrm{Y}_{x} & \mathrm{Y}^T_{x}\mathrm{Y}_{S} \\
		\mathrm{Y}^T_{S}\mathrm{Y}_{x} & \mathrm{Y}^T_{S}\mathrm{Y}_{S}
	\end{vmatrix}
	 = \abs{\mathrm{Y}^T_{S}\mathrm{Y}_{S}}\cdot\abs{\mathrm{Y}^T_{x}\mathrm{Y}_{x}-\mathrm{Y}^T_{x}\hat{H}_S\mathrm{Y}_{x}}
	 = \abs{\hat{\Sigma}_{SS}}\cdot\hat{\Sigma}_{xx\mid S}, \\
	 \abs{\mathrm{Y}^T_{yS}\mathrm{Y}_{yS}} & = 
	\begin{vmatrix}
		\mathrm{Y}^T_{y}\mathrm{Y}_{y} & \mathrm{Y}^T_{y}\mathrm{Y}_{S} \\
		\mathrm{Y}^T_{S}\mathrm{Y}_{y} & \mathrm{Y}^T_{S}\mathrm{Y}_{S}
	\end{vmatrix}
	 = \abs{\mathrm{Y}^T_{S}\mathrm{Y}_{S}}\cdot\abs{\mathrm{Y}^T_{y}\mathrm{Y}_{y}-\mathrm{Y}^T_{y}\hat{H}_S\mathrm{Y}_{y}}
	 = \abs{\hat{\Sigma}_{SS}}\cdot\hat{\Sigma}_{yy\mid S}.
\end{align*}
\normalsize
\small
\begin{align*}
	& \mbox{BF}\big(G_{-(x,y)\in E_t};G_{+(x,y)\in E_t}\big) \\
	& = \frac{f(\mathrm{Y}\mid G_{-(x,y)\in E_t})}{f(\mathrm{Y}\mid G_{+(x,y)\in E_t})}
	  = \frac{\frac{w(\{x,S\})\cdot w(\{y,S\})}{w(S)}}{w(\{x,y,S\})}
	  = \frac{w(\{x,S\})\cdot w(\{y,S\})}{w(S)\cdot w(\{x,y,S\})} \\
	& = \bigg( 1 + \frac{1}{g} \bigg)
	\frac{\Gamma_{d_S}\big(\frac{b+d_S-1}{2}\big)\Gamma_{d_S+2}\big(\frac{b+d_S+1}{2}\big)\Gamma^2_{d_S+1}\big(\frac{b+n+d_S}{2}\big)}{\Gamma^2_{d_S+1}\big(\frac{b+d_S}{2}\big)\Gamma_{d_S}\big(\frac{b+n+d_S-1}{2}\big)\Gamma_{d_S+2}\big(\frac{b+n+d_S+1}{2}\big)}
 	\Bigg(\frac{\abs{\mathrm{Y}_{S}^T\mathrm{Y}_{S}}\cdot\abs{\mathrm{Y}_{xyS}^T\mathrm{Y}_{xyS}}}{\abs{\mathrm{Y}_{xS}^T\mathrm{Y}_{xS}}\cdot\abs{\mathrm{Y}_{yS}^T\mathrm{Y}_{yS}}}\Bigg)^{\frac{n}{2}} \\
 	& = \bigg( 1 + \frac{1}{g} \bigg)
 	\frac{\Gamma\big(\frac{b+d_S+1}{2}\big)\Gamma\big(\frac{b+n+d_S}{2}\big)}{\Gamma\big(\frac{b+d_S}{2}\big)\Gamma\big(\frac{b+n+d_S+1}{2}\big)}
	\Bigg( \frac{\hat{\Sigma}_{xx\mid S}\hat{\Sigma}_{yy\mid S} - \hat{\Sigma}_{xy\mid S}^2}{\hat{\Sigma}_{xx\mid S}\hat{\Sigma}_{yy\mid S}} \Bigg)^{\frac{n}{2}} \\
	& = \bigg( 1 + \frac{1}{g} \bigg)
	\frac{\Gamma\big(\frac{b+d_S+1}{2}\big)\Gamma\big(\frac{b+n+d_S}{2}\big)}{\Gamma\big(\frac{b+d_S}{2}\big)\Gamma\big(\frac{b+n+d_S+1}{2}\big)}
	\big(1- \hat{\rho}_{xy\mid S}^2\big)^{\frac{n}{2}}. 
\end{align*}
\normalsize
By Proposition \ref{gammabound},
\small
\begin{equation*}
	\sqrt{\frac{b+d_S-1}{2}+\frac{1}{4}} < \frac{\Gamma\big(\frac{b+d_S+1}{2}\big)}{\Gamma\big(\frac{b+d_S}{2}\big)} < \sqrt{\frac{b+d_S}{2}}, \quad
	\frac{1}{\sqrt{\frac{b+n+d_S}{2}}} < \frac{\Gamma\big(\frac{b+n+d_S}{2}\big)}{\Gamma\big(\frac{b+n+d_S+1}{2}\big)} < \frac{1}{\sqrt{\frac{b+n+d_S-1}{2}+\frac{1}{4}}}.
\end{equation*}
\normalsize
Thus,
\small
\begin{align*}
	\bigg(1+\frac{1}{g}\bigg)\sqrt{\frac{b+d_S-\frac{1}{2}}{b+n+d_S}}\big(1-\hat{\rho}_{xy\mid S}^2\big)^{\frac{n}{2}} &< {\normalfont\mbox{BF}}\big(G_{-(x,y)\in E_t};G_{+(x,y)\in E_t}\big) \\ 
	&<\bigg(1+\frac{1}{g}\bigg)\sqrt{\frac{b+d_S}{b+n+d_S-\frac{1}{2}}}\big(1-\hat{\rho}_{xy\mid S}^2\big)^{\frac{n}{2}}.
\end{align*}
\normalsize

\noindent\textbf{CASE 2.2:} Only one of $\{x,S\}$ and $\{y,S\}$ is a separator in $G_{+(x,y)\in E_t}$. The cliques containing $\{x,S\}$ or $\{y,S\}$ are a superset of $\{x,S\}$ or $\{y,S\}$ after the deletion in $G_{-(x,y)\in E_t}$, respectively \cite{green2013sampling}. Figure \ref{delete2.2.1} shows when $\{x,S\}$ is in other cliques (only one of those supersets is shown here which is $\{x,S,P\}$ and $P\neq\emptyset$, others are omitted for simplicity), thus $\{x,S\}$ is a separator in $G_{+(x,y)\in E_t}$. Figure \ref{delete2.2.2} shows when $\{y,S\}$ is in other cliques (which is $\{y,S,Q\}$ and $Q\neq\emptyset$), thus $\{y,S\}$ is a separator in $G_{+(x,y)\in E_t}$. 
\begin{figure}[H]
	\centering
	\resizebox{0.9\textwidth}{!}{
	\begin{tikzpicture}[thick]
		\node[draw, regular polygon, regular polygon sides=4, scale=0.8] (SyS) {$x$, $S$};
		\node[draw, ellipse, left=0.7 of SyS] (CxyS) {$x$, $S$, $P$};
		\node[draw, ellipse, right=0.7 of SyS, label={above: $C$}] (nC2) {$x$, $y$, $S$};
		\edge[-] {CxyS} {SyS};
		\edge[-] {nC2} {SyS};
		\node[below=0.1 of SyS, label={right: \scriptsize after deleting the edge between $x$ and $y$}] (DA) {$\Big\Downarrow$};
		\node[draw, regular polygon, regular polygon sides=4, below=0.1 of DA] (S) {$S$};
		\node[draw, ellipse, left=0.7 of S] (C1) {$x$, $S$, $P$};
		\node[draw, ellipse, right=0.7 of S] (C2) {$y$, $S$};
		\edge[-] {C1} {S};
		\edge[-] {C2} {S};
		\node[left=3.6 of SyS] (G) {$G_{+(x,y)\in E_t}$};
		\node[below=0.1 of G] (Gb) {$\phantom{\Big\Downarrow}$};
		\node[below=0.33 of Gb] {$G_{-(x,y)\in E_t}$};
		\node[left=0.7 of CxyS] (etc1) {...};
		\node[left=0.7 of C1] (etc2) {...};
		\node[right=0.7 of nC2] (etc3) {...};
		\node[right=0.7 of C2] (etc4) {...};
		\edge[-] {etc1} {CxyS};
		\edge[-] {etc2} {C1};
		\edge[-] {etc3} {nC2};
		\edge[-] {etc4} {C2};
	\end{tikzpicture}
	}
	\caption{Only $x$ and $S$ are in a superset $\{x,S,P\}$ of $G_{+(x,y)\in E_t}$.} \label{delete2.2.1}
\end{figure}
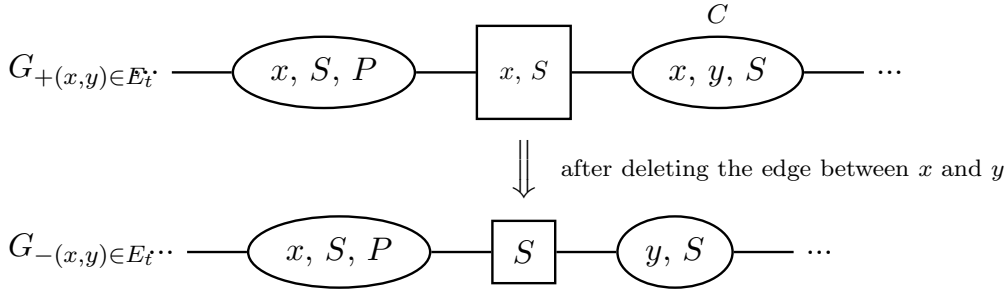
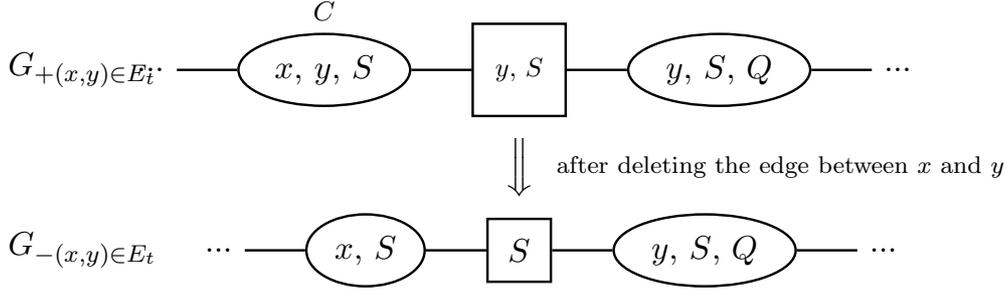
\begin{figure}[H]
	\centering
	\resizebox{0.9\textwidth}{!}{
	\begin{tikzpicture}[thick]
		\node[draw, regular polygon, regular polygon sides=4, scale=0.8] (SyS) {$y$, $S$};
		\node[draw, ellipse, left=0.7 of SyS, label={above: $C$}] (CxyS) {$x$, $y$, $S$};
		\node[draw, ellipse, right=0.7 of SyS] (nC2) {$y$, $S$, $Q$};
		\edge[-] {CxyS} {SyS};
		\edge[-] {nC2} {SyS};
		\node[below=0.1 of SyS, label={right: \scriptsize after deleting the edge between $x$ and $y$}] (DA) {$\Big\Downarrow$};
		\node[draw, regular polygon, regular polygon sides=4, below=0.1 of DA] (S) {$S$};
		\node[draw, ellipse, left=0.7 of S] (C1) {$x$, $S$};
		\node[draw, ellipse, right=0.7 of S] (C2) {$y$, $S$, $Q$};
		\edge[-] {C1} {S};
		\edge[-] {C2} {S};
		\node[left=3.5 of SyS] (G) {$G_{+(x,y)\in E_t}$};
		\node[below=0.1 of G] (Gb) {$\phantom{\Big\Downarrow}$};
		\node[below=0.33 of Gb] {$G_{-(x,y)\in E_t}$};
		\node[left=0.7 of CxyS] (etc1) {...};
		\node[left=0.7 of C1] (etc2) {...};
		\node[right=0.7 of nC2] (etc3) {...};
		\node[right=0.7 of C2] (etc4) {...};
		\edge[-] {etc1} {CxyS};
		\edge[-] {etc2} {C1};
		\edge[-] {etc3} {nC2};
		\edge[-] {etc4} {C2};
	\end{tikzpicture}
	}
	\caption{Only $y$ and $S$ are in a superset $\{y,S,Q\}$ of $G_{+(x,y)\in E_t}$.} \label{delete2.2.2}
\end{figure}
\small
\begin{equation*}
	\mbox{BF}\big(G_{-(x,y)\in E_t};G_{+(x,y)\in E_t}\big) = \frac{w(\{x,S\})\cdot w(\{y,S\})}{w(S)\cdot w(\{x,y,S\})}.
\end{equation*}
\normalsize
This is the same as \textbf{CASE 2.1}. \\

\noindent\textbf{CASE 2.3:} Both $\{x,S\}$ and $\{y,S\}$ are separators in $G_{+(x,y)\in E_t}$. The cliques containing both $\{x,S\}$ and $\{y,S\}$ are supersets of them after the deletion in $G_{-(x,y)\in E_t}$ \cite{green2013sampling}. Figure \ref{delete2.3} shows $\{x,S\}$ in superset $\{x,S,P\}$ and $\{y,S\}$ in superset $\{y,S,Q\}$, where $P,Q\neq\emptyset$ and $P\cap Q=\emptyset$, thus $\{x,S\}$ and $\{y,S\}$ are separators in $G_{+(x,y)\in E_t}$. 
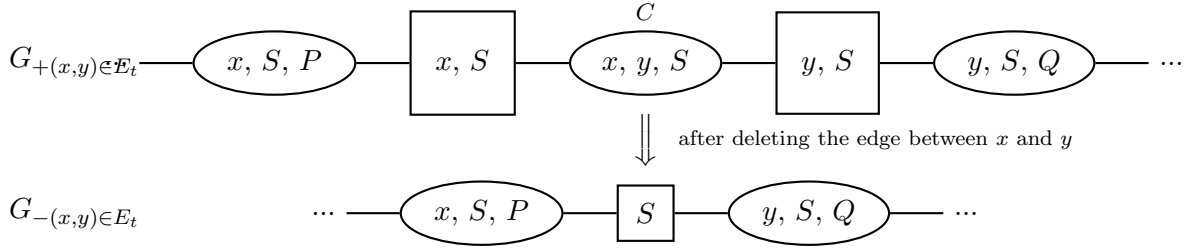
\begin{figure}[H]
	\centering
	\resizebox{1.05\textwidth}{!}{
	\begin{tikzpicture}[thick]
	\node[draw, ellipse, label={above: $C$}] (CxyS) {$x$, $y$, $S$};
	\node[draw, regular polygon, regular polygon sides=4, left=0.7 of CxyS] (SxS) {$x$, $S$};
	\node[draw, regular polygon, regular polygon sides=4, right=0.7 of CxyS] (SyS) {$y$, $S$};
	\edge[-] {SxS} {CxyS};
	\edge[-] {SyS} {CxyS};
	\node[draw, ellipse, left=0.7 of SxS] (nC1) {$x$, $S$, $P$};
	\node[draw, ellipse, right=0.7 of SyS] (nC2) {$y$, $S$, $Q$};
	\edge[-] {SxS} {nC1};
	\edge[-] {SyS} {nC2};
	\node[below=0.1 of CxyS, label={right: {\scriptsize after deleting the edge between $x$ and $y$}}] (DA) {$\Big\Downarrow$};
	\node[draw, regular polygon, regular polygon sides=4, below=0.1 of DA] (S) {$S$};
	\node[draw, ellipse, left=0.7 of S] (C1) {$x$, $S$, $P$};
	\node[draw, ellipse, right=0.7 of S] (C2) {$y$, $S$, $Q$};
	\edge[-] {C1} {S};
	\edge[-] {C2} {S};
	\node[left=5.5 of CxyS] (G) {$G_{+(x,y)\in E_t}$};
	\node[below=0.1 of G] (Gb) {$\phantom{\Big\Downarrow}$};
	\node[below=0.23 of Gb] {$G_{-(x,y)\in E_t}$};
	\node[left=0.7 of nC1] (etc1) {...};
	\node[left=0.7 of C1] (etc2) {...};
	\node[right=0.7 of nC2] (etc3) {...};
	\node[right=0.7 of C2] (etc4) {...};
	\edge[-] {etc1} {nC1};
	\edge[-] {etc2} {C1};
	\edge[-] {etc3} {nC2};
	\edge[-] {etc4} {C2};
	\end{tikzpicture}
	}
	\caption{$\{x,S\}$ and $\{y,S\}$ are in superset $\{x,S,P\}$ and $\{y,S,Q\}$ of $G_{+(x,y)\in E_t}$, respectively.} \label{delete2.3}
\end{figure}
\small
\begin{equation*}
	\mbox{BF}\big(G_{-(x,y)\in E_t};G_{+(x,y)\in E_t}\big) = \frac{w(\{x,S\})\cdot w(\{y,S\})}{w(S)\cdot w(\{x,y,S\})}.
\end{equation*}
\normalsize
This is also the same as \textbf{CASE 2.1}.
\end{proof}

\section{Enumerating Bayes factors in the addition case}
\begin{theorem} \label{validadd} {\normalfont(Condition of proper addition while maintaining decomposability \cite{frydenberg1989decomposition,giudici1999decomposable,thomas2009enumerating}).}
	Adding an edge $(x,y)$ to a decomposable graph $G$ will result in a decomposable graph if and only if $x$ and $y$ are unconnected and contained in cliques that are adjacent in some junction tree of $G$.
\end{theorem}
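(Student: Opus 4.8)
The plan is to prove both implications by surgery on junction trees, exploiting the symmetry between edge addition and edge deletion so that the machinery already developed for Theorem \ref{validdelete} can be reused. Throughout I use the characterization from \S\ref{ssec:graphs} that a graph is decomposable exactly when it admits a junction tree, i.e.\ a tree of cliques covering all edges and satisfying the running intersection property.

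For the ``if'' direction, suppose $G$ has a junction tree $T$ in which the cliques $C_x \ni x$ and $C_y \ni y$ are adjacent, and let $S = C_x \cap C_y$ be the corresponding separator, which is complete in $G$. Since $x$ and $y$ are unconnected in $G$ we have $y \notin C_x$ and $x \notin C_y$, so the set $K := S \cup \{x,y\}$ is complete in $G' := (V, E \cup \{(x,y)\})$. I would build a tree $T'$ for $G'$ from $T$ by subdividing the edge between $C_x$ and $C_y$: delete that edge and insert a new node $K$ joined to both $C_x$ and $C_y$, absorbing $C_x$ into $K$ when $C_x = S \cup \{x\}$, and symmetrically for $C_y$, exactly as in the cases of Lemma \ref{lemmadel} read backwards. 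The verification that $T'$ is a junction tree of $G'$ is the running-intersection check: the subtree of cliques containing $x$ only gains the node $K$ hung off $C_x$ and so stays connected, likewise for $y$; every $s \in S$ previously had its subtree pass through the edge $C_x C_y$ and now passes through $K \ni s$; and a vertex lying in neither $C_x$ nor $C_y$ is unaffected because its subtree never used that edge. As $K$ is the only new bag and it covers the added edge $(x,y)$, the graph $G'$ is decomposable.

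For the ``only if'' direction, assume $G' = (V, E \cup \{(x,y)\})$ is decomposable with $x,y$ unconnected in $G$, and read Theorem \ref{validdelete} in reverse: deleting the edge $(x,y)$ from the decomposable graph $G'$ produces exactly $G$, which is decomposable by hypothesis, so the biconditional of Theorem \ref{validdelete} forces $x$ and $y$ to lie in \emph{exactly one} clique $K'$ of $G'$. Writing $S = K' \setminus \{x,y\}$, the junction-tree transformation underlying that deletion (the cases of Lemma \ref{lemmadel}) splits $K'$ into two adjacent nodes $C_x \supseteq S \cup \{x\}$ and $C_y \supseteq S \cup \{y\}$ sharing the separator $S$, which is precisely a junction tree of $G$ in which a clique containing $x$ is adjacent to a clique containing $y$. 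The substantive work in both directions is the running-intersection bookkeeping during the surgery, together with the small case split of whether $S \cup \{x\}$ and $S \cup \{y\}$ stay maximal or get absorbed into neighbouring cliques, mirroring Lemma \ref{lemmadel}. The genuinely delicate point is the existential nature of the statement: decomposability only guarantees \emph{some} junction tree with the stated adjacency, not every one, so I must ensure the surgery acts on and yields a valid junction tree rather than an arbitrary clique tree; appealing to Theorem \ref{validdelete} is exactly what pins down the uniqueness of $K'$ and removes this ambiguity in the necessity direction.
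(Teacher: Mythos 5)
Your proposal is correct in outline, but there is nothing in the paper to compare it against: Theorem \ref{validadd} is imported from the literature (Frydenberg--Lauritzen, Giudici--Green, Thomas--Green) and is stated without proof, and the junction-tree surgeries you describe are exactly the manipulations those references use and that the paper silently reuses in the figures accompanying Lemmas \ref{lemmadel} and \ref{lemmaadd}. So you have supplied what is essentially the standard argument. Your derivation of necessity from Theorem \ref{validdelete} is a legitimate economy: since $G = G' - (x,y)$ is decomposable by hypothesis, the biconditional of Theorem \ref{validdelete} pins down the unique clique $K' \supseteq \{x,y\}$ of $G'$, and no circularity arises within the paper because Theorem \ref{validdelete} is itself quoted independently of \ref{validadd} --- though in a fully self-contained treatment you would then owe a proof of \ref{validdelete}, since you are deriving one cited theorem from another.

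The one substantive verification your sketch leaves implicit is in the sufficiency direction: you must check that $K = S \cup \{x,y\}$ is actually a clique of $G'$ (maximal) and is the \emph{only} new one, so that the nodes of your surgered tree $T'$ are precisely the cliques of $G'$. This needs the observation that every common neighbour $v$ of $x$ and $y$ in $G$ lies in $S$: deleting the tree edge $(C_x, C_y)$ from $T$ separates the connected subtree of cliques containing $x$ from the subtree of cliques containing $y$ (a clique containing $x$ on the $C_y$ side would force $x \in C_y$ by the junction property), so the connected subtree of cliques containing $v$, which meets both, must contain $C_x$ and $C_y$, whence $v \in C_x \cap C_y = S$. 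With that supplied, your running-intersection bookkeeping and absorption case split go through, including the empty-separator case $S = \emptyset$ for vertices in distinct components, matching CASE 1 of Lemma \ref{lemmaadd}. The analogous point in the necessity direction --- that after splitting $K'$ into $S \cup \{x\}$ and $S \cup \{y\}$ and repartitioning the former neighbours of $K'$, the vertex subtrees for $x$, $y$, and each $s \in S$ remain connected --- is of the same routine nature and is correctly flagged in your sketch.
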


\ul{Notice we use the term ``addition'' {\it only} in the case of adding false edges,} i.e., edges which are not in the true graph $G_t$. Let $G_{+(x,y)\not\in E_t}$ and $G_{-(x,y)\not\in E_t}$ be any decomposable graph with and without the false edge $(x,y)$, respectively. And except the false edge $(x,y)$, the rest of them are the same. ($G_{-(x,y)\not\in E_t}$ does not need to be the true graph, except not having the false edge $(x,y)$.) Therefore, $G_{+(x,y)\not\in E_t}$ can be seen as the result of adding the false edge $(x,y)$ to $G_{-(x,y)\not\in E_t}$. By Theorem \ref{validadd}, we know node $x$ and $y$ are contained in cliques that are adjacent in at least one junction tree of $G_{-(x,y)\not\in E_t}$. Thus we have the following lemma.

\begin{lemma} \label{lemmaadd} {\normalfont(Bayes factor of adding one single false edge).}
	Let $C_1$ and $C_2$ be the cliques which contain $x$ and $y$, respectively. Assume $C_1$ and $C_2$ are two adjacent nodes in at least one junction tree of $G_{-(x,y)\not\in E_t}$. Let $S=C_1\cap C_2$. Then,
	\small
	\begin{align*}
		\bigg(\frac{g}{g+1}\bigg) \sqrt{\frac{b+n+d_S-\frac{1}{2}}{b+d_S}} \big(1-\hat{\rho}^2_{xy\mid S}\big)^{-\frac{n}{2}} &< {\normalfont\mbox{BF}}\big(G_{+(x,y)\not\in E_t};G_{-(x,y)\not\in E_t}\big) \\
		&< \bigg(\frac{g}{g+1}\bigg) \sqrt{\frac{b+n+d_S}{b+d_S-\frac{1}{2}}} \big(1-\hat{\rho}^2_{xy\mid S}\big)^{-\frac{n}{2}},
	\end{align*}
	\normalsize
where $d_S=|S|<p$. When $S=\emptyset$, $d_S=0$ and the sample partial correlation coefficient $\hat{\rho}_{xy\mid S}$ becomes the sample correlation coefficient $\hat{\rho}_{xy}$.
\end{lemma}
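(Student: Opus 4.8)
The plan is to exploit the fact that adding the edge $(x,y)$ to $G_{-(x,y)\not\in E_t}$ is exactly the reverse of deleting it from $G_{+(x,y)\not\in E_t}$, so that the two graphs here play precisely the structural roles of $G_{+(x,y)\in E_t}$ and $G_{-(x,y)\in E_t}$ in Lemma \ref{lemmadel}. The computation in Lemma \ref{lemmadel} is a pure manipulation of marginal likelihoods through the clique/separator factorization and never uses whether $(x,y)$ is a true or a false edge; only the subsequent interpretation of $\hat{\rho}_{xy\mid S}$ (zero versus nonzero population value) differs. Consequently I can write
\begin{equation*}
\mbox{BF}\big(G_{+(x,y)\not\in E_t};G_{-(x,y)\not\in E_t}\big) = \frac{1}{\mbox{BF}\big(G_{-(x,y)\not\in E_t};G_{+(x,y)\not\in E_t}\big)} = \frac{w(\{x,y,S\})\,w(S)}{w(\{x,S\})\,w(\{y,S\})},
\end{equation*}
the reciprocal of the quantity evaluated in the deletion lemma.

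First I would invoke Theorem \ref{validadd} to fix the local junction-tree geometry: since $x\in C_1$, $y\in C_2$ with $C_1,C_2$ adjacent and $S=C_1\cap C_2$, the cliques containing $\{x,S\}$ and $\{y,S\}$ are (supersets of) $\{x,S\}$ and $\{y,S\}$ before addition and merge into the single clique $\{x,y,S\}$ with separator $S$ afterwards. Mirroring the deletion proof, I would enumerate the sub-cases according to whether $\{x,S\}$ and $\{y,S\}$ are themselves separators (the analogues of CASE 2.1--2.3) and verify that in every sub-case the extra vertices $P,Q$ cancel between numerator and denominator of the $w$-ratio, leaving exactly the displayed expression; the degenerate $S=\emptyset$ case reduces to the sample correlation $\hat{\rho}_{xy}$ just as in CASE 1.

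Using the determinant/Schur-complement identities for $\abs{\mathrm{Y}^T_{xyS}\mathrm{Y}_{xyS}}$, $\abs{\mathrm{Y}^T_{xS}\mathrm{Y}_{xS}}$ and $\abs{\mathrm{Y}^T_{yS}\mathrm{Y}_{yS}}$ already recorded in Lemma \ref{lemmadel}, the $w$-ratio collapses to
\begin{equation*}
\mbox{BF}\big(G_{+(x,y)\not\in E_t};G_{-(x,y)\not\in E_t}\big) = \bigg(\frac{g}{g+1}\bigg)\frac{\Gamma\big(\frac{b+d_S}{2}\big)\Gamma\big(\frac{b+n+d_S+1}{2}\big)}{\Gamma\big(\frac{b+d_S+1}{2}\big)\Gamma\big(\frac{b+n+d_S}{2}\big)}\big(1-\hat{\rho}_{xy\mid S}^2\big)^{-\frac{n}{2}},
\end{equation*}
which is literally the reciprocal of the deletion value. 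Finally I would apply Watson's inequality (Proposition \ref{gammabound}) to each of the two gamma ratios $\Gamma\big(\frac{b+d_S+1}{2}\big)/\Gamma\big(\frac{b+d_S}{2}\big)$ and $\Gamma\big(\frac{b+n+d_S+1}{2}\big)/\Gamma\big(\frac{b+n+d_S}{2}\big)$, then multiply the resulting bounds to obtain
\begin{equation*}
\sqrt{\frac{b+n+d_S-\frac{1}{2}}{b+d_S}} < \frac{\Gamma\big(\frac{b+d_S}{2}\big)\Gamma\big(\frac{b+n+d_S+1}{2}\big)}{\Gamma\big(\frac{b+d_S+1}{2}\big)\Gamma\big(\frac{b+n+d_S}{2}\big)} < \sqrt{\frac{b+n+d_S}{b+d_S-\frac{1}{2}}},
\end{equation*}
which yields the stated two-sided bound. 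The only point requiring care is tracking the direction of the inequalities through the reciprocal, since taking reciprocals of $\Gamma\big(\frac{b+d_S+1}{2}\big)/\Gamma\big(\frac{b+d_S}{2}\big)$ swaps the upper and lower Watson bounds; this is the main (mild) obstacle, the remainder being the bookkeeping of the case enumeration.
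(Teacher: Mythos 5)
Your proposal is correct and follows essentially the same route as the paper: the paper's proof likewise enumerates the addition scenarios (the analogues of the deletion cases, with $S=\emptyset$, $P,Q$ empty or not), observes in each that the Bayes factor equals the reciprocal ratio $w(\{x,y,S\})\,w(S)/\big(w(\{x,S\})\,w(\{y,S\})\big)$, collapses it via the same Schur-complement determinant identities to the gamma-ratio times $\big(1-\hat{\rho}^2_{xy\mid S}\big)^{-n/2}$, and applies Proposition \ref{gammabound} with the inequality directions swapped. Your explicit remark that the deletion computation never uses the true/false status of the edge, and your care about reversing the Watson bounds under the reciprocal, are exactly the points the paper's enumeration implicitly relies on.
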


\begin{proof}
Similar to the deletion case, we enumerate all scenarios and calculate the corresponding Bayes factors. The addition case can be partially seen as the reversion of the deletion case, only the edge added here is {\it not} a true edge. Same enumeration can be found in the appendix of \cite{giudici1999decomposable}. \\

\noindent\textbf{CASE 1:} Clique $C_1$ and $C_2$ are disconnected in $G_{-(x,y)\not\in E_t}$, i.e. node $x$ and $y$ are not adjacent and not connected. (The graph can be seen as two separate subgraphs.) In other words, adding edge $(x,y)$ will result in creating a new clique to the current junction tree of $G_{-(x,y)\not\in E_t}$, and also connecting clique $C_1$ and $C_2$. They remain unchanged after adding edge $(x,y)$. This is the special scenario of CASE 2 where $S=\emptyset$. Figure \ref{add1} illustrates the result of adding a false edge $(x,y)$ to $G_{-(x,y)\not\in E_t}$. Here $P=C_1\backslash\{x\}$ and $Q=C_2\backslash\{y\}$, thus $P\cap Q=\emptyset$ and $P,Q\neq\emptyset$.
\begin{figure}[H]
	\centering
	\resizebox{\textwidth}{!}{
	\begin{tikzpicture}[thick]
		\node[draw, regular polygon, regular polygon sides=4] (S) {$\emptyset$};
		\node[draw, ellipse, left=0.7 of S, label={above:$C_1$}] (C1) {$x$, $P$};
		\node[draw, ellipse, right=0.7 of S, label={above:$C_2$}] (C2) {$y$, $Q$};
		\edge[-] {C1} {S};
		\edge[-] {C2} {S};
		\node[left=0.7 of C1] (etc1) {...};
		\node[right=0.7 of C2] (etc2) {...};
		\edge[-] {C1} {etc1};
		\edge[-] {C2} {etc2};
		\node[below=0.1 of S, label={right: \scriptsize after adding an edge between $x$ and $y$}] (DA) {$\Big\Downarrow$};
		\node[draw, ellipse, below=0.18 of DA] (Cxy) {$x$, $y$};
		\node[draw, regular polygon, regular polygon sides=4, left=0.7 of Cxy] (Sx) {$x$};
		\node[draw, regular polygon, regular polygon sides=4, right=0.7 of Cxy] (Sy) {$y$};
		\edge[-] {Sx} {Cxy}; 
		\edge[-] {Sy} {Cxy}; 	
		\node[draw, ellipse, left=0.7 of Sx, , label={above:$C_1$}] (nC1) {$x$, $P$};
		\node[draw, ellipse, right=0.7 of Sy, , label={above:$C_2$}] (nC2) {$y$, $Q$};
		\edge[-] {Sx} {nC1};
		\edge[-] {Sy} {nC2};
		\node[left=0.7 of nC1] (etc3) {...};
		\node[right=0.7 of nC2] (etc4) {...};
		\edge[-] {nC1} {etc3};
		\edge[-] {nC2} {etc4};
		\node[left=1 of nC1] (G) {$G_{+(x,y)\not\in E_t}$};
		\node[above=0.18 of G] (Ga) {$\phantom{\Big\Uparrow}$};
		\node[above=0.18 of Ga] {$G_{-(x,y)\not\in E_t}$};
	\end{tikzpicture}
	}
	\caption{Clique $C_1$ and $C_2$ are disconnected in $G_{-(x,y)\not\in E_t}$.} \label{add1}
\end{figure}
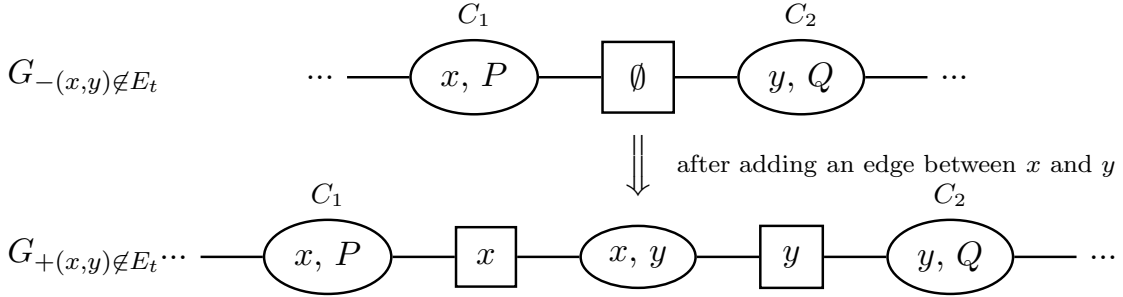
\small
\begin{align*}
	& \mbox{BF}\big(G_{+(x,y)\not\in E_t};G_{-(x,y)\not\in E_t}\big) \\
	& = \frac{f(\mathrm{Y}\mid G_{+(x,y)\not\in E_t})}{f(\mathrm{Y}\mid G_{-(x,y)\not\in E_t})} = \frac{w(\{x,y\})}{w(\{x\})\cdot w(\{y\})} \\
	& = \bigg(\frac{g}{g+1}\bigg) \frac{\Gamma^2(\frac{b}{2})\Gamma_2(\frac{b+n+1}{2})}{\Gamma_2(\frac{b+1}{2})\Gamma^2(\frac{b+n}{2})}
	\Bigg(\frac{\abs{\mathrm{Y}^T_{xy}\mathrm{Y}_{xy}}}{\abs{\mathrm{Y}^T_{x}\mathrm{Y}_{x}}\cdot\abs{\mathrm{Y}^T_{y}\mathrm{Y}_{y}}}\Bigg)^{-\frac{n}{2}} \\
	& = \bigg(\frac{g}{g+1}\bigg) \frac{\Gamma(\frac{b}{2})\Gamma(\frac{b+n+1}{2})}{\Gamma(\frac{b+1}{2})\Gamma(\frac{b+n}{2})}
	\Bigg(\frac{\mathrm{Y}_x^T\mathrm{Y}_x \cdot \mathrm{X}_y^T\mathrm{Y}_y - (\mathrm{Y}_x^T\mathrm{Y}_y)^2}{\mathrm{Y}_x^T\mathrm{Y}_x \cdot \mathrm{X}_y^T\mathrm{Y}_y}\Bigg)^{-\frac{n}{2}} \\
	& = \bigg(\frac{g}{g+1}\bigg) \frac{\Gamma(\frac{b}{2})\Gamma(\frac{b+n+1}{2})}{\Gamma(\frac{b+1}{2})\Gamma(\frac{b+n}{2})} \big(1 - \hat{\rho}_{xy}^2\big)^{-\frac{n}{2}}.
\end{align*}
\normalsize
By Proposition \ref{gammabound},
\small
\begin{equation*}
	\frac{1}{\sqrt{\frac{b}{2}}} < \frac{\Gamma\big(\frac{b}{2}\big)}{\Gamma\big(\frac{b+1}{2}\big)}<\frac{1}{\sqrt{\frac{b-1}{2}+\frac{1}{4}}}, \quad
	\sqrt{\frac{b+n-1}{2}+\frac{1}{4}} < \frac{\Gamma\big(\frac{b+n+1}{2}\big)}{\Gamma\big(\frac{b+n}{2}\big)} < \sqrt{\frac{b+n}{2}}. 
\end{equation*}
\normalsize
\noindent Thus,
\small
\begin{equation*}
	\bigg(\frac{g}{g+1}\bigg)\sqrt{\frac{b+n-\frac{1}{2}}{b}} \big(1 - \hat{\rho}_{xy}^2\big)^{-\frac{n}{2}} < \mbox{BF}\big(G_{+(x,y)\not\in E_t};G_{-(x,y)\not\in E_t}\big) 
	< \bigg(\frac{g}{g+1}\bigg)\sqrt{\frac{b+n}{b-\frac{1}{2}}} \big(1 - \hat{\rho}_{xy}^2\big)^{-\frac{n}{2}}.
\end{equation*}
\normalsize

\noindent\textbf{CASE 2:} Clique $C_1$ and $C_2$ are connected by a non-empty separator $S$ in $G_{-(x,y)\not\in E_t}$ and $P\cap Q=\emptyset$.
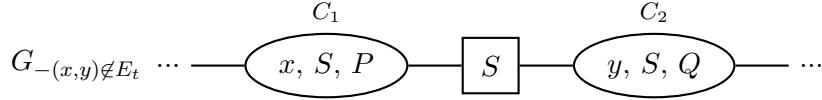
\begin{figure}[H]
	\centering
	\begin{tikzpicture}[thick]
	\node[draw, regular polygon, regular polygon sides=4] (S) {$S$};
	\node[draw, ellipse, left=0.7 of S, label={above:$C_1$}] (C1) {$x$, $S$, $P$};
	\node[draw, ellipse, right=0.7 of S, label={above:$C_2$}] (C2) {$y$, $S$, $Q$};
	\edge[-] {C1} {S};
	\edge[-] {C2} {S};
	\node[left=0.7 of C1] (etc1) {...};
	\node[right=0.7 of C2] (etc2) {...};
	\edge[-] {C1} {etc1};
	\edge[-] {C2} {etc2};
	\node[left=1.2 of C1] {$G_{-(x,y)\not\in E_t}$};
	\end{tikzpicture}
	\caption{When $S$ is a non-empty separator in $G_{-(x,y)\not\in E_t}$.} \label{add2}
\end{figure}

\noindent\textbf{CASE 2.1:} When $P$, $Q$ are both empty sets, i.e. clique $C_1$ contains only $\{x,S\}$ and clique $C_2$ contains only $\{y,S\}$ in $G_{-(x,y)\not\in E_t}$. In this case, adding an edge between $x$ and $y$ will consolidate $C_1$ and $C_2$ to create a single clique which consists of $x$, $y$ and $S$. Figure \ref{add2.1} shows this scenario.
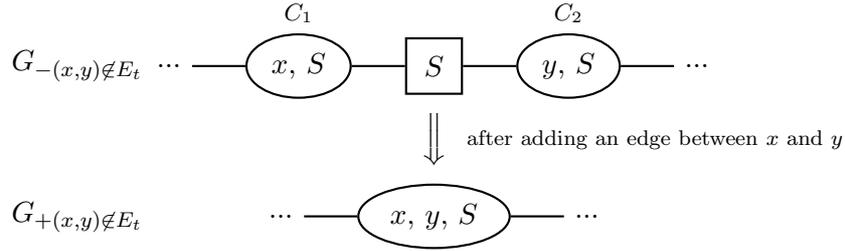
\begin{figure}[H]
	\centering
	\begin{tikzpicture}[thick]
	\node[draw, regular polygon, regular polygon sides=4] (S) {$S$};
	\node[draw, ellipse, left=0.7 of S, label={above:$C_1$}] (C1) {$x$, $S$};
	\node[draw, ellipse, right=0.7 of S, label={above:$C_2$}] (C2) {$y$, $S$};
	\edge[-] {C1} {S};
	\edge[-] {C2} {S};
	\node[left=0.7 of C1] (etc1) {...};
	\node[right=0.7 of C2] (etc2) {...};
	\edge[-] {C1} {etc1};
	\edge[-] {C2} {etc2};
	\node[below=0.1 of S, label={right: \scriptsize after adding an edge between $x$ and $y$}] (DA) {$\Big\Downarrow$};
	\node[draw, ellipse, below=0.1 of DA] (CxyS) {$x$, $y$, $S$};
	\node[left=0.7 of CxyS] (etc3) {...};
	\node[right=0.7 of CxyS] (etc4) {...};
	\edge[-] {CxyS} {etc3};
	\edge[-] {CxyS} {etc4};
	\node[left=1.2 of C1] (G) {$G_{-(x,y)\not\in E_t}$};
	\node[below=0.1 of G] (Gb) {$\phantom{\Big\Downarrow}$};
	\node[below=0.23 of Gb] {$G_{+(x,y)\not\in E_t}$};
	\end{tikzpicture}
	\caption{When $P,Q=\emptyset$, i.e. $C_1=\{x,S\}$ and $C_2=\{y,S\}$ in $G_{-(x,y)\not\in E_t}$.} \label{add2.1}
\end{figure}
\small
\begin{align*}
	& \mbox{BF}\big(G_{+(x,y)\not\in E_t};G_{-(x,y)\not\in E_t}\big) \\
	& = \frac{f(\mathrm{Y}\mid G_{+(x,y)\not\in E_t})}{f(\mathrm{Y}\mid G_{-(x,y)\not\in E_t})} = \frac{w(\{x,y,S\})}{\frac{w(\{x,S\})\cdot w(\{y,S\})}{w(S)}}
	= \frac{w(\{x,y,S\})\cdot w(S)}{w(\{x,S\})\cdot w(\{y,S\})} \\
	& = \bigg(\frac{g}{g+1}\bigg)
	\frac{\Gamma_{d_S+1}^2\big(\frac{b+d_S}{2}\big)\Gamma_{d_S}\big(\frac{b+n+d_S-1}{2}\big)\Gamma_{d_S+2}\big(\frac{b+n+d_S+1}{2}\big)}{\Gamma_{d_S}\big(\frac{b+d_S-1}{2}\big)\Gamma_{d_S+2}\big(\frac{b+d_S+1}{2}\big)\Gamma_{d_S+1}^2\big(\frac{b+n+d_S}{2}\big)}
	\Bigg(\frac{\abs{\mathrm{Y}_S^T\mathrm{Y}_S}\cdot\abs{\mathrm{Y}_{xyS}^T\mathrm{Y}_{xyS}}}{\abs{\mathrm{Y}_{xS}^T\mathrm{Y}_{xS}}\cdot\abs{\mathrm{Y}_{yS}^T\mathrm{Y}_{yS}}}\Bigg)^{-\frac{n}{2}} \\
	& =	\bigg(\frac{g}{g+1}\bigg)
	\frac{\Gamma\big(\frac{b+d_S}{2}\big)\Gamma\big(\frac{b+n+d_S+1}{2}\big)}{\Gamma\big(\frac{b+d_S+1}{2}\big)\Gamma\big(\frac{b+n+d_S}{2}\big)}
	\Bigg( \frac{\hat{\Sigma}_{xx\mid S}\hat{\Sigma}_{yy\mid S} - \hat{\Sigma}_{xy\mid S}^2}{\hat{\Sigma}_{xx\mid S}\hat{\Sigma}_{yy\mid S}} \Bigg)^{-\frac{n}{2}} \\
	& = \bigg(\frac{g}{g+1}\bigg)
	\frac{\Gamma\big(\frac{b+d_S}{2}\big)\Gamma\big(\frac{b+n+d_S+1}{2}\big)}{\Gamma\big(\frac{b+d_S+1}{2}\big)\Gamma\big(\frac{b+n+d_S}{2}\big)}\big(1-\hat{\rho}^2_{xy\mid S}\big)^{-\frac{n}{2}}.
\end{align*}
\normalsize
By Proposition \ref{gammabound},
\small
\begin{equation*}
	\frac{1}{\sqrt{\frac{b+d_S}{2}}} < \frac{\Gamma\big(\frac{b+d_S}{2}\big)}{\Gamma\big(\frac{b+d_S+1}{2}\big)} < \frac{1}{\sqrt{\frac{b+d_S-1}{2}+\frac{1}{4}}}
\end{equation*}
\normalsize
and
\small
\begin{equation*}
		\sqrt{\frac{b+n+d_S-1}{2}+\frac{1}{4}} < \frac{\Gamma\big(\frac{b+n+d_S+1}{2}\big)}{\Gamma\big(\frac{b+n+d_S}{2}\big)} < \sqrt{\frac{b+n+d_S}{2}}.
\end{equation*}
\normalsize
\noindent Thus,
\small
\begin{align*}
	\bigg(\frac{g}{g+1}\bigg) \sqrt{\frac{b+n+d_S-\frac{1}{2}}{b+d_S}} \big(1-\hat{\rho}^2_{xy\mid S}\big)^{-\frac{n}{2}} &< {\normalfont\mbox{BF}}\big(G_{+(x,y)\not\in E_t};G_{-(x,y)\not\in E_t}\big) \\
	&< \bigg(\frac{g}{g+1}\bigg) \sqrt{\frac{b+n+d_S}{b+d_S-\frac{1}{2}}} \big(1-\hat{\rho}^2_{xy\mid S}\big)^{-\frac{n}{2}}.
\end{align*}
\normalsize

\noindent\textbf{CASE 2.2:} One of $P$, $Q$ is an empty set, i.e. clique $C_1$ contains only $\{x,S\}$ or clique $C_2$ contains only $\{y,S\}$ in $G_{-(x,y)\not\in E_t}$. In this case, adding an edge between node $x$ and $y$ will not create a new clique, but extending the original separator $S$ by node $x$ or $y$. Figure \ref{add2.2.1} shows when $P\neq\emptyset$ and $Q=\emptyset$, where $C_1=\{x,S,P\}$, $C_2=\{y,S\}$ in $G_{-(x,y)\not\in E_t}$. Figure \ref{add2.2.2} shows when $Q\neq\emptyset$ and $P=\emptyset$, where $C_1=\{x,S\}$, $C_2=\{y,S,Q\}$ in $G_{-(x,y)\not\in E_t}$.
\begin{figure}[H]
	\centering
	\resizebox{0.9\textwidth}{!}{
	\begin{tikzpicture}[thick]
	\node[draw, regular polygon, regular polygon sides=4] (S) {$S$};
	\node[draw, ellipse, left=0.7 of S, label={above:$C_1$}] (C1) {$x$, $S$, $P$};
	\node[draw, ellipse, right=0.7 of S, label={above:$C_2$}] (C2) {$y$, $S$};
	\edge[-] {C1} {S};
	\edge[-] {C2} {S};
	\node[left=0.7 of C1] (etc1) {...};
	\node[right=0.7 of C2] (etc2) {...};
	\edge[-] {C1} {etc1};
	\edge[-] {C2} {etc2};
	\node[below=0.1 of S, label={right: \scriptsize after adding an edge between $x$ and $y$}] (DA) {$\Big\Downarrow$};
	\node[draw, regular polygon, regular polygon sides=4, below=0.1 of DA] (SyS) {$x$, $S$};
	\node[draw, ellipse, left=0.7 of SyS] (CxyS) {$x$, $S$, $P$};
	\node[draw, ellipse, right=0.7 of SyS] (nC2) {$x$, $y$, $S$};
	\edge[-] {CxyS} {SyS};
	\edge[-] {nC2} {SyS};
	\node[left=0.7 of CxyS] (etc3) {...};
	\node[right=0.7 of nC2] (etc4) {...};
	\edge[-] {CxyS} {etc3};
	\edge[-] {nC2} {etc4};		
	\node[left=1.9 of C1] (G) {$G_{-(x,y)\not\in E_t}$};
	\node[below=0.1 of G] (Gb) {$\phantom{\Big\Downarrow}$};
	\node[below=0.42 of Gb] {$G_{+(x,y)\not\in E_t}$};
	\end{tikzpicture}
	}
	\caption{$P\neq\emptyset$ and $Q=\emptyset$, where $C_1=\{x,S,P\}$, $C_2=\{y,S\}$ in $G_{-(x,y)\not\in E_t}$.} \label{add2.2.1}
\end{figure}
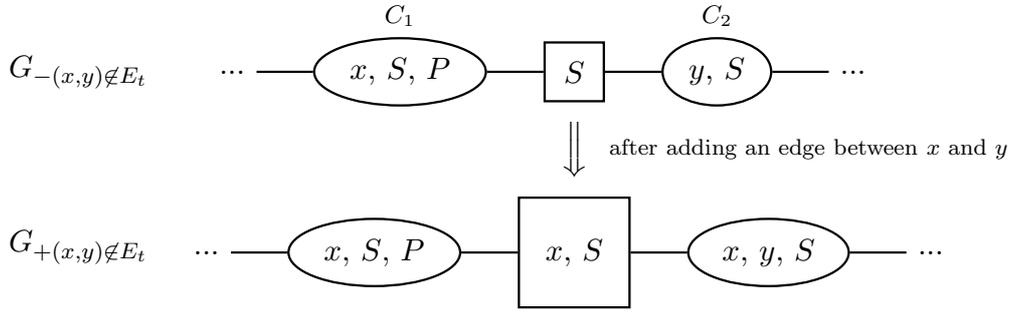
\begin{figure}[H]
	\centering
	\resizebox{0.9\textwidth}{!}{
	\begin{tikzpicture}[thick]
	\node[draw, regular polygon, regular polygon sides=4] (S) {$S$};
	\node[draw, ellipse, left=0.7 of S, label={above:$C_1$}] (C1) {$x$, $S$};
	\node[draw, ellipse, right=0.7 of S, label={above:$C_2$}] (C2) {$y$, $S$, $Q$};
	\edge[-] {C1} {S};
	\edge[-] {C2} {S};
	\node[left=0.7 of C1] (etc1) {...};
	\node[right=0.7 of C2] (etc2) {...};
	\edge[-] {C1} {etc1};
	\edge[-] {C2} {etc2};
	\node[below=0.1 of S, label={right: \scriptsize after adding an edge between $x$ and $y$}] (DA) {$\Big\Downarrow$};
	\node[draw, regular polygon, regular polygon sides=4, below=0.1 of DA] (SyS) {$y$, $S$};
	\node[draw, ellipse, left=0.7 of SyS] (CxyS) {$x$, $y$, $S$};
	\node[draw, ellipse, right=0.7 of SyS] (nC2) {$y$, $S$, $Q$};
	\edge[-] {CxyS} {SyS};
	\edge[-] {nC2} {SyS};
	\node[left=0.7 of CxyS] (etc3) {...};
	\node[right=0.7 of nC2] (etc4) {...};
	\edge[-] {CxyS} {etc3};
	\edge[-] {nC2} {etc4};		
	\node[left=1.9 of C1] (G) {$G_{-(x,y)\not\in E_t}$};
	\node[below=0.1 of G] (Gb) {$\phantom{\Big\Downarrow}$};
	\node[below=0.42 of Gb] {$G_{+(x,y)\not\in E_t}$};
	\end{tikzpicture}
	}
	\caption{$Q\neq\emptyset$ and $P=\emptyset$, where $C_1=\{x,S\}$, $C_2=\{y,S,Q\}$ in $G_{-(x,y)\not\in E_t}$.} \label{add2.2.2}
\end{figure}
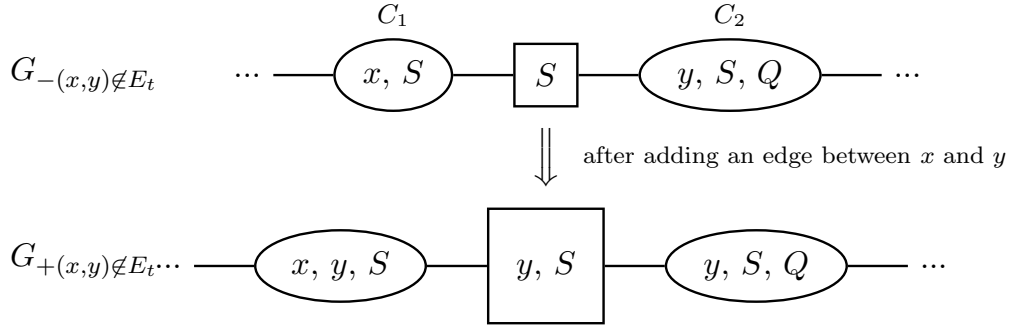
\small
\begin{equation*}
	\mbox{BF}\big(G_{+(x,y)\not\in E_t};G_{-(x,y)\not\in E_t}\big) = \frac{w(\{x,y,S\})\cdot w(S)}{w(\{x,S\})\cdot w(\{y,S\})}.
\end{equation*}
\normalsize
This is the same as \textbf{CASE 2.1}.\\

\noindent\textbf{CASE 2.3:} When $P$, $Q$ are both non-empty sets and $P\cap Q=\emptyset$, i.e. $C_1=\{x,S,P\}$, $C_2=\{y,S,Q\}$ in $G_{-(x,y)\not\in E_t}$. In this case, adding an edge between $x$ and $y$ will create a new clique $\{x,y,S\}$ and two new separators $\{x,S\}$ and $\{y,S\}$. Figure \ref{add2.3} illustrates this case.
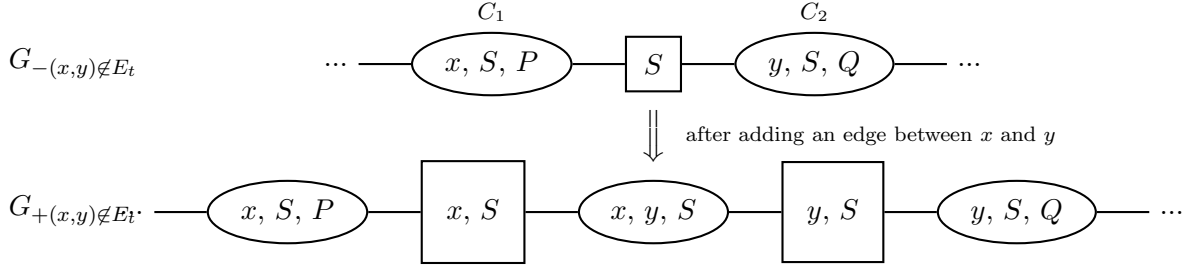
\begin{figure}[H]
	\centering
	\resizebox{1.05\textwidth}{!}{
	\begin{tikzpicture}[thick]
	\node[draw, regular polygon, regular polygon sides=4] (S) {$S$};
	\node[draw, ellipse, left=0.7 of S, label={above:$C_1$}] (C1) {$x$, $S$, $P$};
	\node[draw, ellipse, right=0.7 of S, label={above:$C_2$}] (C2) {$y$, $S$, $Q$};
	\edge[-] {C1} {S};
	\edge[-] {C2} {S};
	\node[left=0.7 of C1] (etc1) {...};
	\node[right=0.7 of C2] (etc2) {...};
	\edge[-] {C1} {etc1};
	\edge[-] {C2} {etc2};
	\node[below=0.1 of S, label={right: \scriptsize after adding an edge between $x$ and $y$}] (DA) {$\Big\Downarrow$};
	\node[draw, ellipse, below=0.1 of DA] (CxyS) {$x$, $y$, $S$};
	\node[draw, regular polygon, regular polygon sides=4, left=0.7 of CxyS] (SxS) {$x$, $S$};
	\node[draw, regular polygon, regular polygon sides=4, right=0.7 of CxyS] (SyS) {$y$, $S$};
	\edge[-] {SxS} {CxyS};
	\edge[-] {SyS} {CxyS};
	\node[draw, ellipse, left=0.7 of SxS] (nC1) {$x$, $S$, $P$};
	\node[draw, ellipse, right=0.7 of SyS] (nC2) {$y$, $S$, $Q$};
	\edge[-] {SxS} {nC1};
	\edge[-] {SyS} {nC2};
	\node[left=0.7 of nC1] (etc3) {...};
	\node[right=0.7 of nC2] (etc4) {...};
	\edge[-] {nC1} {etc3};
	\edge[-] {nC2} {etc4};	
	\node[left=3.5 of C1] (G) {$G_{-(x,y)\not\in E_t}$};
	\node[below=0.1 of G] (Gb) {$\phantom{\Big\Downarrow}$};
	\node[below=0.2 of Gb] {$G_{+(x,y)\not\in E_t}$};
	\end{tikzpicture}
	}
	\caption{$P,Q\neq\emptyset$ and $P\cap Q=\emptyset$, where $C_1=\{x,S,P\}$, $C_2=\{y,S,Q\}$ in $G_{-(x,y)\not\in E_t}$.} \label{add2.3}
\end{figure}
\small
\begin{equation*}
	\mbox{BF}\big(G_{+(x,y)\not\in E_t};G_{-(x,y)\not\in E_t}\big) = \frac{w(\{x,y,S\})\cdot w(S)}{w(\{x,S\})\cdot w(\{y,S\})}.
\end{equation*}
\normalsize
This is also the same as \textbf{CASE 2.1}.
\end{proof}

\section{Pairwise Bayes Factor Consistency and Posterior Ratio Consistency -- any graph $G_a$ versus the true graph $G_t$}
\begin{lemma} \label{chainrule} {\normalfont{(Decomposable graph chain rule \cite{lauritzen1996graphical}).}} 
	Let $G=(V,E)$ be a decomposable graph and let $G'=(V,E')$ be a subgraph of $G$ that also is decomposable with $|E\backslash E'|=k$. Then there is an increasing sequence  $G'=G_0 \subset G_1 \cdots \subset G_{k-1} \subset G_k = G$ of decomposable graphs that differ by exactly one edge.
\end{lemma}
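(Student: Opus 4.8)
The plan is to induct on $k = |E\setminus E'|$. The base case $k=0$ is immediate, and for the inductive step it suffices to produce a single edge $e=(x,y)\in E\setminus E'$ for which $G'+e$ is again decomposable: since $e\in E$ we then automatically have $G'\subsetneq G'+e\subseteq G$ with $|E\setminus E(G'+e)|=k-1$, so the inductive hypothesis applied to the pair $(G'+e,\,G)$ supplies a chain $G'+e=H_0\subset H_1\subset\cdots\subset H_{k-1}=G$, which we prepend with $G'$ to finish. Thus the whole statement reduces to the existence claim: \emph{whenever $G'\subsetneq G$ are both decomposable on the common vertex set $V$, there is an edge $(x,y)\in E\setminus E'$ with $G'+(x,y)$ decomposable.}

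First I would dispose of the easy case. If some missing edge $(x,y)\in E\setminus E'$ joins vertices in different connected components of $G'$, then adding it creates no new cycle, so $G'+(x,y)$ is trivially decomposable and we are done. Hence we may assume every missing edge joins two vertices lying in the same component of $G'$.

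For the remaining case I would combine an extremal choice with a chordless-cycle obstruction. Among all missing edges choose one, $(x,y)$, for which the distance $d_{G'}(x,y)$ in $G'$ is smallest, and suppose toward a contradiction that $G'+(x,y)$ is not decomposable. Since $G'$ is already chordal, any new chordless cycle must run through $(x,y)$, so it has the form $x-w_1-\cdots-w_\ell-y-x$ with $\ell\geq 2$, the path $w_1,\dots,w_\ell$ induced in $G'$, and no $G'$-edge from $x$ or $y$ to an interior $w_i$. All edges of this cycle except $(x,y)$ lie in $G'\subseteq G$, and $(x,y)\in E$, so the entire cycle lies in $G$; as $G$ is decomposable the cycle has a chord in $G$, and because the cycle is chordless in $G'+(x,y)$ that chord is itself a missing edge. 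One then reads off from the structure of this short path a missing edge whose $G'$-distance is no larger, and—after refining the extremal quantity to a suitable well-founded (e.g.\ lexicographic) measure such as distance paired with the length of the violating cycle—forces a strict decrease, yielding the contradiction.

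The main obstacle is precisely this descent: a single scalar such as $d_{G'}(x,y)$ need not strictly decrease, since the chord produced above can sit at the same distance, so the delicate point is to identify an extremal quantity for which the chordless-cycle obstruction guarantees strict descent and hence a genuine decomposable single-edge addition. Equivalently, one may run the argument in the deletion direction via Theorem \ref{validdelete}, seeking a missing edge contained in \emph{exactly one} maximal clique of $G$: for adjacent cliques $C_1,C_2$ sharing a separator $S\supseteq\{x,y\}$ and vertices $a\in C_1\setminus S$, $b\in C_2\setminus S$, the four-cycle $a-x-b-y-a$ has unique chord $(x,y)$ in $G$ (as $S$ separates $a$ from $b$), so if $(x,y)$ is missing from the decomposable $G'$ then one of $(a,x),(a,y),(b,x),(b,y)$ is missing too; the combinatorial core is again to arrange the configuration so that this process terminates at a deletable edge.
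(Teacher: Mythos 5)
The paper does not actually prove this lemma: it is quoted as a known result with a citation to Lauritzen (1996) (it is Lemma 2.21 there), so your task was to supply the full classical argument, and there is no ``paper route'' to compare against. Your reduction is correct as far as it goes: the induction on $k$ is sound, the cross-component case is handled correctly (adding a bridge creates no cycle, hence preserves chordality), and the obstruction setup is right --- any chordless cycle of $G'+(x,y)$ must pass through $(x,y)$, the whole cycle lies in $G$, and its $G$-chord must belong to $E\setminus E'$. But the entire content of the lemma is the existence claim you reduce to, namely that some edge of $E\setminus E'$ can be added to $G'$ preserving decomposability, and at precisely this point you stop: you concede that $d_{G'}(x,y)$ need not strictly decrease and gesture at ``a suitable well-founded (e.g.\ lexicographic) measure'' without exhibiting one or verifying descent. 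A proof proposal whose key step is ``arrange the configuration so that this process terminates'' is a plan, not a proof; this is a genuine gap.

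The gap is not cosmetic, because your primary coordinate provably stagnates. If $d_{G'}(x,y)\geq 3$, a shortest $x$--$y$ path in $G'$ is induced, and together with $(x,y)$ it forms a chordless cycle of length at least $4$; so \emph{every} candidate addable edge has endpoints at $G'$-distance exactly $2$, and the extremal choice gives $d=2$ automatically. On the other hand, the chord your obstruction produces can be taken to be an ``ear'' of the $G$-triangulation of the violating cycle, e.g.\ $(w_{i-1},w_{i+1})$ along the induced path, which is again a missing edge at $G'$-distance exactly $2$. Hence the first coordinate of your lexicographic measure never strictly decreases, everything rides on the secondary coordinate, and you have not shown that the new missing edge has a strictly shorter violating cycle --- it may well have its own longer chordless obstruction. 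To close the induction you would need either a measure with verified strict descent, or a different mechanism altogether: Lauritzen's own proof inducts on $|V|$ using a simplicial vertex, and an alternative is to run the deletion direction from $G$ down to $G'$ by producing, via a clique tree of $G$, an edge of $E\setminus E'$ lying in exactly one clique so that Theorem \ref{validdelete} applies --- but that existence statement also requires an argument, which your final paragraph likewise leaves open.
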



Assume $G_t\not\subset G_a$, then $|E_t|>|E_a^1|$. By Lemma \ref{chainrule}, there exists a decreasing sequence of decomposable graphs from $G_c$ to $G_a$ that differ by exactly one edge, say $\big\{\overline{G}_i^{\,c\rightarrow a}\big\}_{i=0}^{|E_c|-|E_a|}$, where $G_c=\overline{G}_0^{\,c\rightarrow a}\supsetneq\overline{G}_1^{\,c\rightarrow a}\supsetneq\cdots\supsetneq\overline{G}_{|E_c|-|E_a|-1}^{\,c\rightarrow a}\supsetneq\overline{G}_{|E_c|-|E_a|}^{\,c\rightarrow a}=G_a$. There are $|E_c|-|E_a|$ steps for moving from $G_c$ to $G_a$. Let $\big\{\rho_{\overline{x}_i\overline{y}_i\mid\overline{S}_i}\big\}_{i=1}^{|E_c|-|E_a|}$ be the corresponding population partial correlation (or correlation, when $\overline{S}_i=\emptyset$) sequence and $\big\{\mbox{BF}(\overline{G}_i^{\,c\rightarrow a};\overline{G}_{i-1}^{\,c\rightarrow a})\big\}_{i=1}^{|E_c|-|E_a|}$ be the corresponding Bayes factor sequence for each step. By that, we mean in the $i$th step, edge $(\overline{x}_i,\overline{y}_i)$ is removed; $\rho_{\overline{x}_i\overline{y}_i\mid\overline{S}_i}$ and $\mbox{BF}(\overline{G}_i^{\,c\rightarrow a};\overline{G}_{i-1}^{\,c\rightarrow a})$ are the population partial correlation and the Bayes factor accordingly, $i=1,2,\ldots,|E_c|-|E_a|$. $\overline{S}_i$ is the specific separator corresponding to the $i$th step. Among them $|E_t|-|E_a^1|$ steps are removal of true edges that are deletion cases; $|E_c|-|E_a|-|E_t|+|E_a^1|$ steps are removal of false edges that can be seen as the reciprocal of addition cases.

\begin{lemma} \label{Gc2Ga} {\normalfont(Origin of the exponential rate in the deletion case).}
	Assume $G_t\not\subset G_a$. In $\big\{\rho_{\overline{x}_i\overline{y}_i\mid\overline{S}_i}\big\}_{i=1}^{|E_c|-|E_a|}$, among all population partial correlations that are corresponding to the removal of true edges, at least one is non-zero and it is not a population correlation {\normalfont($\overline{S}_i\neq\emptyset$)}.
\end{lemma}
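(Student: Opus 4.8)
The plan is to exploit the freedom in the choice of the decreasing sequence supplied by Lemma \ref{chainrule}: rather than reason about an arbitrary chain from $G_c$ to $G_a$, I would select one whose \emph{very first} move deletes a true edge that is absent from $G_a$. This is legitimate because the product of the one-edge Bayes factors along any such chain telescopes to $\mbox{BF}(G_a;G_c)$ and is therefore independent of the chain used; hence it suffices to exhibit a single chain containing one deletion step with the properties asserted, and the lemma (being an existence ``at least one'' statement) is then immediate.

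First, since $G_t\not\subset G_a$ we have $E_t\backslash E_a\neq\emptyset$, so I may fix a true edge $(x,y)\in E_t\backslash E_a$. In $G_c$ the unique clique is the whole vertex set $V$, so by Theorem \ref{validdelete} the removal of $(x,y)$ from $G_c$ is a valid decomposability-preserving deletion, and the separator associated with this step is $\overline{S}_1=V\backslash\{x,y\}$, which is non-empty whenever $p\geq 3$. The corresponding population quantity is then the \emph{full} partial correlation $\rho_{xy\mid V\backslash\{x,y\}}$, and because $(x,y)$ is an edge of the graph induced by $\Omega_0$ we have $(\Omega_0)_{xy}\neq 0$, equivalently $|\rho_{xy\mid V\backslash\{x,y\}}|\geq\rho_L>0$ under Assumption \ref{assump-lower}. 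Thus this first step is a removal of a true edge whose associated partial correlation is non-zero and whose separator is non-empty.

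To complete the construction I would finish the chain: after deleting $(x,y)$ the graph $G_c-(x,y)$ is decomposable and still contains $G_a$ (as $(x,y)\notin E_a$), so Lemma \ref{chainrule} yields a decreasing sequence of decomposable graphs from $G_c-(x,y)$ down to $G_a$; prepending the first deletion produces the required sequence $\{\overline{G}_i^{\,c\rightarrow a}\}$, with the step $i=1$ witnessing the statement.

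The main subtlety to flag --- and the reason the statement is phrased as ``at least one'' together with the insistence that the separator be non-empty --- is that the claim genuinely \emph{fails for a general ordering}. If one instead deletes all false edges first and only then the true edges, the separators at the true-edge steps can all be empty, so the relevant quantities are marginal correlations rather than partial correlations; and for an adjacent pair the marginal correlation (and, absent faithfulness, even a partial correlation conditioned on a proper subset of $V\backslash\{x,y\}$) may vanish. The only quantity one can certify to be non-zero purely from the identifiability hypothesis is the full partial correlation $\rho_{xy\mid V\backslash\{x,y\}}$, which is exactly what the $G_t\rightarrow G_c\rightarrow G_a$ route delivers at the $G_c$ end. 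The entire force of the argument therefore lies in routing the chain through $G_c$ so that one true-edge deletion is conditioned on the complete remaining vertex set.
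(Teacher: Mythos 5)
Your proposal is correct and follows essentially the same route as the paper's own proof: choose the chain from $G_c$ to $G_a$ so that the \emph{first} deletion removes a true edge $(x,y)\in E_t\backslash E_a$, note the separator is then $V\backslash\{x,y\}\neq\emptyset$, and conclude $\rho_{xy\mid V\backslash\{x,y\}}\neq 0$ (indeed $\geq\rho_L$ in absolute value) from the pairwise Markov property, equivalently $(\Omega_0)_{xy}\neq 0$. Your added touches --- the telescoping argument justifying freedom in the ordering, explicitly completing the chain via Lemma \ref{chainrule} applied to $G_c$ minus one edge, and the $p\geq 3$ caveat --- are details the paper leaves implicit, but the substance is identical.
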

\begin{proof}
	There are many sequences of $\{(\overline{x}_i,\overline{y}_i)\}_{i=1}^{|E_c|-|E_a|}$ (in different orders) that can achieve moving from $G_c$ to $G_a$ and still maintaining decomposability along the way. Let $(\overline{x}_*,\overline{y}_*)\in E_t\backslash E_a^1$. Thus $(\overline{x}_*,\overline{y}_*)\in\{(\overline{x}_i,\overline{y}_i)\}_{i=1}^{|E_c|-|E_a|}$. Choose $(\overline{x}_1,\overline{y}_1)=(\overline{x}_*,\overline{y}_*)$. This means the first step is the removal of a true edge in $E_t\backslash E_a^1$ from $G_c$. Let $\overline{S}_*$ be the corresponding separator. Thus we know $\overline{S}_*=V\backslash\{\overline{x}_*,\overline{y}_*\}\neq\emptyset$, since $(\overline{x}_*,\overline{y}_*)$ is removed from $G_c$. In fact, the removal of any edge from a complete graph still maintains decomposability, i.e. $\overline{G}_{1}^{\,c\rightarrow a}$ is a decomposable graph. Since $(\overline{x}_*,\overline{y}_*)\in E_t$, by the pairwise Markov property, $\rho_{\overline{x}_*,\overline{y}_*\mid V\backslash\{\overline{x}_*,\overline{y}_*\}}\neq 0$. And $\rho_L\leq\abs{\rho_{\overline{x}_*,\overline{y}_*\mid V\backslash\{\overline{x}_*,\overline{y}_*\}}}\leq\rho_U$. Therefore, we complete the proof of this lemma.
\end{proof}

\begin{lemma} \label{inherit} {\normalfont(The inheritance of separators).}
	Let $G=(V,E)$ and $G'=(V,E')$ be two undirected graphs (not necessary to be decomposable). Assume $E\subseteq E'$. If $S\subsetneq V$ separates node $x\in V$ from node $y\in V$ in $G'$, where $(x,y)\not\in E'$, then $S$ also separates them in $G$.
\end{lemma}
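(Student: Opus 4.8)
The plan is to exploit the monotonicity of path structure under edge inclusion. Since $E \subseteq E'$, the graph $G$ is a spanning subgraph of $G'$ on the same vertex set, so every path available in $G$ is already a path in $G'$; deleting edges can only destroy connecting routes, never create new ones. Consequently, any set that blocks all $xy$-routes in the richer graph $G'$ must a fortiori block all $xy$-routes in the sparser graph $G$. The whole argument is a direct appeal to the definition of separation, with no decomposability needed, which is exactly why the statement is phrased for arbitrary undirected graphs.

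First I would recall the definition from \S \ref{ssec:graphs}: a set $S$ separates $x$ from $y$ in a graph $H$ precisely when every path $x = v_0, v_1, \ldots, v_k = y$ in $H$ meets $S$, i.e. $v_i \in S$ for some $i$. The hypothesis $(x,y) \notin E'$ (and hence $(x,y) \notin E$) matters here: it guarantees that no length-one route $x, y$ exists in either graph, so the notion of separation of $x$ from $y$ is not rendered impossible by a direct edge.

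The core step is then immediate. Fix any path $P\colon x = v_0, v_1, \ldots, v_k = y$ in $G$. Each consecutive pair satisfies $(v_{i-1}, v_i) \in E \subseteq E'$, and the $v_i$ are distinct, so $P$ is equally a path from $x$ to $y$ in $G'$. Since $S$ separates $x$ from $y$ in $G'$ by assumption, $P$ meets $S$. Because $P$ was an arbitrary $xy$-path in $G$, every $xy$-path in $G$ meets $S$, which is exactly the assertion that $S$ separates $x$ from $y$ in $G$.

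There is essentially no serious obstacle here; the only points requiring minor care are the trivial boundary case in which $x$ and $y$ are disconnected in $G$, where the collection of $xy$-paths is empty and separation holds vacuously, and the bookkeeping that the containment $E \subseteq E'$ is used in the correct direction, namely that paths of $G$ embed into $G'$ rather than conversely.
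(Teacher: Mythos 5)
Your proof is correct and takes essentially the same approach as the paper: both arguments rest on the single observation that $E\subseteq E'$ makes every $xy$-path of $G$ a path of $G'$, so a set meeting all $xy$-paths in $G'$ meets all $xy$-paths in $G$. The paper merely phrases this contrapositively (assuming a path in $G$ avoids $S$ and deriving a contradiction), while you argue directly; the content is identical.
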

\begin{proof}
	Assume $S$ does not separate $x$ from $y$ in $G$. By the definition of separators, there exists a path from $x$ to $y$ in $G$, say $x=v_0, v_1, \ldots, v_{l-1}, v_l=y$ and $v_i\not\in S$, for all $i=0,1,\ldots,l$. Since $E\subseteq E'$, the path from $x$ to $y$, $\{v_i\}_{i=1}^{l-1}$, is still a path from $x$ to $y$ in $G'$. By the definition of separators again, we know that $S$ does not separate $x$ from $y$ in $G'$. But this contradicts with the assumption in the lemma. Therefore, $S$ separates $x$ from $y$ in $G$.
\end{proof}

Assume $G_t\subsetneq G_a$, thus $|E_t|=|E_a^1|$. By Lemma \ref{chainrule}, there exists an increasing sequence of decomposable graphs from $G_t$ to $G_a$ that differ by exactly one edge, say $\big\{\widetilde{G}_i^{\,t\rightarrow a}\big\}_{i=0}^{|E_a|-|E_t|}$, where $G_t=\widetilde{G}_0^{\,t\rightarrow a}\subsetneq\widetilde{G}_1^{\,t\rightarrow a}\subsetneq\ldots\subsetneq\widetilde{G}_{|E_a|-|E_t|-1}^{\,t\rightarrow a}\subsetneq\widetilde{G}_{|E_a|-|E_t|}^{\,t\rightarrow a}=G_a$. There are $|E_a|-|E_t|$ steps for moving from $G_t$ to $G_a$. All of them are addition of false edges that are addition cases. Let $\big\{\rho_{\widetilde{x}_i\widetilde{y}_i\mid\widetilde{S}_i}\big\}_{i=1}^{|E_a|-|E_t|}$ be the corresponding population partial correlation (or correlation, when $\widetilde{S}_i=\emptyset$) sequence and $\big\{\mbox{BF}(\widetilde{G}_i^{\,t\rightarrow a};\widetilde{G}_{i-1}^{\,t\rightarrow a})\big\}_{i=1}^{|E_a|-|E_t|}$ be the corresponding Bayes factor sequence for each step. By that, we mean in the $i$th step, edge $(\widetilde{x}_i,\widetilde{y}_i)\not\in E_t$ is added; $\rho_{\widetilde{x}_i,\widetilde{y}_i\mid\widetilde{S}_i}$ and $\mbox{BF}(\widetilde{G}_i^{\,t\rightarrow a};\widetilde{G}_{i-1}^{\,t\rightarrow a})$ are the population partial correlation and the Bayes factor accordingly, $i=1,2,\ldots,|E_a|-|E_t|$. $\widetilde{S}_i$ is the specific separator corresponding to the $i$th step.

\begin{lemma} \label{Gt2Gc} {\normalfont(Origin of the polynomial rate in the addition case).}
	Assume $G_t\subsetneq G_a$. For any edge sequence $\{(\widetilde{x}_i,\widetilde{y}_i)\}_{i=1}^{|E_a|-|E_t|}$ from $G_t$ to $G_a$ described above, all population partial correlations in $\big\{\rho_{\widetilde{x}_i\widetilde{y}_i\mid\widetilde{S}_i}\big\}_{i=1}^{|E_a|-|E_t|}$ are zero. {\normalfont(}or correlation, when $\widetilde{S}_i=\emptyset${\normalfont)}
\end{lemma}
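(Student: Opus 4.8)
The plan is to reduce the claim to a statement about graph separation in the true graph $G_t$ and then invoke the global Markov property of the Gaussian graphical model, which for jointly Gaussian variables equates conditional independence with vanishing partial correlation. Concretely, for each addition step $i$ I would show that the separator $\widetilde{S}_i$ separates $\widetilde{x}_i$ from $\widetilde{y}_i$ in $G_t$; once that separation is established, the global Markov property gives $X_{\widetilde{x}_i} \perp X_{\widetilde{y}_i} \mid X_{\widetilde{S}_i}$, hence $\rho_{\widetilde{x}_i\widetilde{y}_i\mid\widetilde{S}_i}=0$ (the case $\widetilde{S}_i=\emptyset$ being the marginal statement $\rho_{\widetilde{x}_i\widetilde{y}_i}=0$).

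First I would fix a step $i$ and recall the local structure supplied by Theorem \ref{validadd} and Lemma \ref{lemmaadd}: since the false edge $(\widetilde{x}_i,\widetilde{y}_i)$ is added to $\widetilde{G}_{i-1}^{\,t\rightarrow a}$ while preserving decomposability, there exist cliques $C_1\ni\widetilde{x}_i$ and $C_2\ni\widetilde{y}_i$ adjacent in some junction tree of $\widetilde{G}_{i-1}^{\,t\rightarrow a}$, with $\widetilde{S}_i=C_1\cap C_2$. I would first note that $\widetilde{x}_i,\widetilde{y}_i\notin\widetilde{S}_i$: if, say, $\widetilde{x}_i\in C_1\cap C_2$, then $\widetilde{x}_i$ and $\widetilde{y}_i$ both lie in the clique $C_2$, forcing the edge $(\widetilde{x}_i,\widetilde{y}_i)$ to be already present, contradicting that it is being added. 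Then I would invoke the fundamental separation property of junction trees for decomposable graphs: deleting the separator $\widetilde{S}_i$ of two adjacent cliques disconnects the remaining vertices of $C_1$ from those of $C_2$. Since $\widetilde{x}_i\in C_1\setminus\widetilde{S}_i$ and $\widetilde{y}_i\in C_2\setminus\widetilde{S}_i$, this yields that $\widetilde{S}_i$ separates $\widetilde{x}_i$ from $\widetilde{y}_i$ in $\widetilde{G}_{i-1}^{\,t\rightarrow a}$.

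Next, because the chain from Lemma \ref{chainrule} is increasing with $G_t=\widetilde{G}_0^{\,t\rightarrow a}\subseteq\widetilde{G}_{i-1}^{\,t\rightarrow a}$, I would apply Lemma \ref{inherit} with the smaller graph $G=G_t$ and the larger graph $G'=\widetilde{G}_{i-1}^{\,t\rightarrow a}$: separation of $\widetilde{x}_i$ and $\widetilde{y}_i$ by $\widetilde{S}_i$ in $G'$ descends to separation in $G_t$. The global Markov property on $G_t$ then delivers $\rho_{\widetilde{x}_i\widetilde{y}_i\mid\widetilde{S}_i}=0$, completing the step; since $i$ was arbitrary, every entry of $\big\{\rho_{\widetilde{x}_i\widetilde{y}_i\mid\widetilde{S}_i}\big\}_{i=1}^{|E_a|-|E_t|}$ vanishes.

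The only substantive point is the junction-tree separation step, and the care needed in applying Lemma \ref{inherit} in the correct direction, namely that separation in the edge-richer graph $\widetilde{G}_{i-1}^{\,t\rightarrow a}$ transfers to the edge-sparser $G_t$ (and not conversely); this monotonicity is exactly what makes $G_t$ the right target. Everything else is bookkeeping plus the standard equivalence between $d$-separation, conditional independence, and zero partial correlation in the Gaussian case.
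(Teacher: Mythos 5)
Your proof is correct and follows essentially the same route as the paper's: junction-tree separation of $\widetilde{x}_i$ and $\widetilde{y}_i$ by $\widetilde{S}_i$ in $\widetilde{G}_{i-1}^{\,t\rightarrow a}$, descent of separation to $G_t$ via Lemma \ref{inherit} applied in exactly the direction you flag, and the global Markov property to conclude $\rho_{\widetilde{x}_i\widetilde{y}_i\mid\widetilde{S}_i}=0$. The only (harmless) variations are that you make explicit the small check $\widetilde{x}_i,\widetilde{y}_i\notin\widetilde{S}_i$, and you fold the $\widetilde{S}_i=\emptyset$ case into the same separation argument, whereas the paper treats it separately via a block-structure argument for the precision matrix; both are valid.
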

\begin{proof}
	Assume in the $i$th step, we add edge $(\widetilde{x}_{i},\widetilde{y}_{i})\not\in E_t$ to graph $\widetilde{G}_{i-1}^{\,t\rightarrow a}$ and $\widetilde{S}_i$ is the corresponding separator, where $1\leq i\leq |E_a|-|E_t|$.
	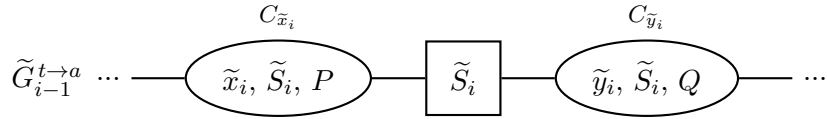
\begin{figure}[H]
		\centering
		\begin{tikzpicture}[thick]
			\node[draw, regular polygon, regular polygon sides=4] (S) {$\widetilde{S}_i$};
			\node[draw, ellipse, left=0.7 of S, label={above:$C_{\widetilde{x}_i}$}] (C1) {$\widetilde{x}_i$, $\widetilde{S}_i$, $P$};
			\node[draw, ellipse, right=0.7 of S, label={above:$C_{\widetilde{y}_i}$}] (C2) {$\widetilde{y}_i$, $\widetilde{S}_i$, $Q$};
			\edge[-] {C1} {S};
			\edge[-] {C2} {S};
			\node[left=0.7 of C1] (etc1) {...};
			\node[right=0.7 of C2] (etc2) {...};
			\edge[-] {C1} {etc1};
			\edge[-] {C2} {etc2};
			\node[left=1.2 of C1] {$\widetilde{G}_{i-1}^{\,t\rightarrow a}$};
		\end{tikzpicture}
		\caption{$\widetilde{G}_{i-1}^{\,t\rightarrow a}$ before adding edge $(\widetilde{x}_{i},\widetilde{y}_{i})\not\in E_t$ where $\widetilde{S}_i\neq\emptyset$.} \label{ithstep}
	\end{figure}
	First, when $\widetilde{S}_i\neq\emptyset$. Since edge $(\widetilde{x}_i,\widetilde{y}_i)\not\in E_t$ is added in the $i$th step, by Lemma \ref{validadd}, $C_{\widetilde{x}_i}$ and $C_{\widetilde{y}_i}$ are adjacent in some junction tree of $\widetilde{G}_{i-1}^{\,t\rightarrow a}$ where $C_{\widetilde{x}_i}$ and $C_{\widetilde{y}_i}$ are the cliques that contain $\widetilde{x}_i$ and $\widetilde{y}_i$, respectively. And $\widetilde{S}_i$ is the separator between them, i.e. $\widetilde{S}_i=C_{\widetilde{x}_i}\cap C_{\widetilde{y}_i}$. By the property of junction trees, we know $\widetilde{S}_i$ separates $\widetilde{x}_i$ from $\widetilde{y}_i$ in $\widetilde{G}_{i-1}^{\,t\rightarrow a}$. Since $\big\{\widetilde{G}_i^{\,t\rightarrow a}\big\}_{i=0}^{|E_a|-|E_t|}$ is an increasing sequence by edge, by Lemma \ref{inherit}, we know $\widetilde{S}_i$ also separates $\widetilde{x}_i$ from $\widetilde{y}_i$ in $\widetilde{G}_0^{\,t\rightarrow a}=G_t$. By the global Markov property, $\rho_{\widetilde{x}_i\widetilde{y}_i\mid\widetilde{S}_i}=0$.
	\begin{figure}[H]
		\centering
		\begin{tikzpicture}[thick]
			\node[draw, regular polygon, regular polygon sides=4] (S) {$\emptyset$};
			\node[draw, ellipse, left=0.7 of S, label={above:$C_{\widetilde{x}_i}$}] (C1) {$\widetilde{x}_i$, $P$};
			\node[draw, ellipse, right=0.7 of S, label={above:$C_{\widetilde{y}_i}$}] (C2) {$\widetilde{y}_i$, $Q$};
			\edge[-] {C1} {S};
			\edge[-] {C2} {S};
			\node[left=0.7 of C1] (etc1) {...};
			\node[right=0.7 of C2] (etc2) {...};
			\edge[-] {C1} {etc1};
			\edge[-] {C2} {etc2};
			\node[left=1.2 of C1] {$\widetilde{G}_{i-1}^{\,t\rightarrow a}$};
		\end{tikzpicture}
		\caption{$\widetilde{G}_{i-1}^{\,t\rightarrow a}$ before adding edge $(\widetilde{x}_{i},\widetilde{y}_{i})\not\in E_t$ where $\widetilde{S}_i=\emptyset$.} \label{ithstep}
	\end{figure}
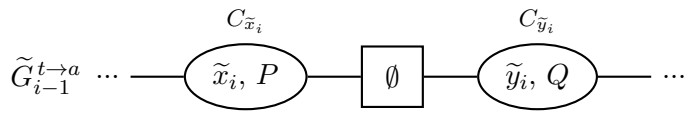
	Next, when $\widetilde{S}_i=\emptyset$, we show $\rho_{\widetilde{x}_i\widetilde{y}_i}=0$. By the property of junction trees, we know node $\widetilde{x}_i$ and $\widetilde{y}_i$ are disconnected. Furthermore, in the current graph $\widetilde{G}_{i-1}^{\,t\rightarrow a}$, nodes before clique $C_{\widetilde{x}_i}$ (including nodes in $C_{\widetilde{x}_i}$) and nodes after clique $C_{\widetilde{y}_i}$ (including nodes in $C_{\widetilde{y}_i}$) are disconnected. Since $G_t\subsetneq \widetilde{G}_{i-1}^{\,t\rightarrow a}$, then this is also true in $G_t$. Thus, nodes before clique $C_{\widetilde{x}_i}$ (including nodes in $C_{\widetilde{x}_i}$) and nodes after clique $C_{\widetilde{y}_i}$ (including nodes in $C_{\widetilde{y}_i}$) are disconnected in $G_t$. We can rearrange the precision matrix of $G_t$ into a block matrix such that the block which $\widetilde{x}_i$ is in and the block which $\widetilde{y}_i$ is in are independent. Therefore, node $\widetilde{x}_i$ and $\widetilde{y}_i$ are marginally independent in $G_t$, $\rho_{\widetilde{x}_i\widetilde{y}_i}=0$. Notice when $G_a=G_c$ this lemma still holds. 
\end{proof}

For the rest of proofs, when $G_t\not\subset G_a$, moving from $G_c$ to $G_a$ is restricted to the order of deleting edges in Lemma \ref{Gc2Ga} (deleting a true edge at the beginning); when $G_t\subsetneq G_a$, moving from $G_t$ to $G_a$ (or $G_c$) can be any order of adding edges (as long as decomposability is satisfied) according to Lemma \ref{Gt2Gc}. Following the notations in Lemma \ref{Gc2Ga} and \ref{Gt2Gc}, we have the decomposition of Bayes factor in favor of $G_a$ as follows. \\

\noindent When $G_t\not\subset G_a$, 
\small
\begin{align*}
	\mbox{BF}(G_a;G_t) &= \frac{f(\mathrm{Y}\mid G_a)}{f(\mathrm{Y}\mid G_t)}
	  = \frac{f(\mathrm{Y}\mid G_a)}{f(\mathrm{Y}\mid G_c)} \cdot \frac{f(\mathrm{Y}\mid G_c)}{f(\mathrm{Y}\mid G_t)} \\
	& = \frac{p(\mathrm{Y}\mid G_a)}{p(\mathrm{Y}\mid \overline{G}_{|E_c|-|E_a|-1}^{\,c\rightarrow a})}\frac{p(\mathrm{Y}\mid \overline{G}_{|E_c|-|E_a|-1}^{\,c\rightarrow a})}{p(\mathrm{Y}\mid \overline{G}_{|E_c|-|E_a|-2}^{\,c\rightarrow a})} \ldots \frac{p(\mathrm{Y}\mid \overline{G}_2^{\,c\rightarrow a})}{p(\mathrm{Y}\mid \overline{G}_1^{\,c\rightarrow a})} \frac{p(\mathrm{Y}\mid \overline{G}_1^{\,c\rightarrow a})}{p(\mathrm{Y}\mid G_c)} \\
	& \times \frac{p(\mathrm{Y}\mid G_c)}{p(\mathrm{Y}\mid \widetilde{G}_{|E_c|-|E_t|-1}^{\,t\rightarrow c})} \frac{p(\mathrm{Y}\mid \widetilde{G}_{|E_c|-|E_t|-1}^{\,t\rightarrow c})}{p(\mathrm{Y}\mid \widetilde{G}_{|E_c|-|E_t|-2}^{\,t\rightarrow c})} \ldots \frac{p(\mathrm{Y}\mid \widetilde{G}_2^{\,t\rightarrow c})}{p(\mathrm{Y}\mid \widetilde{G}_1^{\,t\rightarrow c})} \frac{p(\mathrm{Y}\mid \widetilde{G}_1^{\,t\rightarrow c})}{p(\mathrm{Y}\mid G_t)} \\
	& = \prod_{i=1}^{|E_c|-|E_a|} \mbox{BF}(\overline{G}_i^{\,c\rightarrow a};\overline{G}_{i-1}^{\,c\rightarrow a}) \cdot \prod_{i=1}^{|E_c|-|E_t|} \mbox{BF}(\widetilde{G}_i^{\,t\rightarrow c};\widetilde{G}_{i-1}^{\,t\rightarrow c}) \\
	& = \mbox{BF}_{c\rightarrow a} \cdot \mbox{BF}_{t\rightarrow c}.
\end{align*}
\normalsize
Therefore,
\small
\begin{align*}
	\mbox{PR}(G_a;G_t) & = \frac{p(G_a\mid\mathrm{Y})}{p(G_t\mid\mathrm{Y})} = \frac{f(\mathrm{Y}\mid G_a)\pi(G_a)}{f(\mathrm{Y}\mid G_t)\pi(G_t)} = \mbox{BF}(G_a;G_t)\frac{\pi(G_a)}{\pi(G_t)}\\
	& = \mbox{BF}_{c\rightarrow a} \cdot \mbox{BF}_{t\rightarrow c} \cdot \bigg(\frac{q}{1-q}\bigg)^{|E_a|-|E_t|}.
\end{align*}
\normalsize
$\mbox{BF}_{c\rightarrow a}$ contains $|E_c|-|E_a|$ terms, in which $|E_t|-|E_a^1|$ terms are deletion cases and $|E_c|-|E_a|-|E_t|+|E_a^1|$ terms are the reciprocal of addition cases. $\mbox{BF}_{t\rightarrow c}$ has $|E_c|-|E_t|$ terms that are all addition cases. \\

\noindent When $G_t\subsetneq G_a$,
\small
\begin{align*}
	\mbox{BF}(G_a;G_t) &= \prod_{i=1}^{|E_a|-|E_t|} \mbox{BF}(\widetilde{G}_i^{\,t\rightarrow a};\widetilde{G}_{i-1}^{\,t\rightarrow a}) = \mbox{BF}_{t\rightarrow a}, \\
	\mbox{PR}(G_a;G_t) &= \mbox{BF}_{t\rightarrow a} \cdot \bigg(\frac{q}{1-q}\bigg)^{|E_a|-|E_t|}.
\end{align*}
\normalsize

\subsection{Proof of Theorem \ref{bfupper0}}
First, for any $\tau^*>2$, let $\epsilon_{1,n}=\sqrt{\frac{\log(n-p)}{\tau^*(n-p)}}$. Then define
\small
\begin{equation*}
	R'_{ij\mid S}=\big\{|\hat{\rho}_{ij\mid S}-\rho_{ij\mid S}|<\epsilon_{1,n}\big\}.
\end{equation*}
\normalsize
Given any decomposable graph $G_a\neq G_t$, when $G_t\not\subset G_a$, by Lemma \ref{Gc2Ga}, we have the edge sequence $\{(\overline{x}_i,\overline{y}_i)\}_{i=1}^{|E_c|-|E_a|}$ for moving from $G_c$ to $G_a$ and let $(\overline{x}_1,\overline{y}_1) = (\overline{x}_*,\overline{y}_*)$ be the first in the sequence where a true edge is deleted from $G_c$. Let $\{(\widetilde{x}_i,\widetilde{y}_i)\}_{i=1}^{|E_c|-|E_t|}$ and $\{\widetilde{S}_i\}_{i=1}^{|E_c|-|E_t|}$ be the edge sequence and the corresponding separator sequence for moving from $G_t$ to $G_c$ according to Lemma \ref{Gt2Gc}. Let
\small
\begin{equation*}
	\Delta_{t\not\subset a,\epsilon_1}=\Big(R'_{\overline{x}_*\overline{y}_*\mid V\backslash\{\overline{x}_*,\overline{y}_*\}}\Big)\bigcap
	\Big(\cap_{i=1}^{|E_c|-|E_t|} R'_{\widetilde{x}_i\widetilde{y}_i\mid \widetilde{S}_i}\Big).
\end{equation*}
\normalsize
Since $\rho_U\neq 1$, by the proof of Lemma \ref{lemmafinite}, we have
\small
\begin{equation*}
	\bbP(\Delta_{t\not\subset a,\epsilon_1}) \geq \bbP(\Delta'_{\epsilon_1}) \geq 1 - \frac{42p^2}{(1-\rho_U)^2} (n-p)^{-\frac{1}{4\tau^*} }\Big\{\frac{1}{\tau^*}\log(n-p)\Big\}^{-\frac{1}{2}}.
\end{equation*}
\normalsize
When $G_t\subsetneq G_a$, let $\{(\widetilde{x}_i,\widetilde{y}_i)\}_{i=1}^{|E_a|-|E_t|}$ and $\{\widetilde{S}_i\}_{i=1}^{|E_a|-|E_t|}$ be the edge sequence and the corresponding separator sequence for moving from $G_t$ to $G_a$ according to Lemma \ref{Gt2Gc}. (Notice here we use the same edge and separator notations as in $G_t$ to $G_c$ for consistency reason and $G_t$ to $G_a$ can be seen as a part of $G_t$ to $G_c$.) Let
\small
\begin{equation*}
	\Delta_{t\subsetneq a,\epsilon_1}=	\bigcap_{i=1}^{|E_a|-|E_t|} R'_{\widetilde{x}_i\widetilde{y}_i\mid \widetilde{S}_i}.
\end{equation*}
\normalsize
Since $\rho_U\neq 1$, by the proof of Lemma \ref{lemmafinite}, we also have
\small
\begin{equation*}
	\bbP(\Delta_{t\subsetneq a,\epsilon_1}) \geq \bbP(\Delta'_{\epsilon_1}) \geq 1 - \frac{42p^2}{(1-\rho_U)^2} (n-p)^{-\frac{1}{4\tau^*} }\Big\{\frac{1}{\tau^*}\log(n-p)\Big\}^{-\frac{1}{2}}.
\end{equation*}
\normalsize
Thus, $\Delta_{a,\epsilon_1} = \Delta_{t\not\subset a,\epsilon_1}$ when $G_t\not\subset G_a$ and $\Delta_{a,\epsilon_1} = \Delta_{t\subsetneq a,\epsilon_1}$ when $G_t\subsetneq G_a$. For the following proof, we restrict it to the event $\Delta_{a,\epsilon_1}$. Next, we consider two scenarios for Bayes factor consistency, i.e. $G_t\not\subset G_a$ and $G_t\subsetneq G_a$. \\

First, when $G_t\not\subset G_a$ and $G_t\neq G_c$, we have $|E_t|>|E_a^1|$ and $|E_c|>|E_t|$. We begin by simplifying the upper bound of $\mbox{BF}_{t\rightarrow c}$. (for $G_t=G_c$, $\mbox{BF}_{t\rightarrow c}=1$) By Lemma \ref{lemmaadd} and \ref{Gt2Gc},
\small 
\begin{align*}
	\mbox{BF}_{t\rightarrow c} & = \prod_{i=1}^{|E_c|-|E_t|} \mbox{BF}(\widetilde{G}_i^{\,t\rightarrow c};\widetilde{G}_{i-1}^{\,t\rightarrow c}) \\
	& < \prod_{i=1}^{|E_c|-|E_t|} \Big(\frac{g}{g+1}\Big) \sqrt{\frac{b+n+d_{\widetilde{S}_i}}{b+d_{\widetilde{S}_i}-\frac{1}{2}}} (1-\hat{\rho}^2_{\widetilde{x}_i\widetilde{y}_i\mid\widetilde{S}_i})^{-\frac{n}{2}} \\
	& < \Big( \frac{2}{n} \Big)^{\frac{|E_c|-|E_t|}{2}} \bigg\{1 - \frac{\log(n-p)}{\tau^*(n-p)} \bigg\}^{-(|E_c|-|E_t|)\frac{n}{2}}, \text{ when } n>b+p\\
	& < \Big( \frac{2}{n} \Big)^{\frac{|E_c|-|E_t|}{2}} \exp\bigg(\frac{n}{n-p-1/\tau^*\log n} \cdot \frac{|E_c|-|E_t|}{2\tau^*} \cdot \log n \bigg) \\
	& < \Big( \frac{2}{n} \Big)^{\frac{|E_c|-|E_t|}{2}} \exp\Big(\frac{|E_c|-|E_t|}{\tau^*}\cdot\log n \Big), \text{ when } n>4p \\
	& < \exp\bigg\{ p^2 - \Big(\frac{1}{2}-\frac{1}{\tau^*}\Big)(|E_c|-|E_t|)\log n \bigg\}.
\end{align*}
\normalsize
Next, we examine $\mbox{BF}_{c\rightarrow a}$. Based on Lemma \ref{Gc2Ga} and its proof, we divide it into two parts, i.e deletion cases and the reciprocal of addition cases. For deletion cases, we use $\{(\overline{x}^{\,d}_i,\overline{y}^{\,d}_i)\}_{i=1}^{|E_t|-|E_a^1|}$ to denote the sequence of true edges and $\{\overline{S}^{\,d}_i\}_{i=1}^{|E_t|-|E_a^1|}$ are the corresponding separator sequence. For addition cases, we use $\{(\overline{x}^{\,a}_i,\overline{y}^{\,a}_i)\}_{i=1}^{|E_c|-|E_a|-|E_t|+|E_a^1|}$ and $\{\overline{S}^{\,a}_i\}_{i=1}^{|E_c|-|E_a|-|E_t|+|E_a^1|}$. Since $p$ is finite, by the definition of $\rho_L$, then $\rho_L$ is a positive finite constant.
\small
\begin{align*}
	\mbox{BF}_{c\rightarrow a} & = \prod_{i=1}^{|E_c|-|E_a|} \mbox{BF}(\overline{G}_i^{\,c\rightarrow a};\overline{G}_{i-1}^{\,c\rightarrow a}) \\
	& < \prod_{i=1}^{|E_t|-|E_a^1|} \Big(1+\frac{1}{g}\Big) \sqrt{\frac{b+d_{\overline{S}^{\,d}_i}}{b+n+d_{\overline{S}^{\,d}_i}-\frac{1}{2}}}(1-\hat{\rho}^2_{\overline{x}^{\,d}_i\overline{y}^{\,d}_i\mid\overline{S}^{\,d}_i})^{\frac{n}{2}} \\
	& \phantom{111111111111} \times \prod_{i=1}^{|E_c|-|E_a|-|E_t|+|E_a^1|} \Big(1+\frac{1}{g}\Big) \sqrt{\frac{b+d_{\overline{S}^{\,a}_i}}{b+n+d_{\overline{S}^{\,a}_i}-\frac{1}{2}}}(1-\hat{\rho}^2_{\overline{x}^{\,a}_i\overline{y}^{\,a}_i\mid\overline{S}^{\,a}_i})^{\frac{n}{2}} \\
	& < \big\{2p(n+1)\big\}^{\frac{|E_c|-|E_a|}{2}} (1-\hat{\rho}^2_{\overline{x}_*\overline{y}_*\mid V\backslash\{\overline{x}_*,\overline{y}_*\}})^{\frac{n}{2}}, \text{ wlog assume } p>b \\
	& < \big\{2p(n+1)\big\}^{\frac{|E_c|-|E_a|}{2}} \Big\{1-\big(\epsilon_1-\abs{\rho_{\overline{x}_*\overline{y}_*\mid V\backslash\{\overline{x}_*,\overline{y}_*\}}}\big)^2\Big\}^{\frac{n}{2}} \\
	& < \big\{2p(n+1)\big\}^{\frac{|E_c|-|E_a|}{2}}  
	\exp\Big( -\frac{n\rho^2_L}{2} + n\epsilon_1 -\frac{n\epsilon_1^2}{2}\Big) \\
	& < \big\{2p(n+1)\big\}^{\frac{|E_c|-|E_a|}{2}} 
	\exp\Big\{ -\frac{n\rho^2_L}{2} + \sqrt{n\log n} -\frac{1}{2\tau^*}\log (n-p) \Big\}, \text{ when } n>2p \\
	& < \exp\Big\{ -\frac{n\rho^2_L}{2} + p^2\log n + \sqrt{n\log n} -\frac{1}{2\tau^*}\log (n-p) + 2p^2\log p \Big\}, \text{ when } n>1.
\end{align*}
\normalsize
Let $\delta(n) = p^2\log n + \sqrt{n\log n} + 3p^2\log p$ and $\delta(n)/n\rightarrow 0$ as $n\rightarrow\infty$. Hence,
\small
\begin{equation*}
	\mbox{BF}(G_a;G_t \mid G_t\not\subset G_a) = \mbox{BF}_{c\rightarrow a} \cdot \mbox{BF}_{t\rightarrow c} < \exp \Big\{-\frac{n\rho^2_L}{2} + \delta(n) \Big\}.
\end{equation*}
\normalsize
When $G_t\subsetneq G_a$, by Lemma \ref{lemmaadd} and \ref{Gt2Gc} we have
\small
\begin{align*}
	\mbox{BF}(G_a;G_t\mid G_t\subsetneq G_a) 
	& = \prod_{i=1}^{|E_a|-|E_t|} \mbox{BF}(\widetilde{G}_i^{t\rightarrow a};\widetilde{G}_{i-1}^{t\rightarrow a}) \\
	& < \exp\bigg\{ p^2 - \Big(\frac{1}{2}-\frac{1}{\tau^*}\Big)(|E_a|-|E_t|)\log n \bigg\}.
\end{align*}
\normalsize

\subsection{Proof of Theorem \ref{thpr}}
From $\gamma>1-4\alpha$, we have $\frac{1-\gamma}{2}<2\alpha$; from $\lambda<\frac{1}{2}-\alpha$, we have $\alpha+\lambda<\frac{1}{2}$; from $\lambda<\alpha$, we have $\alpha+\lambda<2\alpha$. For any $\beta^*$ that satisfies
\small
\begin{equation*}
	\max\Big\{\alpha+\lambda, \frac{1-\gamma}{2}\Big\}<\beta^*<\min\Big\{\frac{1}{2},2\alpha\Big\},
\end{equation*}
\normalsize
let $\epsilon_{2,n}=(n-p)^{-\beta^*}$. Then define
\small
\begin{equation*}
	R''_{ij|S}=\big\{|\hat{\rho}_{ij|S}-\rho_{ij|S}|<\epsilon_{2,n}\big\}.
\end{equation*}
\normalsize
Given any decomposable graph $G_a\neq G_t$, when $G_t\not\subset G_a$, by Lemma \ref{Gc2Ga}, we have the edge sequence $\{(\overline{x}_i,\overline{y}_i)\}_{i=1}^{|E_c|-|E_a|}$ for moving from $G_c$ to $G_a$ and let $(\overline{x}_1,\overline{y}_1) = (\overline{x}_*,\overline{y}_*)$ be the first in the sequence where a true edge is deleted from $G_c$. Let $\{(\widetilde{x}_i,\widetilde{y}_i)\}_{i=1}^{|E_c|-|E_t|}$ and $\{\widetilde{S}_i\}_{i=1}^{|E_c|-|E_t|}$ be the edge sequence and the corresponding separator sequence for moving from $G_t$ to $G_c$ according to Lemma \ref{Gt2Gc}. Let
\small
\begin{equation*}
	\Delta_{t\not\subset a,\epsilon_2}(n)=\Big(R''_{\overline{x}_*\overline{y}_*\mid V\backslash\{\overline{x}_*,\overline{y}_*\}}\Big)\bigcap
	\Big(\cap_{i=1}^{|E_c|-|E_t|} R''_{\widetilde{x}_i\widetilde{y}_i\mid \widetilde{S}_i}\Big).
\end{equation*}
\normalsize
Since $0<\beta^*<\frac{1}{2}$ and Assumption \ref{assump-upper}, by Lemma \ref{lemmainf1}, when $n\rightarrow\infty$,
\small
\begin{equation*}
	\bbP\big\{\Delta_{t\not\subset a,\epsilon_2}(n)\big\} \geq \bbP\big\{\Delta'_{\epsilon_2}(n)\big\} \geq 1 - \frac{42p^2}{(1-\rho_U)^2}(n-p)^{\beta^*-\frac{1}{2}}\exp\Big\{-\frac{1}{4}(n-p)^{1-2\beta}\Big\} \rightarrow 1.
\end{equation*}
\normalsize
When $G_t\subsetneq G_a$, let $\{(\widetilde{x}_i,\widetilde{y}_i)\}_{i=1}^{|E_a|-|E_t|}$ and $\{\widetilde{S}_i\}_{i=1}^{|E_a|-|E_t|}$ be the edge sequence and the corresponding separator sequence for moving from $G_t$ to $G_a$ according to Lemma \ref{Gt2Gc}. (Notice here we use the same edge and separator notations as in $G_t$ to $G_c$ for consistency reason and $G_t$ to $G_a$ can be seen as a part of $G_t$ to $G_c$.) Let
\small
\begin{equation*}
	\Delta_{t\subsetneq a,\epsilon_2}(n) =	\bigcap_{i=1}^{|E_a|-|E_t|} R''_{\widetilde{x}_i\widetilde{y}_i\mid \widetilde{S}_i}.
\end{equation*}
\normalsize
Since $0<\beta^*<\frac{1}{2}$ and Assumption \ref{assump-upper}, by Lemma \ref{lemmainf1}, when $n\rightarrow\infty$,
\small
\begin{equation*}
	\bbP\big\{\Delta_{t\subsetneq a,\epsilon_2}(n)\big\} \geq \bbP\big\{\Delta'_{\epsilon_2}(n)\big\} \geq 1 - \frac{42p^2}{(1-\rho_U)^2}(n-p)^{\beta^*-\frac{1}{2}}\exp\Big\{-\frac{1}{4}(n-p)^{1-2\beta}\Big\} \rightarrow 1.
\end{equation*}
\normalsize
Thus, $\Delta_{a,\epsilon_2}(n) = \Delta_{t\not\subset a,\epsilon_2}(n)$ when $G_t\not\subset G_a$ and $\Delta_{a,\epsilon_2}(n) = \Delta_{t\subseteq a,\epsilon_2}(n)$ when $G_t\subsetneq G_a$. For the following proof, we restrict it to the event $\Delta_{a,\epsilon_2}(n)$. Similar to the proof of Theorem \ref{bfupper0}, we consider two scenarios here for posterior ratio consistency, i.e. $G_t\not\subset G_a$ and $G_t\subsetneq G_a$. \\

First, when $G_t\not\subset G_a$ and $G_t\neq G_c$, we have $|E_t|>|E_a^1|$ and $|E_c|>|E_t|$. (for $G_t=G_c$, $\mathrm{BF}_{t\rightarrow c}=1$) By Lemma \ref{lemmaadd} and \ref{Gt2Gc},
\small
\begin{align*}
	\mathrm{BF}_{t\rightarrow c} & = \prod_{i=1}^{|E_c|-|E_t|} \mbox{BF}(\widetilde{G}_i^{\,t\rightarrow c};\widetilde{G}_{i-1}^{\,t\rightarrow c}) \\
	& < \Big( \frac{2}{n} \Big)^{\frac{|E_c|-|E_t|}{2}} \Big\{1 - (n-p)^{-2\beta^*} \Big\}^{-(|E_c|-|E_t|)\frac{n}{2}}, \text{ when } n>b+p\\
	& < \Big( \frac{2}{n} \Big)^{\frac{|E_c|-|E_t|}{2}} \bigg\{1+\frac{2}{(n-p)^{2\beta^*}} \bigg\}^{(|E_c|-|E_t|)\frac{n}{2}}, \text{ when } n>\max\{2p,2^{1/(2\beta^*)+1}\} \\
	& < \exp\bigg\{\frac{np^2}{(n-p)^{2\beta^*}}-\frac{|E_c|-|E_t|}{4}\log n \bigg\}, \text{ when } n>4.
\end{align*}
\normalsize
Similar to the proof of Theorem \ref{bfupper0}, we have
\small
\begin{align*}
	\mbox{BF}_{c\rightarrow a} & = \prod_{i=1}^{|E_c|-|E_a|} \mbox{BF}(\overline{G}_i^{\,c\rightarrow a};\overline{G}_{i-1}^{\,c\rightarrow a}) \\
	& < \big\{2p(n+1)\big\}^{\frac{|E_c|-|E_a|}{2}} \big(1-\hat{\rho}^2_{\overline{x}_*\overline{y}_*\mid V\backslash\{\overline{x}_*,\overline{y}_*\}}\big)^{\frac{n}{2}}, \text{ when } p>b \\
	& < \big\{2p(n+1)\big\}^{\frac{|E_c|-|E_a|}{2}} \exp\Big( -\frac{n\rho^2_L}{2} + n\epsilon_2 -\frac{n\epsilon^2_2}{2} \Big) \\
	& < \big\{2p(n+1)\big\}^{\frac{|E_c|-|E_a|}{2}} \exp\bigg\{ -\frac{n\rho^2_L}{2} + \frac{n}{(n-p)^{\beta^*}}-\frac{1}{2}n^{1-2\beta^*} \bigg\} \\
	& < \exp\bigg\{ -\frac{n\rho^2_L}{2} + \frac{n}{(n-p)^{\beta^*}}-\frac{1}{2}n^{1-2\beta^*} + 3p^2\log n \bigg\}.
\end{align*}
\normalsize
When $n>3\exp\{(1-2\beta^*)^{-2}\}$, we have $n(n-p)^{-2\beta^*}>3\log n$. Hence,
\small
\begin{equation*}
	\mbox{BF}(G_a;G_t \mid G_t\not\subset G_a)
	< \exp\bigg\{ -\frac{n\rho^2_L}{2} + \frac{n}{(n-p)^{\beta^*}}-\frac{1}{2}n^{1-2\beta^*}+\frac{2np^2}{(n-p)^{2\beta^*}} \bigg\}.
\end{equation*}
\normalsize
Therefore, when $G_t\not\subset G_a$, for $n>(\log 2/C_q)^{1/\gamma}$,
\small
\begin{equation*}
	\mbox{PR}(G_a;G_t\mid G_t\not\subset G_a) < \exp\bigg\{-\frac{n\rho^2_L}{2} + \frac{n}{(n-p)^{\beta^*}}-\frac{1}{2}n^{1-2\beta^*}+\frac{2np^2}{(n-p)^{2\beta^*}}+\big(|E_a|-|E_t|\big)\log(2q)\bigg\}.
\end{equation*}
\normalsize
By the construction of $\beta^*$, we have
\small
\begin{equation*}
	1-2\lambda>1+2\alpha-2\beta^*>\max\{2\alpha,1-2\beta^*,1-\beta^*\},
\end{equation*}
\normalsize
and $1-2\lambda>\sigma+\gamma$. Therefore, $-n\rho^2_L/2$ is the leading term in the upper bound of $\mbox{PR}(G_a;G_t\mid G_t\not\subset G_a)$. Thus, $\mbox{PR}(G_a;G_t)\rightarrow 0$, as $n\rightarrow\infty$ when $G_t\not\subset G_a$. \\

\noindent When $G_t\subsetneq G_a$, by Lemma \ref{lemmaadd} and \ref{Gt2Gc} we have
\small
\begin{equation*}
	\mbox{BF}(G_a;G_t \mid G_t\subsetneq G_a) < \exp\bigg\{\frac{\big(|E_a|-|E_t|\big)n}{(n-p)^{2\beta^*}}\bigg\}.
\end{equation*}
\normalsize
So
\small
\begin{equation*}
	\mbox{PR}(G_a;G_t \mid G_t\subsetneq G_a) < \exp\bigg\{\frac{\big(|E_a|-|E_t|\big)n}{(n-p)^{2\beta^*}} + \big(|E_a|-|E_t|\big)\log(2q)\bigg\}.
\end{equation*}
\normalsize
Since $\beta^*>\frac{1-\gamma}{2}$, then $\big(|E_a|-|E_t|\big)\log(2q)$ is the leading term above and $|E_a|-|E_t|>0$. Therefore, $\mbox{PR}(G_a;G_t | G_t\subsetneq G_a)\rightarrow 0$, as $n\rightarrow\infty$.

\section{Proof of Theorem \ref{thstrong}}
From $\gamma>\alpha$, we have $\frac{1-\gamma}{2}<\frac{1-\alpha}{2}$; from $\gamma>1-4\alpha$, we have $\frac{1-\gamma}{2}<2\alpha$; from $\lambda<\frac{1}{2}(1-3\alpha)$, we have $\alpha+\lambda<\frac{1-\alpha}{2}$; from $\lambda<\alpha$, we have $\alpha+\lambda<2\alpha$. For any $\beta^\#$ satisfies
\small
\begin{equation*}
	\max\Big\{\alpha+\lambda, \frac{1-\gamma}{2}\Big\}<\beta^\#<\min\Big\{\frac{1-\alpha}{2},2\alpha\Big\},
\end{equation*}
\normalsize
let $\epsilon_{3,n}=(n-p)^{-\beta^\#}$. Then define
\small
\begin{equation*}
	R'''_{ij|S}=\big\{|\hat{\rho}_{ij|S}-\rho_{ij|S}|<\epsilon_{3,n}\big\}.
\end{equation*}
\normalsize
Denote
\small
\begin{equation*}
	\Delta''_{\epsilon_3}(n)=\Big\{\cap_{(i,j)\in E_t} R'''_{ij\mid V\backslash\{i,j\}}\Big\} \bigcap 
	\Big\{\cap_{(i,j)\not\in E_t}\big(\cap_{S\in\Pi_{ij}} R'''_{ij\mid S}\big)\Big\}. 
\end{equation*}
\normalsize
Since $0<\alpha<\frac{1}{3}$, thus $0<\beta^\#<\frac{1-\alpha}{2}<\frac{1}{2}$. By Assumption \ref{assump-upper} and Lemma \ref{lemmainf2}, 
\begin{equation*}
	\bbP\big\{\Delta''_{\epsilon_3}(n)\big\} \rightarrow 1, \text{ as } n\rightarrow\infty.
\end{equation*}
For any decomposable graph $G_a$, there exists a set $\Delta_{a,\epsilon_3}(n)$ defined in Theorem \ref{thpr}, such that $\Delta''_{\epsilon_3}(n)\subset \Delta_{a,\epsilon_3}(n)$. For the following proof, we restrict it to the event $\Delta''_{\epsilon_3}(n)$. Thus, the upper bound of Bayes factors derived under $\Delta''_{\epsilon_3}(n)$ is a uniform upper bound for all decomposable graphs that are not $G_t$. Following the proof of Theorem \ref{thpr}, when $G_t\not\subset G_a$,
\small
\begin{equation*}
	\mbox{PR}(G_a;G_t\mid G_t\not\subset G_a) < \exp\Big\{-\frac{n\rho^2_L}{2} + \frac{n}{(n-p)^{\beta^\#}}-\frac{1}{2}n^{1-2\beta^\#}+\frac{2np^2}{(n-p)^{2\beta^\#}}+\big(|E_a|-|E_t|\big)\log(2q)\Big\}.
\end{equation*}
\normalsize
By the construction of $\beta^\#$, we have
\begin{equation*}
	1-2\lambda>1+2\alpha-2\beta^\#>\max\{2\alpha,1-2\beta^\#,1-\beta^\#\},
\end{equation*}
and $1-2\lambda>\gamma+\sigma$. Therefore, $-n\rho^2_L/2$ is the leading term in the upper bound of $\mbox{PR}(G_a;G_t\mid G_t\not\subset G_a)$. For simplicity, only the leading term is used in the following calculation. \\

\noindent When $G_t\subsetneq G_a$,
\small
\begin{equation*}
	\mbox{PR}(G_a;G_t \mid G_t\subsetneq G_a) < \exp\bigg\{\frac{\big(|E_a|-|E_t|\big)n}{(n-p)^{2\beta^\#}} + \big(|E_a|-|E_t|\big)\log(2q)\bigg\}.
\end{equation*}
\normalsize
Since $\beta^\#>\frac{1-\gamma}{2}$, then $\big(|E_a|-|E_t|\big)\log(2q)$ is the leading term above and $|E_a|-|E_t|>0$. Thus, when $n$ is sufficiently large, for any decomposable graph $G_a\neq G_t$, we have 
\small
\begin{align*}
	\mbox{PR}(G_a;G_t \mid G_t\not\subset G_a) &< \exp\Big(-D_1n\rho^2_L \Big), \\
	\mbox{PR}(G_a;G_t \mid G_t\subsetneq G_a) &< \exp\Big\{-D_2n^{\gamma}\big(|E_a|-|E_t|\big) \Big\},
\end{align*}
\normalsize
where $D_1$ and $D_2$ are two positive finite constants.
\small
\begin{align*}
	\sum_{G_t\not\subset G_a} \mbox{PR}(G_a;G_t) & = \sum_{|E_a^1|=0}^{|E_t|-1} \binom{|E_t|}{|E_a^1|} \sum_{|E_a|-|E_a^1|=0}^{|E_c|-|E_t|} \binom{|E_c|-|E_t|}{|E_a|-|E_a^1|}\mbox{PR}(G_a;G_t \mid G_t\not\subseteq G_a) \\
	& < \exp(p^2\log 2)\exp(-D_1n\rho^2_L) \rightarrow 0, \text{ as } n\rightarrow\infty. 
\end{align*}
\normalsize
\small
\begin{align*}
	\sum_{G_t\subsetneq G_a} \mbox{PR}(G_a;G_t) & = \sum_{|E_a|=|E_t|+1}^{|E_c|} \binom{|E_c|-|E_t|}{|E_a|-|E_t|} \mbox{PR}(G_a;G_t \mid G_t\subsetneq G_a) \\
	& < \sum_{i=1}^{|E_c|-|E_t|} \binom{|E_c|-|E_t|}{i} \big(e^{-D_2n^\gamma}\big)^i \\
	& = (1 + e^{-D_2n^\gamma})^{|E_c|-|E_t|} - 1 \\
	& < \exp\big\{\big(|E_c|-|E_t|\big)e^{-D_2n^\gamma}\big\} - 1 \rightarrow 0, \text{ as } n\rightarrow\infty.
\end{align*}
\normalsize
\noindent (i) When $G_t=G_0$, where $G_0$ is the null graph with no edges.
\small
\begin{equation*}
	\sum_{G_a\neq G_0} \mbox{PR}(G_a;G_0) = \sum_{G_0\subsetneq G_a} \mbox{PR}(G_a;G_0) \rightarrow 0, \text{ as } n\rightarrow\infty;
\end{equation*}
\normalsize
\noindent (ii) When $G_t\neq G_0$ and $G_t\neq G_c$,
\small
\begin{equation*}
	\sum_{G_a\neq G_t} \mbox{PR}(G_a;G_t) = \sum_{G_t\not\subset G_a} \mbox{PR}(G_a;G_t) + \sum_{G_t\subsetneq G_a} \mbox{PR}(G_a;G_t) \rightarrow 0, \text{ as } n\rightarrow\infty;
\end{equation*}
\normalsize
\noindent (iii) When $G_t=G_c$,
\small
\begin{equation*}
	\sum_{G_a\neq G_c} \mbox{PR}(G_a;G_c) = \sum_{G_c\not\subset G_a} \mbox{PR}(G_a;G_c) \rightarrow 0, \text{ as } n\rightarrow\infty.
\end{equation*}
\normalsize
Therefore,
\small
\begin{equation*}
	\pi(G_t\mid \mathrm{Y}) = \frac{1}{1+\sum_{G_a\neq G_t} \mbox{PR}(G_a;G_t)}\rightarrow 1, \text{ as } n\rightarrow\infty.
\end{equation*}
\normalsize

\subsection{Proof of Corollary \ref{decomp-mode}}

	According to the proof of Theorem \ref{thstrong}, in the set $\Delta''_{\epsilon_3}(n)$, all Bayes factors in favor of $G_a$ converge to zero uniformly. Thus, we have

	\begin{equation*}
		\bbP \Big\{\max_{G_a\neq G_t} \pi(G_a\mid \mathrm{Y}) < \pi(G_t\mid \mathrm{Y}) \Big\}  \rightarrow 1, \quad \text{ as } n\rightarrow \infty.
	\end{equation*}

	Therefore,

	\begin{equation*}
		\bbP \big( \hat{G} = G_t \big) \rightarrow 1, \quad \text{ as } n\rightarrow \infty.
	\end{equation*}

\section{Equivalence Of Minimal Triangulations When $G_t$ Is Not Decomposable}
Let $G_m=(V,E_m)$ be any minimal triangulation of $G_t$, where $E_m=E_t\cup F$, $F\neq\emptyset$. In here $G_a$ denotes any decomposable graph other than minimal triangulations of $G_t$. Since $G_m$ is a minimal triangulation, then $E_a\neq E_t\cup F'$, where $F'\subseteq F$. Different from when $G_t$ is decomposable, there are three cases here: (1) $|E_a^1|<|E_m^1|=|E_t|$, thus $G_m\not\subset G_a$; (2) $|E_a^1|=|E_m^1|=|E_t|$ and $G_m\subsetneq G_a$; (3) $|E_a^1|=|E_m^1|=|E_t|$ and $G_m\not\subset G_a$. But in case (3) there exists at least one minimal triangulation of $G_t$ which is a subset of $G_a$. And in both (2) and (3), we have $|E_m|<|E_a|$.

For case (1), when $|E_a^1|<|E_m^1|=|E_t|$, i.e. one of the two cases where $G_m\not\subset G_a$, we inherit all notations from Lemma \ref{Gc2Ga}, $\{\overline{x}_i,\overline{y}_i\}_{i=1}^{|E_c|-|E_a|}$ is the edge sequence from $G_c$ to $G_a$ and $\{\rho_{\overline{x}_i\overline{y}_i\mid\overline{S}_i}\}_{i=1}^{|E_c|-|E_a|}$ is the corresponding population partial correlation sequence. And Lemma \ref{Gc2Ga} still holds here, i.e. at least one population partial correlation in $\{\rho_{\overline{x}_i\overline{y}_i\mid\overline{S}_i}\}_{i=1}^{|E_c|-|E_a|}$ corresponding to the removal of a true edge is non-zero and it is not a correlation. The proof carries out the same as in Lemma \ref{Gc2Ga}, just let the first step of moving from $G_c$ to $G_a$ be the deletion of one true edge which is missing in $G_a$. For case (3), where $|E_a^1|=|E_m^1|=|E_t|$ but $G_m\not\subset G_a$, when moving from $G_c$ to $G_a$, all steps are the reciprocal of addition cases. There is no deletion case here since $G_a$ has all the true edges in $G_t$.

For case (2), when $G_m\subsetneq G_a$ and $|E_a^1|=|E_m^1|=|E_t|$, we still use $\{(\widetilde{x}_i,\widetilde{y}_y)\}_{i=1}^{|E_a|-|E_m|}$ to denote the sequence of edges which are added in each steps from $G_m$ to $G_a$ and $\{\rho_{\widetilde{x}_i\widetilde{y}_i\mid\widetilde{S}_i}\}_{i=1}^{|E_a|-|E_m|}$ is the corresponding population partial correlation sequence. A similar version of Lemma \ref{Gt2Gc} still holds here.

\begin{lemma} \label{Gm2Ga}
	For any edge sequence $\{(\widetilde{x}_i,\widetilde{y}_i)\}_{i=1}^{|E_a|-|E_m|}$ from $G_m$ to $G_a$ describe above, all population partial correlations in $\{\rho_{\widetilde{x}_i\widetilde{y}_i\mid\widetilde{S}_i}\}_{i=1}^{|E_a|-|E_m|}$ are zero. {\normalfont(}or correlation, when $\widetilde{S}_i=\emptyset${\normalfont)}
\end{lemma}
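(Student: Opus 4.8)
The plan is to reproduce the argument of Lemma \ref{Gt2Gc} almost verbatim, with one added observation that lets the separation inherit all the way down to the non-decomposable true graph $G_t$. The starting point is that in case (2) we have the chain of inclusions
\[
G_t \subseteq G_m = \widetilde{G}_0^{\,m\rightarrow a} \subsetneq \widetilde{G}_1^{\,m\rightarrow a} \subsetneq \cdots \subsetneq \widetilde{G}_{|E_a|-|E_m|}^{\,m\rightarrow a} = G_a,
\]
since $E_t \subseteq E_m = E_t\cup F$ by the definition of a triangulation, and the sequence from $G_m$ to $G_a$ is increasing by one edge. In particular each added edge $(\widetilde{x}_i,\widetilde{y}_i)\notin E_m$, hence $(\widetilde{x}_i,\widetilde{y}_i)\notin E_t$, so it is a false edge and the claim $\rho_{\widetilde{x}_i\widetilde{y}_i\mid\widetilde{S}_i}=0$ is what one expects.

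First I would fix a step $i$ with $\widetilde{S}_i\neq\emptyset$. Because the edge $(\widetilde{x}_i,\widetilde{y}_i)$ is added to $\widetilde{G}_{i-1}^{\,m\rightarrow a}$ while preserving decomposability, Theorem \ref{validadd} gives cliques $C_{\widetilde{x}_i}\ni\widetilde{x}_i$ and $C_{\widetilde{y}_i}\ni\widetilde{y}_i$ that are adjacent in some junction tree of $\widetilde{G}_{i-1}^{\,m\rightarrow a}$, with $\widetilde{S}_i = C_{\widetilde{x}_i}\cap C_{\widetilde{y}_i}$; the junction-tree property then forces $\widetilde{S}_i$ to separate $\widetilde{x}_i$ from $\widetilde{y}_i$ in $\widetilde{G}_{i-1}^{\,m\rightarrow a}$. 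This is exactly as in Lemma \ref{Gt2Gc}, so these steps can be cited rather than rewritten.

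The one genuinely new move is the inheritance down to $G_t$. Since $G_t\subseteq\widetilde{G}_{i-1}^{\,m\rightarrow a}$ (edge sets satisfy $E_t\subseteq E_{\widetilde{G}_{i-1}^{\,m\rightarrow a}}$), I apply Lemma \ref{inherit} directly to this inclusion to conclude that $\widetilde{S}_i$ separates $\widetilde{x}_i$ from $\widetilde{y}_i$ in $G_t$ as well. The global Markov property, which holds for any undirected Gaussian graphical model and in particular for the non-decomposable $G_t$, then yields $\rho_{\widetilde{x}_i\widetilde{y}_i\mid\widetilde{S}_i}=0$. For the boundary case $\widetilde{S}_i=\emptyset$, the junction-tree structure shows $\widetilde{x}_i$ and $\widetilde{y}_i$ lie in disconnected parts of $\widetilde{G}_{i-1}^{\,m\rightarrow a}$; this disconnection persists in the subgraph $G_t$, so the corresponding blocks of the precision matrix decouple and $\widetilde{x}_i,\widetilde{y}_i$ are marginally independent, giving $\rho_{\widetilde{x}_i\widetilde{y}_i}=0$.

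The only point requiring care, and the reason this is not literally Lemma \ref{Gt2Gc}, is that here separation is read off from a junction tree of $G_m$ (a triangulation), whereas the partial correlations are computed under the true distribution governed by $G_t$; the argument goes through precisely because $G_t\subseteq G_m$ lets Lemma \ref{inherit} transport separation from the triangulated sequence back to $G_t$, and because the global Markov property does not require $G_t$ to be decomposable. With these observations in place the conclusion follows for every $i=1,\ldots,|E_a|-|E_m|$, completing the proof.
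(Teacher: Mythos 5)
Your proposal is correct and matches the paper's own proof essentially step for step: both invoke Theorem \ref{validadd} and the junction-tree property to get separation by $\widetilde{S}_i$ in $\widetilde{G}_{i-1}^{\,m\rightarrow a}$, then use the chain $G_t\subsetneq G_m\subsetneq\widetilde{G}_{i-1}^{\,m\rightarrow a}$ with Lemma \ref{inherit} to transport separation to $G_t$, apply the global Markov property (valid without decomposability), and handle $\widetilde{S}_i=\emptyset$ via disconnection persisting in the subgraph $G_t$. Your explicit remark that the global Markov property does not require $G_t$ to be decomposable is exactly the point the paper uses implicitly, so no gap remains.
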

\begin{proof}
	This proof follows similarly to the proof of Lemma \ref{Gt2Gc}. Assume in the $i$th step we add edge $(\widetilde{x}_i,\widetilde{y}_i)\not\in E_t$ to graph $\widetilde{G}^{\,m\rightarrow a}_{i-1}$ and $\widetilde{S}_i$ is the corresponding separator.

	When $\widetilde{S}_i\neq\emptyset$. Since adding edge $(\widetilde{x}_i,\widetilde{y}_i)\not\in E_t$ to graph $\widetilde{G}^{\,m\rightarrow a}_{i-1}$ maintains the decomposability of graph $\widetilde{G}^{\,m\rightarrow a}_i$. By Lemma \ref{validadd}, $\widetilde{x}_i$ and $\widetilde{y}_i$ are in two cliques which are adjacent in the current junction tree of $\widetilde{G}^{\,m\rightarrow a}_{i-1}$. Thus by the property of junction trees, we know $\widetilde{S}_i$ separates $\widetilde{x}_i$ from $\widetilde{y}_i$ in $\widetilde{G}^{\,m\rightarrow a}_{i-1}$. Since this is an increasing sequence in terms of edges from $G_m$ to $G_a$, thus $G_m\subsetneq\widetilde{G}^{\,m\rightarrow a}_{i-1}$. And due to the minimal triangulation, $G_t\subsetneq G_m\subsetneq\widetilde{G}^{\,m\rightarrow a}_{i-1}$.	By Lemma \ref{inherit}, $\widetilde{S}_i$ separates node $\widetilde{x}_i$ from $\widetilde{y}_i$ in $G_t$, $\rho_{\widetilde{x}_i\widetilde{y}_i\mid\widetilde{S}_i}=0$. 

	When $\widetilde{S}_i=\emptyset$, $\widetilde{x}_i$ and $\widetilde{y}_i$ are disconnected in the current graph $\widetilde{G}^{\,m\rightarrow a}_{i-1}$. Then they are also disconnected in $G_t$. Thus, they are marginally independent in $G_t$, $\rho_{\widetilde{x}_i\widetilde{y}_i}=0$.
\end{proof}

\begin{remark}
	For $|E_a^1|=|E_t|$ and $|E_a|-|E_a^1|=0,\ldots,|F|-1$, no decomposable $G_a$ exists; for $|E_a^1|=|E_t|$ and $|E_a|-|E_a^1|>|F|$, at least one decomposable $G_a$ exists; but for $|E_a^1|<|E_t|$ and $|E_a|-|E_a^1|\geq 0$, a decomposable $G_a$ may not exist. The Bayes factor $\mbox{BF}(G_a;G_m)$ under $|E_a^1|<|E_t|$ and $|E_a|-|E_a^1|\geq 0$ is only valid when a decomposable $G_a$ exists, otherwise it is defined to be zero.
\end{remark}

\subsection{Proof of Theorem \ref{finitetri}}
	{\bf Part 1.} For any given decomposable graph $G_a$ that is not a minimal triangulation of $G_t$, let
	\small
	\begin{equation*}
		\tau^*>\max\bigg\{2, \frac{2(|E_c|-|E_m|)}{|E_a|-|E_m|}\bigg\}.
	\end{equation*}
	\normalsize
	The construction of $\Delta_{a,\epsilon_1}$ is the same as in the proof of Theorem \ref{bfupper0}. After that, we restrict the following proof to the set $\Delta_{a,\epsilon_1}$. For case (1), when $|E_a^1|<|E_m^1|=|E_t|$, we have
	\small
	\begin{align*}
		\mbox{BF}_{m\rightarrow c} &< \exp\Big\{ p^2-\Big(\frac{1}{2}-\frac{1}{\tau^*}\Big)(|E_c|-|E_m|)\log n\Big\}\rightarrow 0, \\
		\mbox{BF}_{c\rightarrow a} &< \exp\Big\{ -\frac{n\rho^2_L}{2} + p^2\log n + \sqrt{n\log n} -\frac{1}{2\tau^*}\log(n-p) + 2p^2\log p \Big\}\rightarrow 0.
	\end{align*}
	\normalsize
	Hence,
	\small
	\begin{equation*}
		\mbox{BF}(G_a;G_m\mid G_m\not\subset G_a,|E_a^1|<|E_m^1|) = \mbox{BF}_{c\rightarrow a} \cdot \mbox{BF}_{m\rightarrow c} \rightarrow 0.
	\end{equation*}
	\normalsize
	For case (2), when $G_m\subsetneq G_a$, i.e. $|E_a^1|=|E_m^1|=|E_t|$ and $|E_a|>|E_m|$, we have
	\small
	\begin{equation*}
		\mbox{BF}(G_a;G_m\mid G_m\subsetneq G_a) < \exp\Big\{ p^2-\Big(\frac{1}{2}-\frac{1}{\tau^*}\Big)(|E_a|-|E_m|)\log n\Big\}\rightarrow 0.
	\end{equation*}
	\normalsize
	For case (3), when $|E_a^1|=|E_m^1|=|E_t|$ and $G_m\not\subset G_a$, also $|E_a|>|E_m|$, we have
	\small
	\begin{align*}
		\mbox{BF}_{m\rightarrow c} &< 2^{p^2} n^{-\frac{|E_c|-|E_a|}{2}} \exp\bigg[-\bigg\{\frac{|E_a|-|E_m|}{2(|E_c|-|E_m|)}-\frac{1}{\tau^*}\bigg\}(|E_c|-|E_m|)\log n\bigg], \\
		\mbox{BF}_{c\rightarrow a} &< (4p)^{p^2} n^{\frac{|E_c|-|E_a|}{2}}, \text{ when } n>1.
	\end{align*}
	\normalsize
	Hence,
	\small
	\begin{align*}
		& \mbox{BF}(G_a;G_m\mid G_m\not\subset G_a,|E_a^1|=|E_m^1|) \\
		& \phantom{1111111111} < (8p)^{p^2} \exp\bigg[-\bigg\{\frac{|E_a|-|E_m|}{2(|E_c|-|E_m|)}-\frac{1}{\tau^*}\bigg\}(|E_c|-|E_m|)\log n\bigg]\rightarrow 0.
	\end{align*}
	\normalsize
	Therefore, $\mbox{BF}(G_a;G_m)\rightarrow 0$, as $n\rightarrow\infty$. \\

	\noindent{\bf Part 2.} Let $\{\hat{\rho}_{m_1,i}\}_{i=1}^{|E_c|-|E_{m_1}|}$ and $\{\rho_{m_1,i}\}_{i=1}^{|E_c|-|E_{m_1}|}$ be the sample and population partial correlation sequence corresponding to each step from $G_{m_1}$ to $G_c$. By Lemma \ref{Gm2Ga}, $\rho_{m_1,i}=0$, $i=1,2,\ldots,|E_c|-|E_{m_1}|$. By Lemma \ref{lemmafinite1}, for any $0<\epsilon<1$, there exist $0<M_1(\epsilon)<1/4$ and $M_2(\epsilon)>3$ (the choice of $M_1$ and $M_2$ is the same as in the proof of Lemma \ref{lemmafinite1}), we have $\bbP(\Delta^0_{\epsilon})>1-\epsilon/2$, for $n>p+3$. Let
	\small
	\begin{equation*}
		R_{m_1,i} = \bigg\{ \frac{M_1}{n}< \hat{\rho}^2_{m_1,i} <\frac{M_2}{n-p} \bigg\},
	\end{equation*}
	\normalsize
	and denote
	\small
	\begin{equation*}
		\Delta_{m_1} = \bigcap_{i=1}^{|E_c|-|E_{m_1}|} R_{m_1,i}.
	\end{equation*}
	\normalsize
	Then 
	\small
	\begin{equation*}
		\bbP(\Delta_{m_1}) \geq \bbP(\Delta^0_\epsilon) \geq 1-\epsilon/2.
	\end{equation*}
	\normalsize
	By Lemma \ref{validadd}, when $n>b+p$, we have
	\small
	\begin{align*}
		\Big(\frac{1}{2n}\Big)^{\frac{|E_c|-|E_{m_1}|}{2}}\prod_{i=1}^{|E_c|-|E_{m_1}|} & (1-\hat{\rho}^2_{m_1,i})^{-\frac{n(|E_c|-|E_{m_1}|)}{2}} < \mbox{BF}(G_c;G_{m_1}) \\
		& < \Big(\frac{2}{n}\Big)^{\frac{|E_c|-|E_{m_1}|}{2}}\prod_{i=1}^{|E_c|-|E_{m_1}|}(1-\hat{\rho}^2_{m_1,i})^{-\frac{n(|E_c|-|E_{m_1}|)}{2}}.
	\end{align*}
	\normalsize
	Under the event $\Delta_{m_1}$, when $n>p+M_2$,
	\small
	\begin{equation*}
		\Big(\frac{e^{M_1}}{2n}\Big)^{\frac{|E_c|-|E_{m_1}|}{2}} < \mbox{BF}(G_c;G_{m_1})
		 < \Big(\frac{2e^{2M_2}}{n}\Big)^{\frac{|E_c|-|E_{m_1}|}{2}}.
	\end{equation*}
	\normalsize
	Thus we have
	\small
	\begin{equation*}
		\bbP\Bigg\{ \Big(\frac{e^{M_1}}{2n}\Big)^{\frac{|E_c|-|E_{m_1}|}{2}} < \mbox{BF}(G_c;G_{m_1})
		 < \Big(\frac{2e^{2M_2}}{n}\Big)^{\frac{|E_c|-|E_{m_1}|}{2}} \Bigg\} > 1-\frac{\epsilon}{2}.
	\end{equation*}
	\normalsize
	Similarly,
	\small
	\begin{equation*}
		\bbP\Bigg\{  \Big(\frac{2e^{2M_2}}{n}\Big)^{-\frac{|E_c|-|E_{m_1}|}{2}} < \mbox{BF}(G_{m_2};G_c)
		 < \Big(\frac{e^{M_1}}{2n}\Big)^{-\frac{|E_c|-|E_{m_1}|}{2}} \Bigg\} > 1-\frac{\epsilon}{2}.
	\end{equation*}
	\normalsize
	Therefore, let $A_1=\frac{1}{4}e^{-M_2}$ and $A_2=4e^{2M_2p^2}$,
	\small
	\begin{equation*}
		\bbP\big\{A_1<\mbox{BF}(G_{m_1};G_{m_2})<A_2\big\} > 1-\epsilon.
	\end{equation*}
	\normalsize

	\noindent{\bf Part 3.} Let $G_{m_1},G_{m_2},\ldots,G_{m_l}$ be all the minimal triangulations of $G_t$, where $l$ is a positive finite integer, since the graph dimension is finite. By Part 1, on the set $\Delta_{a,\epsilon_1}$,
	\small
	\begin{equation*}
		\mbox{BF}(G_{m_i};G_a) \rightarrow \infty, \quad i=1,2,\ldots,l,
	\end{equation*}
	\normalsize
	where $G_a\not\in\mathcal{M}_t$. Therefore,
	\small
	\begin{align*}
		\sum_{G_m\in \mathcal{M}_t} \pi(G_m\mid \mathrm{Y}) &= \frac{\sum_{i=1}^l p(\mathrm{Y}\mid G_{m_i})}{\sum_{i=1}^l p(\mathrm{Y}\mid G_{m_i}) + \sum_{G_a\not\in\mathcal{M}_t}p(\mathrm{Y}\mid G_a)} \\
		& = \frac{1}{1+ \sum_{G_a\not\in\mathcal{M}_t}\frac{p(\mathrm{Y}\mid G_a)}{\sum_{i=1}^l p(\mathrm{Y}\mid G_{m_i})} } \\
		& = \frac{1}{1+ \sum_{G_a\not\in\mathcal{M}_t}\frac{1}{\sum_{i=1}^l \mbox{BF}(G_{m_i};G_a)} } \rightarrow 1, \quad \text{ as } n\rightarrow\infty.
	\end{align*}
	\normalsize

\subsection{Proof of Theorem \ref{hightri}}
	{\bf Part 1.} From $\gamma>1-2\alpha$, we have $\frac{1-\gamma+2\alpha}{2}<2\alpha$; from $\lambda<\frac{1}{2}-\alpha$, we have $\alpha+\lambda<\frac{1}{2}$; from $\lambda<\alpha$, we have $\alpha+\lambda<2\alpha$; from $\gamma>2\alpha$, we have $\frac{1-\gamma+2\alpha}{2}<\frac{1}{2}$. Let $\beta^*$ satisfy
	\small
	\begin{equation*}
		\max\Big\{\alpha+\lambda, \frac{1-\gamma+2\alpha}{2}\Big\}<\beta^*<\min\Big\{\frac{1}{2},2\alpha\Big\},
	\end{equation*}
	\normalsize
	then follow the construction of $\Delta_{a,\epsilon_2}(n)$ in the proof of Theorem \ref{thpr} using $\beta^*$ specified above. After that, we restrict the following proof to the set $\Delta_{a,\epsilon_2}(n)$. For case (1), when $|E_a^1|<|E_m^1|=|E_t|$, by the construction of $\beta^*$, we have
	\small
	\begin{equation*}
		1-2\lambda>1+2\alpha-2\beta^*>\max\{2\alpha,1-2\beta^*,1-\beta^*\},
	\end{equation*}
	\normalsize
	and $1-2\lambda>\sigma+\gamma$. Thus,
	\small
	\begin{align*}
		& \mbox{PR}(G_a;G_m\mid G_m\not\subset G_a, |E_a^1|<|E_m^1|) \\
		& < \exp\Big\{ -\frac{n\rho^2_L}{2}+ \frac{n}{(n-p)^{\beta^*}} -\frac{1}{2}n^{1-2\beta^*} + \frac{2np^2}{(n-p)^{2\beta^*}}+\big(|E_a|-|E_m|\big)\log(2q) \Big\} \rightarrow 0.
	\end{align*}
	\normalsize
	For case (2), when $G_m\subsetneq G_a$, i.e. $|E_a^1|=|E_m^1|=|E_t|$ and $|E_a|>|E_m|$, since $\beta^*>\frac{1-\gamma}{2}$, we have
	\small
	\begin{equation*}
		\mbox{PR}(G_a;G_m\mid G_m\subsetneq G_a) < \exp\bigg\{ \frac{(|E_a|-|E_m|)n}{(n-p)^{2\beta^*}} + \big(|E_a|-|E_m|\big)\log(2q) \bigg\} \rightarrow 0.
	\end{equation*}
	\normalsize
	For case (3), when $|E_a^1|=|E_m^1|=|E_t|$ and $G_m\not\subset G_a$, also $|E_a|>|E_m|$, since $\beta^*>\frac{1-\gamma+2\alpha}{2}$, we have 
	\small
	\begin{align*}
		& \mbox{PR}(G_a;G_m\mid G_m\not\subset G_a, |E_a^1|=|E_m^1|) \\
		& \phantom{11111} < \{2p(n+1)\}^{\frac{|E_c|-|E_a|}{2}} \exp\bigg\{ \frac{(|E_c|-|E_m|)n}{(n-p)^{2\beta^*}} + \big(|E_a|-|E_m|\big)\log(2q) \bigg\} \rightarrow 0.
	\end{align*}
	\normalsize
	Therefore, $\mbox{PR}(G_a;G_m)\rightarrow 0$, as $n\rightarrow\infty$. \\

	\noindent{\bf Part 2.} Since the number of fill-in edges is finite, then the number of cycles length greater than 3 without a chord in $G_t$ is finite and the length of the longest cycle without a chord is also finite. Thus instead of adding one chord for each of those cycles that are length greater than 3 in $G_t$, we can complete the subgraphs induced by those cycles with finite number of edges. Let $G_{m_c}$ be the graph after completing all subgraphs induced by those cycles. Then $G_{m_c}$ is decomposable and $|E_{m_c}|-|E_t|$ is finite. We also know $G_{m_1},G_{m_2}\subsetneq G_{m_c}$. Let $\delta_c=|E_{m_c}|-|E_{m_1}|=|E_{m_c}|-|E_{m_2}|$. 

	Let $\{\hat{\rho}_{m_1,i}\}_{i=1}^{|E_{m_c}|-|E_{m_1}|}$ and $\{\rho_{m_1,i}\}_{i=1}^{|E_{m_c}|-|E_{m_1}|}$ be the sample and population partial correlation sequence corresponding to each step from $G_{m_1}$ to $G_{m_c}$. By Lemma \ref{Gm2Ga}, $\rho_{m_1,i}=0$, $i=1,2,\ldots,|E_{m_c}|-|E_{m_1}|$. By Corollary \ref{lemmafinite2}, for any $0<\epsilon<1$, there exist $0<M_1(\epsilon)<1/4$ and $M_2(\epsilon)>3$ (the choice of $M_1$ and $M_2$ is the same as in the proof of Corollary \ref{lemmafinite2}), we have $P(\Delta^{0+}_{\epsilon})>1-\epsilon/2$, for $n>p+3$. Let
	\small
	\begin{equation*}
		R'_{m_1,i} = \bigg\{ \frac{M_1}{n}< \hat{\rho}^2_{m_1,i} <\frac{M_2}{n-p} \bigg\},
	\end{equation*}
	\normalsize
	and denote
	\small
	\begin{equation*}
		\Delta'_{m_1} = \bigcap_{i=1}^{|E_{m_c}|-|E_{m_1}|} R'_{m_1,i}.
	\end{equation*}
	\normalsize
	Then 
	\small
	\begin{equation*}
		\bbP(\Delta'_{m_1}) \geq \bbP(\Delta^{0+}_\epsilon) \geq 1-\epsilon/2.
	\end{equation*}
	\normalsize
	By Lemma \ref{validadd}, when $n>b+p$, we have
	\small
	\begin{equation*}
		\Big(\frac{1}{2n}\Big)^{\frac{\delta_c}{2}}\prod_{i=1}^{\delta_c} (1-\hat{\rho}^2_{m_1,i})^{-\frac{n\delta_c}{2}} < \mbox{BF}(G_{m_c};G_{m_1})
		< \Big(\frac{2}{n}\Big)^{\frac{\delta_c}{2}}\prod_{i=1}^{\delta_c}(1-\hat{\rho}^2_{m_1,i})^{-\frac{n\delta_c}{2}}.
	\end{equation*}
	\normalsize
	Under the event $\Delta'_{m_1}$, when $n>p+M_2$,
	\small
	\begin{equation*}
		\Big(\frac{e^{M_1}}{2n}\Big)^{\frac{\delta_c}{2}} < \mbox{BF}(G_{m_c};G_{m_1})
		 < \Big(\frac{2e^{2M_2}}{n}\Big)^{\frac{\delta_c}{2}}.
	\end{equation*}
	\normalsize
	Thus we have
	\small
	\begin{equation*}
		\bbP\Bigg\{ \Big(\frac{e^{M_1}}{2n}\Big)^{\frac{\delta_c}{2}} < \mbox{BF}(G_{m_c};G_{m_1})
		 < \Big(\frac{2e^{2M_2}}{n}\Big)^{\frac{\delta_c}{2}} \Bigg\} > 1-\frac{\epsilon}{2}.
	\end{equation*}
	\normalsize
	Similarly,
	\small
	\begin{equation*}
		\bbP\Bigg\{  \Big(\frac{2e^{2M_2}}{n}\Big)^{-\frac{\delta_c}{2}} < \mbox{BF}(G_{m_2};G_{m_c})
		 < \Big(\frac{e^{M_1}}{2n}\Big)^{-\frac{\delta_c}{2}} \Bigg\} > 1-\frac{\epsilon}{2}.
	\end{equation*}
	\normalsize
	Therefore, let $A_1=\frac{1}{4}e^{-M_2\delta_c}$ and $A_2=4e^{M_2\delta_c}$,
	\small
	\begin{equation*}
		\bbP\big\{A_1<\mbox{BF}(G_{m_1};G_{m_2})<A_2\big\} > 1-\epsilon.
	\end{equation*}
	\normalsize

	\noindent{\bf Part 3.} From $\gamma>1-2\alpha$, we have $\frac{1-\gamma+2\alpha}{2}<2\alpha$; from $\lambda<\frac{1-3\alpha}{2}$, we have $\alpha+\lambda<\frac{1-\alpha}{2}$; from $\lambda<\alpha$, we have $\alpha+\lambda<2\alpha$; from $\gamma>3\alpha$, we have $\frac{1-\gamma+2\alpha}{2}<\frac{1-\alpha}{2}$. Let $\beta^*$ satisfy
	\small
	\begin{equation*}
		\max\Big\{\alpha+\lambda, \frac{1-\gamma+2\alpha}{2}\Big\}<\beta^*<\min\Big\{\frac{1-\alpha}{2},2\alpha\Big\},
	\end{equation*}
	\normalsize
	then follow the construction of $\Delta''_{\epsilon_3}(n)$ in the proof of Theorem \ref{thstrong} using $\beta^*$ specified above. After that, we restrict the following proof to the set $\Delta''_{\epsilon_3}(n)$. Let $G_{m_1},G_{m_2},\ldots,G_{m_h}$ be all the minimal triangulations of $G_t$, where $h$ is a positive integer that depends on $n$. By Part 1, we have
	\small
	\begin{align*}
		\mbox{PR}(G_a;G_m\mid G_m\not\subset G_a, |E_a^1|<|E_m^1|) &< \exp\Big( -D_1n\rho^2_L \Big), \\
		\mbox{PR}(G_a;G_m\mid G_m\subsetneq G_a) &< \exp\Big\{ -D_2n^\gamma\big(|E_a|-|E_m|\big)\Big\}, \\
		\mbox{PR}(G_a;G_m\mid G_m\not\subset G_a, |E_a^1|=|E_m^1|) &< \exp\Big\{ -D_3n^\gamma\big(|E_a|-|E_m|\big)\Big\},
	\end{align*}
	\normalsize
	where $D_1$, $D_2$, $D_3$ are three positive finite constants. And
	\small
	\begin{align*}
		\sum_{\substack{G_a\not\in\mathcal{M}_t, \\ G_{m_1}\not\subset G_a, \\ |E_a^1|<|E_{m_1}^1|}} \mbox{PR}(G_a;G_{m_1}) &< \exp(p^2)\exp\big( -D_1n\rho^2_L \big)\rightarrow 0, \\
		\sum_{\substack{G_a\not\in\mathcal{M}_t, \\ G_{m_1} \subsetneq G_a}} \mbox{PR}(G_a;G_{m_1}) &< \sum_{i=1}^{|E_c|-|E_{m_1}|}\binom{|E_c|-|E_{m_1}|}{i}\big(e^{-D_2n^\gamma}\big)^i \\
		& < \exp\big\{ (|E_c|-|E_{m_1}|)e^{-D_2n^\gamma} \big\} -1 \rightarrow 0, \\
		&\phantom{111} \\
		\sum_{\substack{G_a\not\in\mathcal{M}_t, \\ G_{m_1}\not\subset G_a, \\ |E_a^1|=|E_{m_1}^1|}} \mbox{PR}(G_a;G_{m_1}) &< \sum_{i=1}^{|E_c|-|E_{m_1}|}\binom{|E_c|-|E_{m_1}^1|}{|E_{m_1}|-|E_{m_1}^1|+i}\big(e^{-D_3n^\gamma}\big)^i \\
		& < \exp(p^2) \exp\big(-D_3n^\gamma\big) \rightarrow 0.
	\end{align*}
	\normalsize
	Thus
	\small
	\begin{align*}
		&\sum_{G_a\not\in\mathcal{M}_t}\frac{1}{\sum_{i=1}^h \mbox{PR}(G_{m_i};G_a)} < \sum_{G_a\not\in\mathcal{M}_t}\frac{1}{\mbox{PR}(G_{m_1};G_a)} \\
		& = \sum_{\substack{G_a\not\in\mathcal{M}_t, \\ G_{m_1}\subsetneq G_a}} \mbox{PR}(G_a;G_{m_1}) + \sum_{\substack{G_a\not\in\mathcal{M}_t, \\ G_{m_1}\not\subset G_a}} \mbox{PR}(G_a;G_{m_1}) + \sum_{\substack{G_a\not\in\mathcal{M}_t, \\ G_{m_1}\not\subset G_a, \\ |E_a^1|=|E_{m_1}^1|}} \mbox{PR}(G_a;G_{m_1}) \rightarrow 0.
	\end{align*}
	\normalsize
	Therefore,
	\small
	\begin{equation*}
		\sum_{G_m\in \mathcal{M}_t} \pi(G_m\mid \mathrm{Y}) 
		= \frac{1}{1+ \sum_{G_a\not\in\mathcal{M}_t}\frac{1}{\sum_{i=1}^h \mbox{PR}(G_{m_i};G_a)} }\rightarrow 1, \quad \text{ as } n\rightarrow\infty.
	\end{equation*}
	\normalsize

\subsection{Proof of Corollary \ref{nondecomp-mode}}
	Under the event $\Delta''_{\epsilon_3}(n)$ in the proof of Theorem \ref{hightri}, given any $G_m\in\mathcal{M}_t$, all Bayes factors in favor of $G_a$ converge to zero uniformly. Thus, we have
	\begin{equation*}
		\bbP \Big\{\max_{G_a\not\in\mathcal{M}_t} \pi(G_a\mid \mathrm{Y}) < \min_{G_m\in\mathcal{M}_t} \pi(G_m\mid \mathrm{Y}) \Big\}  \rightarrow 1, \quad \text{ as } n\rightarrow \infty.
	\end{equation*}
	Therefore,
	\small
	\begin{equation*}
		\bbP \big( \hat{G}\in\mathcal{M}_t \big) \rightarrow 1, \quad \text{ as } n\rightarrow \infty.
	\end{equation*}

\bibliographystyle{biometrika}
\bibliography{GraphConsist}

\end{document}